\documentclass[12pt]{article}

\usepackage{amsfonts,amsmath,amsthm,amssymb}
\usepackage{graphicx}
\usepackage[a4paper, total={6in, 9in}]{geometry}
\usepackage[caption=false]{subfig}
\usepackage{authblk}
\usepackage{bm}
\usepackage[authoryear]{natbib}
\theoremstyle{plain}

\newtheorem{proposition}{Proposition}[section]
\newtheorem{theorem}{Theorem}[section]
\newtheorem{lemma}{Lemma}[section]
\newtheorem{corollary}{Corollary}[section]

\theoremstyle{definition}

\newtheorem{example}{Example}[section]
\newtheorem{assumption}{Assumption}[section]

\theoremstyle{remark}
\newtheorem{remark}{\textbf{Remark}}[section]

\DeclareMathOperator*{\btheta}{\bm{\theta}}

\newcommand{\tr}{\operatorname{tr}}
\newcommand{\Var}{\operatorname{Var}}
\newcommand{\Cov}{\operatorname{Cov}}
\newcommand{\E}{\operatorname{E}}

\begin{document}

\title{A Composite Divergence Approach to Robust Multivariate Estimation under Cellwise and Casewise Contamination}

\author[1]{Abhik Ghosh}
\author[2]{Claudio Agostinelli}
\author[1]{Ayanendranath Basu}

\affil[1]{Indian Statistical Institute, Kolkata, India.}
\affil[2]{University of Trento, Trento, Italy.}

\maketitle

\begin{abstract}
Composite likelihood (CL) methods provide a computationally efficient alternative to full likelihood inference  for complex multivariate models by replacing the joint likelihood with a product of lower-dimensional marginal or conditional components. 
Like the MLE, however, the maximum CL estimator (MCLE) is highly sensitive to data contamination.
On the other hand, robust divergence-based procedures such as the minimum density power divergence (DPD) estimator 
require the full joint density and so scale poorly to complex multivariate models. 
We introduce the composite DPD (CDPD), a genuine statistical divergence built entirely from the low-dimensional component densities 
defining a CL, combining the computational scalability of CL with the robustness of the DPD. The resulting minimum CDPD estimator (MCDPDE) robustifies the MCLE without requiring integration over the full multivariate sample space.

We establish consistency, asymptotic normality, and the influence function of the MCDPDE under regularity conditions 
on the component models alone, without requiring correct specification of the full joint distribution.
We also show that it is qualitatively robust for every positive value of its tuning parameter, unlike the MCLE recovered as the limit.  Because its components can be chosen at the pairwise or cell level, the framework guards simultaneously against casewise and cellwise contamination. 
Operating directly on component densities rather than elliptical distance structures, it extends robust inference beyond the elliptical models to which most existing cellwise-robust procedures are confined.

We develop computational algorithms for Gaussian pairwise CLs and for a non-elliptical multivariate models 
based on McKay's bivariate gamma distribution for ordered positive data, which are implemented in the accompanying \textsf{R} package \texttt{mvdpd} available on CRAN. 
Simulation studies and real-data applications show that the MCDPDE achieves substantial robustness gains over the MCLE 
while retaining competitive efficiency under the assumed model.

\bigskip\noindent
\textbf{Keywords:} Composite likelihood; Robust multivariate estimation; Density power divergence; Minimum divergence estimator; 
Cellwise contamination; Flexible multivariate model; Ordered multivariate data.
\end{abstract}

\section{Introduction} 
\label{SEC:intro}

Modern statistical applications increasingly require inference for complex multivariate models with rich dependence structures. 
In many such settings, the full likelihood is either analytically intractable or computationally prohibitive in large  dimensions. 
Composite likelihood (CL) methods provide a powerful solution to this problem 
by replacing the full likelihood with a product of lower-dimensional marginal or conditional likelihoods.
It was originally motivated by the pseudo-likelihood approach of \cite{besag:1974spatial},  
later formalized by \cite{lindsay:1988composite}. 
Although a CL is generally not a probability density for the full observation vector, 
each component is a genuine likelihood associated with a lower-dimensional event.
The maximum CL estimator (MCLE) thus retains standard likelihood-based properties,
such as consistency, asymptotic normality, and efficiency often close to the maximum likelihood estimator (MLE) when the latter is available
\citep{lindsay:1988composite,varin/etc:2005,varin/etc:2011}, 
while requiring correct specification of only the component distributions rather than the entire joint distribution \citep[][Sec.~2.1]{xu/reid:2011}.
These advantages have led to widespread use of CL methods in spatial statistics, network models, genetics, and high-dimensional multivariate data among others
\citep{varin/etc:2011,rodriguez/etc:2024,schwob/etc:2024,ranalli/etc:2024, liu/etc:2025,rimella/etc:2025}.

Despite these advantages, the MCLE inherits a fundamental limitation of likelihood-based inference,
namely acute sensitive to outliers and data contamination. 
This issue becomes particularly serious  in multivariate datasets featuring complex, latent contamination structures. 
Alongside classical \textit{casewise} contamination, where an entire vector observation is corrupted, 
practical data frequently exhibit \textit{cellwise} contamination, in which only a subset of coordinates in an observation is affected, 
e.g., through recording errors, sensor failures, heterogeneous data pipelines. 
Since a single corrupted cell can enter several CL components at once, 
it may cause severe bias in both standard and composite likelihood methods.

A large body of robust statistical methods have been developed 
to reduce sensitivity to contamination while preserving efficiency under the assumed model. 
Classical robust multivariate scatter estimators range from the minimum volume ellipsoid (MVE) and minimum covariance determinant (MCD)
\citep{Rousseeuw1985,Rousseeuw1999} to M-, S-, $\tau$-, GM-, and MM-estimators  
\citep{Huber2009robust,Tyler1987,Tyler2014,Ruppert1992,Maronna2002,Maronna2016}. 
These methods rely heavily on the Mahalanobis-distance (MD) and are largely confined to elliptical families,
with limited applicability to general non-elliptical multivariate models  \citep{Maronna2016,Tatsuoka2000}. 
Even recent cellwise robust procedures, such as the cellwise MCD \citep{raymaekers2024cellwise}, are primarily developed for 
Gaussian or elliptically symmetric settings and do not directly extend to general dependence structures.

A complementary approach to robust inference is based on robust likelihood modifications, 
including weighted likelihood  \citep{Agostinelli2019} and minimum divergence methods \citep{Basu2011,pardo2018statistical}. 
They directly compare the assumed model distribution with the empirical distribution 
and can be formulated for any general parametric models. Important examples of divergences used in this context are the $\phi$-divergences, 
the density power divergence (DPD) of \cite{Basu1998},  and related families 
\citep{moon2014multivariate,chakraborty2026robust,nkum2024robust,rekavandi2024learning}.
Among these, the minimum DPD estimator (MDPDE) has become popular as a general-purpose robust parametric procedure, 
which is consistent and asymptotically normal, has a bounded influence function for positive values of its tuning parameter $\beta$,
and contains the MLE as a limiting case as $\beta\downarrow 0$ \citep{Basu2011,basu2026}.

However, their direct application to multivariate models face the same computational obstacle as the full-likelihood approach, 
since the objective function typically involves the full joint density and its integrals over the entire sample space, 
a daunting task when the joint density is high-dimensional or analytically intractable. This raises a natural question:
\emph{Can the computational advantages of CL be combined with the robustness of divergence-based inference in a principled 	way?}

To the best of our knowledge, such a general framework is not currently available in the literature. 
Existing attempts are tied to specific model classes, such as a composite $\tau$-estimator for linear mixed models
\citep{Agostinelli/Yohai:2016composite} and  a DPD-based method for a specific Ising models \citep{liu/etc:2025dpd}, 
that exploit the special structure of the densities. 
The estimators developed by \cite{Castilla/etc:2017,Castilla/etc:2020,Castilla/etc:2021} are closer in spirit, 
but they applied the DPD directly to the composite likelihood itself. 
Because a CL is generally not a probability density, this may not define a genuine divergence, 
and even when it does, it still requires integrating over the full multivariate sample space, 
thereby loosing the very computational advantage CL was meant to provide (see Section \ref{SEC:MCDPDE} for more discussions). 

The present paper addresses this gap by introducing a general framework for robust CL inference through 
the \emph{composite density power divergence} (CDPD). We replace the (possibly intractable) full-data density 
in the DPD with a collection of valid low-dimensional component densities used to build the CL. 
The resulting CDPD remains a genuine statistical divergence regardless of whether the CL itself integrates to one
and requires integration only over individual components, thereby combining CL's computational scalability with the robustness of the DPD.
The resulting minimum CDPD estimator (MCDPDE) provides a robust extension of the MCLE for general parametric multivariate models. 
It extends naturally to general non-elliptical dependence structures.
Moreover, when pairwise or low-dimensional components are used, 
it guards against both casewise and cellwise contamination simultaneously.
Our main contributions are summarized as follows.

\begin{enumerate}
	\item[(i)] We introduce the CDPD, a genuine divergence defined entirely through component densities, 
	that generalizes the composite Kullback--Leibler divergence (CKLD) of \cite{varin/etc:2005} 
	to introduce robustness through a tuning parameter $\beta>0$.
	It extends the classical DPD to CL settings without specifying the full joint distribution. 
			
	\item[(ii)] We establish the asymptotic theory of the resulting MCDPDEs, 
	including consistency and asymptotic normality under regularity conditions on the component models alone, 
	with no requirement that the full joint distribution be tractable or even correctly specified. 
	We show the estimator is both locally B-robust  and qualitatively robust for every $\beta>0$, 
	and describe how the robustness is lost at $\beta=0$.
		
	\item[(iii)] We develop computational algorithms, with an accompanying \texttt{R} package, 
	for two representative model classes, namely the pairwise CLs based on bivariate Gaussian and 
	McKay's bivariate gamma distribution \citep{mckay1934sampling}.   
	The second one demonstrates natural applicability of our framework well beyond Gaussian and elliptical settings.
	
	\item[(iv)] We show via extensive simulation, under both casewise and cellwise contamination, 
	that the MCDPDE substantially improves robustness relative to the MCLE while maintaining high efficiency under the assumed model. 
	An application to real-life ordered positive-valued data (non-elliptical) further illustrate its practical utility.   
	Even under Gaussian models, the MCDPDE performs competitively with, or slightly outperforms, existing cellwise-robust methods 
	while remaining computable at contamination levels where the cellwise MCD fails outright.
\end{enumerate}

The remainder of the paper is organized as follows. 
Section \ref{SEC:method} introduces the CDPD and the MCDPDE with illustrative examples, and relates them to existing composite divergence constructions.
Section \ref{SEC:theory} establishes the asymptotic theory. 
Section \ref{SEC:simulations} evaluates finite-sample performance under Gaussian and non-elliptical (multivariate ordered gamma) models with casewise and cellwise contamination. Section \ref{SEC:data_ultramarathon} presents an application to ultra-marathon race data, and Section \ref{SEC:discusions} concludes with discussions and directions for future work. 
Technical details, proofs, illustrations on benchmark Gaussian datasets, and additional tables and figures are provided in Appendices~\ref{APP:Computation}--\ref{APP:empResults}, available in the Supplementary Material.

The \textsf{R} package \texttt{mvdpd}, available on CRAN, implements Gaussian and multivariate ordered gamma model while the vignettes therein provide the code to reproduce all the figures and tables from the real datasets.

\section{The Minimum Composite DPD Estimation}
\label{SEC:method}

\subsection{Model Set-up and Composite Likelihood Estimation}
\label{SEC:Model}

Consider a $d$-dimensional random vector $\boldsymbol{X} = (X_1, \ldots, X_d)^\top \in \mathbb{R}^d$  
with true (unknown) population density $g$  and distribution function $G$.
We model $G$ by a family of $d$-dimensional parametric distributions 
$\mathcal{F}^\ast =\left\{ F_{\boldsymbol{\theta}} : \boldsymbol{\theta} \in \Theta \subseteq \mathbb{R}^p\right\}$,
with corresponding family of densities 
$\mathcal{F} =\left\{ f_{\boldsymbol{\theta}} : \boldsymbol{\theta} \in \Theta \subseteq \mathbb{R}^p\right\}$.
We assume throughout that the dominating measure of the model family coincides with that of the true distribution 
and hence does not depend on $\btheta$. Our objective is to estimate the unknown parameter $\boldsymbol{\theta}$ based on  
$n$ independent and identically distributed (IID) observations  $\mathcal{X}_n=\{\boldsymbol{x}_1,\ldots,\boldsymbol{x}_n\}$ on $\bm{X}$.
The classical MLE maximizes the full joint likelihood 
$\mathcal{L}(\boldsymbol{\theta}) = \prod_{i=1}^{n} f_{\boldsymbol{\theta}}(\boldsymbol{x}_i)$
with respect to $\boldsymbol{\theta}\in\Theta$. 

In many complex multivariate settings, the joint density $f_{\boldsymbol{\theta}}$ is either computationally intractable 
or unavailable in closed form. Composite likelihood methods address this difficulty by replacing the full likelihood 
with a product of lower-dimensional likelihood contributions \citep{lindsay:1988composite,varin/etc:2011}. 
Formally, let $\{\mathcal{A}_1, \ldots, \mathcal{A}_K\}$  denote a collection of (marginal or conditional) events, 
and denote the likelihood associated with $\mathcal{A}_k$ by  
$\mathcal{L}_k(\boldsymbol{\theta}|\boldsymbol{X})  \propto f_{\boldsymbol{\theta},k}(\boldsymbol{X}) = f_{\boldsymbol{\theta}}(\boldsymbol{X}\in\mathcal{A}_k)$, 
the density of $\boldsymbol{X}$ restricted to $\mathcal{A}_k$ under the joint model $f_{\boldsymbol{\theta}}$, for $k=1, \ldots, K$.
The composite likelihood (CL) of $\boldsymbol{X}$ is then defined as \citep{lindsay:1988composite,varin/etc:2011}
$\mathcal{L}_C(\boldsymbol{\theta}|\boldsymbol{X}) = \prod_{k=1}^K \mathcal{L}_k(\boldsymbol{\theta}|\boldsymbol{X})^{w_k}$,
where $w_1, \ldots, w_K\in\mathbb{R}^+$ are pre-specified weights independent of $\btheta$. 
Given an IID sample   $\mathcal{X}_n$ of size $n$, the maximum CL estimator (MCLE) of $\boldsymbol{\theta}$ maximizes the CL,
$\mathcal{L}_C(\boldsymbol{\theta}|\mathcal{X}_n) = \prod_{i=1}^n \mathcal{L}_C(\boldsymbol{\theta}|\boldsymbol{x}_i)$, or the log-CL,
\begin{equation}
\ell_C(\boldsymbol{\theta}|\mathcal{X}_n) = 
\sum_{i=1}^n \sum_{k=1}^K w_k \ell_k(\boldsymbol{\theta}|\boldsymbol{x}_i),
\qquad \ell_k(\boldsymbol{\theta}|\boldsymbol{x}) = \ln \mathcal{L}_k(\boldsymbol{\theta}|\boldsymbol{x}),
\label{EQ:log-CL}
\end{equation}
with respect to $\boldsymbol{\theta}\in\Theta$, assuming a maximizer exists. 
Equivalently, it solves the composite score equation given by
\begin{equation*}
\sum_{i=1}^n \sum_{k=1}^K w_k \boldsymbol{u}_{\boldsymbol{\theta},k}(\boldsymbol{x}_i) = \boldsymbol{0}_p,
\end{equation*}
where $\boldsymbol{u}_{\boldsymbol{\theta},k}(\boldsymbol{x}) 
= \nabla \ell_k(\boldsymbol{\theta}|\boldsymbol{x}) = \nabla\ln  f_{\boldsymbol{\theta},k}(\boldsymbol{x}) $
is the score function corresponding to the $k$th likelihood component, and $\nabla$ denotes the gradient with respect to $\btheta$. 
Under standard regularity conditions, this estimating equation is unbiased and the MCLE is Fisher consistent 
whenever the composite likelihood identifies the true parameter.
The event subsets $\{\mathcal{A}_k\}$  and hence component likelihoods may correspond to different marginal or conditional structures, 
including univariate, pairwise, or higher-order marginal distributions, as well as conditional distributions. 
When every event $\mathcal{A}_k$ carries equal weight, the weights are usually dropped and the unweighted CL is used
(equivalent to taking all $w_k$ equal to a common value).
The MCLE also admits a divergence interpretation. Let $g_k$ and $f_{\boldsymbol{\theta},k}$ denote the true and model densities, 
respectively, associated with the $k$th component. The composite Kullback--Leibler divergence (CKLD) is then defined as 
\citep{varin/etc:2005}  
\begin{eqnarray}\label{EQ:CKLD}
\mbox{KLD}_C(g, f_{\boldsymbol{\theta}}) = \sum_{k=1}^K w_k \E_G\left[
\log \frac{g_k(\boldsymbol{X})}{f_{\boldsymbol{\theta},k}(\boldsymbol{X})}\right]. 
\end{eqnarray}
The MCLE can be seen to be a minimizer of the estimated CKLD obtained by replacing the expectation under $G$ 
with the empirical average under the empirical distribution function $G_n$. 
The associated population version, the minimum CL functional of $\boldsymbol{\theta}$, is thus defined as 
\begin{equation}
\boldsymbol{\theta}^\ast = \arg\min_{\boldsymbol{\theta}\in\Theta} \mbox{KLD}_C(g, f_{\boldsymbol{\theta}}),
\end{equation}
whenever such a minimizer exists. 
Under standard regularity assumptions, the MCLE is $\sqrt n$-consistent and asymptotically normal, 
with covariance matrix determined by the Godambe information matrix
$\boldsymbol{I}_G(\boldsymbol{\theta}) = \boldsymbol{J}(\boldsymbol{\theta}) 
\boldsymbol{V}^{-1}(\boldsymbol{\theta})\boldsymbol{J}(\boldsymbol{\theta})$,
where $\boldsymbol{J}(\boldsymbol{\theta})  = - \E_g\left[\nabla^2 \ell_C(\boldsymbol{\theta}|\boldsymbol{X})\right]$
and $\boldsymbol{V}(\boldsymbol{\theta})  = \mbox{Var}_g\left[\nabla \ell_C(\boldsymbol{\theta}|\boldsymbol{X})\right]$
\citep{lindsay:1988composite,varin2008composite}. 

\subsection{The Composite DPD and its minimization}
\label{SEC:MCDPDE}

The density power divergence (DPD) of \citet{Basu1998} provides a principled framework for constructing robust estimators 
by replacing the Kullback-Leibler divergence (KLD) with a family of divergence measures indexed by a tuning parameter $\beta$. 
Formally, given a tuning parameter $\beta> 0$, the DPD between the true joint density $g$ and the model density $f_{\boldsymbol{\theta}}$ 
is defined as 
\begin{eqnarray}
D_{\beta}(g,f_{\boldsymbol{\theta}}) &=& \int f_{\boldsymbol{\theta}}^{1+\beta}(\boldsymbol{x}) \;d\boldsymbol{x} 
- \left(1+\frac{1}{\beta}\right)\int f_{\boldsymbol{\theta}}^{\beta}(\boldsymbol{x})g(\boldsymbol{x})\;d\boldsymbol{x} 
+ \frac{1}{\beta}\int g^{1+\beta}(\boldsymbol{x})\;d\boldsymbol{x}
\nonumber\\
&=& \int f_{\boldsymbol{\theta}}^{1+\beta}(\boldsymbol{x}) \;d\boldsymbol{x} 
- \left(1+\frac{1}{\beta}\right) \E_{G}\left[f_{\boldsymbol{\theta}}^{\beta}(\boldsymbol{X})\right] 
+ \frac{1}{\beta} \E_G \left[g^\beta(\boldsymbol{X})\right].\nonumber
\label{EQ:dpd}
\end{eqnarray}
The limiting case $\beta\downarrow0$ gives the KLD,
\begin{eqnarray}
\label{EQ:KLD}
KLD(g, f_{\boldsymbol{\theta}}) = D_0(g, f_{\boldsymbol{\theta}}) 
= \int \log\left[\frac{g(\boldsymbol{x})}{f_{\boldsymbol{\theta}}(\boldsymbol{x})}\right] g(\boldsymbol{x})d\boldsymbol{x}
= \E_G\left[\log \frac{g(\boldsymbol{X})}{f_{\boldsymbol{\theta}}(\boldsymbol{X})}\right].
\end{eqnarray}
The minimum DPD functional (MDPDF) $\boldsymbol{U}_\beta(G)$, with tuning parameter $\beta\geq 0$, is thus defined as 
a minimizer of $D_{\beta}(g,f_{\boldsymbol{\theta}})$ with respect to $\boldsymbol{\theta}\in\Theta$, whenever it exists. 
The corresponding MDPDE of $\boldsymbol{\theta}$ based on a sample $\mathcal{X}_n$ is defined as $\boldsymbol{U}_\beta(G_n)$, 
i.e., a minimizer of the estimated DPD measures obtained by replacing the expectation with respect to $G$ in (\ref{EQ:dpd}) 
by the empirical average under $G_n$. Replacing the $\bm{\theta}$-free third term in (\ref{EQ:dpd}), 
with the simpler constant $\beta^{-1}$,  the objective  function for the MDPDE turns out to be \citep{Basu1998}
\begin{equation}
 \label{EQ:ObjFunc_MDPDE}
\int f_{\boldsymbol{\theta}}^{1+\beta}(\boldsymbol{x}) \;d\boldsymbol{x} - \left(1+\frac{1}{\beta}\right)\frac{1}{n}\sum_{i=1}^{n}f_{\boldsymbol{\theta}}^{\beta}(\boldsymbol{X}_{i}) + \frac{1}{\beta}.
\end{equation}
This MDPDE coincides with the MLE as $\beta\downarrow 0$, and is highly robust to case-wise contamination  
at a small cost in efficiency for moderate $\beta>0$.
However, the associated optimization problem is already difficult to solve for multivariate Gaussian models at modest sample sizes 
and larger $\beta>0$ \citep{chakraborty2026robust}. For more complex models, it requires evaluation of 
$\int f_{\boldsymbol{\theta}}^{1+\beta}(\boldsymbol{x})d\boldsymbol{x}$, which can be computationally demanding. 
This difficulty is amplified in models where the joint density is unavailable.

To overcome such computational challenges for the otherwise well-performing robust MDPDEs, 
\citet{chakraborty2025componentwise} developed a componentwise MDPDE for robust location and scatter estimation under elliptical models. 
Although it is efficient to compute and performs well under Gaussian models, it fails to capture more complex dependence structures,
and the resulting scatter estimate need not be positive definite at finite sample sizes.
\cite{Castilla/etc:2017} instead proposed minimizing the DPD between the true density $g$ and the CL $\mathcal{L}_C$, directly, 
in place of $f_{\boldsymbol{\theta}}$. This construction has two major problems. 
First, $\mathcal{L}_C$ need not integrate to one over $\bm{x}$; it is not, in general, a genuine probability density, and so 
the resulting object need not be a genuine divergence at all. Second, even where it is well defined,
its objective function still requires evaluating $\int \mathcal{L}_C^{1+\beta}(\boldsymbol{\theta}|\boldsymbol{x})d\boldsymbol{x}$ 
over the full $d$-dimensional sample space, reinstating exactly the computational burden that CL was meant to remove. 

A natural solution is obtained by applying the DPD construction to the component likelihoods 
$f_{\boldsymbol{\theta},k}$ and $g_k$ underlying the CL. 
Writing the DPD in \eqref{EQ:dpd} as an expectation,
\begin{equation*}
D_{\beta}(g,f_{\boldsymbol{\theta}}) = \E_G \left[\int f_{\boldsymbol{\theta}}^{1+\beta} 
- \left(1+\frac{1}{\beta}\right)f_{\boldsymbol{\theta}}^{\beta}(\boldsymbol{X}) + \frac{1}{\beta} g^\beta(\boldsymbol{X})\right],\nonumber
\end{equation*}
we note that the term inside the brackets is itself non-negative, with minimum zero attained only at 
$g(\boldsymbol{x}) = f_{\boldsymbol{\theta}}(\boldsymbol{x})$ for almost all $\boldsymbol{x}$ \citep{Basu2011,basu2026}. 
Guided by the relationship between the DPD and KLD, and the form of the CKLD in \eqref{EQ:CKLD}, 
we define the composite DPD (CDPD) by replacing the joint densities inside this bracket with the component densities 
and taking a weighted sum of their expectations under $G$, exactly as in \eqref{EQ:CKLD}. 
Formally, for any $\beta > 0$, the CDPD is given by  
\begin{eqnarray}
D_{\beta}^C(g,f_{\boldsymbol{\theta}})  = \sum_{k=1}^K w_k \E_G\left[\int f_{\boldsymbol{\theta},k}^{1+\beta} 
- \left(1+\frac{1}{\beta}\right)f_{\boldsymbol{\theta},k}^{\beta}(\boldsymbol{X}) 
+ \frac{1}{\beta} g_k^\beta(\boldsymbol{X})\right].
\label{EQ:CDPD}
\end{eqnarray}
This definition extends the CKLD in exactly the same manner that the ordinary DPD extends the KLD. 
In particular, as $\beta\downarrow 0$, this CDPD coincides in the limit with the CKLD  in (\ref{EQ:CKLD}).

\begin{remark}[Validity of the CDPD]
Since each component $f_{\boldsymbol{\theta},k}$ (and $g_k$) is a proper probability density  on the support of $\mathcal{A}_k$, 
each term $D_\beta(g_k,f_{\boldsymbol{\theta},k})$ is a valid DPD. Therefore,
$D_\beta^C(g,f_{\boldsymbol{\theta}})\ge0$, with equality if and only if $g_k=f_{\boldsymbol{\theta},k}$ a.e. for all $k$,
whenever all weights are positive. Hence the proposed CDPD construction defines a genuine statistical divergence 
regardless of whether $\mathcal{L}_C$  integrates to one. 
This distinguishes the CDPD from previous approaches of \cite{Castilla/etc:2017} 
that directly replace the joint density $f_{\boldsymbol{\theta}}$ in the DPD \eqref{EQ:dpd} by $\mathcal{L}_C$.
\qed
\end{remark}

The minimum CDPD functional (MCDPDF) of $\boldsymbol{\theta}$ with tuning parameter $\beta\geq0$, 
denoted as $\boldsymbol{\theta}_\beta^\ast = \boldsymbol{T}_\beta(G)$, is then defined as a minimizer of 
$D_{\beta}^C(g,f_{\boldsymbol{\theta}})$ with respect to $\boldsymbol{\theta}\in\Theta$.
Since the last term in (\ref{EQ:CDPD}) is independent of $\boldsymbol{\theta}$, 
the MCDPDF equivalently minimizes $\E_G[V_\beta(\boldsymbol{X}|\boldsymbol{\theta})]$ with respect to $\boldsymbol{\theta}\in\Theta$, 
where 
\begin{equation}
V_{\beta}(\boldsymbol{x}|{\boldsymbol{\theta}}) = \sum_{k=1}^K w_k V_{k,\beta}(\boldsymbol{x}|{\boldsymbol{\theta}}),
\mbox{ with }
V_{k,\beta}(\boldsymbol{x}|{\boldsymbol{\theta}}) = \int f_{\boldsymbol{\theta},k}^{1+\beta} 
	- \left(1+\frac{1}{\beta}\right)f_{\boldsymbol{\theta},k}^{\beta}(\boldsymbol{x}) + \frac{1}{\beta} .
	\label{EQ:V_theta}
\end{equation}
The extra term $\frac{1}{\beta}$ ensures $\lim\limits_{\beta\rightarrow 0} V_{k,\beta}(\boldsymbol{x}|{\boldsymbol{\theta}}) 
= - \log f_{\boldsymbol{\theta},k}(\boldsymbol{x})$ for each $k$,
so that the MCDPDF at $\beta=0$ coincides with the MCL functional in the limit. 
It is also straightforward to verify that, by construction, the MCDPDF  is Fisher consistent, 
i.e., $\boldsymbol{\theta}_\beta^\ast = \boldsymbol{T}_\beta(G)=\boldsymbol{\theta}_0$ 
whenever $G_k = F_{\boldsymbol{\theta}_0,k}$ for all $k=1, \ldots, K$, and $\boldsymbol{\theta}_0 \in \Theta$,
even if the full joint density is misspecified.

Given an IID sample $\mathcal{X}_n$ of size $n$, the minimum CDPD estimator (MCDPDE) of $\boldsymbol{\theta}$, 
with tuning parameter $\beta\geq 0$, can be defined as $\widehat{\boldsymbol{\theta}}_{n,\beta} = \boldsymbol{T}_\beta(G_n)$,
which minimizes the empirical objective 
\begin{eqnarray}
	\widehat{\boldsymbol{\theta}}_{n,\beta} = \arg\min_{\boldsymbol{\theta}\in\Theta} \frac{1}{n} \sum_{i=1}^n V_{\beta}(\boldsymbol{x}_i|{\boldsymbol{\theta}})  
	= \arg\min_{\boldsymbol{\theta}\in\Theta} \sum_{k=1}^K \frac{w_k}{n} \sum_{i=1}^n  V_{k,\beta}(\boldsymbol{x}_i|{\boldsymbol{\theta}}),
\label{EQ:MCDPDE}
\end{eqnarray} 
where $V_{\beta}$ and $V_{k,\beta}$s are as defined in (\ref{EQ:V_theta}). 
Under differentiability of component densities $f_{\btheta,k}$ in $\btheta$, the MCDPDE satisfies the estimating equation
\begin{equation}
	\label{EQ:MCDPDE_EstEqn}
	\frac{1}{n}\sum_{i=1}^{n} \boldsymbol{\psi}_\beta(\boldsymbol{x}_i|\boldsymbol{\theta}) = \boldsymbol{0}_p,
\end{equation}
where
\begin{align}
\boldsymbol{\psi}_\beta(\boldsymbol{x}|\boldsymbol{\theta}) & = & \frac{1}{1+\beta}\nabla V_\beta(\boldsymbol{x}|\boldsymbol{\theta}) 
= \frac{1}{1+\beta} \sum_{k=1}^K w_k \nabla V_{k,\beta}(\boldsymbol{x}|{\boldsymbol{\theta}}) 
\nonumber\\
& = & \sum_{k=1}^K w_k \left[\int \boldsymbol{u}_{\boldsymbol{\theta},k} f_{\boldsymbol{\theta},k}^{1+\beta} 
- \boldsymbol{u}_{\boldsymbol{\theta},k}(\boldsymbol{x}) f_{\boldsymbol{\theta},k}^{\beta}(\boldsymbol{x})\right].
\label{EQ:Psi_theta}
\end{align}
Since each $f_{\boldsymbol{\theta},k}$ is a genuine probability density, 
$\E\left[\boldsymbol{\psi}_\beta(\boldsymbol{X}|\boldsymbol{\theta})\right] = \boldsymbol{0}$ for any $\beta\geq 0$,
where the expectation is taken under a joint distribution with component densities $f_{\boldsymbol{\theta},k}$.
The estimating equation is thus unbiased whenever only the model component distributions are correctly specified.

\begin{remark}
As $\beta \downarrow 0$, the MCDPDE reduces to the MCLE. For $\beta > 0$, 
the component scores are downweighted according to the magnitude of the corresponding component density contribution, 
yielding the robustness properties  established formally in Section~\ref{SEC:IF}.
As with the ordinary MDPDE, $\beta$ governs the trade-off between robustness and efficiency. 
A fully data-adaptive choice of $\beta$ in the composite setting, for example, 
by adapting minimum-distance or bootstrap-based selection criteria developed for the ordinary MDPDE, 
is an important direction that we hope to pursue in future.
\qed
\end{remark}

Since the MCDPDE objective involves only lower-dimensional integrals, chosen to be tractable via the composite likelihood's event subsets, 
it is considerably easier to minimize than the objective of the corresponding MDPDE based on the full joint distribution, 
and the same holds for the estimating equation. Because each component density typically involves only a subset of $\btheta$, 
the estimating equations often separate in a way that supports simple fixed-point iteration, 
so the MCDPDE inherits the same computational advantages as the classical MCLE. 
See Appendix \ref{APP:Algo_MVN} for a representative computational algorithm for MCDPDE under the Gaussian model.

\subsection{Illustrative Examples}
\label{SEC:Examples}

The proposed MCDPDE is applicable whenever the model admits a collection of tractable marginal or conditional component densities 
for constructing a CL. The following examples illustrate this generality across continuous, discrete, and high-dimensional models. 
In each case, robustness is achieved without requiring evaluation of the full joint density.

\subsubsection{Elliptically Symmetric Distributions}

Elliptically symmetric distributions provide a natural starting point because they include many widely used multivariate models, 
including Gaussian and Student-$t$ families. A $d$-dimensional elliptical density can be written as \citep{Kelker1970,Chmielewski1981}
\begin{equation}
	\centering
	f_{\boldsymbol{\theta}}(\boldsymbol{x})= c_d |\boldsymbol{\Sigma}|^{-\frac{1}{2}}h\left((\boldsymbol{x}-\boldsymbol{\mu})^\top\boldsymbol{\Sigma}^{-1}(\boldsymbol{x}-\boldsymbol{\mu})\right),
	~~~~~\boldsymbol{x}\in\mathbb{R}^d,
	\label{EQ:density_ES}
\end{equation}
where $\boldsymbol{\mu} = (\mu_1, \ldots, \mu_d)^\top\in\mathbb{R}^d$ is a location vector, 
$\boldsymbol{\Sigma}$ is a $d\times d$ positive definite scatter matrix, 
$h$ is the (possibly dimension-dependent) \textit{density generator}, and $c_d$ is the normalizing constant given by 
\begin{equation*}
c_d = \frac{\Gamma(d/2)}{2\pi^{d/2}} \left[\int_0^\infty r^{d-1}h(r^2) dr \right]^{-1}.
\end{equation*}
The density in (\ref{EQ:density_ES}) depends on $\boldsymbol{x}$ through the squared Mahalanobis distance (MD)
$\Delta(\boldsymbol{x}|\boldsymbol{\mu}, \boldsymbol{\Sigma})
=(\boldsymbol{x}-\boldsymbol{\mu})^\top\boldsymbol{\Sigma}^{-1}(\boldsymbol{x}-\boldsymbol{\mu})$. 
In parametric multivariate modelling, we often assume that $h$ is known 
and estimate the parameters in $\bm{\mu}$ and $\bm{\Sigma}$ from observed data.
However, it is often convenient to reparametrize in terms of the correlation matrix $\bm{R}$, 
with the $(j,k)$-th entry being $\rho_{jk} = \mbox{Corr}(X_j, X_k)$ for $j, k = 1, \ldots, d$.
Then, the $(j,k)$-th entry of $\bm{\Sigma}$ is given by $\sigma_{jk} = \rho_{jk}\sigma_j\sigma_k$ ($j, k = 1, \ldots, d$),
where $\rho_{jj} =1$ and thus $\sigma_{jj} = \sigma_j^2$ denote the variance of $X_j$ for each $j$. 
Writing $\boldsymbol{\theta}_j =(\mu_j, \sigma_j^2)^\top$ for all $j=1, \ldots, d$ and 
$\boldsymbol{r} = (\rho_{12}, \rho_{13}, \ldots, \rho_{d-1, d})^\top$, which stacks the upper triangular entries of $\boldsymbol{R}$ 
(excluding the diagonals) column-wise into a vector of dimension $\frac{1}{2}d(d-1)$, our working parameter space is 
\begin{equation*}
\Theta = \left\{ \boldsymbol{\theta} = (\boldsymbol{\theta}_{1}^\top, \ldots, \boldsymbol{\theta}_{d}^\top, \boldsymbol{r}^\top)^\top \in (\mathbb{R}\times\mathbb{R}^{+})^d\times [-1, 1]^{\frac{1}{2}d(d-1)}: \boldsymbol{R} \text{ is positive definite} \right\}.
\end{equation*}
Since elliptical distributions are fully characterized by their pairwise marginals,
a natural composite construction is thus obtained from the bivariate margins $\boldsymbol{X}_{jk} = (X_j, X_k)$ for $1\leq j < k\leq d$, 
yielding $K=\frac{1}{2}d(d-1)$ components. In particular, under (\ref{EQ:density_ES}), 
the pairwise component density of $\boldsymbol{X}_{jk} = (X_j, X_k)$ is given by 
\begin{equation}
f_{\boldsymbol{\theta},{jk}}(\boldsymbol{x}_{jk})  = \frac{c_{2*}}{\sigma_j\sigma_k\sqrt{1-\rho_{jk}}} 
		h_{jk}\left(\Delta(\boldsymbol{x}_{jk}|\boldsymbol{\mu}_{jk}, \boldsymbol{\Sigma}_{jk})\right), \qquad \boldsymbol{x}_{jk} = (x_j, x_k)^\top\in \mathbb{R}^2,
\label{EQ:density_ES_pairwise}
\end{equation}
where 
$\boldsymbol{\mu}_{jk} = (\mu_j, \mu_k)^\top$, 
$\boldsymbol{\Sigma}_{jk} = 
\begin{bmatrix}
	\begin{array}{cc}
		\sigma_j^2 & \sigma_j\sigma_k \rho_{jk}\\
		\sigma_j\sigma_k \rho_{jk} & \sigma_k^2
	\end{array}
\end{bmatrix}$, 
$h_{jk}$ is the bivariate density generator induced by $h$ for each $1 \leq j < k\leq d$, 
and $c_{2*}$ is the corresponding normalizing constant. 
This depends only on the $\bm{x}_{jk}$ and on a subset of $\btheta$ (unless $h_{jk}$ itself depends on further parameters).  
Then, the resulting pairwise CL is  
\begin{eqnarray}
	\mathcal{L}_C(\boldsymbol{\theta}|\boldsymbol{X}) = \prod_{1 \leq j < k\leq d} \frac{c_{2*}}{\sigma_j\sigma_k\sqrt{1-\rho_{jk}}} 
		h_{jk}\left(\Delta(\boldsymbol{x}_{jk}|\boldsymbol{\mu}_{jk}, \boldsymbol{\Sigma}_{jk})\right),	
\label{EQ:CL_Ellp}
\end{eqnarray}
where equal weights have been used for simplicity  ($w_{jk} \equiv 1$).
The associated MCDPDE is obtained by replacing each bivariate likelihood contribution with its corresponding CDPD criterion. 
This construction remains valid even when the full joint elliptical model is not assumed, 
provided that the relevant pairwise component models are correctly specified.

\begin{example}[Multivariate Gaussian Model]
\label{EX:MVN}
For the multivariate Gaussian distribution, $h(u) = e^{-u/2}$, independent of $d$, and $c_d = (2\pi)^{-d/2}$. 
Since every bivariate Gaussian margin has the same generator, $h_{jk} = h_2 = e^{-u/2}$, and $c_{2*} = c_2 = 1/(2\pi)$, 
the pairwise CDPD components have closed forms. For $\beta > 0$ the MCDPDE minimizes \eqref{EQ:MCDPDE} with
\begin{equation*}
  V_{\beta}(\boldsymbol{x}|{\boldsymbol{\theta}}) = \sum\limits_{1 \leq j < k\leq d} V_{jk,\beta}\left(\boldsymbol{x}_{jk}| \boldsymbol{\theta}_{j}, \boldsymbol{\theta}_{k}, \rho_{jk}\right),
\end{equation*}
where direct integration gives
\begin{equation}
	V_{jk, \beta}\left(\boldsymbol{x}_{jk}|\boldsymbol{\theta}_{j}, \boldsymbol{\theta}_{k},\rho_{jk}\right) 
	= \kappa_{jk}(\beta) \left[	\frac{1}{(1+\beta)} 
	- \frac{1+\beta}{\beta} e^{-\frac{\beta}{2}\Delta(\boldsymbol{x}_{jk}|\boldsymbol{\mu}_{jk}, \boldsymbol{\Sigma}_{jk})}\right]
	+\frac{1}{\beta},
\label{EQ:Vjk-gaussian}
\end{equation}
with $\kappa_{jk}(\beta) = \sigma_j^{-\beta}\sigma_k^{-\beta}(1-\rho_{jk}^2)^{-\beta/2}(2\pi)^{-\beta}$.
As $\beta \downarrow0$, $V_{jk, \beta}\left(\boldsymbol{x}_{jk}|\boldsymbol{\theta}_{j}, \boldsymbol{\theta}_{k},\rho_{jk}\right)$
tends to the log-CL, given by 
\begin{equation*}
\ell_{jk}(\boldsymbol{\theta}|\boldsymbol{x}_{jk}) = - \frac{1}{2}
\left[\log(2\pi \sigma_j^2\sigma_k^2(1-\rho_{jk}^2)) + \Delta(\boldsymbol{x}_{jk}|\boldsymbol{\mu}_{jk}, \boldsymbol{\Sigma}_{jk}) \right].
\end{equation*}
Hence the MCDPDE continuously reduces to the pairwise MCLE when $\beta=0$.
For $\beta>0$, the contribution of an observation is modulated by the factor 
$\exp\left\{-\frac{\beta}{2}\Delta(\boldsymbol{x}_{jk})\right\}$,
which decreases with the pairwise MD. Thus, unlike the MCLE, extreme observations have reduced influence. 
The resulting estimating equations and the computational algorithm are given in Appendix~\ref{APP:Algo_MVN}.
\qed
\end{example}

\begin{example}[Multivariate Student's $t$ Model]
The multivariate $t_\nu(\bm{\mu},\bm{\Sigma})$ distribution is a natural heavier-tailed alternative within the elliptical family, 
having density generator $h(u) \propto (1+u/\nu)^{-(\nu+d)/2}$, for pre-specified degrees of freedom $\nu > 0$. 

A useful property of the multivariate $t$ distribution is that every marginal sub-vector is again Student's $t$ distributed with 
the same degrees of freedom. Consequently, each pairwise component required by the composite construction is available in closed form as
\begin{equation*}
f_{\btheta,jk}(\bm{x}_{jk}) = \frac{k_2(\nu)}{\sigma_j \sigma_k (1-\rho_{jk}^2)^{1/2}}
	\left[1 + \frac{\Delta(\bm{x}_{jk} \mid \bm{\mu}_{jk}, \bm{\Sigma}_{jk})}{\nu}\right]^{-(\nu+2)/2},
\end{equation*}
and $k_2(\nu) = \Gamma\left(\frac{\nu+2}{2}\right)/(\Gamma\left(\frac\nu2\right) \pi \nu)$. Using the substitution $y = (x_{jk}-\mu_{jk})/\sqrt\nu$ followed by the	standard radial-integral identity 
$\int_{\mathbb{R}^2} (1+u^\top A u)^{-a}du = |A|^{-1/2} \pi\, \Gamma(a-1)/\Gamma(a)$ for $a>1$, 
we can simplify the objective function of the MCDPDE. Its components has the form, with $a_\beta = (1+\beta)(\nu+2)/2$,
\begin{align}
V_{jk, \beta}\left(\boldsymbol{x}_{jk}|\boldsymbol{\theta}_{j}, \boldsymbol{\theta}_{k},\rho_{jk}\right) 
& = \frac{(1+\beta)k_2(\nu)^\beta}{\sigma_j^\beta\sigma_k^\beta(1-\rho_{jk}^2)^{\beta/2}(2\pi)^\beta}		
\Bigg\{ \nu\pi\, k_2(\nu)\frac{\Gamma(a_\beta-1)}{\Gamma(a_\beta)} \nonumber \\
& - \frac{1}{\beta} \left[1 + \frac{\Delta(\bm{x}_{jk}\mid\bm{\mu}_{jk},\bm{\Sigma}_{jk})}{\nu}\right]^{-\beta(\nu+2)/2}\Bigg\} 
+ \frac1\beta.
	\label{EQ:Vjk-t}
\end{align}
As $\nu \to \infty$, $k_2(\nu) \to (2\pi)^{-1}$ and \eqref{EQ:Vjk-t} reduces to \eqref{EQ:Vjk-gaussian}, recovering the Gaussian case. 
Interestingly, unlike the exponential downweighting $e^{-\beta \Delta/2}$ in the Gaussian case,
observations here are downweighted at the polynomial rate $[1+\Delta/\nu]^{-\beta(\nu+2)/2}$. 
The resulting MCDPDE combines robustness arising from the heavy-tailed model itself 
with the additional protection induced by the DPD tuning parameter.
The associated estimating equations follow by differentiating \eqref{EQ:Vjk-t} with respect to $(\mu_j,\sigma_j^2,\rho_{jk})$,
which we skip for brevity.  
\qed
\end{example}

\subsubsection{Non-Elliptical Continuous Multivariate Distributions}

Composite likelihood methods are particularly attractive for multivariate models when the dependence structure is complicated beyond elliptical symmetry but lower-dimensional marginals remain tractable. 
We illustrate this point using a class of ordered multivariate distributions generated through
gamma increments on the cumulative hazard scale, introduced by \citet{balakrishnan2016multivariate}. 
Such models arise naturally for ordered event times, reliability data, hydrology, environmental applications,
and biomedical survival studies.

\begin{example}[A Multivariate Gamma Model for Ordered Positive Data]
\label{EX:GammaExp}
Let $0<X_1<\cdots<X_d$ denote ordered positive observations. The general construction of \citet{balakrishnan2016multivariate}
begins with a baseline distribution  having density $f$ with support $(a, b)$ 
and survival function $\overline F$, and defines the transformed increments
$Y_1=-\log \overline F(X_1)$, $Y_j=-\log\frac{\overline F(X_j)}{\overline F(X_{j-1})}$, $j=2,\ldots,d$.
Assuming these increments are independently gamma distributed with shape parameters $\delta_1,\ldots,\delta_d >0$, 
the resulting joint density is
\begin{eqnarray}
f_{\boldsymbol{\theta}}(\boldsymbol{x})  =  \frac{(-\log \overline{F}(x_1))^{\delta_1-1}f(x_d)}{\prod_{j=1}^{d}\Gamma(\delta_j)} 
\prod_{j=1}^{d-1}\left(- \log \frac{\overline{F}(x_{j+1})}{\overline{F}(x_j)}\right)^{\delta_{j+1} -1 }
\frac{f(x_j)}{\overline{F}(x_j)}, 
\label{EQ:BR_model}
\end{eqnarray}
for $\boldsymbol{x} \in  \{ \boldsymbol{x}\in(a, b)^d : x_1 < x_2 < \cdots < x_d\}$, 
and the parameter vector $\btheta$ contains the gamma-shape parameters together with any unknown parameters of the baseline distribution.

For illustration, we restrict to an exponential baseline distribution with $f(u) = \lambda e^{-\lambda u}$ and  $(a, b) = (0, \infty)$,
where $\lambda>0$ denotes its (baseline)  hazard rate parameter. Thus, the joint model density \eqref{EQ:BR_model} simplifies to 
\begin{eqnarray}
	f_{\boldsymbol{\theta}}(\boldsymbol{x})  = e^{-\lambda x_d} \prod_{j=1}^{d}\frac{\lambda^{\delta_j} (x_j - x_{j-1})^{\delta_j-1}}{\Gamma(\delta_j)}, 
	\label{EQ:density_GammaExp}
\end{eqnarray}
with $x_0=0$ and parameter vector  $\boldsymbol{\theta}  = (\boldsymbol{\delta}^\top, \lambda)^\top\in\Theta = \mathbb{R}^{(d+1)+}$. 
The representation in \eqref{EQ:density_GammaExp} shows that the model is constructed through cumulative sums of 
independent gamma increments; we  refer to this particular distribution having density \eqref{EQ:density_GammaExp}
as the \textit{multivariate ordered Gamma distribution}.  Although the full likelihood is available, 
inference based on the joint density becomes increasingly cumbersome as $d$ grows.
Furthermore, verifying the adequacy of this joint model is quite hard for any given sample data,
although pairwise (ordered) marginals can be readily verified in practice.

A key advantage of the CDPD framework is that only low-dimensional components are required. 
In particular, each pairwise marginal $(X_j,X_k)$ are distributed as McKay's bivariate gamma distribution  
\citep{mckay1934sampling} having density  ($1\leq j < k \leq d$)
\begin{equation}
f_{\boldsymbol{\theta},{jk}}(\boldsymbol{x}_{jk})  = 	\frac{\lambda^{\delta_k^*}}{\Gamma(\delta_j^*)\Gamma(\delta_k^*-\delta_j^*)}
x_j^{\delta_j^* -1}(x_k - x_j)^{\delta_k^* - \delta_j^* -1}e^{-\lambda x_k}, \quad 0<x_j<x_k, 
\label{EQ:McKay_2density}
\end{equation}
where the updated parameters are $\delta_j^* = \delta_1 + \cdots + \delta_j$ and $\delta_k^* - \delta_j^* = \delta_{j+1}+\cdots+\delta_k$. 
The pairwise CL therefore involves only $K=\frac{d(d-1)}{2}$ bivariate gamma components. The corresponding MCDPDE minimizes
\begin{equation*}
\frac{1}{n}\sum\limits_{i=1}^n V_{\beta}(\boldsymbol{x}_i|{\boldsymbol{\theta}})  
= \frac{1}{n}\sum\limits_{i=1}^n \sum\limits_{1 \leq j < k\leq d} 
V_{jk,\beta}\left(\boldsymbol{x}_{jk}^{(i)}| \btheta\right),
\end{equation*}
where 
\begin{equation*}
V_{jk, \beta}\left(\boldsymbol{x}_{jk}|\btheta\right) 
= \lambda^{2\beta} C_{\beta}\left(\delta_j^*, \delta_k^* - \delta_j^*\right)- \frac{1+\beta}{\beta} f_{\boldsymbol{\theta},{jk}}^\beta(\boldsymbol{x}_{jk}) + \frac{1}{\beta}, \quad 0<x_j<x_k. 
\end{equation*}
for  all $1\leq j<k \leq d$, with
\begin{equation}
C_{\beta}(\delta_1, \delta_2) = \frac{\Gamma(\beta\delta_1-\beta+\delta_1)\Gamma(\beta\delta_2-\beta+\delta_2)}{
\{\Gamma(\delta_1)\Gamma(\delta_2)\}^{\beta+1}(1+\beta)^{\beta(\delta_1+\delta_2-2)+\delta_1 +\delta_2}}, \quad \delta_1, \delta_2 >0.
\label{EQ:McKay_C}
\end{equation}
This constant $C_{\beta}(\delta_1, \delta_2)$ is well-defined only when $(1+\beta)\delta_1>\beta$ and $(1+\beta)\delta_2>\beta$,
which are the natural integrability condition required by the DPD criterion. Thus the MCDPDE is defined only for tuning parameters 
$\beta > \max\{0, \frac{\delta_j^*}{1-\delta_j^*}, \frac{\delta_k^* - \delta_j^*}{1-\delta_k^* + \delta_j^*} : 1\leq j < k \leq  d\}$.

Thus, unlike the ordinary MDPDE based on the full $d$-dimensional density, 
the proposed estimator requires only evaluations of bivariate gamma densities 
and avoids high-dimensional integration while retaining robustness against atypical observations.
This example demonstrates that the CDPD framework applies beyond elliptical models and 
remains useful whenever low-dimensional component distributions are available even if the full likelihood is inconvenient.
The estimating equations, obtained by direct differentiation, are presented in Appendix \ref{APP:Algo_GammaExp}.
\qed 
\end{example}
A different source of intractability arises when a flexible dependence structure makes the full joint density costly to evaluate 
or differentiate in high dimensions, as is common for copula-based models with non-Gaussian dependence \citep{joe2014dependence}. 
Let $X_j$ have marginal distribution function  $F_{\eta_j, j}$ and density $f_{\eta_j, j}$, 
and suppose the joint law is generated by a copula $C_\phi$ with density $c_\phi$, as 
	\begin{equation*}
	f_{\btheta}(\bm{x}) = c_{\phi}\big(F_{\eta_1,1}(x_1),\ldots,F_{\eta_d,d}(x_d)\big)
	\prod_{j=1}^d f_{\eta_j,j}(x_j;\eta_j), \quad \btheta = (\eta_1,\ldots,\eta_d, \phi)^\top.
	\end{equation*}
For flexible dependence structures, including vine copulas and factor copulas, the full density may involve high-dimensional products 
or latent-variable integrations. Although the likelihood $f_{\btheta}$ itself may be available, evaluating the MDPDE requires
a $d$-dimensional integral $\int f_{\btheta}^{1+\beta}(\bm{x})d\bm{x}$ which is generally unavailable in closed form.

In contrast, the CDPD construction (and hence the MCDPDE) based on pairwise components requires only $K=d(d-1)/2$ bivariate integrals
of the form $\int f_{\btheta,jk}^{1+\beta}\, dx_j\, dx_k$, where 
\begin{equation*}
f_{\btheta,jk}(x_j,x_k) = c_{\phi_{jk}, jk}\big(F_{\eta_j,j}(x_j), F_{\eta_k, k}(x_k)\big) f_{\eta_j,j}(x_j) f_{\eta_k, k}(x_k), \quad 1 \le j < k \le d,
\end{equation*}
with $\phi_{jk}$ denoting the bivariate dependence parameter of the pair $(X_j,X_k)$, 
e.g., the corresponding entry of a vine copula, or a sub-correlation of an elliptical copula \citep{panagiotelis2012pair, aas2009pair}. 
These are two-dimensional integrals and can be evaluated easily using standard numerical quadrature.
Even without a closed form,  evaluating these $K = d(d-1)/2$ separate \emph{bivariate} integrals  
is vastly cheaper than the $d$-dimensional integration required for computing MDPDEs. 
Therefore, the CDPD provides a computationally scalable robust alternative for high-dimensional copula inference.

For the Gaussian-copula, $\phi_{jk} = \rho_{jk}$ and $c_{\rho, jk}(u,v)$ is the bivariate Gaussian copula density 
with correlation $\rho_{jk}$, so that $z_j = \Phi^{-1}(F_{\eta_j, j}(x_j))$ is standard Gaussian and the copula factor reduces exactly 
to the bivariate-Gaussian dependence structure of Example \ref{EX:MVN} evaluated at $(z_j,z_k)$. 
If the marginals are also Gaussian, $V_{jk,\beta}$ reduces 	exactly to \eqref{EQ:Vjk-gaussian}. 

\subsubsection{Discrete Multivariate models}

The CDPD framework  is equally valuable for discrete multivariate models with an intractable normalizing constant. The following example illustrates the CDPD for pairwise binary Markov random fields, connecting directly to the Ising-model construction of \cite{liu/etc:2025dpd}.

\begin{example}[Binary Markov Random Fields]\label{EX:BMRF}
Let $\bm{X} = (X_1,\ldots,X_d)^\top \in \{0,1\}^d$ follow an Ising-type model with joint density
\begin{equation*}
f_{\btheta}(\bm{x}) = \frac{1}{Z(\btheta)} \exp\left(\sum_{j=1}^d \alpha_j x_j + \sum_{j<k} \gamma_{jk} x_j x_k\right), 
\end{equation*}
where evaluating the normalizing constant $Z(\btheta)$ requires summing over $2^d$ configurations and is intractable for even moderate $d$. 
Following the pseudo-likelihood idea of \citet{besag:1974spatial}, we construct the composite model from the full conditional distributions
\begin{equation*}
p_j = P_{\btheta}(X_j = 1 \mid \bm{X}_{-j} = \bm{x}_{-j}) = \frac{\exp(\eta_j(\bm{x}_{-j}))}{1+\exp(\eta_j(\bm{x}_{-j}))}, 
\quad \eta_j(\bm{x}_{-j}) = \alpha_j + \sum_{k \ne j} \gamma_{jk} x_k,
\end{equation*}
where $\bm{X}_{-j}$ (or $\bm{x}_{-j}$) is the $(d-1)$-dimensional subvector of $\bm{X}$ (or $\bm{x}$) after dropping its $j$-th component $X_j$ (or $x_j$).
The corresponding unweighted ($w_j \equiv 1$) MCDPDE objective is  
$\frac{1}{n}\sum\limits_{i=1}^n \sum\limits_{j=1}^d V_{j,\beta}(\bm{x}_i \mid \btheta)$, where 
\begin{equation}
V_{j,\beta}(\bm{x} \mid \btheta) = p_j^{1+\beta} + (1-p_j)^{1+\beta} 
- \left(1+\frac1\beta\right) p_j^{\beta x_j} (1-p_j)^{\beta(1-x_j)} + 	\frac1\beta,
\label{eq:Vj-ising}
\end{equation}
summing over the two-point support of $X_j$ in place of integrating; 
this is exactly the DPD objective used for robust binary/logistic regression.
Thus, the CDPD estimator avoids computation of the global normalizing constant $Z(\btheta)$, 
and inherits the robustness properties of DPD-based estimation under logistic models as illustrated in \citet{ghosh2016robust}.
The estimating equations reduce to a weighted pseudo-likelihood score equation by using $\bm{u}_{\btheta, j}(x_j, \bm{x}_{-j}) = \nabla_{\btheta} \log P_{\btheta}(X_j = x_j \mid	\bm{X}_{-j} = \bm{x}_{-j}) = (x_j-p_j)\begin{pmatrix} 1 \\
\bm{x}_{-j}
\end{pmatrix}$,
and can be solved using iterative reweighted procedures.
This example generalizes the Ising-specific construction of \cite{liu/etc:2025dpd} as a special case (with $n=1$) of the present framework. 
\qed
\end{example}

The same construction as in Example \ref{EX:BMRF} also extends to composite pseudo-likelihoods and CDPD for multivariate count data.
For example, under a Poisson Markov random field, it can be achieved with $p_j$ replaced by 
the corresponding conditional Poisson probabilities  and the two-point sum in \eqref{eq:Vj-ising} replaced by 
a  sum over the count support, which is still low-dimensional, and hence tractable.

\subsection{Handling Missing Data}
\label{SEC:missing}

An additional practical advantage of marginal-component CDPD construction is its natural accommodation of incomplete observations. 
Suppose that for observation $\boldsymbol{x}_i$, only the components indexed by $\mathcal K_i\subseteq\{1,\ldots,K\}$ 
are completely observed. For example, under the pairwise marginal construction, $\mathcal K_i$ consists of all pairs $(j,k)$ 
for which both variables are  observed (not missing). 
The MCDPDE objective and estimating equation in \eqref{EQ:MCDPDE}--\eqref{EQ:MCDPDE_EstEqn} extend immediately 
to this setting by summing only over the observed components for each observation, so that we get
\begin{equation}
\widehat{\btheta}_{n,\beta} = \arg\min_{\btheta\in\Theta} \frac1n \sum_{i=1}^n 
								\sum_{k \in \mathcal K_i} w_k\, V_{k,\beta}(\bm{x}_i \mid\btheta).
\label{EQ:MCDPDE-missing}
\end{equation}

Under missing completely at random (MCAR), each observed component contributes an unbiased estimating function, 
so the theoretical properties of the MCDPDE continue to hold when the complete collection of components is replaced by the observed subset. 
The CDPD framework may also provide some protection in settings where missingness is driven by the extremeness of the underlying values, 
a form of nonignorable missingness that is more naturally viewed as contamination than as a conventional MCAR mechanism \citep{little2019statistical}. 
Because the MCDPDE downweights observations that are poorly supported by the fitted model, 
the same mechanism that yields robustness to case-wise and cell-wise contamination is expected to attenuate, though not eliminate, 
the resulting bias. A formal analysis of this setting is beyond the scope of the present work and is left for future research.

\section{Theoretical Properties}
\label{SEC:theory}

\subsection{Notation and Assumptions}

Throughout this section, $\btheta_\beta^* = \bm{T}_\beta(G) = \arg\min_{\btheta\in \Theta} \E_G[V_\beta(\bm{X} \mid \btheta)]$ 
denotes the minimum CDPD functional evaluated at the true distribution $G$, which is not required to belong to the parametric family. 
The corresponding empirical estimator based on $\mathcal X_n=\{\bm x_1,\ldots,\bm x_n\}$ is denoted by $\widehat{\btheta}_{n,\beta}$.
Recall that $\bm{u}_{\btheta,k}(\bm{x}) = \nabla_{\btheta} \log f_{\btheta,k}(\bm{x})$ is the component score function, 
and define the component curvature matrix $\bm{I}_{\btheta,k}(\bm{x}) = -\nabla_{\btheta}^2 \log f_{\btheta,k}(x)$. 
The dimensions of the observation vector and parameter vector are denoted by $d$ and $p$, respectively. 
The index $k=1,\ldots,K$ refers generically to any collection of marginal or conditional components used in constructing the CL.
Further, define 
\begin{equation}\label{EQ:J-K}
	\boldsymbol{J}_\beta(\btheta) = \E_G\left[\nabla \boldsymbol{\psi}_\beta(\boldsymbol{X}|\boldsymbol{\theta})\right],
\mbox{ and }
	\boldsymbol{K}_\beta(\btheta
	= \E_G[\boldsymbol{\psi}_\beta(\boldsymbol{X}|\boldsymbol{\theta})\boldsymbol{\psi}_\beta^\top(\boldsymbol{X}|\boldsymbol{\theta})],
\end{equation}
where $\bm{\psi}_\beta$ is as defined in \eqref{EQ:Psi_theta}. The following assumptions provide sufficient conditions for consistency and asymptotic normality of the MCDPDE. 
They extend the standard Cram\'er-type  regularity conditions used for the ordinary MDPDE \citep{Basu1998,Basu2011,basu2026}, 
general extremum estimators \citep{newey1994large}, and smooth $M$-estimators \citep{Huber2009robust,van2000asymptotic} 
to the composite setting.

\begin{assumption}[Identifiability and existence]\label{ASS:Asmp1}
	For each $k$, the support of $f_{\btheta,k}$ does not depend on $\btheta$, and is the same as the support of $g_k$.
	The CL is identifiable with respect to $\btheta$, i.e., $f_{\btheta_1, k} \equiv f_{\btheta_2, k}$ for all $k=1, \ldots, K$,
	implies $\btheta_1 = \btheta_2$. Further, the minimizer $\btheta_\beta^* = \bm{T}_\beta(G)$ exists, is unique, and lies in the interior of the parameter space $\Theta$.
\end{assumption}

\begin{assumption}[Smoothness]\label{ASS:Asmp2}
	For almost all $\bm{x}$ and each component $k = 1, \ldots, K$, the mapping $\btheta \mapsto f_{\theta,k}(\bm{x})$ 
	is thrice continuously differentiable on an open neighborhood $\mathcal N$ of $\btheta_\beta^*$.
\end{assumption}

\begin{assumption}[Differentiation under the integral sign]\label{ASS:Asmp3}
For each $k$, the quantities $\int f_{\btheta,k}^{1+\beta}(\bm{x})\,d\bm{x}$ and $\int f_{\btheta,k}^{\beta}(\bm{x})g_k(\bm{x})\,d\bm{x}$ 
may be differentiated in $\btheta$ under the integral sign, up to third order, uniformly for $\btheta \in \mathcal N$.
\end{assumption}

\begin{assumption}[Envelope conditions]\label{ASS:Asmp4}
	There exist $G$-integrable envelop functions dominating, uniformly over	$\mathcal N$, the third order partial derivatives of the objective $V_\beta(\bm{x}\mid\btheta)$. Also, the variability matrix $\bm{K}_\beta(\btheta_\beta^*)$ exists finitely.
\end{assumption}

\begin{assumption}[Non-singularity]\label{ASS:Asmp5}
The sensitivity matrix $\bm{J}_\beta(\btheta_\beta^*)$ exists and is positive definite.
\end{assumption}

These assumptions are formulated entirely in terms of the component densities $f_{\btheta,k}$ 
rather than the full joint density $f_{\btheta}$. In particular, they do not require the true distribution $G$ 
to belong to the parametric family or even to satisfy the complete joint model specification. 
Only the component distributions entering the composite construction need to be sufficiently regular. 
This distinction is important for complex problems, where the full dependence structure may be difficult to model or evaluate, 
whereas lower-dimensional marginals or conditionals remain tractable.

\subsection{Asymptotic Properties}
\label{SEC:asymptotics}

We now establish the large-sample properties of the MCDPDE, assuming the dimensions $d$ and $p$ to be fixed, while $n\rightarrow\infty$. 
The results are developed under possible model misspecification, namely when the true distribution $G$ does not necessarily belong to 
the assumed family. This follows the general framework of extremum estimators \citep{newey1994large,van2000asymptotic} 
and extends the corresponding results for ordinary MDPDEs \citep{Basu1998,Basu2011,basu2026} to the composite setting.
In particular, the following theorem shows that replacing the full density by a collection of lower-dimensional components 
preserves consistency and asymptotic normality, provided the component-based objective satisfies suitable regularity assumptions.
Please refer to Appendix \ref{APP:Proof_Asymp} for its proof.

\begin{theorem}\label{THM:MCDPDE_asymp}
Under Assumptions \ref{ASS:Asmp1}--\ref{ASS:Asmp5} at any given $\beta\geq 0$, 
the MCDPDE  $\widehat{\boldsymbol{\theta}}_{n, \beta}$ exists with probability tending to one, 
$\widehat{\btheta}_{n,\beta} \xrightarrow{\mathcal{P}} \btheta_\beta^*$ and 
\begin{equation*}
\sqrt{n}\left(\widehat{\boldsymbol{\theta}}_{n, \beta} - \boldsymbol{\theta}_{\beta}^*\right) \mathop{\rightarrow}^\mathcal{D} 
N_p\left(\boldsymbol{0}_p, 
\boldsymbol{J}_\beta^{-1}({\btheta}_\beta^*)\boldsymbol{K}_\beta({\btheta}_\beta^*)\boldsymbol{J}_\beta^{-1}({\btheta}_\beta^*)\right), 
\qquad \mbox{ as } n \to \infty.
\end{equation*}
\end{theorem}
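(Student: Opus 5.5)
We would follow the classical Cramér-type route used for the ordinary MDPDE in \cite{Basu1998} and \cite{Basu2011}, adapted to the composite objective. Write $H_n(\btheta) = n^{-1}\sum_{i=1}^n V_\beta(\bm{x}_i|\btheta)$ and $H(\btheta)=\E_G[V_\beta(\bm{X}|\btheta)]$. Since $V_\beta=\sum_k w_k V_{k,\beta}$ is a finite sum with fixed weights, every regularity statement can be checked componentwise. Assumptions \ref{ASS:Asmp2}--\ref{ASS:Asmp3} make $H$ three times differentiable on $\mathcal N$ with derivatives obtained under the integral sign. In particular $\nabla H(\btheta)=(1+\beta)\E_G[\bm{\psi}_\beta(\bm{X}|\btheta)]$, and $\nabla H(\btheta_\beta^*)=\bm{0}$ because, by Assumption \ref{ASS:Asmp1}, $\btheta_\beta^*$ is an interior minimizer. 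The Hessian is $(1+\beta)\bm{J}_\beta(\btheta)$. The case $\beta=0$ is the same argument with $V_{k,0}=-\log f_{\btheta,k}$, i.e.\ the MCLE.

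\textbf{Step 1 (existence and consistency).} Fix a small $a>0$ with the closed ball $Q_a=\{\|\btheta-\btheta_\beta^*\|\le a\}\subset\mathcal N$. Taylor-expand $H_n$ about $\btheta_\beta^*$ on the sphere $\|\btheta-\btheta_\beta^*\|=a$:
\[
H_n(\btheta)-H_n(\btheta_\beta^*) = S_1+S_2+S_3 .
\]
Here $S_1$ is linear in $\btheta-\btheta_\beta^*$ with coefficient $\nabla H_n(\btheta_\beta^*)$, which tends to $\bm 0$ almost surely by the strong law of large numbers; integrability holds since $\bm{K}_\beta$ is finite (Assumption \ref{ASS:Asmp4}). $S_2$ is the quadratic term with matrix $\nabla^2H_n(\btheta_\beta^*)$, which tends to $(1+\beta)\bm J_\beta(\btheta_\beta^*)$; this is positive definite by Assumption \ref{ASS:Asmp5}, so $S_2\ge c a^2$ with probability tending to one. $S_3$ is the cubic remainder, bounded by $a^3 n^{-1}\sum_i M(\bm x_i)\to a^3\E_G M$ using the envelope $M$ of Assumption \ref{ASS:Asmp4}. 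For $a$ small, $S_2$ dominates $|S_1|+|S_3|$, so $H_n>H_n(\btheta_\beta^*)$ on the sphere with probability tending to one. Continuity of $H_n$ then gives a local minimizer inside $Q_a$, which solves \eqref{EQ:MCDPDE_EstEqn}. Since $a$ is arbitrary, a consistent root exists with probability tending to one. This is the standard Lehmann-type argument, and I take $\widehat\btheta_{n,\beta}$ to be this root.

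\textbf{Step 2 (asymptotic normality).} Expand the estimating equation:
\[
\bm 0 = n^{-1}\sum_i\bm\psi_\beta(\bm x_i|\widehat\btheta_{n,\beta}) = n^{-1}\sum_i \bm\psi_\beta(\bm x_i|\btheta_\beta^*) + \big[\bm J_n + R_n\big](\widehat\btheta_{n,\beta}-\btheta_\beta^*).
\]
Here $\bm J_n=n^{-1}\sum_i\nabla\bm\psi_\beta(\bm x_i|\btheta_\beta^*)\to\bm J_\beta(\btheta_\beta^*)$ almost surely, by the law of large numbers and Assumption \ref{ASS:Asmp5}. The remainder satisfies $\|R_n\|\le \|\widehat\btheta_{n,\beta}-\btheta_\beta^*\| \, n^{-1}\sum_iM(\bm x_i)=o_p(1)$, by consistency and the envelope. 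The summands $\bm\psi_\beta(\bm X_i|\btheta_\beta^*)$ are IID with mean zero, since $\E_G\bm\psi_\beta(\bm{X}|\btheta_\beta^*)=\nabla H(\btheta_\beta^*)/(1+\beta)=\bm 0$; this holds even under misspecification because we centre at $\btheta_\beta^*$, not at a true parameter. Their covariance is $\bm K_\beta(\btheta_\beta^*)$, so the multivariate CLT gives $n^{-1/2}\sum_i\bm\psi_\beta\to N_p(\bm 0,\bm K_\beta)$. Inverting $\bm J_n+R_n$, which is invertible with probability tending to one, and applying Slutsky's lemma yields the sandwich limit $\bm J_\beta^{-1}\bm K_\beta\bm J_\beta^{-1}$.

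\textbf{Main obstacle.} The delicate point is justifying the interchanges for the $\btheta$-dependent integral terms $\int f_{\btheta,k}^{1+\beta}$. These are deterministic in $\bm x$, so they enter $H_n$ and $H$ identically and cancel in all stochastic fluctuations. Their derivatives, however, must be continuous and uniformly bounded on $\mathcal N$ for the Taylor and remainder bounds; Assumption \ref{ASS:Asmp3} supplies exactly this. A second subtlety is that without global convexity we only obtain a consistent sequence of roots, not consistency of the global minimizer. I would state the result in this Cramér sense, as in \cite{Basu1998}, noting that uniqueness of $\btheta_\beta^*$ together with a uniform LLN on compacta would upgrade it if desired.
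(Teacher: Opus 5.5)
Your proposal is correct and follows essentially the same Cram\'er--Lehmann route as the paper's proof. You Taylor-expand the empirical objective on a small sphere around $\btheta_\beta^*$ in three terms to obtain a consistent root, then expand the estimating equation and apply the CLT to $\bm{\psi}_\beta(\bm{X}_i\mid\btheta_\beta^*)$ with Slutsky; the paper uses Lehmann's Lemma 4.1 in place of Slutsky, which is equivalent. The one detail the paper makes explicit that you gloss over is choosing a root sequence that does not depend on $a$ (it takes the root closest to $\btheta_\beta^*$). As you rightly note, the argument yields a consistent sequence of roots rather than consistency of the global minimizer, and the paper's proof has the same limitation.
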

We next simplify the matrices $\bm{J}_\beta$ and $\bm{K}_\beta$, from \eqref{EQ:J-K} under the practically important case 
where each composite component is correctly specified, i.e.,  $g_k = f_{{\btheta}_0,k}$ for every $k = 1, \ldots, K$, 
and some ${\btheta}_0 \in \Theta$. This assumption is weaker than requiring $G=F_{{\btheta}_0}$, 
because only the lower-dimensional distributions entering the composite construction need to be correct. 
Under this condition, we still have ${\btheta}_\beta^* = {\btheta}_0$ for every $\beta\geq0$, and the MCDPDE is Fisher consistent.
Further, it is straightforward to see that,  under such component-wise correct specification, we get  
\begin{equation}
\bm{J}_\beta({\btheta}_0) = \sum_{k=1}^K w_k 
\int \bm{u}_{{\btheta}_0,k}(\bm{x})\,\bm{u}_{{\btheta}_0,k}^\top(\bm{x})\, f_{{\btheta}_0,k}^{1+\beta}(\bm{x})\,d\bm{x},
\label{EQ:Jbeta-model}
\end{equation}
and
\begin{equation}
\bm{K}_\beta({\btheta}_0) = \sum_{k=1}^K \sum_{l=1}^K w_k w_l \left[
\bm{C}_{kl}({\btheta}_0) - \bm{m}_k({\btheta}_0)\, \bm{m}_l^\top({\btheta}_0)\right],
\label{EQ:Kbeta-model}
\end{equation}
where
\begin{equation*}
  \bm{m}_k(\btheta) = \int \bm{u}_{\btheta,k}(\bm{x}) f_{\btheta,k}^{1+\beta}(\bm{x}) d\bm{x}
\end{equation*}  
and
\begin{equation*}
\bm{C}_{kl}(\btheta) = \E_G\left[
  \bm{u}_{\btheta,k}(X) \bm{u}_{\btheta,l}^\top(\bm{X}) f_{\btheta,k}^\beta(\bm{X})f_{\theta,l}^\beta(\bm{X})\right],
\end{equation*}  
the last expectation being taken under the joint distribution  of $\bm{X}$ implied by $G$ restricted to the (typically low-dimensional) 
set of variables spanned by $\mathcal{A}_k \cup \mathcal{A}_l$.
The dependence between different composite components is therefore fully captured through the cross-covariance terms $\bm C_{kl}$.

The following corollaries further confirm that the MCDPDEs include both the MCLE (at $\beta=0$) and the ordinary MDPDE (at $K=1$) 
as special cases of a single asymptotic theory, developed in Theorem \ref{THM:MCDPDE_asymp}.
\begin{corollary}[Full likelihood, $K=1$]\label{COR:Asymp_K1}
When $K=1$ and $w_1=1$, the CL reduces to the full 	likelihood, $\mathcal{L}_C = f_{\btheta}$, 
and the MCDPDE coincides with the ordinary MDPDE.
In this case, \eqref{EQ:Jbeta-model}--\eqref{EQ:Kbeta-model} reduce to 
$\bm{J}_\beta({\btheta}_0) = \int \bm{u}_{{\btheta}_0}(\bm{x})u_{{\btheta}_0}^\top(\bm{x}) f_{{\btheta}_0}^{1+\beta}(\bm{x})d\bm{x}$ and
\begin{equation*}
\bm{K}_\beta({\btheta}_0) =	\int \bm{u}_{{\btheta}_0}(\bm{x}) \bm{u}_{{\btheta}_0}^\top(\bm{x}) f_{{\btheta}_0}^{1+2\beta}(\bm{x})d\bm{x} 
-	\bm{\xi}_\beta({\btheta}_0)\bm{\xi}_\beta^\top({\btheta}_0),
\end{equation*}
with $\bm{\xi}_\beta({\btheta}_0)= \int \bm{u}_{{\btheta}_0}(\bm{x}) f_{{\btheta}_0}^{1+\beta}(\bm{x})d\bm{x}$, recovering exactly the asymptotic variance of the ordinary MDPDE at the model \citep{Basu1998,Basu2011,basu2026}.
\end{corollary}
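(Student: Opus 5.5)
The plan is to specialize the general expressions \eqref{EQ:Jbeta-model}--\eqref{EQ:Kbeta-model} to a single component. With $K=1$, $w_1=1$ and $\mathcal{A}_1$ the whole sample space, the single component density is $f_{\btheta,1}=f_{\btheta}$ and $g_1=g$. Consequently $V_{1,\beta}$ in \eqref{EQ:V_theta} coincides with the integrand of the MDPDE objective \eqref{EQ:ObjFunc_MDPDE}, so the empirical objective \eqref{EQ:MCDPDE} is exactly the MDPDE objective. The two estimators, and also the two functionals $\bm T_\beta$ and $\bm U_\beta$, are therefore identical.

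The covariance is then obtained from Theorem \ref{THM:MCDPDE_asymp} by plugging in the single-component versions of $\bm J_\beta$ and $\bm K_\beta$.

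\emph{The matrix $\bm J_\beta$.} Formula \eqref{EQ:Jbeta-model} with one term gives the stated expression directly. As a sanity check, I would rederive it from $\nabla\bm\psi_\beta$. Differentiating \eqref{EQ:Psi_theta} produces terms of the form $\int(\nabla\bm u+(1+\beta)\bm u\bm u^\top)f^{1+\beta}$ and $-(\nabla\bm u+\beta\bm u\bm u^\top)f^\beta$. Taking $\E_G$ with $g=f_{\btheta_0}$, the $\nabla\bm u$ terms cancel, and the $\bm u\bm u^\top$ terms leave $\int\bm u\bm u^\top f^{1+\beta}$.

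\emph{The matrix $\bm K_\beta$.} From \eqref{EQ:Kbeta-model}, the double sum collapses to the single term $k=l=1$, giving $\bm C_{11}-\bm m_1\bm m_1^\top$. Here $\bm m_1=\bm\xi_\beta$ by definition. Since $g=f_{\btheta_0}$, we have
\[
\bm C_{11}=\int \bm u_{\btheta_0}\bm u_{\btheta_0}^\top f_{\btheta_0}^{2\beta}\,f_{\btheta_0}\,d\bm x=\int \bm u_{\btheta_0}\bm u_{\btheta_0}^\top f_{\btheta_0}^{1+2\beta}\,d\bm x .
\]

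Equivalently, $\bm K_\beta$ is the variance of $\bm\psi_\beta=\bm\xi_\beta-\bm u f^\beta$. Its mean is zero by the unbiasedness noted after \eqref{EQ:Psi_theta}. Its second moment is $\E[\bm u\bm u^\top f^{2\beta}]-\bm\xi_\beta\bm\xi_\beta^\top$, which again matches the stated formula.

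These are exactly the $\bm J$ and $\bm K$ (often written $\bm J,\bm K-\bm\xi\bm\xi^\top$) of \citet{Basu1998}, so the sandwich covariance recovers the known MDPDE result. There is no real obstacle. The only point needing care is to confirm that the $\theta$-free constant $1/\beta$ and the centering term do not alter the estimating equation. They do not, because $\bm\psi_\beta$ involves only gradients.
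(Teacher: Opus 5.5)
Your proposal is correct and follows the paper's route. The paper gives no separate argument: it obtains the corollary by specializing \eqref{EQ:Jbeta-model}--\eqref{EQ:Kbeta-model} to the single component $f_{\bm{\theta},1}=f_{\bm{\theta}}$ with $g=f_{\bm{\theta}_0}$, so that the double sum collapses to $\bm{C}_{11}-\bm{m}_1\bm{m}_1^\top$ with $\bm{m}_1=\bm{\xi}_\beta$. Your extra checks via $\E_G[\nabla\bm{\psi}_\beta]$ and $\Var(\bm{\psi}_\beta)$ agree with how those model-level formulas arise.
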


\begin{corollary}[$\beta = 0$]\label{COR:Asymp_beta0}
As $\beta \downarrow 0$, $\bm{m}_k({\btheta}_0) \to 0$ for every $k$, since the ordinary composite score has $G$-mean zero at the model.
Then, the MCDPDE estimating equation becomes the ordinary composite score equation, 
and hence \eqref{EQ:Jbeta-model}--\eqref{EQ:Kbeta-model} reduce to \\
$\bm{J}_0({\btheta}_0)= \sum_k w_k \int \bm{u}_{{\btheta}_0,k}(\bm{x}) \bm{u}_{{\btheta}_0,k}^\top(\bm{x}) f_{{\btheta}_0,k}(\bm{x})d\bm{x} 
= - \E_g[\nabla_{\btheta}^2 \log \mathcal{L}_C({\btheta}_0 \mid \bm{X})]$ and 
$\bm{K}_0({\btheta}_0) = \mathrm{Var}_g[\nabla_{\btheta} \log \mathcal{L}_C({\btheta}_0 \mid \bm{X})]$, 
exactly recovering the classical Godambe sandwich covariance matrix $\boldsymbol{I}_G(\boldsymbol{\theta})$ of the MCLE 
as defined in Section \ref{SEC:Model}.
\end{corollary}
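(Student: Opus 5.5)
The statement to prove is Corollary~\ref{COR:Asymp_beta0}. The plan is to pass to the limit $\beta\downarrow0$ directly in the model-case expressions \eqref{EQ:Jbeta-model}--\eqref{EQ:Kbeta-model}. By Theorem~\ref{THM:MCDPDE_asymp}, these already give the sandwich covariance under componentwise correct specification $g_k=f_{\btheta_0,k}$. First I would look at $\bm m_k(\btheta_0)=\int \bm u_{\btheta_0,k}f_{\btheta_0,k}^{1+\beta}$. As $\beta\downarrow0$ the integrand tends pointwise to $\bm u_{\btheta_0,k}f_{\btheta_0,k}=\nabla f_{\btheta_0,k}$. Assumption~\ref{ASS:Asmp3} allows differentiation under the integral sign, and it supplies a dominating function uniform in small $\beta$: on $\{f\le1\}$ use $f^{1+\beta}\le f$, and on $\{f>1\}$ use $f^{1+\beta}\le f^{1+\beta_1}$ for a fixed $\beta_1$. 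Dominated convergence then gives $\bm m_k\to\int\nabla f_{\btheta_0,k}=\nabla\int f_{\btheta_0,k}=\bm 0$. This is the only analytic step, and it is the main obstacle, since it requires the domination to be uniform in $\beta$ near zero rather than at a single fixed $\beta$. I would state this uniformity explicitly as part of the regularity being invoked.

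Next, the estimating function. From \eqref{EQ:Psi_theta}, $\bm\psi_\beta(\bm x|\btheta)\to-\sum_k w_k\bm u_{\btheta,k}(\bm x)=-\nabla\ell_C(\btheta|\bm x)$, using the limit just established for the integral term. So the MCDPDE equation \eqref{EQ:MCDPDE_EstEqn} becomes the composite score equation. This is consistent with the earlier remark that $V_{k,\beta}\to-\log f_{\btheta,k}$.

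For $\bm J_0$, take the pointwise limit in \eqref{EQ:Jbeta-model}, again by domination, to get $\sum_k w_k\int\bm u_{\btheta_0,k}\bm u_{\btheta_0,k}^\top f_{\btheta_0,k}$. Identifying this with $-\E_g[\nabla^2\ell_C]$ is the usual second Bartlett identity, applied componentwise. Differentiating $\int f_{\btheta,k}=1$ twice (Assumption~\ref{ASS:Asmp3}) gives $\int(\nabla^2\log f_{\btheta,k}+\bm u\bm u^\top)f_{\btheta,k}=0$. Since $g_k=f_{\btheta_0,k}$, the expectation of each component under $G$ reduces to an integral against $f_{\btheta_0,k}$. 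Summing with weights $w_k$ then gives $\bm J_0=-\E_g[\nabla^2\log\mathcal L_C(\btheta_0|\bm X)]$.

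For $\bm K_0$, the $\bm m_k\bm m_l^\top$ terms vanish by the first step. Meanwhile $\bm C_{kl}\to\E_G[\bm u_{\btheta_0,k}\bm u_{\btheta_0,l}^\top]$, where dominated convergence uses $f^{\beta}\le\max(1,f^{\beta_1})$ together with the finiteness of $\bm K_\beta$ from Assumption~\ref{ASS:Asmp4}. Hence $\bm K_0=\E_G[\nabla\ell_C\nabla\ell_C^\top]$. Because each component score has mean zero under $g_k$, this equals $\mathrm{Var}_g[\nabla\ell_C(\btheta_0|\bm X)]$. Therefore $\bm J_0^{-1}\bm K_0\bm J_0^{-1}=\bm I_G^{-1}(\btheta_0)$, the Godambe sandwich of Section~\ref{SEC:Model}.

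Finally, the limit of the asymptotic covariances should agree with the asymptotic covariance at $\beta=0$ itself. Theorem~\ref{THM:MCDPDE_asymp} is stated for $\beta\ge0$, with $V_{k,0}=-\log f_{\btheta,k}$ and $\bm\psi_0=-\nabla\ell_C$, so the same identities hold directly at $\beta=0$ and no further argument is needed.
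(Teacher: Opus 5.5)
Your proposal is correct and follows the same route as the paper. The paper gives no separate proof; its justification is the remark embedded in the statement: the zero-mean property of the component scores gives $\bm{m}_k\to\bm{0}$, so $\bm{\psi}_0=-\nabla\ell_C$ and the model-case formulas for $\bm{J}_\beta$ and $\bm{K}_\beta$ collapse to the Godambe matrices. Your envelope $|\bm{u}|\max(f,f^{1+\beta_1})$, the componentwise second Bartlett identity for $\bm{J}_0$, and the mean-zero step that turns $\bm{K}_0$ into a variance make explicit what the paper leaves implicit; you are also right that the resulting sandwich $\bm{J}_0^{-1}\bm{K}_0\bm{J}_0^{-1}$ equals $\boldsymbol{I}_G^{-1}$, not $\boldsymbol{I}_G$ as the statement loosely says.
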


\begin{figure}[!b]
	\centering
	\subfloat[Independent variables ($\rho=0$)]{
		\includegraphics[width=0.4\linewidth]{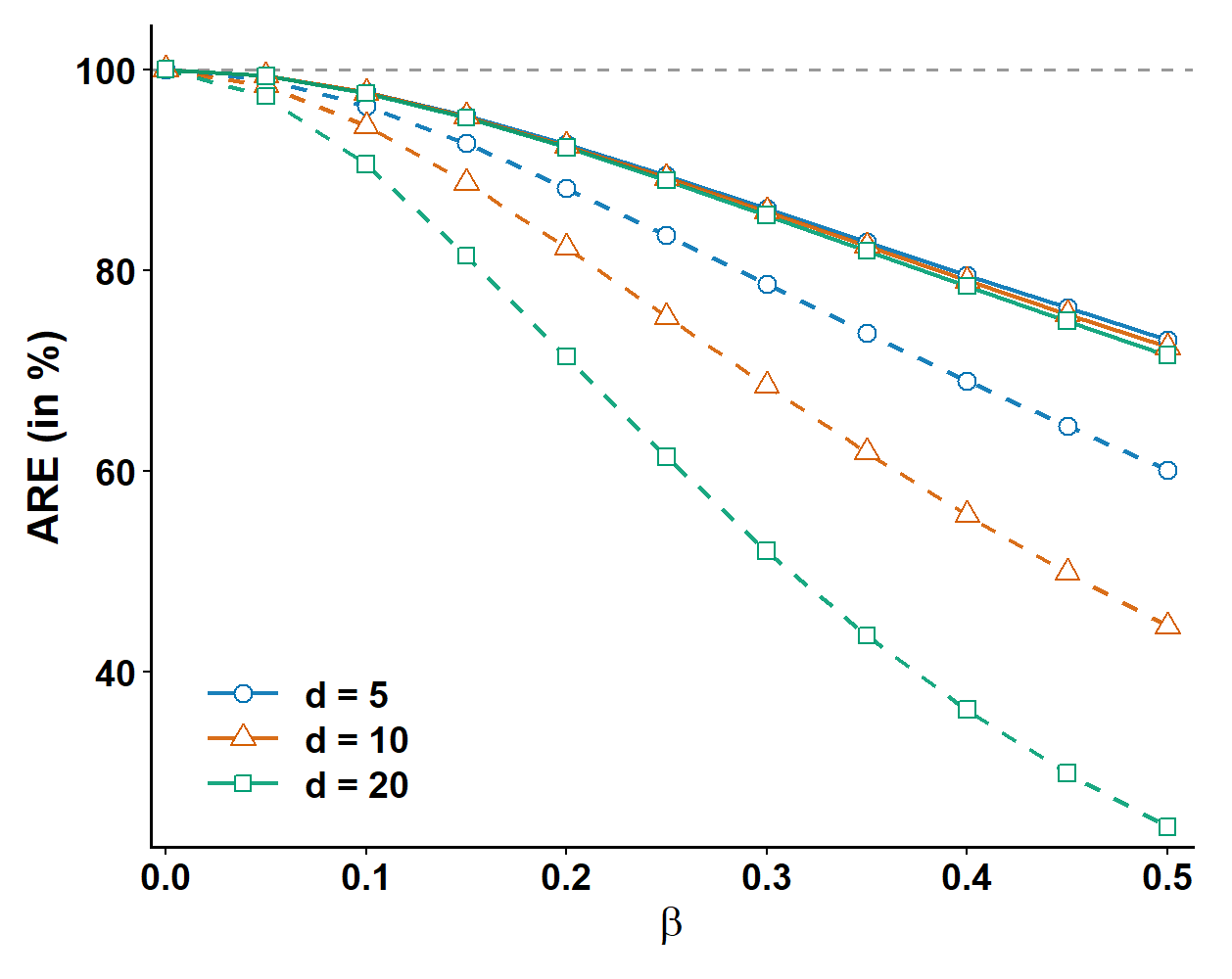}
		\label{FIG:ARE_MVN0}
	}
	\subfloat[Dependent variables ($\rho=0.5$)]{
		\includegraphics[width=0.4\linewidth]{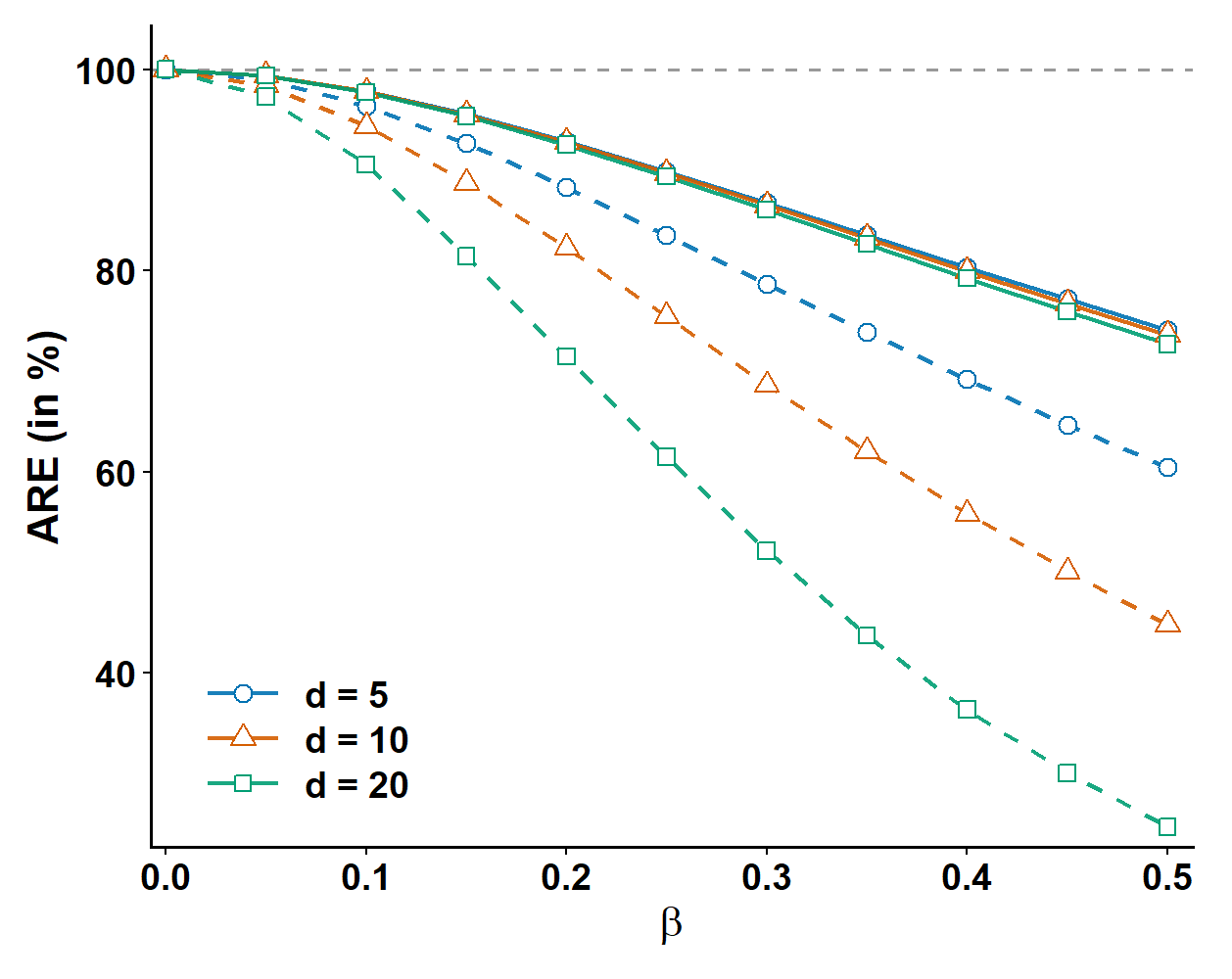}
		\label{FIG:ARE_MVN05}
	}
	\caption{AREs of the MCDPDEs (solid line) and MDPDEs (dashed line), relative to the MLE, under $d$-dimensional  Gaussian model at the true parameter value
		$\mu_j = 0, \sigma_j^2=1$ and $\rho_{jk}=\rho$ for all $j, k = 1, \ldots, d$.}
	\label{FIG:ARE_MVN}
\end{figure}

\begin{example}[Continuation of Example \ref{EX:MVN}]\label{EX:MVN-efficiency}
	Let us reconsider the pairwise Gaussian CL of Example \ref{EX:MVN}	with unweighted components ($w_{jk} \equiv 1$), 
	and compute the asymptotic variance of the MCDPDE following Theorem \ref{THM:MCDPDE_asymp}.
	Detailed derivations are given in Appendix \ref{APP:Algo_MVN}.
	Figure \ref{FIG:ARE_MVN} compares the MCDPDE and the MDPDEs in terms of their asymptotic relative efficiency (ARE),  defined as 
	the ratio of the MLE's total variance (the trace of its asymptotic covariance matrix) to that of the corresponding estimator. 
	For Gaussian models, the MCLE coincides with the MLE and therefore does not suffer any efficiency loss (i.e., its ARE is 100\%). 
	The MCDPDE exhibits a gradual efficiency loss as $\beta$ increases, but maintains substantially higher AREs than the MDPDE 
	over a broad range of $\beta$, particularly for larger data dimension $d$. 
	Moreover, while the efficiency loss of the MDPDE increases significantly with  $d$, that of the MCDPDE remains nearly stable. 
	These results demonstrate the more favorable robustness-efficiency 	trade-off of the MCDPDE relative to the standard MDPDE.
	\qed
\end{example}

\subsection{Robustness: Influence Functions and Sensitivity}
\label{SEC:IF}

We now study the local robustness properties of the MCDPDE through the influence function (IF) 
of its associated statistical functional \citep{hampel1974influence,hampel2011robust}. 
The IF measures the first-order effect of an infinitesimal contamination at a point $\bm{y}$ on the estimator functional.
For the MCDPDF $\boldsymbol{\theta}_\beta^\ast = \boldsymbol{T}_\beta(G)$,
the influence function at $G$ is defined as the Gateaux derivative \citep{hampel2011robust}
\begin{equation*}
\mbox{IF}(\bm{y}; \bm{T}_\beta, G) = \lim\limits_{\epsilon\rightarrow 0}\frac{\bm{T}_\beta(G_\epsilon) - \bm{T}_\beta(G)}{\epsilon}
= \frac{\partial}{\partial\epsilon}\bm{T}_\beta(G_\epsilon)\bigg|_{\epsilon=0},
\qquad \bm{y}\in\mathbb{R}^d,
\end{equation*}
where $G_\epsilon = (1-\epsilon)G + \epsilon \wedge_{\bm{y}}$ with $\epsilon$ denoting the contamination proportion and 
$\wedge_{\bm{y}}$ denoting the degenerate distribution at the point $\bm{y}$. 

In order to explicitly compute this IF, let us recall that the MCDPDF  $\boldsymbol{\theta}_\beta^\ast = \boldsymbol{T}_\beta(G)$
satisfies the estimating equation $\int \boldsymbol{\psi}_\beta(\boldsymbol{x}|\boldsymbol{\theta})dG(\bm{x}) = \boldsymbol{0}_p$.
Thus, $\bm{T}_\beta(G_\epsilon)$ satisfies
\begin{equation*}
\int \boldsymbol{\psi}_\beta(\boldsymbol{x}|\bm{T}_\beta(G_\epsilon))dG_\epsilon(\bm{x}) = \boldsymbol{0}_p.
\end{equation*}
Differentiating this equation with respect to $\epsilon$ at $\epsilon = 0$ and simplifying suitably, 
we get the IF of the MCDPDF as presented in the following theorem. 
The subsequent corollary presents the simplification under the assumption of component-wise correct specification.

\begin{theorem} \label{THM:IF}
Suppose Assumptions \ref{ASS:Asmp1}--\ref{ASS:Asmp5} hold and the functional $\boldsymbol T_\beta(G)$ is Gateaux differentiable at $G$.
Then, the influence function of the MCDPDF at $G$ is given by 
\begin{equation}
{\rm IF}(\bm{y}; \bm{T}_\beta, G) = -\bm{J}_\beta^{-1}({\btheta}_\beta^*)\, \bm{\psi}_\beta(\bm{y} \mid {\btheta}_\beta^*),
\label{EQ:IF-fixed}
\end{equation}
where $\boldsymbol{J}_\beta(\btheta) = \E_G\left[\nabla \boldsymbol{\psi}_\beta(\boldsymbol{X}|\boldsymbol{\theta})\right]$ 
as defined in \eqref{EQ:J-K}.
\end{theorem}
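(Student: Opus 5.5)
The plan is to differentiate the defining estimating equation of the functional along the contamination path and solve for the derivative. Write $\bm\theta_\epsilon = \bm T_\beta(G_\epsilon)$, so $\bm\theta_0=\bm\theta_\beta^*$ by uniqueness in Assumption \ref{ASS:Asmp1}. Because $\bm\theta_\beta^*$ is an interior minimizer and Assumptions \ref{ASS:Asmp2}--\ref{ASS:Asmp3} allow differentiation of $\E_G[V_\beta(\bm X|\bm\theta)]$ under the integral sign, $\bm\theta_\beta^*$ solves $\int \bm\psi_\beta(\bm x|\bm\theta)\,dG(\bm x)=\bm 0_p$. For small $\epsilon$, $\bm\theta_\epsilon$ lies in $\mathcal N$ (Gateaux differentiability gives continuity along the path) and solves the perturbed equation. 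Linearity of the integral in the measure splits this equation as
\[
(1-\epsilon)\int \bm\psi_\beta(\bm x|\bm\theta_\epsilon)\,dG(\bm x) + \epsilon\,\bm\psi_\beta(\bm y|\bm\theta_\epsilon) = \bm 0_p .
\]

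Next, differentiate this identity in $\epsilon$ at $\epsilon=0$, term by term. The first term gives $-\int\bm\psi_\beta(\bm x|\bm\theta_\beta^*)\,dG=\bm 0_p$ (it vanishes by the population equation), plus $\int \nabla\bm\psi_\beta(\bm x|\bm\theta_\beta^*)\,dG(\bm x)\cdot \frac{\partial\bm\theta_\epsilon}{\partial\epsilon}\big|_{0}$. The second gives $\bm\psi_\beta(\bm y|\bm\theta_\beta^*)$, because the remaining piece carries a factor $\epsilon$ and vanishes at $\epsilon=0$, given continuity of $\bm\psi_\beta(\bm y|\cdot)$ from Assumption \ref{ASS:Asmp2}. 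Putting these together yields
\[
\bm J_\beta(\bm\theta_\beta^*)\,\mathrm{IF}(\bm y;\bm T_\beta,G)+\bm\psi_\beta(\bm y|\bm\theta_\beta^*)=\bm 0_p ,
\]
and inverting $\bm J_\beta(\bm\theta_\beta^*)$, which is positive definite by Assumption \ref{ASS:Asmp5}, gives \eqref{EQ:IF-fixed}.

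The main obstacle is justifying the chain rule for $\epsilon\mapsto\int\bm\psi_\beta(\bm x|\bm\theta_\epsilon)\,dG(\bm x)$, which means interchanging $\partial/\partial\theta$ with $\int dG$. I would handle it with a mean-value expansion around $\bm\theta_\beta^*$:
\[
\int\bm\psi_\beta(\bm x|\bm\theta_\epsilon)\,dG-\int\bm\psi_\beta(\bm x|\bm\theta_\beta^*)\,dG=\Big[\int\nabla\bm\psi_\beta(\bm x|\tilde{\bm\theta})\,dG\Big](\bm\theta_\epsilon-\bm\theta_\beta^*).
\]
Two facts then control the remainder. First, $\bm\psi_\beta$ is $(1+\beta)^{-1}\nabla V_\beta$, so the second derivatives of $\bm\psi_\beta$ are third derivatives of $V_\beta$. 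Second, Assumption \ref{ASS:Asmp4} supplies a $G$-integrable envelope for these on $\mathcal N$, which makes $\bm\theta\mapsto\int\nabla\bm\psi_\beta\,dG$ continuous (indeed Lipschitz) there. Dividing by $\epsilon$ and using Gateaux differentiability of $\bm T_\beta$, so that $(\bm\theta_\epsilon-\bm\theta_\beta^*)/\epsilon\to\mathrm{IF}$ and $\tilde{\bm\theta}\to\bm\theta_\beta^*$, produces the $\bm J_\beta$ term. The $\int f^{1+\beta}_{\btheta,k}$ parts of $\bm\psi_\beta$ are deterministic in $\bm x$ and are handled directly by Assumption \ref{ASS:Asmp3}.
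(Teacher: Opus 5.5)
Your proposal is correct and takes essentially the paper's route: the paper also obtains the IF by differentiating $\int \bm{\psi}_\beta(\bm{x}\mid\bm{T}_\beta(G_\epsilon))\,dG_\epsilon(\bm{x})=\bm{0}_p$ at $\epsilon=0$, and your difference-quotient and mean-value argument with the Assumption~\ref{ASS:Asmp4} envelope makes rigorous the interchange that the paper leaves to ``simplifying suitably.'' The only cosmetic fix is that the vector mean-value step needs row-wise intermediate points (or the integral form of Taylor's theorem), which does not affect the conclusion.
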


\begin{corollary} \label{CORR:IF}
Under the assumptions of Theorem \ref{THM:IF}, and under component-wise correct specification with parameter $\boldsymbol\theta_0$,
the	influence function of the MCDPDE functional $\bm{T}_\beta$ at $F_{{\btheta}_0}$ simplifies to
\begin{equation}
{\rm IF}(\bm{y}; \bm{T}_\beta, F_{{\btheta}_0}) = - \bm{J}_\beta^{-1}({\btheta}_0)\,\bm{\psi}_\beta(\bm{y}\mid{\btheta}_0),
\label{EQ:IF-fixed-model}
\end{equation}
where $\bm{J}_\beta$ is now as given in \eqref{EQ:Jbeta-model}.
\end{corollary}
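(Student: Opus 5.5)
The plan is to specialize Theorem \ref{THM:IF} to the case $G = F_{\btheta_0}$, or more generally any $G$ whose components satisfy $g_k = f_{\btheta_0,k}$ for all $k$. There are two things to establish. First, the functional evaluated at this $G$ is $\btheta_0$. Second, the matrix $\bm{J}_\beta(\btheta_0) = \E_G[\nabla \bm{\psi}_\beta(\bm{X}\mid\btheta_0)]$ takes the form \eqref{EQ:Jbeta-model}.

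For the first point I will use the Fisher consistency already noted after \eqref{EQ:V_theta}. Each summand $w_k D_\beta(g_k, f_{\btheta,k})$ in \eqref{EQ:CDPD} is nonnegative and vanishes at $\btheta = \btheta_0$, so $\btheta_0$ minimizes $D_\beta^C$. By the uniqueness part of Assumption \ref{ASS:Asmp1}, it is then the unique minimizer, giving $\btheta_\beta^* = \bm{T}_\beta(F_{\btheta_0}) = \btheta_0$. Substituting this into \eqref{EQ:IF-fixed} yields \eqref{EQ:IF-fixed-model}, once we identify $\bm{J}_\beta(\btheta_0)$.

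For the second point I will differentiate \eqref{EQ:Psi_theta} term by term, which is justified by Assumption \ref{ASS:Asmp3}. This gives
\begin{equation*}
\nabla \bm{\psi}_\beta(\bm{x}\mid\btheta) = \sum_{k=1}^K w_k \Big[ \int \big\{(1+\beta)\bm{u}_{\btheta,k}\bm{u}_{\btheta,k}^\top - \bm{I}_{\btheta,k}\big\} f_{\btheta,k}^{1+\beta} - \big\{\beta\, \bm{u}_{\btheta,k}(\bm{x})\bm{u}_{\btheta,k}^\top(\bm{x}) - \bm{I}_{\btheta,k}(\bm{x})\big\} f_{\btheta,k}^\beta(\bm{x}) \Big].
\end{equation*}
The $k$th summand depends on $\bm{x}$ only through the variables in $\mathcal{A}_k$. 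So when we take the expectation under $G$, only the component law $g_k = f_{\btheta_0,k}$ enters, and the second integral becomes an integral against $f_{\btheta_0,k}^{1+\beta}$. The $\bm{I}_{\btheta,k}$ terms then cancel, and of the $\bm{u}\bm{u}^\top$ terms we are left with a coefficient $(1+\beta) - \beta = 1$. This produces exactly \eqref{EQ:Jbeta-model}.

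The main subtlety is the conditional-component case (e.g.\ Example \ref{EX:BMRF}). There the integral in $\bm\psi_\beta$ runs over the conditional variable only, with the conditioning variables held fixed. In that case I will take the expectation iteratively: first the inner conditional expectation, which again matches the conditional model density by correct specification, and then the outer expectation over the conditioning variables. The cancellation happens at the inner level, and \eqref{EQ:Jbeta-model} is then read with the integral understood as this iterated expectation. Positive definiteness of $\bm{J}_\beta(\btheta_0)$, and hence invertibility, is provided by Assumption \ref{ASS:Asmp5}.
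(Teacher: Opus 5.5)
Your proposal is correct and follows the same route as the paper. Fisher consistency under component-wise correct specification identifies $\btheta_\beta^*=\btheta_0$, which you then substitute into \eqref{EQ:IF-fixed}. Your term-by-term differentiation of $\bm{\psi}_\beta$ (the $\bm{I}_{\btheta,k}$ terms cancel and the $\bm{u}\bm{u}^\top$ coefficient becomes $(1+\beta)-\beta=1$) is exactly the ``straightforward'' simplification the paper invokes to get \eqref{EQ:Jbeta-model}. Your remark on conditional components usefully clarifies a point the paper leaves implicit: there, the integral in \eqref{EQ:Jbeta-model} must be read as an iterated expectation over the conditioning variables under $G$.
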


The above expression immediately shows that the robustness of the MCDPDE is governed by the boundedness of the weighted composite score
$\boldsymbol\psi_\beta(\bm y|\boldsymbol\theta)$. For $\beta>0$, each component contribution contains the factor 
$\boldsymbol u_{\boldsymbol\theta,k}(\bm y) f_{\boldsymbol\theta,k}^{\beta}(\bm y)$, and hence a sufficient condition for B-robustness is
\begin{equation*}
\sup_{\bm y}
\left\|
\boldsymbol u_{\boldsymbol\theta,k}(\bm y)
f_{\boldsymbol\theta,k}^{\beta}(\bm y)
\right\|
<\infty ,
\qquad k=1,\ldots,K.
\end{equation*}
Under this condition, which holds for most common statistical models, 
the IF in \eqref{EQ:IF-fixed} is bounded, establishing local robustness of the MCDPDE. 
This is precisely the robustness mechanism of the ordinary MDPDE, where the density-power factor downweights observations 
having low model density \citep{Basu1998,Basu2011,basu2026}.

The boundedness of the IF establishes local robustness, but it is also important to quantify the extent of this robustness. 
Following \citet{hampel2011robust}, we consider the gross-error sensitivity (GES) of the MCDPDE functional given by
\begin{equation*}
\gamma^*(\bm T_\beta,G) = \sup_{\bm{y}\in\mathcal{X}}\left|\mbox{IF}(\bm{y};\bm{T}_\beta,G) \right| 
= \sup_{\bm{y}\in\mathcal{X}}\left|\bm{J}^{-1}_\beta({\btheta}_\beta^\ast)\bm{\psi}_\beta(\bm{y}\mid{\btheta}_\beta^\ast)\right|
\propto \sup_{\bm{y}\in\mathcal{X}}\left|\bm{u}_{\btheta,k}(\bm{y}) f_{\btheta,k}^{\beta}(\bm{y})\right|.
\end{equation*}
For $\beta>0$, the multiplicative density factor $f_{\btheta,k}^{\beta}(\bm{y})$ shrinks the contribution of observations lying 
in regions of low model density. Consequently, even when the component score $\bm{u}_{\btheta,k}(\bm{y})$ diverges in the tails, 
the product $\bm{u}_{\btheta,k}(\bm{y})f_{\btheta,k}^{\beta}(\bm{y})$ may remain bounded. 
The tuning parameter $\beta$ therefore controls the magnitude of the gross-error sensitivity, 
yielding a finite gross-error sensitivity under mild tail conditions which further decreases with increasing values of $\beta>0$.
This is precisely the mechanism responsible for the local robustness of DPD-based procedures \citep{Basu1998,Basu2011,basu2026}.

In contrast, for $\beta=0$, the density attenuation disappears and 
$\boldsymbol\psi_0(\bm y|\boldsymbol\theta_0)= -\sum_{k=1}^{K} w_k \boldsymbol u_{\boldsymbol\theta_0,k}(\bm y)$,
which is the ordinary CL score. Since score functions are unbounded in most regular models, 
the corresponding MCLE has unbounded IF and infinite gross-error sensitivity, recovering the well-known non-robustness of the CMLE.

\subsection{Qualitative Robustness}
\label{SEC}

Influence function boundedness characterizes infinitesimal robustness, but robustness with respect to finite perturbations of 
the underlying distribution can be captured by the notion of qualitative robustness introduced by \citet{hampel1971general}.
A statistical functional $\bm{T}$ is said to be qualitatively robust at $G$ if for every $\epsilon>0$, 
there exists a neighborhood $\mathcal{N}(G)$ of $G$ under the weak topology such that $G'\in\mathcal{N}(G)$ 
implies that the Prokhorov distance between the distributions of $\bm{T}_n = \bm{T}(G_n)$ under $G'$ and $G$ are less than $\epsilon$
for all sufficiently large sample sizes. In other words, small changes in the generating distribution lead to only small changes 
in the sampling distribution of the estimator for all sufficiently large sample sizes.

For $\beta>0$, qualitative robustness follows from the general theory of robust
$M$-functionals \citep{hampel2011robust,Huber2009robust}. Under Assumption
\ref{ASS:Asmp1}, the population objective function  $Q_\beta(G,\btheta) = \E_G\!\left[V_\beta(\bm X\mid\btheta)\right]$
is well defined and possesses a unique minimizer $\btheta_\beta^\ast=\bm T_\beta(G)$.
Moreover, Assumption \ref{ASS:Asmp4} guarantees the existence of $G$-integrable envelope functions dominating
$V_\beta(\bm X\mid\btheta)$ and its first two derivatives uniformly over a neighborhood of $\btheta_\beta^\ast$.
Combined with the continuity of $V_\beta(\bm x\mid\btheta)$ in $\btheta$, these conditions imply that the mappings
$G\longmapsto \E_G[V_\beta(\bm X\mid\btheta)]$  and $G\longmapsto \E_G[\bm\psi_\beta(\bm X\mid\btheta)]$ 
are continuous under weak convergence (see, e.g., \citealp[Chapter~3]{Huber2009robust}). Further, since the sensitivity matrix 
$\bm J_\beta(\btheta_\beta^\ast) = \E_G\!\left[ \nabla_{\btheta} \bm\psi_\beta(\bm X\mid\btheta_\beta^\ast)\right]$
is finite and nonsingular by Assumption \ref{ASS:Asmp5}, the implicit function theorem \citep{Rudin1976} yields an open neighborhood
$\mathcal N(G)$ of $G$ on which the estimating equation $\E_{G'}[\bm\psi_\beta(\bm X\mid\btheta)]=\bm0$
admits a unique continuously differentiable solution $\btheta=\bm T_\beta(G')$.
Consequently, the MCDPDF $\bm T_\beta$ is weakly continuous at $G$, so that $G_n \Rightarrow G$ implies 
$\bm T_\beta(G_n)\rightarrow\bm T_\beta(G)$.
Hence, the sequence of MCDPDEs is qualitatively robust for every $\beta>0$.

In contrast, the above argument breaks down when $\beta=0$. 
The weak-continuity result established for $\beta>0$ ultimately relies on the Portmanteau characterization of weak convergence,
which states that $G_n\Rightarrow G$ implies $\E_{G_n}[h(\bm{X})]\to \E_G[h(\bm X)]$ for every bounded continuous $h$.
Assumption \ref{ASS:Asmp4} extends this guarantee to $\bm\psi_\beta$ only when 
$\bm\psi_\beta(\bm{x}\mid\btheta)$ remains \emph{bounded} in $\bm x$. 
This is possible for $\beta>0$, because the density-power weight $f_{\btheta,k}^\beta(\bm{X})$ multiplying each component score 
in $\bm\psi_\beta(\bm X\mid\btheta)$ decays fast enough to keep $\bm\psi_\beta(\cdot\mid\btheta)$ itself bounded in $\bm x$
for the regular parametric models covered by our assumptions. 
Thus, observations far from the model are automatically downweighted before they can affect $\E_G[\bm \psi_\beta(\bm X\mid\btheta)]$. 
For $\beta=0$, however, $\bm\psi_0(\bm X\mid\btheta)$ reduces to the ordinary composite score, 
which is generically \emph{unbounded} (e.g., linear in $x$ for a location parameter), and no such downweighting occurs. 
As a result, the mapping $G\mapsto \E_G[\bm\psi_0(\bm X\mid\btheta)]$ is not weakly continuous in general. 
Indeed, a vanishing fraction of contamination escaping to infinity can permanently alter the expectation of the score 
despite weak convergence of the underlying distributions, explaining why the MCLE, like the ordinary MLE, is not qualitatively robust
\citep{hampel2011robust,Huber2009robust}.

This result complements the IF analysis. A bounded IF ensures protection against infinitesimal gross errors, 
whereas qualitative robustness guarantees stability under finite weak perturbations of the data-generating distribution. 
Together, these properties provide a robustness characterization of the MCDPDE analogous to the classical theory for the MDPDEs 
\citep{Basu1998,Basu2011,basu2026}.

\subsection{Estimating Standard Errors in practice}

The asymptotic theory developed in Section \ref{SEC:asymptotics} provides the limiting distribution of the MCDPDE. 
For practical inference, however, this result must be supplemented by a consistent estimator of its asymptotic covariance matrix
given in Theorem \ref{THM:MCDPDE_asymp}. Then, standard errors of individual parameters can be obtained from 
the square roots of the diagonal elements of  such an estimator of the asymptotic covariance matrix.

There are two practically relevant situations. First, if the complete joint distribution of $\bm X$ is correctly specified, 
then both $\boldsymbol{J}_\beta$ and $\boldsymbol{K}_\beta$ can be evaluated from their population expressions given earlier. 
Replacing $\btheta_\beta^*$ by the MCDPDE $\widehat{\btheta}_{n,\beta}$ yields the consistent plug-in estimator
of the asymptotic covariance matrix of MCDPDE given by
\begin{equation*}
\widehat{\boldsymbol{\Sigma}}_\beta = \boldsymbol{J}_\beta^{-1}(\widehat{\btheta}_{n,\beta}) 
\boldsymbol{K}_\beta(\widehat{\btheta}_{n,\beta})\boldsymbol{J}_\beta^{-T}(\widehat{\btheta}_{n,\beta}).
\end{equation*}

Second, under component-wise correct specification, the matrix $\boldsymbol{J}_\beta$ continues to have the closed form 
given in Equation \eqref{EQ:Jbeta-model}. However, $\boldsymbol{K}_\beta$ also involves cross-component covariance terms
$\bm C_{kl}({\btheta}_0)$, which depend on the unknown dependence structure linking the components. 
Therefore, unlike $\boldsymbol{J}_\beta$, the full $\boldsymbol{K}_\beta$ cannot generally be recovered from the component models alone.
In such cases, when the joint distribution may itself be unavailable or computationally intractable, 
we recommend using the  model-robust estimators of $\bm{J}_\beta$ and $\bm{K}_\beta$ as given by
\begin{equation}
\widehat{\bm{J}}_\beta = \frac1n \sum_{i=1}^n \nabla_{\btheta} \bm{\psi}_\beta(\bm{x}_i\mid \widehat{\btheta}_{n,\beta}), \qquad 
\widehat{\bm{K}}_\beta = \frac1n \sum_{i=1}^n \bm{\psi}_\beta(\bm{x}_i \mid \widehat{\btheta}_{n,\beta})
\bm{\psi}_\beta^\top(\bm{x}_i \mid \widehat{\btheta}_{n,\beta}),
	\label{EQ:empirical-sandwich}
\end{equation}
leading to the empirical sandwich estimator $\widehat{\boldsymbol\Sigma}_\beta = \widehat{\boldsymbol J}_\beta^{-1}
\widehat{\boldsymbol K}_\beta \widehat{\boldsymbol J}_\beta^{-1}$ of the asymptotic variance in Theorem \ref{THM:MCDPDE_asymp}. This estimator requires no knowledge of the unknown dependence among components and 
is therefore the natural default estimator for the MCDPDE's asymptotic variances and standard errors.
Consistency of these estimators in \eqref{EQ:empirical-sandwich} for $\bm{J}_\beta(\btheta_\beta^*)$ and $\bm{K}_\beta(\btheta_\beta^*)$ 
follows from the laws of large numbers together with continuity of $\nabla_{\btheta}\bm{\psi}_\beta$ and $\bm{\psi}_\beta$, 
and consistency of $\widehat{\btheta}_{n,\beta}$. 
The following proposition presents these consistency results under one additional assumption, 
while the detailed proof is deferred to Appendix \ref{APP:Proof-Varest}.

\begin{assumption}	
\label{ASS:Asmp6}
There exist $G$-integrable envelop functions dominating, uniformly over	$\mathcal N$, 
the first and second order partial derivatives of the objective $V_\beta(\bm{x}\mid\btheta)$, and 
\begin{equation*}
  \E_G \left[\sup\limits_{\btheta\in\mathcal N}\|	\bm{\psi}_\beta(\bm X|\btheta)	\|^2\right]<\infty.
\end{equation*}
\end{assumption}

\begin{proposition}[Consistency of the empirical sandwich estimator]
\label{PROP:sandwich}
Under Assumptions \ref{ASS:Asmp1}--\ref{ASS:Asmp6},
$\widehat{\boldsymbol J}_\beta\xrightarrow{\mathcal{P}}	\boldsymbol J_\beta(\btheta_\beta^*)$ and
$\widehat{\boldsymbol K}_\beta\xrightarrow{\mathcal{P}}	\boldsymbol K_\beta(\btheta_\beta^*)$.	
Consequently, $\widehat{\boldsymbol\Sigma}_\beta \xrightarrow{\mathcal{P}} 	\boldsymbol\Sigma_\beta$ as $n\rightarrow\infty$.
\end{proposition}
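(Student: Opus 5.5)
The plan is the standard uniform-law-of-large-numbers plus consistency argument for sandwich estimators. By Theorem \ref{THM:MCDPDE_asymp}, $\widehat{\btheta}_{n,\beta}\xrightarrow{\mathcal P}\btheta_\beta^*$. Hence, with probability tending to one, $\widehat{\btheta}_{n,\beta}$ lies in any fixed compact ball $\bar B\subset\mathcal N$ centred at $\btheta_\beta^*$. On that event I will decompose
\begin{equation*}
\widehat{\bm J}_\beta-\bm J_\beta(\btheta_\beta^*)
=\Bigl[\tfrac1n\textstyle\sum_i \nabla\bm\psi_\beta(\bm x_i\mid\widehat{\btheta}_{n,\beta})-\bm J_\beta(\widehat{\btheta}_{n,\beta})\Bigr]
+\Bigl[\bm J_\beta(\widehat{\btheta}_{n,\beta})-\bm J_\beta(\btheta_\beta^*)\Bigr],
\end{equation*}
and bound the first bracket by $\sup_{\btheta\in\bar B}\|\frac1n\sum_i\nabla\bm\psi_\beta(\bm x_i\mid\btheta)-\bm J_\beta(\btheta)\|$. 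The matrix $\widehat{\bm K}_\beta$ is handled by the identical decomposition, with $\bm\psi_\beta\bm\psi_\beta^\top$ in place of $\nabla\bm\psi_\beta$.

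For the uniform convergence I will invoke the standard uniform LLN (e.g.\ \citealp[Lemma 2.4]{newey1994large}). It requires three things: compactness of $\bar B$, continuity of $\btheta\mapsto h(\bm x,\btheta)$ for almost all $\bm x$, and a $G$-integrable envelope $\sup_{\btheta\in\bar B}\|h(\bm x,\btheta)\|$.

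\emph{First term.} Take $h=\nabla\bm\psi_\beta=(1+\beta)^{-1}\nabla^2V_\beta$. Continuity in $\btheta$ follows from Assumptions \ref{ASS:Asmp2}--\ref{ASS:Asmp3}: the integral terms $\int \bm u_{\btheta,k}f^{1+\beta}_{\btheta,k}$ are differentiable under the integral sign, hence continuous. The integrable envelope of the second derivatives is exactly what Assumption \ref{ASS:Asmp6} supplies.

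\emph{Second term.} Take $h=\bm\psi_\beta\bm\psi_\beta^\top$. Continuity follows in the same way, and $\|\bm\psi_\beta\bm\psi_\beta^\top\|\le\|\bm\psi_\beta\|^2$. The envelope $\sup_{\mathcal N}\|\bm\psi_\beta\|^2$ is integrable by the second part of Assumption \ref{ASS:Asmp6}.

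The same lemma also gives continuity of $\btheta\mapsto\bm J_\beta(\btheta)$ and $\btheta\mapsto\bm K_\beta(\btheta)$ on $\bar B$, by dominated convergence. Combining this with $\widehat{\btheta}_{n,\beta}\xrightarrow{\mathcal P}\btheta_\beta^*$ and the continuous mapping theorem, both brackets tend to zero in probability.

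The final claim follows by the continuous mapping theorem. By Assumption \ref{ASS:Asmp5}, $\bm J_\beta(\btheta_\beta^*)$ is positive definite, so matrix inversion is continuous there, and $\widehat{\bm J}_\beta$ is invertible with probability tending to one. Since $(\bm J,\bm K)\mapsto \bm J^{-1}\bm K\bm J^{-1}$ is continuous at $(\bm J_\beta(\btheta_\beta^*),\bm K_\beta(\btheta_\beta^*))$, we get $\widehat{\bm\Sigma}_\beta\xrightarrow{\mathcal P}\bm\Sigma_\beta$.

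The main obstacle is the dependence of the summands on the random $\widehat{\btheta}_{n,\beta}$, which rules out applying a pointwise LLN directly. The uniform LLN over a compact neighbourhood handles this. The point needing care is that the envelopes in Assumption \ref{ASS:Asmp6} are stated over the open set $\mathcal N$, so I will work on a closed ball $\bar B\subset\mathcal N$, where these envelopes still dominate.
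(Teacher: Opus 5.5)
Your proposal is correct and follows essentially the same route as the paper. Both use consistency of $\widehat{\btheta}_{n,\beta}$ from Theorem \ref{THM:MCDPDE_asymp}, a uniform law of large numbers driven by the envelopes of Assumption \ref{ASS:Asmp6} for $\widehat{\bm J}_\beta$ and $\widehat{\bm K}_\beta$ (the paper calls the latter a Glivenko--Cantelli property), and the continuous mapping theorem via nonsingularity of $\bm J_\beta(\btheta_\beta^*)$. Your restriction to a compact ball $\bar B\subset\mathcal N$, the explicit two-term decomposition, and the continuity of $\btheta\mapsto\bm J_\beta(\btheta),\bm K_\beta(\btheta)$ make precise steps the paper leaves implicit, since it states its uniform law over the open set $\mathcal N$.
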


This result extends the classical Godambe information approach for CL inference \citep{varin/etc:2011} to the robust MCDPDE setting. 
Importantly, it does not require analytical expressions for the cross-component covariance terms $\bm{C}_{kl}$,
and therefore remains applicable even when the underlying joint distribution is unavailable.
Detailed discussions on computational aspects for two representative models are provided in Appendix \ref{APP:Computation}.

\begin{remark}[Choice of the tuning parameter $\beta$]	
The tuning parameter $\beta$ controls the trade-off between robustness and statistical efficiency of the CMDPDE, 
exactly as in the ordinary MDPDE \citep{Basu1998,Basu2011,basu2026}. 
Larger values of $\beta$ produce stronger downweighting of observations lying in low-density regions, 
but increase the asymptotic variance under the model.
Based on our empirical illustrations presented in Section \ref{SEC:simulations}--\ref{SEC:data_ultramarathon}, 
values in the range $0.1\leq\beta\leq0.5$ often provide a useful compromise between efficiency and robustness.
However, a fully data-driven choice is also possible for CMDPDEs extending the results for those used in case of ordinary MDPDEs.
	
In particular, following \citet{warwick2005choosing}, one may select $\beta$ by minimizing an empirical estimate of 
the mean squared error, where the bias and variance components are estimated using the robust functional 
and the empirical sandwich covariance matrix given above. Iterative improvements of this idea were proposed by \citet{basak2021optimal}.
Either of these approaches can be routinely applied for the MCDPDEs as well to obtain a data-driven optimal value of $\beta$
over a finite grid minimizing the estimated risk criterion. This provides a completely automatic tuning procedure without requiring
knowledge of the unknown dependence structure among the composite components.
\qed
\end{remark}

\section{Simulation Studies} 
\label{SEC:simulations}

\subsection{Illustration Under the Multivariate Gaussian Family}
\label{SEC:simu_MVN}

We first investigate the finite-sample performance of the proposed MCDPDE under the multivariate Gaussian model, 
where closed-form expressions for the objective function and the estimating equations are available (Example~\ref{EX:MVN}). 
Besides the performance of the MCDPDE, this setting allows us to compare it with the popular cellwise MCD commonly used for Gaussian models.

We consider dimensions $d=10,20,30,40,50$ and set the sample size as $n=10d$.
The uncontaminated observations are generated from $N_d(\boldsymbol{0},\bm{\Sigma})$,
where $\bm{\Sigma}=\bm{D}^{1/2}\bm{R}\bm{D}^{1/2}$, with $\bm{D}=\sigma^2\mathbb{I}_d$, $\sigma^2=1$, 
and $\bm{R}$ a randomly generated correlation matrix having condition number equal to $100$. 
This construction creates a moderately ill-conditioned covariance structure,
which is challenging for high-dimensional covariance estimation.

Two contamination mechanisms, namely casewise and cellwise contamination, are considered 
producing two fundamentally different types of outlying behaviour in multivariate data \citep{alqallaf2009propagation, Agostinelli2015}.
These contamiations are incorporated in the simulated data using the functions \texttt{generate.casecontam} 
and \texttt{generate.cellcontam} from the \textsf{R} \citep{CRAN} package \texttt{GSE} \citep{GSE}. Specifically, for casewise contamination, a proportion $\epsilon\in\{0.1,0.2,0.3\}$ of randomly selected observations is replaced 
by contaminated observations generated as $\bm{Y}_i=y\bm{v}_{\min}+\bm{Z}_i$, 
where $\bm{v}_{\min}$ denotes the eigenvector associated with the smallest eigenvalue of $\bm{\Sigma}$,
$\bm{Z}_i\sim N_d(\bm{0},0.01^2\mathbb{I}_d)$, and the contamination magnitude $y$ is varied from $0$ to $100$. For cellwise contamination, a proportion $\epsilon\in\{0.02,0.05,0.10,0.15,0.20\}$ of randomly selected (individual) cells is replaced 
independently by observations generated from $N(y,0.01^2)$, where $y$ varies from $0$ to $15$.
This contamination mechanism mimics sparse measurement errors and isolated variable-level anomalies.

\begin{table}[!b]
	\centering
	\caption{Monte-Carlo Efficiency of different competing estimators computed relative to the MLE
	based on empirical MSE and Wasserstein-2 distance at several data dimension $d$}
	\begin{tabular}{r|rrrrr||rrrrr}
		\hline  
		Method & \multicolumn{5}{c}{$d$} & \multicolumn{5}{c}{$d$}\\
		   & 10 & 20 & 30 & 40 & 50 & 10 & 20 & 30 & 40 & 50 \\  
		\hline
		& \multicolumn{5}{c||}{MSE-based} & \multicolumn{5}{|c}{Wasserstein-2 distance-based}\\ \hline
		\hline
		 MCLE   & 1.00 & 1.00 & 1.00 & 1.00 & 1.00 & 1.00 & 1.00 & 1.00 & 1.00 & 1.00 \\
		 MCD & 0.73 & 0.76 & 0.79 & 0.84 & 0.82  & 0.77 & 0.84 & 0.86 & 0.87 & 0.88 \\
		cellMCD & 0.98 & 0.94 & 0.95 & 0.95 & 0.94 &  0.90 & 0.90 & 0.89 & 0.88 & 0.88 \\ 
		\hline		
		\multicolumn{11}{l}{Minimum Composite DPD Estimators (MCDPDE)}\\
		 $\beta=0.1$ & 1.00 & 0.95 & 0.96 & 0.96 & 0.97 & 0.99 & 0.98 & 0.98 & 0.98 & 0.98 \\ 
		0.3 & 0.93 & 0.89 & 0.92 & 0.91 & 0.93 & 0.91 & 0.90 & 0.89 & 0.89 & 0.89 \\
		0.5 & 0.88 & 0.84 & 0.86 & 0.86 & 0.88 & 0.80 & 0.78 & 0.77 & 0.76 & 0.76 \\ 
		\hline
		\multicolumn{11}{l}{Ordinary Minimum DPD Estimators (MDPDE)}\\
		$\beta=0.1$ & 0.98 & 0.89 & 0.85 & 0.78 & 0.72 & 0.98 & 0.94 & 0.92 & 0.88 & 0.84 \\ 
		0.3 & 0.69 & 0.46 & 0.23 & 0.04 & 0.02 & 0.79 & 0.69 & 0.56 & 0.33 & 0.26 \\ 
		0.5 & 0.14 & 0.08 & 0.02 & 0.01 & 0.01 & 0.44 & 0.37 & 0.26 & 0.24 & 0.23 \\ 
		\hline
		\multicolumn{11}{l}{Componentwise Minimum DPD Estimators (CMDPDE)}\\
		$\beta=0.1$ & 1.00 & 0.98 & 0.99 & 0.99 & 0.99 & 0.99 & 0.99 & 0.99 & 0.99 & 0.99 \\
		0.3 & 0.95 & 0.91 & 0.93 & 0.93 & 0.94 & 0.92 & 0.91 & 0.90 & 0.91 & 0.90 \\
		0.5 & 0.87 & 0.83 & 0.85 & 0.85 & 0.86 & 0.83 & 0.81 & 0.80 & 0.80 & 0.80 \\ 		
		\hline
	\end{tabular}
\label{TAB:normal_eff}
\end{table}

\begin{figure}[!ht]
	\centering
\subfloat[Casewise contamination]{
	\includegraphics[width=0.49\linewidth]{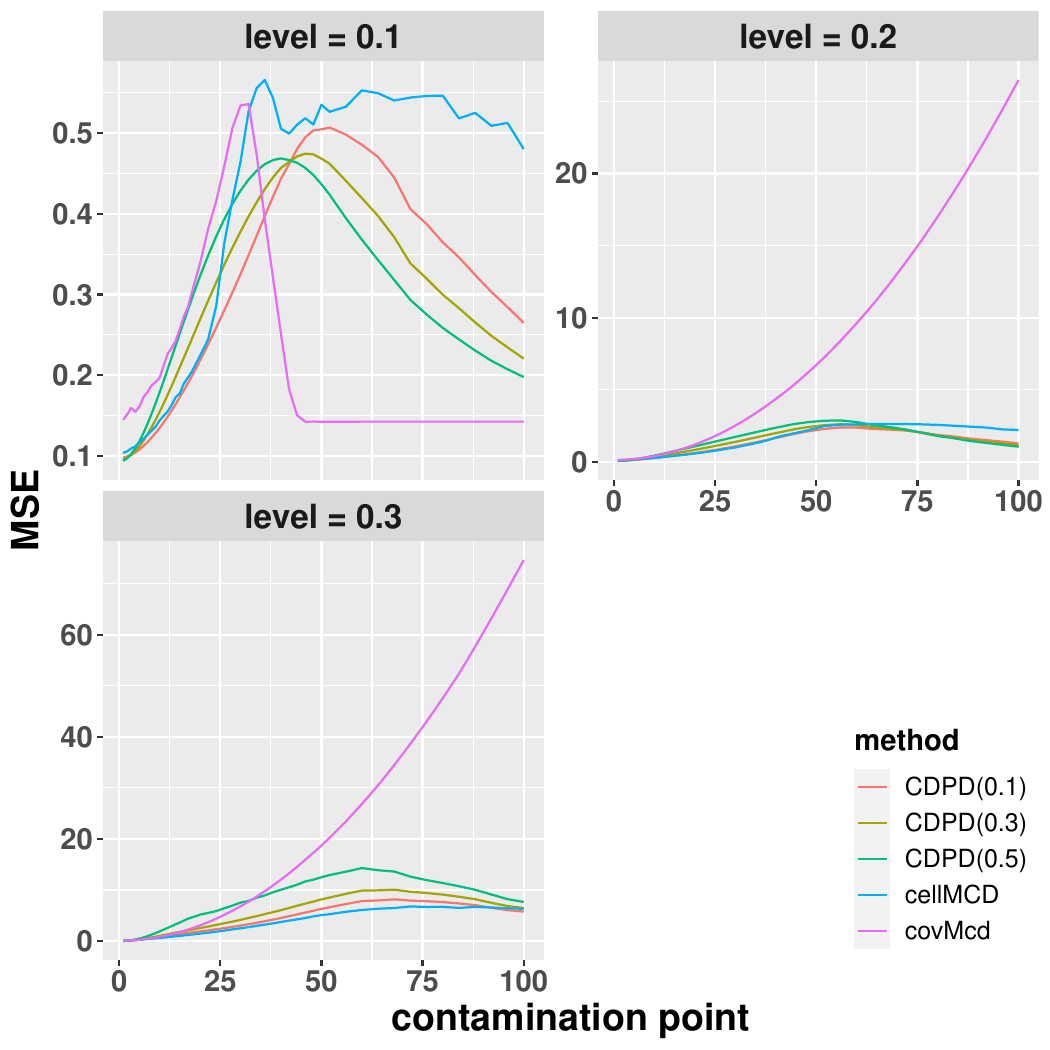}
	\includegraphics[width=0.49\linewidth]{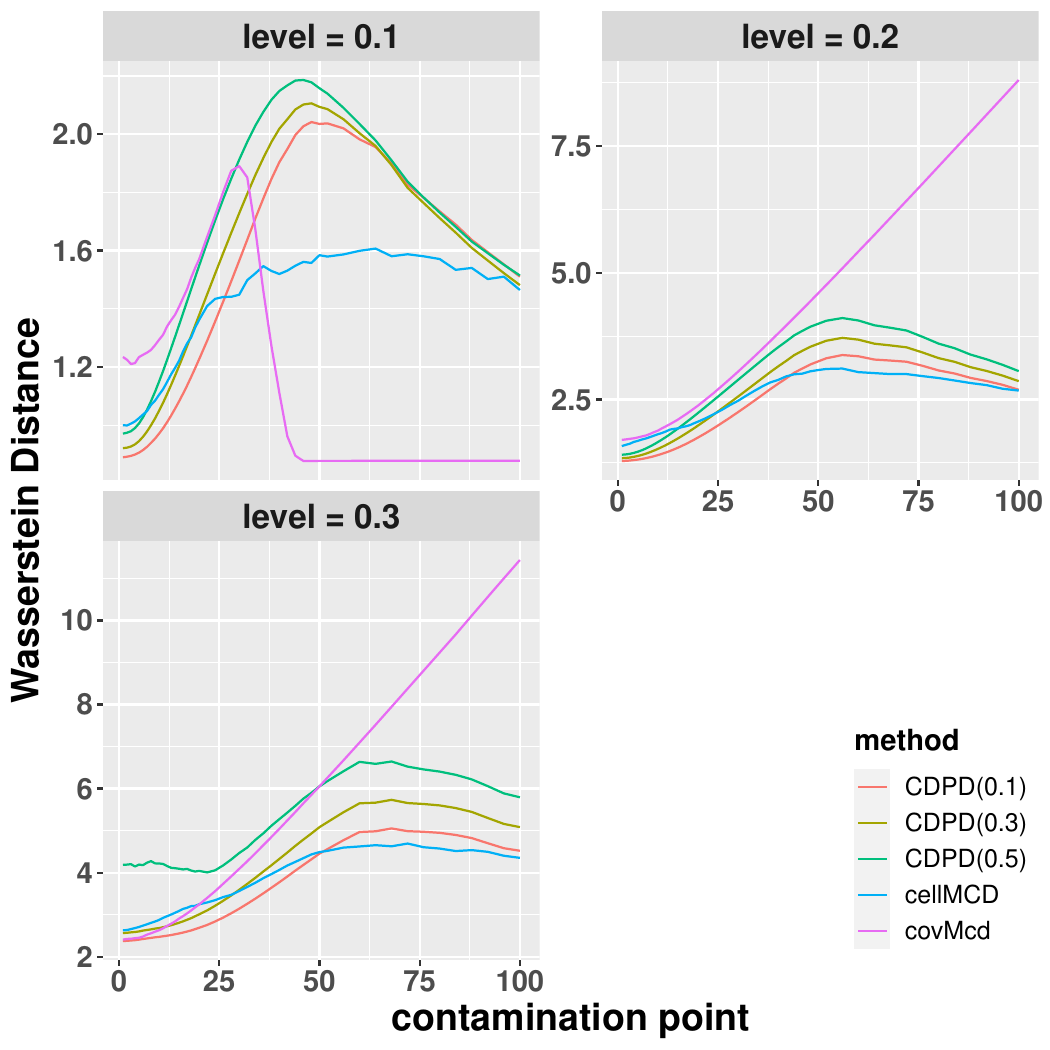}
		\label{FIG:normal-case-20}
	}
\\
\subfloat[Cellwise contamination]{
	\includegraphics[width=0.49\linewidth]{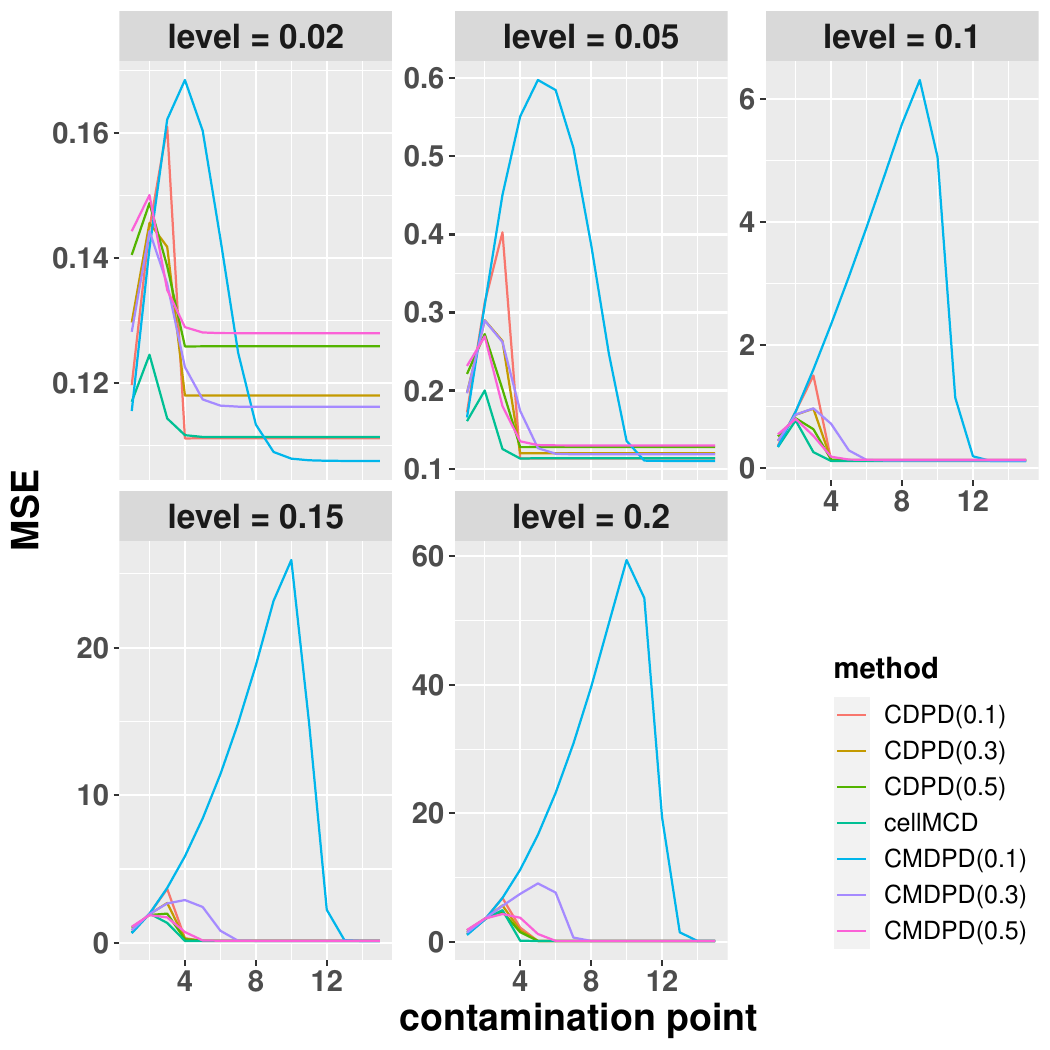}
	\includegraphics[width=0.49\linewidth]{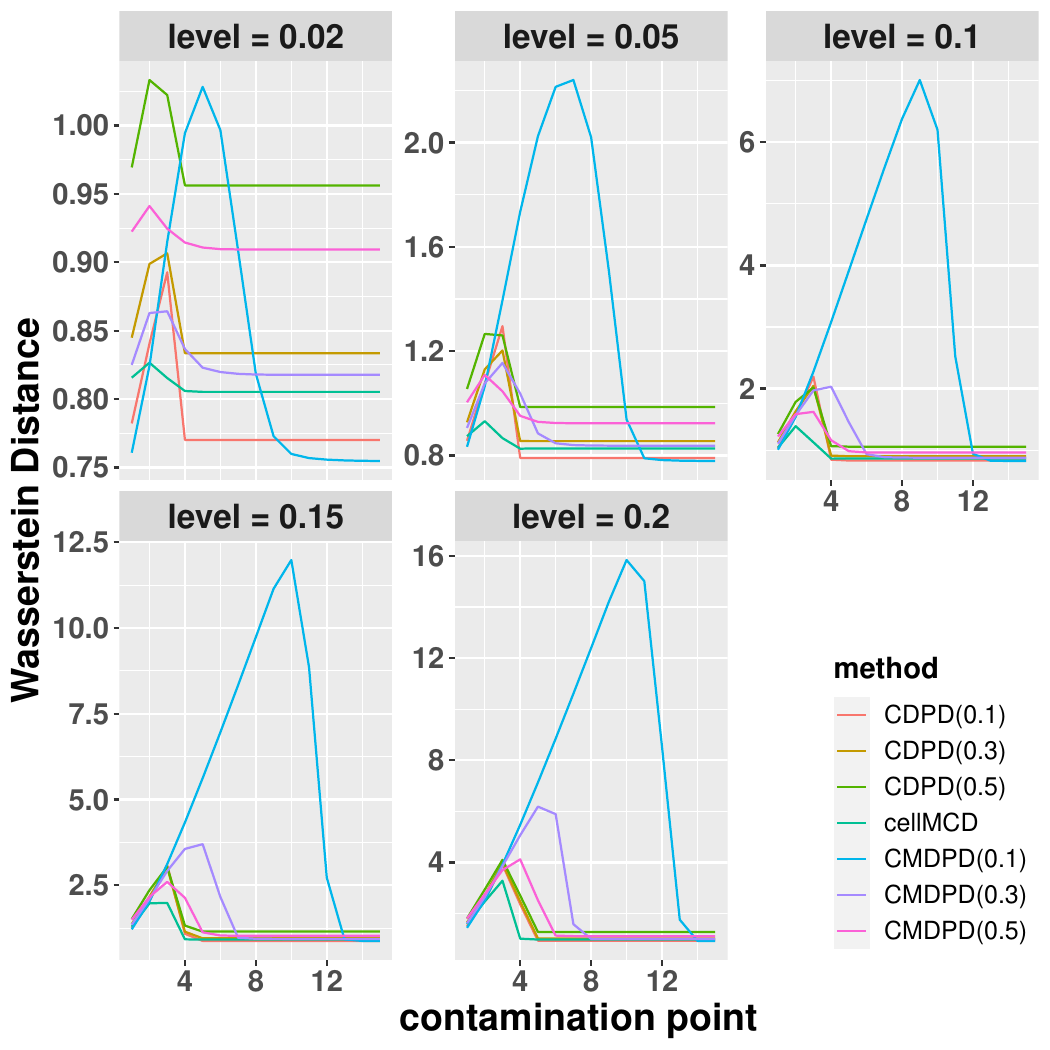}
	\label{FIG:normal-cell-20}
}
\caption{Plots of empirical MSE and Wasserstein-2 distance of different competing estimators over the contamination point $y$
	for data dimension $d=20$.}
\label{FIG:normal_cont20}
\end{figure}

For every configuration, $100$ Monte Carlo replications are performed. The performance of the proposed MCDPDEs,
at several values of $\beta>0$, are compared with the MCLE (corresponding to $\beta=0$).
The MCDPDE is computed using the fixed-point algorithm described in Appendix~\ref{APP:Algo_MVN} 
with the different initialization strategies discussed therein. Among these, we report the results obtained using the recommended 
\textit{robust filtering initialization}, as it provides the best efficiency-robustness trade-off across all our empirical studies.
We also compare the proposed estimator with several benchmark methods, including MCD, cellMCD, the ordinary MDPDE, and 
the component-wise MDPDE (CMDPDE) of \cite{chakraborty2025componentwise}. 
These benchmark estimators are initialized at the true parameter values, whenever needed, to assess their best possible performance.

For each method, we report the average empirical MSE of the location estimator and the average empirical Wasserstein-2 distance, 
computed using the location and scatter estimates, under both clean and contaminated data settings. 
The relative efficiencies of all estimators with respect to the MLE under clean data are reported in Table~\ref{TAB:normal_eff}.
The results under contaminated data are presented in Figure~\ref{FIG:normal_cont20} for dimension $d=20$, 
and in Figures~\ref{FIG:normal_cont10}--\ref{FIG:normal_cont50} in Appendix~\ref{APP:empResults} for the remaining dimensions.
For casewise contamination, we compare the proposed MCDPDE with MCD and cellMCD, 
whereas for cellwise contamination we compare it with cellMCD and CMDPDE for different values of $\beta$. We also conducted simulations with true covariance matrix having smaller condition numbers; 
since the qualitative conclusions remained essentially unchanged, these results are omitted for brevity.

As expected, we see from Table \ref{TAB:normal_eff}, the MCLE (equivalently MCDPDE with $\beta$=0) achieves an ARE of one, 
confirming no efficiency loss due to replacing the full likelihood by the pairwise CL is negligible in this setting. For the MCDPDEs at $\beta=0.1$ as well, the efficiency loss is almost negligible, with its performance remaining comparable to the MCLE 
even as the dimension increases. Increasing $\beta$ to 0.3 and 0.5 leads to a gradual reduction in efficiency, particularly 
for the Wasserstein-based criterion, reflecting the additional variability introduced by stronger downweighting of observations. 
Nevertheless, the efficiency remains competitive with existing robust estimators. In contrast, the ordinary MDPDE becomes 
increasingly inefficient in high dimensions, whereas the proposed MCDPDE maintains substantially better efficiency 
because it exploits the low-dimensional pairwise structure rather than requiring the full multivariate density. 
The componentwise MDPD estimator shows similar behavior, but the proposed estimator additionally preserves 
the dependence information through the pairwise components.

Under casewise contamination, the classical MCLE exhibits a rapid increase in both error measures as the contamination magnitude increases. 
This deterioration is particularly severe for larger dimensions, where a small number of contaminated observations can influence 
a large number of pairwise likelihood contributions simultaneously. In contrast, all MCDPDEs with $\beta>0$ remain remarkably stable, 
with substantially flatter error curves across the contamination range. The improvement is particularly pronounced for larger contamination 
proportion, where the density-power weighting mechanism effectively limits the contribution of observations lying in low-density regions. 
Among the robust procedures considered, the MCDPDE provides performance comparable to or better than the cellMCD estimator, 
while avoiding the restriction to Gaussian scatter structures imposed by covariance-based robust procedures.

A similar pattern is observed under cellwise contamination, although the nature of the robustness challenge is different. 
Since a single contaminated cell affects only a subset of pairwise components, the impact is less catastrophic than casewise contamination; 
however, the cumulative effect becomes substantial as the contamination proportion or magnitude increases. 
The MCLE again shows increasing instability, whereas the MCDPDEs maintain controlled estimation error over the entire contamination range. 
Increasing $\beta$ systematically improves robustness, with $\beta=0.3$ and $\beta=0.5$ providing the strongest protection in 
highly contaminated scenarios. The performance is competitive with specialized cellwise procedures such as cellMCD and CMDPDE, 
while the proposed approach has the important advantage of being directly extendable beyond elliptical models 
as shown in the next subsection.

\subsection{Illustration under the Multivariate Ordered Gamma Model}
\label{SEC:simu_McKay}

We next study the finite-sample performance of the proposed MCDPDE under the multivariate ordered gamma model of Example~\ref{EX:GammaExp}, 
which provides a flexible framework for modeling ordered positive-valued multivariate observations. 
Unlike the Gaussian model considered previously, this distribution lies outside the class of elliptically symmetric models. 
Consequently, existing robust multivariate procedures based on robust scatter estimation, 
including the cellwise MCD and its variants, are not directly applicable. This example therefore illustrates a key advantage of the proposed CDPD methodology that it extends robust estimation 
to flexible multivariate models for which no general robust alternative is currently available. 
We compare the MCLE ($\beta=0$) with MCDPDEs at $\beta\in\{0.1,0.2,0.3\}$ computed based on the pairwise McKay marginals, 
as well as the ordinary MLE obtained based on the full joint density,
evaluating their empirical bias and mean squared error under both pure and contaminated data.

The data are generated from the multivariate ordered gamma model with density given in \eqref{EQ:density_GammaExp}. 
We consider dimension $d=9$ with  parameters $$\boldsymbol{\delta}=(3,2.7,2.4,2.1,1.8,1.5,1.2,0.9,0.6)^\top$$ and $\lambda=0.5$.
The sample size is fixed at $n=100$, and $100$ Monte Carlo replications are generated. To investigate robustness against cellwise contamination, for each observation, 
every coordinate is first independently selected for contamination with probability $\epsilon\in\{0,0.025, 0.05,0.10\}$.
For a contaminated cell, the original value is multiplied by a random factor $C\sim U(0,5)$, 
creating either downward or upward perturbations while preserving positivity. 
After contamination, the observation vector is reordered to preserve the required ordering of the data. This contamination mechanism mimics errors in sequential measurements, ordered event times, reliability experiments, 
and environmental applications, where occasional extreme values may arise at individual measurement points.
The boxplots of the parameter estimates over 100 replications are shown in Figures \ref{FIG:marathon-est1}
and \ref{FIG:marathon-est2} (Appendix \ref{APP:empResults}).

\begin{figure}[!ht]
	\centering
	\includegraphics[width=0.49\linewidth]{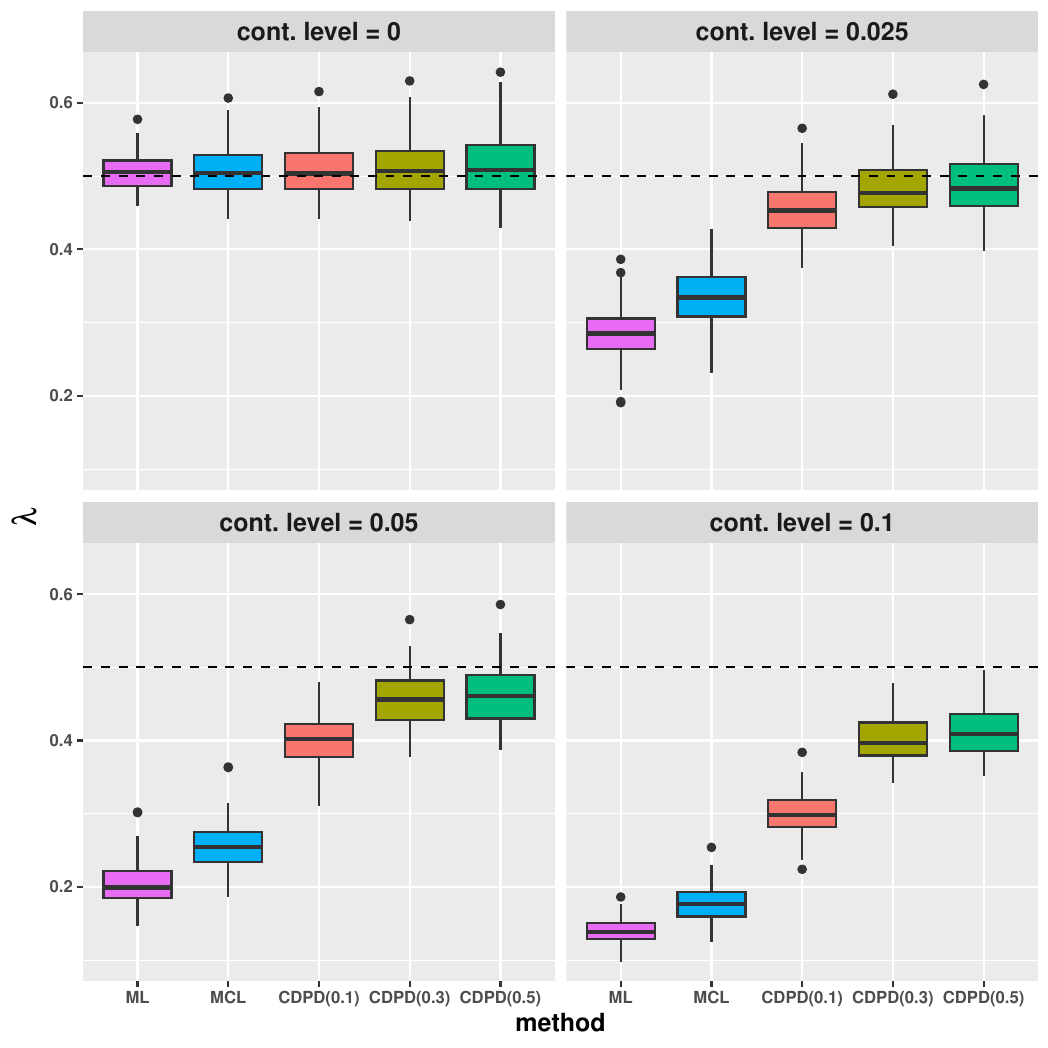}
	\includegraphics[width=0.49\linewidth]{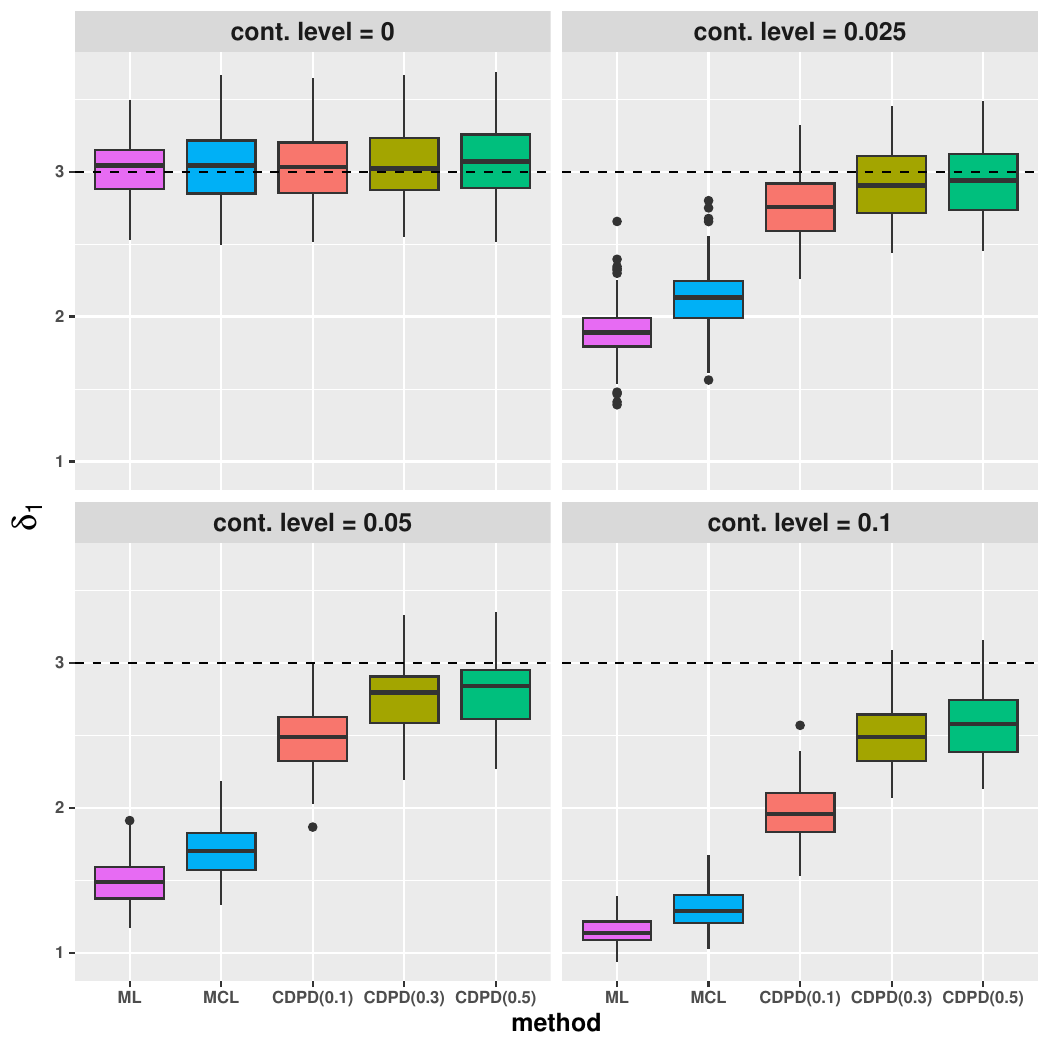}
	\includegraphics[width=0.49\linewidth]{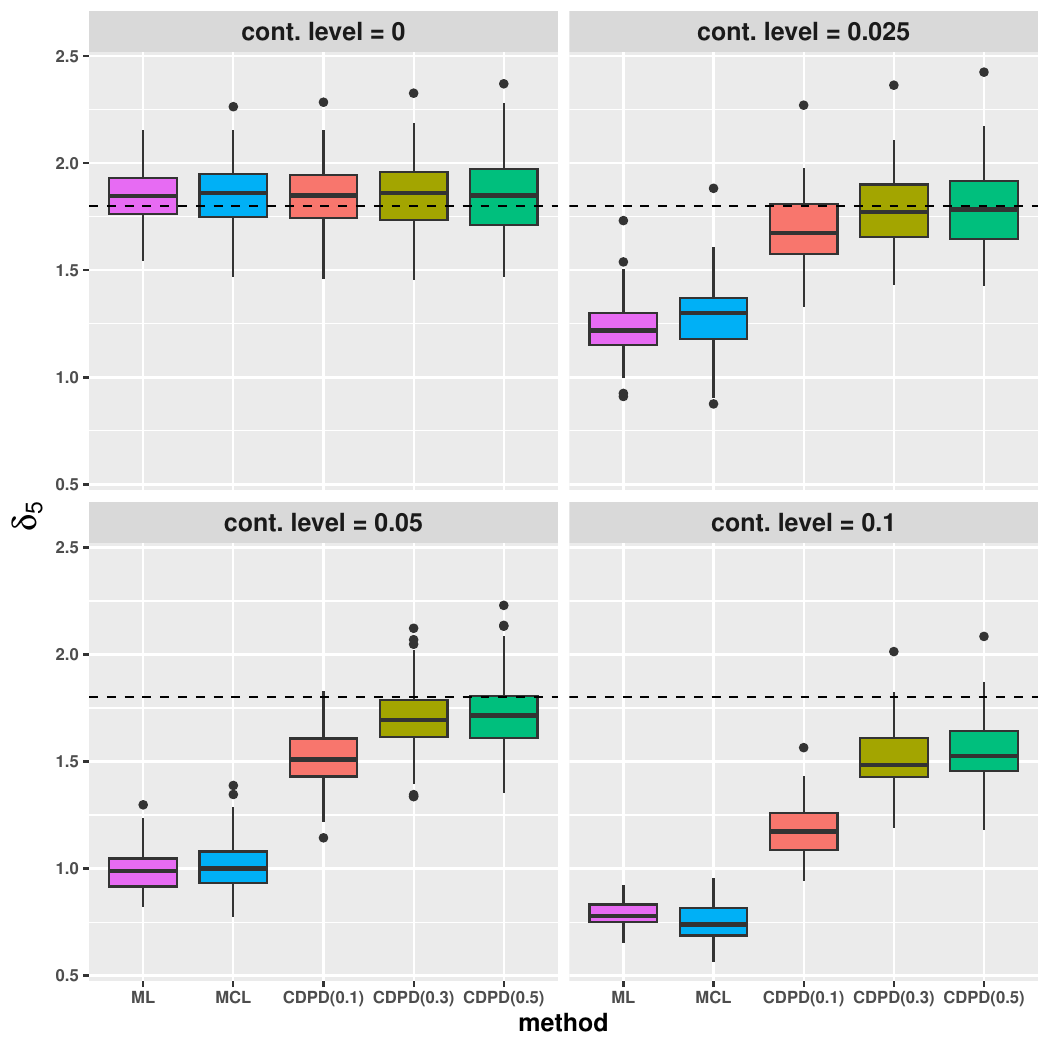}
	\includegraphics[width=0.49\linewidth]{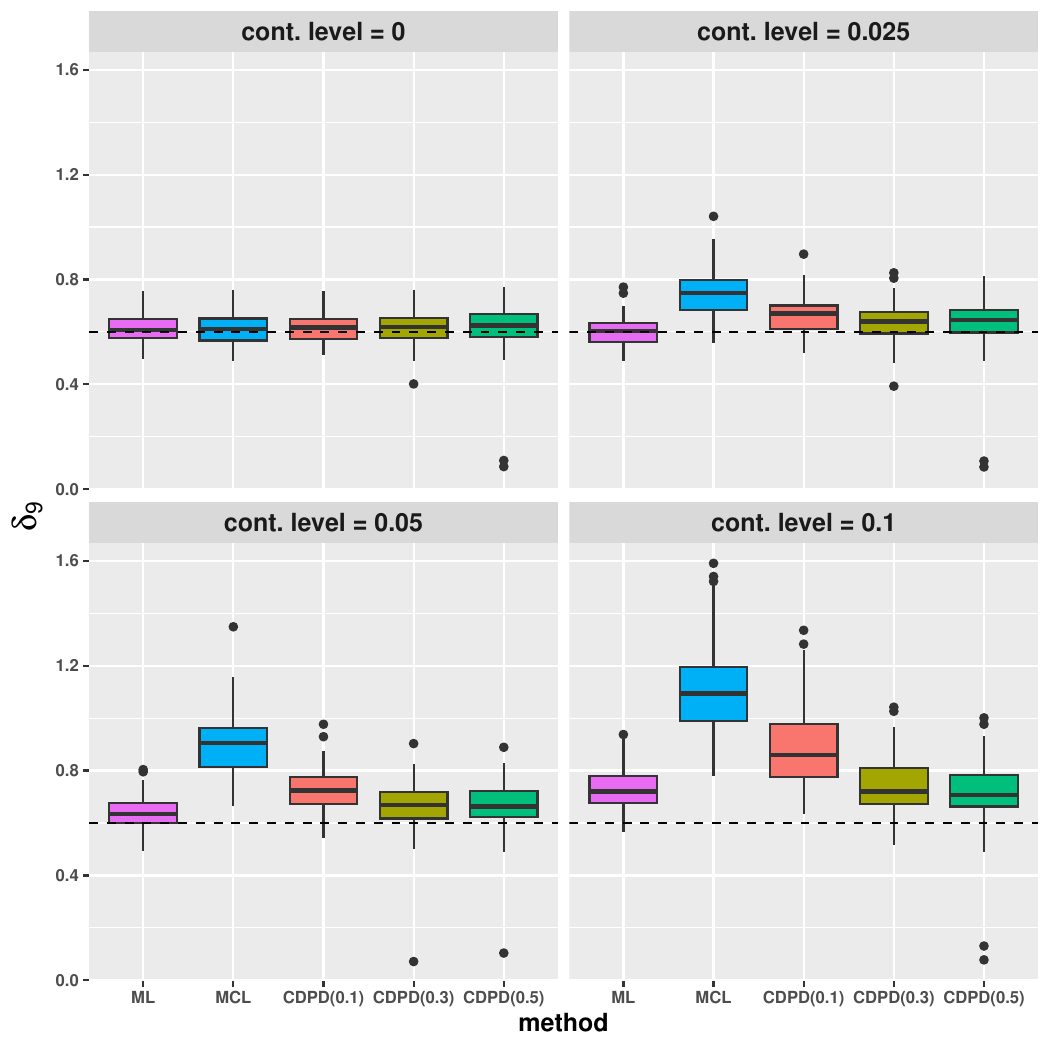}
	\caption{Boxplot of estimated parameters ($\lambda$, $\delta_1$, $\delta_5$, $\delta_9$) 
		obtained across Monte Carlo replications under the Multivariate Ordered Gamma Model.}
	\label{FIG:marathon-est1}
\end{figure}

These simulated results shows that, under uncontaminated data, the MLE, the MCLE, and the proposed MCDPDEs all recover 
the true parameter values accurately, indicating that the proposed robustification entails only a modest efficiency loss 
when the model assumptions are satisfied. As the contamination level increases, however, the MLE and the MCLE exhibit rapidly 
increasing variability and noticeable bias across nearly all model parameters. 
In contrast, the proposed MCDPDEs remain substantially more stable, with their sampling distributions continuing to be 
centered close to the true values even under the highest contamination levels considered. 
Moreover, the robustness improves systematically with the tuning parameter $\beta$, 
as larger values of ($\beta$) produce increasingly concentrated estimates and further suppress the influence of contaminated observations. The observed improvement is consistent across the entire parameter vector, demonstrating that the proposed estimator provides 
uniform protection against cellwise contamination. Although larger values of $\beta$ lead to a slight increase in variability 
under uncontaminated data, this loss is small relative to the substantial gains in robustness under contaminated settings. 
Thus, the proposed MCDPDE successfully balances statistical efficiency and robustness, 
yielding reliable estimation across a broad range of contamination scenarios.

\section{Application: Analyzing Ultramarathon Race Performances}
\label{SEC:data_ultramarathon}

Let us now illustrate the applicability of the proposed MCDPDE methodology beyond  elliptical models 
by analyzing an interesting real-life example involving ordered positive-valued multivariate data arising from ultramarathon competitions.
In particular, we analyze the ``\textit{Two Centuries of Ultra Marathon Races}'' dataset, 
publicly available on Kaggle \footnote{\texttt{https://www.kaggle.com/datasets/fatihyavuzz/two-centuries-of-um-races}}, 
which contains race results from ultramarathon events held worldwide over approximately two centuries. Since finishing times are naturally ordered positive variables, this dataset provides an appropriate setting 
for the  multivariate ordered gamma model introduced in Example~\ref{EX:GammaExp}.
Unlike the Gaussian example of the previous subsection, no competing robust estimator is currently available for 
this non-elliptical multivariate model; existing cellwise robust procedures based on the MCD or related robust scatter estimators 
rely fundamentally on elliptical symmetry and therefore cannot be applied in this setting. 

We restrict attention to mens' 50-mile races. For each race event (identified by the event name and date), 
finishing times were converted to hours and the ten fastest finishers were retained, 
producing one ordered $10$-dimensional observation per race. 
Events with fewer than ten recorded finishers or with tied finishing times among the top ten were discarded, 
yielding a collection of complete ordered observations suitable for analysis under the model of Example~\ref{EX:GammaExp}. 
We then consider a reduced dataset consisting of the fastest 200 races 
which is analyzed using the MLE, MCLE, standard multivariate MDPDEs and our MCDPDE computed based on pairwise McKay bivariate gamma distribution.

\begin{figure}[!ht]
	\centering
	\subfloat[Actual data]{
		\includegraphics[width=0.3\linewidth]{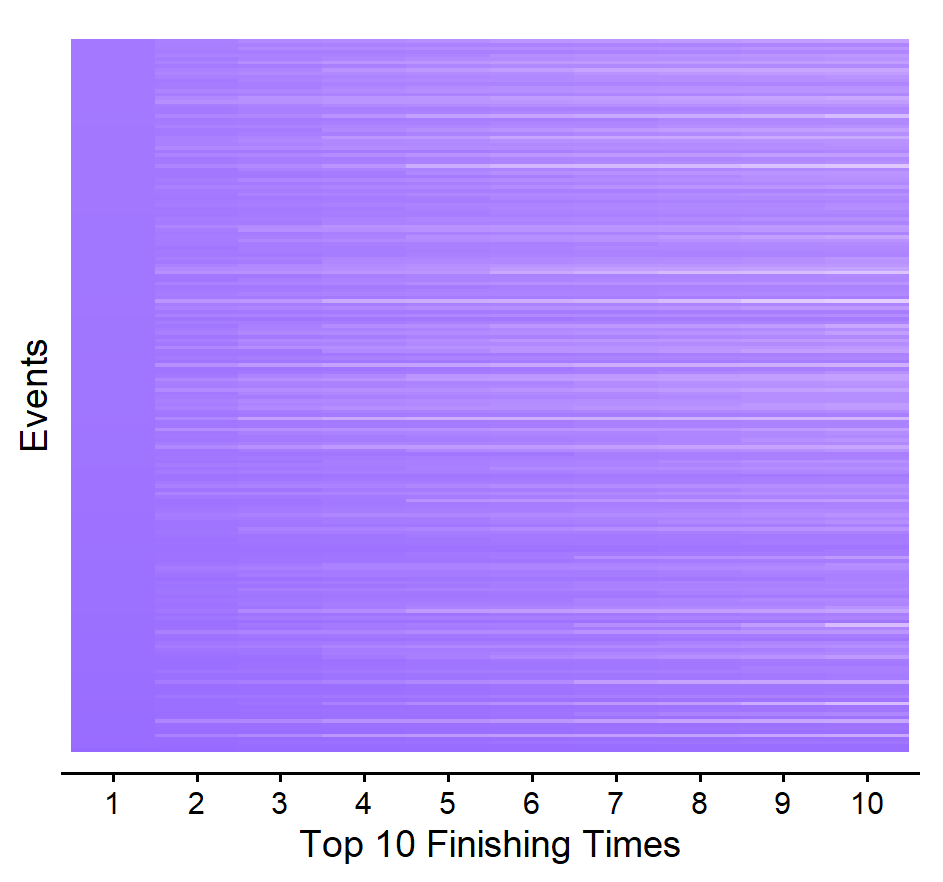}
	}
	\subfloat[Casewise Contamination]{
		\includegraphics[width=0.3\linewidth]{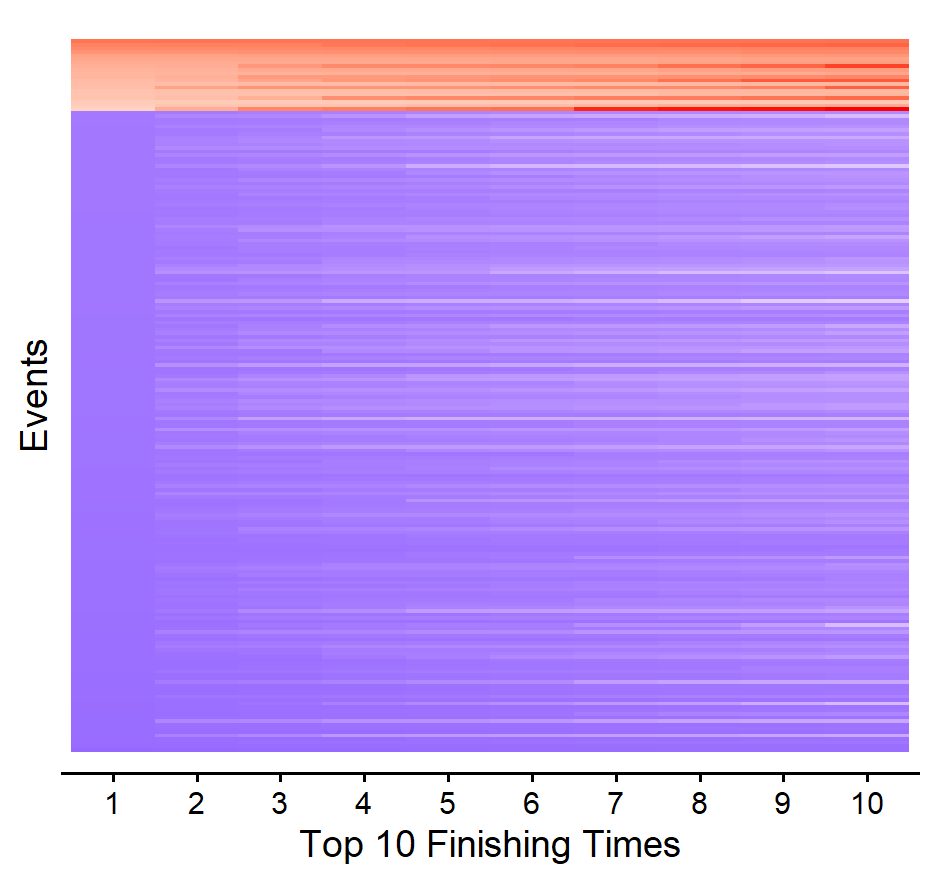}
	}
	\subfloat[Cellwise Contamination]{
		\includegraphics[width=0.36\linewidth]{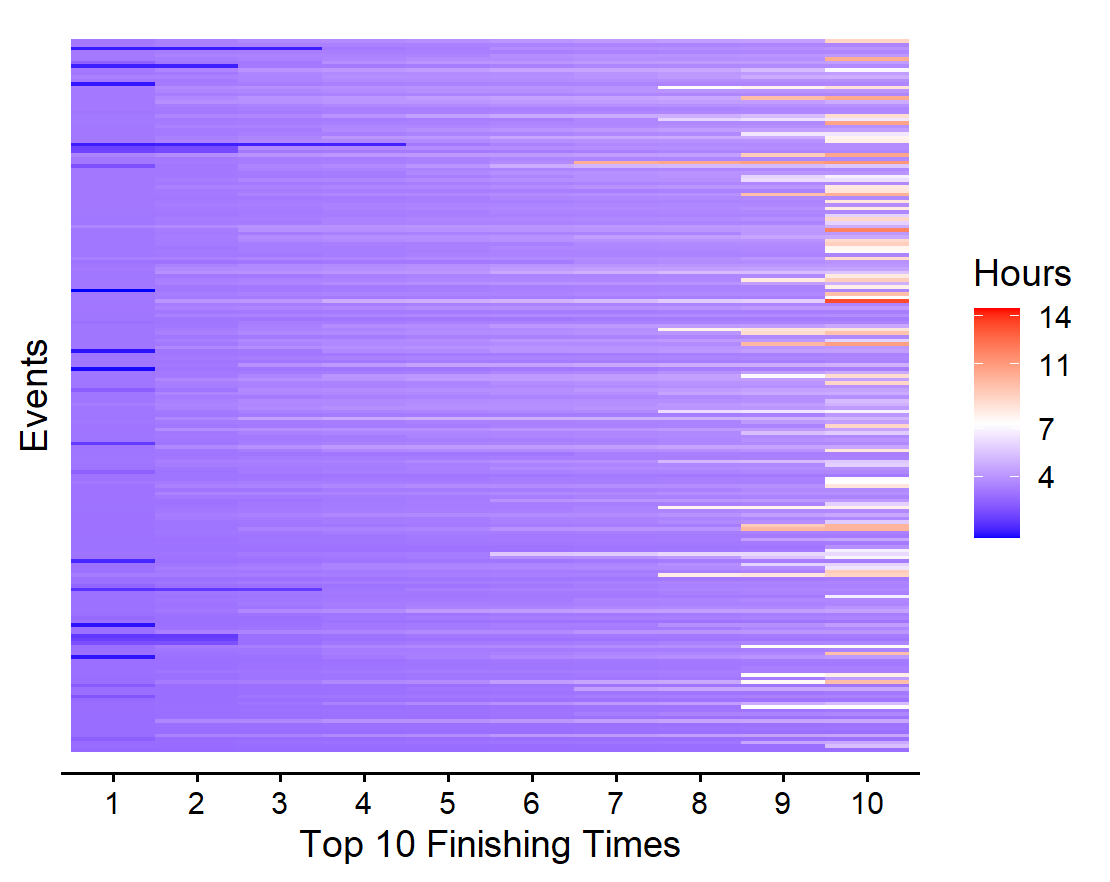}
	}
	\caption{Heatmaps of the clean and contaminated ultramarathon data}
	\label{FIG:marathon-heatmap}
\end{figure}

To examine performance against contamination, we replace the last 20 rows by the 20 worst races from the whole datasets, producing a natural rowwise contamination. 
Further, we study the robustness against cellwise contamination, by artificially corrupting cells in 20 randomly selected rows 
in exactly the same manner as done in our simulation studies, except now the random multiplicative factor $C\sim U(0,3)$.
This construction mimics recording errors or localized timing anomalies affecting only a subset of finishers within a race.
For a visual understanding, heatmaps of these contaminated data are presented in Figure \ref{FIG:marathon-heatmap}, 
along with the same for clean data.   The resulting parameter estimates, under both clean and contaminated data, 
are provided in Table \ref{TAB:marathon-estimates} in the Supplementary material.
It can be seen that the estimates of $\lambda$ and $\delta_1$ are significantly larger compared to the remaining $\delta_j$s,
which ranges from 0.65 to 1.96 across methods under clean data; in particular, the MLE under clean data 
are $\widehat{\lambda} = 11.53$ and $\widehat{\bm{\delta}} = (35.59, 1.37, 1.06, 0.96, 0.88, 0.89, 0.82, 0.76, 0.77, 0.76)^\top$. 
So, we study the absolute relative changes under both types of contamination with respect to the clean data estimates for $\lambda$ and $\delta_1$, 
which are presented in Figure \ref{FIG:marathon-RE}.

\begin{figure}[!ht]
	\centering
	\subfloat[Casewise Contamination]{
		\includegraphics[width=0.4\linewidth]{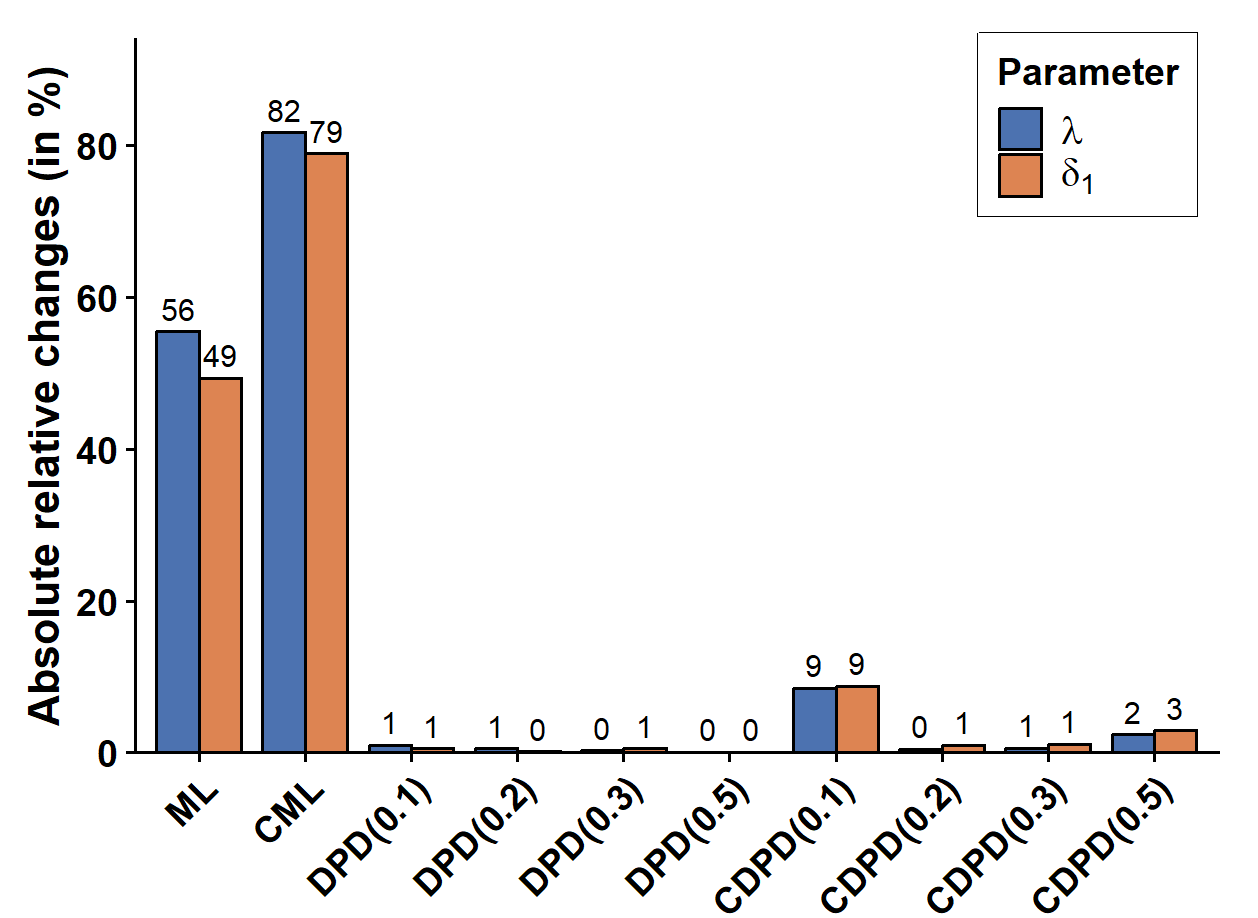}
	}
	\subfloat[Cellwise Contamination]{
		\includegraphics[width=0.4\linewidth]{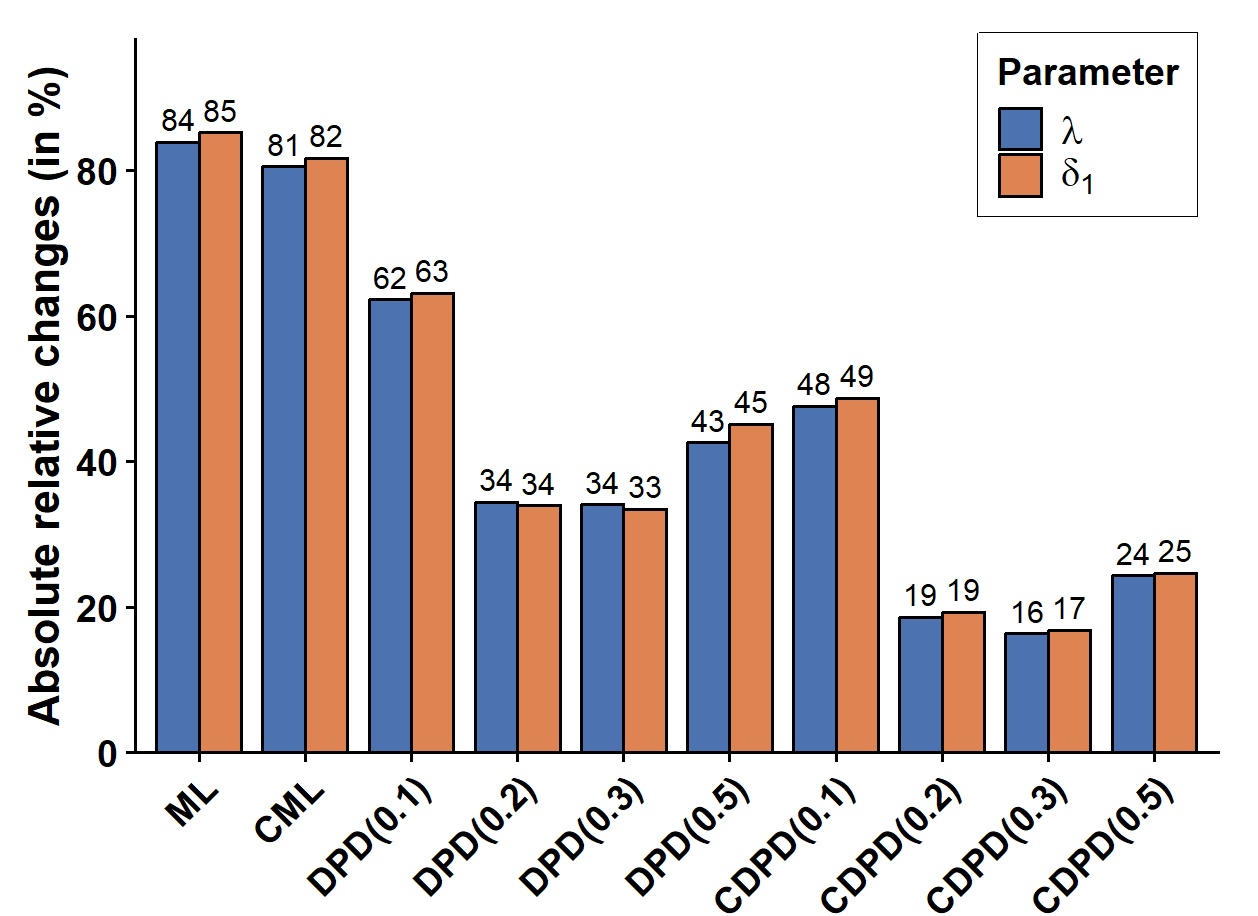}
	}
	\caption{Absolute relative changes (in \%) in the parameter estimates due to contamination for the ultramarathon data.}
	\label{FIG:marathon-RE}
\end{figure}

As in the simulation studies, the MCLE and the MLE exhibit substantial sensitivity to the contaminated entries, 
while the MCDPDEs remain considerably more stable (Table \ref{TAB:marathon-estimates}). 
Increasing $\beta$ progressively reduces the impact of the contaminated cells, with only a modest change in the uncontaminated estimates. 
Compared to the usual MDPDEs, the CMDPDEs are seen to produce significantly more stable estimates of important parameters under cellwise contamination
while being competitively similar under casewise contamination at suitable values of $\beta>0$. 
These results demonstrate that the proposed composite divergence methodology provides effective protection against cellwise contamination 
even for non-elliptical multivariate models where existing robust multivariate estimators fail or are not applicable.

\section{Discussion and Future Directions}
\label{SEC:discusions}

We introduced the composite DPD and the resulting minimum CDPD estimator as a general framework for robust inference 
in complex multivariate models where the full likelihood is unavailable or computationally infeasible. 
Our approach reveals a general principle for constructing divergence measures directly from lower-dimensional likelihood components. 
It combines the computational scalability of CL with the robustness mechanism of DPD, 
providing protection against both casewise and cellwise contamination. Importantly, this robustness comes without sacrificing asymptotic validity. 
Under regularity conditions on the component models alone, the MCDPDE is consistent and asymptotically normal, 
without requiring the full joint distribution to be tractable or even correctly specified.
For every $\beta>0$, the MCDPDE is both qualitatively robust and B-robust, having a bounded influence function. 
The estimator reduces to the MCLE at $\beta\downarrow 0$, and we pinpoint the explicit mechanism through 
which robustness is lost in this limit. The tuning parameter $\beta$ governs the usual robustness-efficiency tradeoff, 
downweighting observations in an increasingly data-dependent way as $\beta$ grows, thereby improving resistance to contamination. 
Our simulation studies and real-data analyses confirm that this yields substantial robustness gains 
at only moderate efficiency cost under the assumed model. A practical implementation for two representative examples, 
namely the multivariate Gaussian model and the non-elliptical multivariate ordered gamma model, 
is provided in the accompanying \texttt{R} package \texttt{mvdpd} on CRAN, 
only using computational ingredients similar to those required  for ordinary CL.

The proposed framework also broadens the scope of robust multivariate analysis beyond the elliptical setting. 
Most existing procedures designed for cellwise contamination rely on robust scatter estimation or distance-based constructions, 
which are naturally tied to Gaussian or elliptically symmetric models. 
In contrast, the CDPD operates directly on the component densities defining the dependence structure,
extending robustness naturally in models with non-Gaussian marginals, asymmetric distributions, discrete dependence structures, 
and other complex mechanisms whenever suitable CL components are available. This generality suggests several extensions beyond the examples considered in this paper. For clustered and longitudinal data, 
where pairwise or blockwise conditional distributions avoid high-dimensional random-effects integration, 
CDPDs could yield a robust alternative in the same spirit of the composite $\tau$-estimator of \cite{Agostinelli/Yohai:2016composite}. 
Similarly, for spatial and graphical models, CDPDs built from neighboring-pair conditionals could robustify pseudo-likelihood approaches 
such as those introduced by \cite{besag:1974spatial}, particularly against spatial outliers or localized measurement errors. 
More broadly, the CDPD provides a general foundation for robust CL inference in increasingly complex multivariate data structures.

The work also opens several theoretical and methodological directions for further investigation. 
First, although we develop the CDPD within the DPD family, the underlying construction is more general. 
The key idea is to replace the full joint density appearing in a divergence criterion by valid lower-dimensional component densities 
and aggregate the resulting component-wise divergences. Applying this principle to other divergence families, 
including general $\phi$-divergences \citep{Basu2011} and the broader class of $S$-divergences \citep{ghosh2017generalized}, 
may lead to a wider class of \textit{composite divergence estimators} with different robustness-efficiency properties. 
A systematic theory for such minimum composite divergence estimators remains to be developed.

Second, we assign equal weight to all CL components. In classical CL inference, 
optimal weighting via Godambe information can substantially improve efficiency \citep{lindsay:1988composite}. 
Extending this idea to the robust setting is nontrivial, since the DPD mechanism already introduces data-dependent weighting 
through $f_{\boldsymbol{\theta},k}^{\beta}$. Whether adaptive component weighting can further improve the MCDPDE's robustness, 
or whether the two weighting mechanisms are redundant, is an open question.

Third, robust model selection remains unresolved. Analogous to the CL-based information criterion of \cite{varin/etc:2005}, 
a CDPD-based criterion could replace the classical Godambe information adjustment with a divergence-based effective complexity term 
built from $\bm{J}_\beta$ and $\bm{K}_\beta$. Such a criterion would provide a principled way 
to compare marginal or conditional component choices in high-dimensional problems.

Fourth, the present work provides a natural basis for robust hypothesis testing under cellwise contamination, 
an area far less developed than casewise-robust testing even for elliptical models.
Most existing cellwise-robust procedures target estimation or covariance regularization,  
with limited attention to calibrated hypothesis testing. The MCDPDE's asymptotic covariance theory offers a direct route forward, 
since consistent estimation of $\bm{J}_\beta$ and $\bm{K}_\beta$ yields a sandwich covariance estimator (Proposition \ref{PROP:sandwich})
usable in Wald- or score-type tests. Such tests would combine asymptotic validity with B-robustness inherited from 
the bounded influence property. A complete theory of robust hypothesis testing based on composite divergence estimators, 
including optimal test statistics and finite-sample calibration under contamination, remains an important open problem.

Finally, our asymptotic theory treats the data dimension $d$, the number of components $K$, and the parameter dimension $p$ as fixed 
while the sample size $n\to\infty$, whereas modern applications increasingly involve settings where dimension grows with $n$. 
Establishing high-dimensional asymptotics for the MCDPDE, such as rates of convergence and conditions preserving robustness as 
$d$, $p$, or $K$ grow with $n$, is an important direction for future work.
This would further establish composite divergence methods as a general framework for robust inference in large-scale multivariate problems.


\newpage
\begin{center}
	{\Huge Supplementary Materials}
\end{center}
\setcounter{figure}{0}
\renewcommand{\thefigure}{S\arabic{figure}}
\setcounter{table}{0}
\renewcommand{\thetable}{S\arabic{table}}

\bigskip\bigskip

\appendix
\section{Technical Details and Derivations under Specific Models}
\label{APP:Computation}

\subsection{Gaussian Pairwise Composite Likelihood}
\label{APP:Algo_MVN}

Consider a multivariate continuous random vector  $\boldsymbol{X}=(X_1,\ldots,X_d)^\top$, 
where the composite likelihood is  constructed from all bivariate marginal distributions. 
Suppose that each  pairwise marginal distribution is modeled by a bivariate Gaussian density. 
The corresponding CL and CDPD objective function are obtained by combining the bivariate Gaussian component densities 
$f_{\btheta,jk}$, as provided in  Example~\ref{EX:MVN}.
We here present their estimating equations and asymptotic variance formulas with detailed derivations. 

Recall the parametrization $\btheta = (\btheta_1^\top,\ldots,\btheta_d^\top, \bm{r}^\top)^\top$ 
with $\btheta_j=(\mu_j,\sigma_j^2)^\top$ and $\bm{r}=(\rho_{12},\ldots,\rho_{d-1,d})^\top$, giving $p = 2d + \binom{d}{2}$ parameters. 
Let $\boldsymbol{x}_i=(x_{i1},\ldots,x_{id})^\top$ denote the $i$th observation 
and let $\boldsymbol{x}^{(i)}_{jk}=(x_{ij},x_{ik})^\top$ denote the corresponding pairwise observation,
which is modeled by the bivariate Gaussian density given by
\begin{equation*}
f_{\btheta,jk}(\bm{x}_{jk}) = \frac{1}{2\pi\sigma_j\sigma_k\sqrt{1-\rho_{jk}^2}}\exp\!\left(-\tfrac12\Delta(\bm{x}_{jk}\mid\bm{\mu}_{jk},\bm{\Sigma}_{jk})\right),
\end{equation*}
with $\bm{\mu}_{jk} = (\mu_j, \mu_k)^\top$ and $\bm{\Sigma}_{jk}=\begin{pmatrix}\sigma_j^2 & \rho_{jk}\sigma_j\sigma_k\\ \rho_{jk}\sigma_j\sigma_k & \sigma_k^2\end{pmatrix}$. 
Then, a simple integration gives 
\begin{equation}
	\int f_{\btheta,jk}(\bm{x})^{1+\beta}\,d\bm{x} = (2\pi)^{-\beta}(1+\beta)^{-1}\sigma_j^{-\beta}\sigma_k^{-\beta}(1-\rho_{jk}^2)^{-\beta/2} = \frac{\kappa_{jk}(\beta)}{(1+\beta)}, 
	\label{EQ:kappa}
\end{equation}
which has been used to construct the MCDPDE objective function in Example \ref{EX:MVN}.
\\

\subsubsection{Derivation of Estimating equations}
For a pair  $(j,k)$, $1\le j<k\le d$, differentiating $\log f_{\btheta,jk}$ gives the five nonzero entries of the pairwise score 
$\bm{u}_{\btheta,jk}(\bm{x})=\nabla_{\btheta}\log f_{\btheta,jk}(\bm{x})$ corresponding to the parameters 
$\mu_j$, $\mu_k$, $\sigma_j^2$, $\sigma_k^2$ and $\rho_{jk}$, while it has zero in every other coordinates.
For future reference, we will denote these five non-zero entries by $u_{\mu_j}$, $u_{\mu_k}$, $u_{\sigma_j^2}$, $u_{\sigma_k^2}$
and $u_{\rho_{jk}}$, which are given by
\begin{align*}
\frac{z_j-\rho_{jk} z_k}{\sigma_j (1-\rho_{jk}^2)}, \quad
& \frac{z_k-\rho_{jk} z_j}{\sigma_k (1-\rho_{jk}^2)}, \quad
\frac{1}{2\sigma_j^2}\left[\frac{z_j^2-\rho_{jk} z_jz_k}{(1-\rho_{jk}^2)}-1\right], \quad
\frac{1}{2\sigma_k^2}\left[\frac{z_k^2-\rho_{jk} z_jz_k}{(1-\rho_{jk}^2)}-1\right],
\end{align*}
and
\begin{equation}
\frac{\rho_{jk} (1-\rho_{jk}^2)+(1+\rho_{jk}^2)z_jz_k-\rho_{jk}(z_j^2+z_k^2)}{(1-\rho_{jk}^2)^2}, 
\label{EQ:ujk_MVN}
\end{equation}
respectively, with $z_j=(x_j-\mu_j)/\sigma_j$ for all $j=1, \ldots, d$. Then, the corresponding elements of the vector $\bm{m}_{jk}(\btheta)=\int \bm{u}_{\btheta,jk}(\bm{x})f_{\btheta,jk}(\bm{x})^{1+\beta}d\bm{x}$
turn out to be 
\begin{equation*}
0, \quad 0, \quad-\tfrac{\beta}{(1+\beta)^2}\tfrac{\kappa_{jk}(\beta)}{2\sigma_j^2}, 
\quad-\tfrac{\beta}{(1+\beta)^2}\tfrac{\kappa_{jk}(\beta)}{2\sigma_k^2},
\quad\tfrac{\beta}{(1+\beta)^2} \tfrac{\rho_{jk}\kappa_{jk}(\beta)}{(1-\rho_{jk}^2)},
\end{equation*} 
respectively (all other components of it being automatically zero).
Using all these, we get the following estimating equations for the (unweighted) MCDPDE at any $\beta\geq 0$.
\begin{align}
	\sum_{i=1}^n \sum_{k=1, k\neq j}^d & \frac{w_{jk,\beta}^{(i)}}{(1-\rho_{jk}^2)} \left[\frac{\left(x_{ij} - \mu_j\right)}{\sigma_j} 
	- \rho_{jk} \frac{\left(x_{ik} - \mu_k\right)}{\sigma_k}\right] = 0, \quad j = 1, \ldots, d,
	\label{EQ:EstEq_mu}\\
	\nonumber\\
	\sum_{i=1}^n \sum_{k=1, k\neq j}^d  & \frac{w_{jk,\beta}^{(i)}}{(1-\rho_{jk}^2)} \left[\frac{\left(x_{ij} - \mu_j\right)\left(x_{ik} - \mu_k\right)}{\sigma_j\sigma_k} \rho_{jk} 
	- \frac{\left(x_{ij} - \mu_j\right)^2}{\sigma_j^2} + (1-\rho_{jk}^2) \right]
	\nonumber\\
	& = \frac{n \beta}{(1+\beta)^{2}\sigma_j^{\beta}} \sum_{k=1, k\neq j}^d  \frac{1}{\sigma_k^\beta(1-\rho_{jk}^2)^{\frac{\beta}{2}}}, \quad j = 1, \ldots, d,
	\label{EQ:EstEq_sigma}\\
	\nonumber\\
	\sum_{i=1}^n \frac{w_{jk,\beta}^{(i)}} {(1-\rho_{jk}^2)} & \left[\frac{\left(x_{ij} - \mu_j\right)\left(x_{ik} - \mu_k\right)}{\sigma_j\sigma_k} \frac{(1+\rho_{jk}^2)}{(1-\rho_{jk}^2)} 
	- \left(\frac{\left(x_{ij} - \mu_j\right)^2}{\sigma_j^2} + \frac{\left(x_{ik} - \mu_k\right)^2}{\sigma_k^2}\right)\frac{\rho_{jk}}{(1-\rho_{jk}^2)} + \rho_{jk}\right]
	\nonumber\\
	& = \frac{n \beta\rho_{jk}}{(1+\beta)^{2}\sigma_j^\beta\sigma_k^\beta(1-\rho_{jk}^2)^{\frac{\beta}{2}+1}}, \quad 1\leq j <k \leq  d.	
	\label{EQ:EstEq_rho}
\end{align}
where the weights are given by  
\begin{equation*} 
w_{jk,\beta}^{(i)} = w_{jk,\beta}^{(i)}(\boldsymbol{\theta}_{j}, \boldsymbol{\theta}_{k},\rho_{jk})
= \frac{\exp\left({-\frac{\beta}{2}\Delta(\boldsymbol{x}_{jk}^{(i)}|\boldsymbol{\mu}_{jk}, \boldsymbol{\Sigma}_{jk})}\right)}{
	\sigma_j^\beta\sigma_k^\beta(1-\rho_{jk}^2)^{\beta/2}}, \quad 1\leq j<k \leq d, ~i=1, \ldots, n,
\end{equation*}
and $\Delta(\cdot)$ denotes the squared Mahalanobis distance as defined in the main paper.
These weights satisfy $w_{jk,0}^{(i)}=1$, and therefore the above equations reduce to the usual CL score equations when $\beta=0$. 
For $\beta>0$, observations  with small pairwise model density receive smaller weights, 
providing robustness against both casewise and cellwise contamination.

\subsubsection{A simple computational Algorithm}

The estimating equations can be solved using an alternating iterative procedure. At each iteration, 
the weights are updated using the current parameter values, and then the model parameters are updated sequentially
as per the following fixed-point iteration  algorithm.

\subsubsection{Algorithm A.1 (Computation of the MCDPDE for Gaussian Pairwise Marginals)}

Starting with an initial value $\boldsymbol{\theta}^{(0)}$ of the parameter vector $\btheta$, 
we update each component of the parameter in $(m+1)$-th iteration, denoted as $\boldsymbol{\theta}^{(m+1)}$, for $m=0,1, 2, \ldots$, 
as follows until convergence is achieved: 
\begin{itemize}
	\item Update the mean parameters $\mu_j$, for each $j=1, \ldots, d$, using the fixed-point equation 
	\begin{eqnarray}
		\mu_j = \frac{\sum_{i=1}^n \sum_{k=1, k\neq j}^d \frac{w_{jk,\beta}^{(i)}}{(1-\rho_{jk}^2)} \left[x_{ij}  
			- \rho_{jk}\sigma_j \frac{\left(x_{ik} - \mu_k\right)}{\sigma_k}\right]}{\sum_{i=1}^n \sum_{k=1, k\neq j}^d \frac{w_{jk,\beta}^{(i)}}{(1-\rho_{jk}^2)} },
		\label{EQ:Updtae_mu}
	\end{eqnarray}
	where the right-hand side is evaluated at the parameter value from the last iteration.
	
	\item Update the variance parameters $\sigma_j$ as the positive root of the quadratic equation 
	\begin{eqnarray}
		A_j(\boldsymbol{\theta}^{(m)}) \sigma^2 + B_j(\boldsymbol{\theta}^{(m)})\sigma - C_j(\boldsymbol{\theta}^{(m)}) = 0,
		\label{EQ:Updtae_sigma}
	\end{eqnarray}
	for each $j=1, \ldots, d$, 	where 
	\begin{eqnarray}
		A_j(\boldsymbol{\theta}) &=&  \sum_{i=1}^n \sum_{k=1, k\neq j}^d w_{jk,\beta}^{(i)} - \frac{n \beta}{(1+\beta)^{2}\sigma_j^{\beta}} \sum_{k=1, k\neq j}^d  \frac{1}{\sigma_k^\beta(1-\rho_{jk}^2)^{\frac{\beta}{2}}},
		\nonumber\\
		B_j(\boldsymbol{\theta}) &=& \sum_{i=1}^n \sum_{k=1, k\neq j}^d  \frac{w_{jk,\beta}^{(i)}}{(1-\rho_{jk}^2)} \frac{\left(x_{ij} - \mu_j\right)\left(x_{ik} - \mu_k\right)}{\sigma_k} \rho_{jk},
		\nonumber\\	
		C_j(\boldsymbol{\theta}) &=& \sum_{i=1}^n \sum_{k=1, k\neq j}^d  \frac{w_{jk,\beta}^{(i)}}{(1-\rho_{jk}^2)} \left(x_{ij} - \mu_j\right)^2.
		\nonumber
	\end{eqnarray}
	
	\item Update the correlation parameters $\rho_{jk}$, for each $1\leq j <k \leq  d$, using the fixed-point equation 
	\begin{eqnarray}
		\rho_{jk} = \frac{\sum_{i=1}^n w_{jk,\beta}^{(i)} \frac{\left(x_{ij} - \mu_j\right)\left(x_{ik} - \mu_k\right)}{\sigma_j\sigma_k} \frac{(1+\rho_{jk}^2)}{(1-\rho_{jk}^2)}}{
			\frac{n \beta}{(1+\beta)^{2}\sigma_j^\beta\sigma_k^\beta(1-\rho_{jk}^2)^{\frac{\beta}{2}}} + \sum_{i=1}^n w_{jk,\beta}^{(i)} \left[
			\left(\frac{\left(x_{ij} - \mu_j\right)^2}{\sigma_j^2} + \frac{\left(x_{ik} - \mu_k\right)^2}{\sigma_k^2}\right)\frac{1}{(1-\rho_{jk}^2)} -1\right] },
		\label{EQ:Updtae_rho}
	\end{eqnarray}
	where the right-hand side is evaluated at the parameter value from the last iteration. 	 
\end{itemize}
Iterations are terminated when the relative change in all parameter estimates falls below a specified tolerance.
Since the objective function is non-convex, we investigated the effect of initialization through the following three strategies: 
\begin{enumerate}
	\item[(i)] Oracle initialization: the algorithm is initialized at the true parameter value.  
	This evaluates the intrinsic statistical performance without initialization effects.
	
	\item[(ii)] Robust marginal initialization:	location parameters are initialized by component-wise medians.
	The covariance matrix is initialized using the marginal median absolute deviation (MAD) approximation, 
	following standard robust covariance initialization procedures \citep{maronna2019robust}, as given by
	\begin{equation*}
	\widehat{\Sigma}_{jk} = 	1.4826^2 \operatorname{MAD}(X_j) \operatorname{MAD}(X_k),
	\end{equation*}
	
	\item[(iii)] Robust filtering initialization: first, univariate DPD estimators with $\beta=0.99$ are computed for each coordinate. 
	Observations are standardized using these robust estimates, and cells with absolute standardized values exceeding $3$ 
	are treated as missing. The MCDPDE is then fitted using pairwise complete observations only, 
	exploiting the missing-data extension developed in Section~\ref{SEC:missing}.
\end{enumerate}

As expected, the first one (oracle choice) produces the best results among the three. However, in practice, 
we recommend to use the third option (robust filtering based initialization) as it performs significantly better than the second one 
and also quite close to the oracle initialization.  

\begin{remark}
For the scale update in Algorithm A.1, the quadratic equation has two roots. 
Under regularity conditions and for sufficiently large samples, one root is positive and one is negative, 
so the positive root provides an unambiguous update for $\sigma_j$. 
\qed
\end{remark}

\subsubsection{Derivation of the asymptotic covariance matrix of (unweighted) MCDPDEs}

Let us first compute the the sensitivity matrix $\bm{J}_\beta({\btheta}_0)$ which is given by the sum of the integral 
$\int \bm{u}_{{\btheta}_0,jk}(\bm{x})\bm{u}_{{\btheta}_0,jk}(\bm{x})^\top f_{{\btheta}_0,jk}(\bm{x})^{1+\beta}d\bm{x}$ 
over all pairs $(j,k)$ with $1\leq j<k \leq d$. Based on the form of $\bm{u}_{\btheta, jk}$ derived above,
the only non-zero entries of this integral (matrix) are
\begin{align}
\int u_{\mu_{j}}^2(\bm{x}) f_{{\btheta}_0,jk}^{1+\beta}(\bm{x})\,d\bm{x} &= \frac{\kappa_{jk}(\beta)}{\sigma_j^2(1 - \rho_{jk}^2)}(1+\beta)^{-2},  ~~1\leq j\leq d, \label{eq:Jmu1}\\
\int u_{\sigma_j^2}^2(\bm{x}) f_{{\btheta}_0,jk}^{1+\beta}(\bm{x})\,d\bm{x} &= \frac{\kappa_{jk}(\beta)}{4\sigma_j^4}\left[\frac{(3-2\rho_{jk}^2)}{(1-\rho_{jk}^2)} + \beta^2 -1 \right](1+\beta)^{-3},  ~~1\leq j\leq d,  
\label{eq:Jsjsj}\\
\int u_{\mu_{j}}(\bm{x})u_{\mu_{k}}(\bm{x}) f_{{\btheta}_0,jk}^{1+\beta}(\bm{x})\,d\bm{x} & =  - \frac{\rho_{jk}\;\kappa_{jk}(\beta)}{\sigma_j\sigma_k(1 - \rho_{jk}^2)}(1+\beta)^{-2}, ~~1\leq j, k\leq d,\label{eq:Jmu12}\\
\int u_{\sigma_j^2}(\bm{x})u_{\sigma_k^2}f_{{\btheta}_0,jk}^{1+\beta}(\bm{x})\,d\bm{x} &
=\frac{\kappa_{jk}(\beta)}{4\sigma_j^2\sigma_k^2}\left[\frac{(1-2\rho_{jk}^2)}{(1-\rho_{jk}^2)} + \beta^2 -1\right](1+\beta)^{-3}, ~~1\leq j, k\leq d, 
\label{eq:Jsjsk}\\
\int u_{\sigma_j^2}(\bm{x})u_{\rho_{jk}}f_{{\btheta}_0,jk}^{1+\beta}(\bm{x})\,d\bm{x} &
= -\frac{\rho_{jk}\;\kappa_{jk}(\beta)}{2\sigma_j^2(1-\rho_{jk}^2)} \frac{(1+\beta^2)}{(1+\beta)^{3}}, ~~1\leq j,  k\leq d, \label{eq:Jsjrho}\\
\int u_{\rho_{jk}}^2(\bm{x}) f_{{\btheta}_0,jk}^{1+\beta}(\bm{x})\,d\bm{x} &= \kappa_{jk}(\beta)\,\frac{1 + \rho_{jk}^2(1+\beta^2)}{(1-\rho_{jk}^2)^2} (1+\beta)^{-3}, ~~1\leq j, k\leq d. 
\label{eq:Jrhorho}
\end{align}
Then, the diagonal entries of $\bm{J}_\beta({\btheta}_0)$ corresponding to $\mu_l$ and $\sigma_l^2$ ($l=1, \ldots, d$) are given by 
\begin{equation*}
(1+\beta)^{-2}\sum_{k=1, k\neq l}^d \frac{\kappa_{lk}(\beta)}{\sigma_l^2(1 - \rho_{lk}^2)} 
~~\mbox{ and }~~
(1+\beta)^{-3}\sum_{k=1, k\neq l}^d \frac{\kappa_{lk}(\beta)}{4\sigma_l^4}\left[\frac{(3-2\rho_{lk}^2)}{(1-\rho_{lk}^2)} + \beta^2 -1 \right],
\end{equation*} 
respectively, and all remaining elements of $\bm{J}_\beta({\btheta}_0)$ are given by exactly one integral from \eqref{eq:Jmu12}--\eqref{eq:Jrhorho} 
if it is available, or zero otherwise. In particular, any pair of coordinates belonging to no common pair 
(e.g. $\rho_{jk}$ vs.\ $\rho_{lm}$ for $\{j,k\}\ne\{l,m\}$, or $\sigma_j^2$ vs.\ $\rho_{kl}$ with $j\notin\{k,l\}$), 
and any location-shape pair, becomes zero. 

Next, let us compute the variability matrix $\bm{K}_\beta({\btheta}_0)$, which can be expressed following \eqref{EQ:Kbeta-model} as 
\begin{eqnarray}
\bm{K}_\beta({\btheta}_0)=\sum_{S}\sum_{T} \big[\bm{C}_{S,T}({\btheta}_0) - \bm{m}_S({\btheta}_0)\bm{m}_T({\btheta}_0)^\top\big], 
\label{EQ:Kbeta-model-MVN}
\end{eqnarray}
with the double sum ranging over \emph{all ordered pairs} $(S,T)$ of composite components, 
i.e., $S, T \in\left\{ (j,k) : 1\leq j<k\leq d\right\}$, and
\begin{equation*}
\bm{C}_{S,T}({\btheta}_0) = \E_G\!\left[\bm{u}_{{\btheta}_0,S}(\bm{X})\bm{u}_{{\btheta}_0,T}(\bm{X})^\top f_{{\btheta}_0,S}(\bm{X}_S)^\beta f_{{\btheta}_0,T}(\bm{X}_T)^\beta\right].
\end{equation*}
Unlike $\bm{J}_\beta$, this requires the joint law of $\bm{X}$ restricted to $A=S\cup T$ whenever $S\ne T$, 
and the formulas differ dependent on whether $S,T$ share an index or are fully disjoint.
We will compute $\bm{C}_{S,T}$, and hence $\bm{K}_\beta({\btheta}_0)$, at the model assuming that these restricted distributions 
are Gaussian with suitable mean vector and covariance matrices, for which we will use the following lemma. 

\begin{lemma}\label{LEM:double-tilt}
Suppose $g_A(\cdot; \bm{\mu}_A, \bm{\Sigma}_A)$ denotes the density of $N(\bm{\mu}_A,\bm{\Sigma}_A)$ on $\mathbb{R}^{|A|}$. 
Let $S,T$ be two (possibly overlapping) index subsets of $A$, with component covariances $\bm{\Sigma}_S$, $\bm{\Sigma}_T$ 
(suitable sub-blocks of $\bm{\Sigma}_A$),  and let $\bm{P}_S, \bm{P}_T$ denote $\bm{\Sigma}_S^{-1},\bm{\Sigma}_T^{-1}$ 
embedded (zero-padded) into the $A$-space. Then for $\beta\ge0$,
\begin{equation}
	g_A(\bm{x}_A;  \bm{\mu}_A, \bm{\Sigma}_A)\,f_{\btheta,S}(\bm{x}_S)^\beta f_{\btheta,T}(\bm{x}_T)^\beta = \Lambda_{S,T}(\beta)\,g_A(\bm{x}_A; \bm{\mu}_A, \bm{M}^{-1}),
 \label{eq:Lambda}
\end{equation}
with $\bm{M} = \bm{\Sigma}_A^{-1}+\beta(\bm{P}_S+\bm{P}_T)$ and
\begin{equation*}
  \Lambda_{S,T}(\beta) = (2\pi)^{-\beta(|S|+|T|)/2}\,|\bm{\Sigma}_S|^{-\beta/2}|\bm{\Sigma}_T|^{-\beta/2}\,|\bm{\Sigma}_A|^{-1/2}|\bm{M}|^{-1/2}.
\end{equation*}  
Interesting special case is $T=\varnothing$, $S=A$ and $g_A = f_{\btheta,S}$, yielding
\begin{equation*}
  g_A(\bm{x}_A;  \bm{\mu}_A, \bm{\Sigma}_A)^{1+\beta} = \widetilde{\kappa}_A(\beta)\,g_A\!\left(\bm{x}_A;\bm{\mu}_A,\tfrac{1}{1+\beta}\bm{\Sigma}_A\right)
\end{equation*}
where
\begin{equation*}
\widetilde{\kappa}_A(\beta) = (2\pi)^{-\beta|A|/2}(1+\beta)^{-|A|/2}|\bm{\Sigma}_A|^{-\beta/2},
\end{equation*}
\end{lemma}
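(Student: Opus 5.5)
The plan is a direct Gaussian computation: write both sides explicitly and complete the square in the exponent. I will take the component model densities to be Gaussian with the \emph{same} centre as $g_A$. That is, $f_{\btheta,S}(\bm{x}_S)$ is the $N(\bm{\mu}_S,\bm{\Sigma}_S)$ density with $\bm{\mu}_S$ the $S$-subvector of $\bm{\mu}_A$, and similarly for $T$. This is exactly the situation in which the lemma is used, namely evaluating $\bm{C}_{S,T}(\btheta_0)$ at the model.

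First, I rewrite each quadratic form in $A$-coordinates. Since $\bm{P}_S$ is $\bm{\Sigma}_S^{-1}$ zero-padded into the $A$-space,
\[
(\bm{x}_S-\bm{\mu}_S)^\top\bm{\Sigma}_S^{-1}(\bm{x}_S-\bm{\mu}_S)=(\bm{x}_A-\bm{\mu}_A)^\top\bm{P}_S(\bm{x}_A-\bm{\mu}_A),
\]
and the same identity holds for $T$. The left side of \eqref{eq:Lambda} is then a constant times $\exp\{-\tfrac12(\bm{x}_A-\bm{\mu}_A)^\top\bm{M}(\bm{x}_A-\bm{\mu}_A)\}$, with $\bm{M}=\bm{\Sigma}_A^{-1}+\beta(\bm{P}_S+\bm{P}_T)$. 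All three quadratic forms share the common centre $\bm{\mu}_A$, so no genuine completing of the square (and no shift of the mean) is needed.

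Next, I collect the constants. They are $(2\pi)^{-|A|/2}|\bm{\Sigma}_A|^{-1/2}$ from $g_A$, and $(2\pi)^{-\beta|S|/2}|\bm{\Sigma}_S|^{-\beta/2}$ and $(2\pi)^{-\beta|T|/2}|\bm{\Sigma}_T|^{-\beta/2}$ from the powered component densities. On the right side, $g_A(\bm{x}_A;\bm{\mu}_A,\bm{M}^{-1})$ carries $(2\pi)^{-|A|/2}|\bm{M}|^{1/2}$. Dividing gives
\[
\Lambda_{S,T}(\beta)=(2\pi)^{-\beta(|S|+|T|)/2}|\bm{\Sigma}_S|^{-\beta/2}|\bm{\Sigma}_T|^{-\beta/2}|\bm{\Sigma}_A|^{-1/2}|\bm{M}|^{-1/2},
\]
as claimed.

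The one point requiring care is that $g_A(\cdot;\bm{\mu}_A,\bm{M}^{-1})$ is a genuine density, which needs $\bm{M}$ to be positive definite. This holds because $\bm{\Sigma}_A^{-1}$ is positive definite, $\bm{P}_S$ and $\bm{P}_T$ are positive semidefinite (zero-padded positive definite blocks), and $\beta\ge0$.

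For the special case, I take $T=\varnothing$ with the conventions $|T|=0$, $\bm{P}_T=\bm{0}$ and $|\bm{\Sigma}_T|=1$, and set $S=A$ with $\bm{\Sigma}_S=\bm{\Sigma}_A$. Then $\bm{M}=(1+\beta)\bm{\Sigma}_A^{-1}$, so $\bm{M}^{-1}=\bm{\Sigma}_A/(1+\beta)$ and $|\bm{M}|^{-1/2}=(1+\beta)^{-|A|/2}|\bm{\Sigma}_A|^{1/2}$. Substituting, $\Lambda$ reduces to $\widetilde{\kappa}_A(\beta)=(2\pi)^{-\beta|A|/2}(1+\beta)^{-|A|/2}|\bm{\Sigma}_A|^{-\beta/2}$.

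I expect no real obstacle. The only things to state carefully are the common-mean convention and the conventions for the empty set. I also note that integrating \eqref{eq:Lambda} over $\bm{x}_A$ gives $\E[f_{\btheta,S}^\beta f_{\btheta,T}^\beta]=\Lambda_{S,T}(\beta)$. Moreover, moments of the scores under the tilted law $N(\bm{\mu}_A,\bm{M}^{-1})$ then yield the entries of $\bm{C}_{S,T}$, which is how the lemma will be used.
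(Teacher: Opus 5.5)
Your proposal is correct and follows essentially the same route as the paper's proof. Both write every Gaussian exponent as a quadratic form in $\bm{x}_A-\bm{\mu}_A$ using the zero-padded precisions, identify the combined precision $\bm{M}=\bm{\Sigma}_A^{-1}+\beta(\bm{P}_S+\bm{P}_T)$, and obtain $\Lambda_{S,T}(\beta)$ by matching normalizing constants. Your additions are details the paper leaves implicit: the explicit common-mean convention, the check that $\bm{M}$ is positive definite, and the empty-set conventions used in the special case.
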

\begin{proof}
The result follow by matching the quadratic forms in the exponent of all the (Gaussian) densities involved, since  
\begin{equation*}
g_A(\bm{x}_A)f_{\btheta,S}^\beta f_{\btheta,T}^\beta \propto \exp\{-\tfrac12(\bm{x}_A-\bm{\mu}_A)^\top[\bm{\Sigma}_A^{-1}+\beta(\bm{P}_S+\bm{P}_T)](\bm{x}_A-\bm{\mu}_A)\},
\end{equation*} 
which is proportional to $g_A(\bm{x}_A;\bm{\mu}_A,\bm{M}^{-1})$.
Subsequently, the constant $\Lambda_{S,T}(\beta)$ is obtained by equating normalizing constants. 
\end{proof}

We can apply the above lemma to compute $\bm{C}_{S,T}({\btheta}_0)$ by noting that each non-zero components of the pairwise scores, 
$\bm{u}_{\btheta}(\bm{x})$, are normally distributed. This can be observed by noting that, with $\bm{Y}=(X_j-\mu_j,X_k-\mu_k)^\top$, we have
\begin{align}
	u_{\mu_j}(\bm{Y}) &= \bm{\ell}_{\mu_j}^\top \bm{Y}, & \bm{\ell}_{\mu_j} &= \frac{1}{(1-\rho_{jk}^2)}\left(\frac{1}{\sigma_j^2}, -\frac{\rho_{jk}}{\sigma_j\sigma_k}\right)^\top, \label{eq:ell}\\
	u_{\sigma_j^2}(\bm{Y}) &= \bm{Y}^\top \bm{B}_{\sigma_j^2}\bm{Y} + c_{\sigma_j^2}, &
	\bm{B}_{\sigma_j^2} &= \frac{1}{2\sigma_j^2(1-\rho_{jk}^2)}\begin{pmatrix} \frac{1}{\sigma_j^2} & -\frac{\rho_{jk}}{2\sigma_j\sigma_k}\\ -\frac{\rho_{jk}}{2\sigma_j\sigma_k} & 0\end{pmatrix}, \quad 
	c_{\sigma_j^2}=-\frac{1}{2\sigma_j^2}, \label{eq:Bsj}\\
	u_{\rho_{jk}}(\bm{Y}) &= \bm{Y}^\top \bm{B}_{\rho_{jk}}\bm{Y}+c_{\rho_{jk}}, &
	\bm{B}_{\rho_{jk}} &= \frac{1}{(1-\rho_{jk}^2)^2}\begin{pmatrix}-\frac{\rho_{jk}}{\sigma_j^2} & \frac{(1+\rho_{jk}^2)}{2\sigma_j\sigma_k}\\ \frac{(1+\rho_{jk}^2)}{2\sigma_j\sigma_k} & -\frac{\rho_{jk}}{\sigma_k^2}\end{pmatrix}, \quad c_{\rho_{jk}}=\frac{\rho_{jk}}{(1-\rho_{jk}^2)}, \label{eq:Brho}
\end{align}
Given two pairs $S=(j,k)$, $T=(j',k')$, let $A=S\cup T$, so that $|A|\in\{2,3,4\}$.
We can then form $\bm{\Sigma}_A=[\sigma_a\sigma_b\rho_{ab}]_{a,b\in A}$ from the components of $\btheta$ for every pair $\{a,b\}\subseteq A$, 
$\bm{P}_S,\bm{P}_T$ as embedded $\bm{\Sigma}_{jk}^{-1},\bm{\Sigma}_{j'k'}^{-1}$,  
and compute $\bm{\Omega}=\left[\bm{\Sigma}_A^{-1}+\beta(\bm{P}_S+\bm{P}_T)\right]^{-1}$ and  $\Lambda_{S,T}(\beta)$ as in Lemma \ref{LEM:double-tilt}. 
Then, for score components $a$ of $S$ and $b$ of $T$ (each embedded in $\mathbb{R}^{|A|}$ as in \eqref{eq:ell}--\eqref{eq:Brho}), yielding
\begin{align}
	\quad & \mbox{location - location:}\nonumber\\
  C_{S,T}[a,b] & = \Lambda_{S,T}(\beta)\,\bm{\ell}_a^\top\bm{\Omega}\,\bm{\ell}_b, \label{eq:Cloc}\\[4pt]
	\quad & \mbox{scale - scale / corr - corr /  scale - corr:} \nonumber\\
	C_{S,T}[a,b] & = \Lambda_{S,T}(\beta)\Big[2\,\tr(\bm{B}_a\bm{\Omega} \bm{B}_b\bm{\Omega})+\tr(\bm{B}_a\bm{\Omega})\tr(B_b\bm{\Omega}) \nonumber\\
	& + c_a\tr(\bm{B}_b\bm{\Omega})+c_b\tr(\bm{B}_a\bm{\Omega})+c_ac_b\Big], \label{eq:Cshape}\\[4pt]
	\quad & \mbox{loc - scale / loc - corr:} \nonumber\\
  C_{S,T}[a,b] &= 0 \quad \text{ (for every $\bm{\Omega}$ and dimension).}\label{eq:Czero}
\end{align}
Formulas \eqref{eq:Cloc}--\eqref{eq:Czero} are the standard raw-moment identities for linear/quadratic forms of a Gaussian vector $\bm{Y}_A\sim N(\bm{0},\bm{\Omega})$.
Identity \eqref{eq:Czero} holds because the third-order moments of any centered Gaussian vector vanish identically, regardless of $\bm{\Omega}$ or dimension.

Once we get $\bm{C}_{S,T}$ for all $(S, T)$, we can use them to compute $\bm{K}_\beta({\btheta}_0)$ via \eqref{EQ:Kbeta-model-MVN}, 
where $\bm{m}_S$ are derived previously. This requires summing over all $K=\binom{d}{2}$ composite components $S$ and $T$ (including $S=T$), 
embedding each $C_{S,T}[a,b]- m_S[a]m_T[b]$ contribution into the appropriate entries of the $p\times p$ matrix indexed by 
$(\mu_j,\mu_k,\sigma_j^2,\sigma_k^2,\rho_{jk})$ (from $S$) and $(\mu_{j'},\mu_{k'},\sigma_{j'}^2,\sigma_{k'}^2,\rho_{j'k'})$ (from $T$). 
Because a single coordinate (e.g.\ $\sigma_j^2$) belongs to every pair containing index $j$, 
a single diagonal entry such as $[\bm{K}_\beta]_{\sigma_j^2\sigma_j^2}$ receives contributions from $O(d^2)$ pairs of pairs, 
in contrast to the pair-local sparsity of $\bm{J}_\beta$.
Therefore, except for a few very special cases, this process does not lead to a nice close-form final expression for $\bm{K}_\beta({\btheta}_0)$.
However, they can be easily computed through any standard programming language; 
our supplementary \texttt{R}-package \texttt{mvdpd} provides a function to compute this variability matrix $\bm{K}_\beta({\btheta}_0)$, 
and subsequently the asymptotic variance of matrix of the MCDPDE combining it with $\bm{J}_\beta^{-1}({\btheta}_0)$. 

\begin{remark}[Computational cost]
Evaluating $\bm{J}_\beta({\btheta}_0)$ costs $O(K)=O(d^2)$ elementary operations ($K=\binom{d}{2}$ pair blocks, each a $2\times2$ solve). 
Evaluating $\bm{K}_\beta(\theta)$ exactly via \eqref{EQ:Kbeta-model-MVN} costs $O(K^2)=O(d^4)$, 
since every ordered pair of composite components requires forming and inverting a matrix of size $2$, $3$, or $4$. 
This is practical for smaller dimensions $d$ up to roughly $15$--$20$, but becomes prohibitive at larger dimensions. 
For such settings the empirical sandwich estimator from \eqref{EQ:empirical-sandwich} can be used, 
which requires no cross-pair integration at all and is the recommended estimator in practice.
\qed
\end{remark}

\begin{remark}[Comparison with usual MDPDEs under multivariate Gaussian model]
	For the usual MDPDEs of $\btheta$ obtained under multivariate Gaussian model distribution, 
	let us represent the elements of the $p$-dimensional score vector $\bm{u}_{\btheta}$ by $u_{\theta_a}$ 
	corresponding to the component $\theta_a$ of $\btheta$. Then, for the first $d$ components related to the mean parameters, 
	we get $u_{\mu_j}(\bm{x})=\bm{e}_j^\top\bm{\Sigma}^{-1}(\bm{x} - \bm{\mu})$  for $j=1, \ldots, d$,
	which are linear in $\bm{x}$. For all remaining (scale and correlation) components $\theta_a$ of $\btheta$ 
	affects only $\bm{\Sigma}$, and the corresponding score is given by 
	$u_{\theta_a}(\bm{x})=(\bm{x}-\bm{\mu})^\top \bm{B}_{\theta_a} (\bm{x}-\bm\mu) + c_{\theta_a}$, 
	with $\bm{B}_{\theta_a}=\tfrac12\bm{\Sigma}^{-1}\frac{\partial\bm{\Sigma}}{\partial\theta_a}\bm{\Sigma}^{-1}$, $c_{\theta_a}=-\tfrac12\tr\left(\bm{\Sigma}^{-1}\frac{\partial\bm{\Sigma}}{\partial\theta_a}\right)$. 
	In particular, for $\theta_a=\sigma_j^2$ ($j=1, \ldots, d$) and $\theta_a = \rho_{jk}$ ($1\leq j <k \leq d$), we have 
	\begin{align}
		\bm{B}_{\sigma_j^2} &= \tfrac12\bm{\Sigma}^{-1}\Big[\bm{e}_j\bm{e}_j^\top
		+\tfrac{1}{2\sigma_j^2}(\bm{e}_j\bar{\bm{c}}_j^\top +\bar{\bm{c}}_j\bm{e}_j^\top)\Big]\bm{\Sigma}^{-1}, 
		& c_{\sigma_j^2} &= -\tfrac12\tr\!\Big(\bm{\Sigma}^{-1}\tfrac{\partial\bm{\Sigma}}{\partial\sigma_j^2}\Big), 
		\label{eq:Bsigma}\\
		\bm{B}_{\rho_{jk}} &= \sigma_j\sigma_k\,\mathrm{sym}\big(\bm{\Sigma}^{-1}\bm{e}_j\bm{e}_k^\top\bm{\Sigma}^{-1}\big), 
		& c_{\rho_{jk}} &= -\sigma_j\sigma_k\,[\bm{\Sigma}^{-1}]_{jk}, 
		\label{eq:Brho2}
	\end{align}
where $\bar{\bm{c}}_j = \bm{\Sigma}\bm{e}_j$ with its $j$-th entry set to zero, and $\mathrm{sym}(\bm{M}) = \tfrac12(\bm{M}+\bm{M}^\top)$.
	Then, we can compute all quantities involved in the asymptotic covariance matrix of the MDPDEs through standard normal moments.
	For example, the elements of $\bm{m}_{\btheta}(\beta)=\int \bm{u}_{\btheta}(\bm{x})f_{\btheta}^{1+\beta}(\bm{x})d\bm{x}$ are given by 
	$m_{\mu_j}(\beta)=0$ and 
	$m_{\sigma_j^2}(\beta)=\kappa_\beta\left[\frac{1}{1+\beta}\tr(\bm{B}_{\sigma_j^2}\bm{\Sigma})+c_{\sigma_j^2}\right]$, 
	for $j=1, \ldots, d$, and 
	$m_{\rho_{jk}}(\beta)=\kappa_\beta\left[\frac{1}{1+\beta}\tr(\bm{B}_{\rho_{jk}}\bm{\Sigma})+c_{\rho_{jk}}\right]$
	for $j,k=1, \ldots, d$, with $\kappa_\beta=(2\pi)^{-\beta d/2}(1+\beta)^{-d/2}|\bm{\Sigma}|^{-\beta/2}$. 
	Similarly, all the required quantities can be computed in closed form.  
	However, unlike the case of MCDPDE requiring only $2\times 2$ solve of pairwise covariance matrices,  
	the computation of asymptotic variance of the MDPDEs required inverting/manipulating the full $d\times d$ matrix $\bm{\Sigma}$, 
	involving an $O(d^3)$ operation per trace and $O(p^2)=O(d^4)$ entries.
	\qed
\end{remark}

\subsubsection{Empirical sandwich estimator of the covariance of (unweighted) MCDPDEs}
The empirical sandwich estimator can be obtained from the formula \eqref{EQ:empirical-sandwich} of the main paper,
which involves only $\bm{\psi}_\beta$ and $\nabla_{\btheta}\bm{\psi}_\beta$.
The term $\bm{\psi}_\beta$ can be computed directly from the closed forms of $\bm{u}_{jk}$ and $\bm{m}_{jk}$ derived previously.
The term $\nabla_{\btheta}\bm{\psi}_\beta$ involves the gradient of these quantities, 
which also have nice and sparse closed forms with their (few) non-zero entries being 
(for all $1\leq j, k\leq d$)
\begin{align}
\frac{\partial u_{\mu_j}}{\partial\mu_j}  &= -\frac{1}{\sigma_j^2(1-\rho_{jk}^2)}, \qquad
\frac{\partial u_{\mu_j}}{\partial\mu_k} = \frac{\partial u_{\mu_k}}{\partial\mu_j} 
= \frac{\rho_{jk}}{\sigma_j\sigma_k(1-\rho_{jk}^2)},  
\notag\\
\frac{\partial u_{\mu_j}}{\partial\sigma_j^2} &= \frac{\partial u_{\sigma_j^2}}{\partial\mu_j} 
= \frac{\rho_{jk}\; z_k-2z_j}{2\sigma_j^3(1-\rho_{jk}^2)},  \qquad 
\frac{\partial u_{\mu_j}}{\partial\sigma_k^2}= \frac{\partial u_{\sigma_k^2}}{\partial\mu_j} 
= \frac{\rho_{jk}\; z_k}{2\sigma_j\sigma_k^2(1-\rho_{jk}^2)}
\notag\\
\frac{\partial u_{\sigma_j^2}}{\partial\sigma_j^2} &= \frac{2(1-\rho_{jk}^2)-4z_j^2+3\rho_{jk}\; z_jz_k}{4\sigma_j^4(1-\rho_{jk}^2)}, 		
\qquad
\frac{\partial u_{\sigma_j^2}}{\partial\sigma_k^2}  = \frac{\partial u_{\sigma_k^2}}{\partial\sigma_j^2} 
= \frac{\rho_{jk}\; z_jz_k}{4\sigma_j^2\sigma_k^2(1-\rho_{jk}^2)}, 
\notag\\
\frac{\partial u_{\mu_j}}{\partial\rho_{jk}} &=  \frac{\partial u_{\rho_{jk}}}{\partial\mu_j} 
= \frac{2\rho_{jk}\; z_j-z_k(1+\rho_{jk}^2)}{\sigma_j(1-\rho_{jk}^2)^2}, \qquad
\frac{\partial u_{\sigma_j^2}}{\partial\rho_{jk}}  = \frac{\partial u_{\rho_{jk}}}{\partial\sigma_j^2} 
= \frac{z_j\big[2\rho_{jk}\; z_j-z_k(1+\rho_{jk}^2)\big]}{2\sigma_j^2(1-\rho_{jk}^2)^2}, 
\notag\\
\frac{\partial u_{\rho_{jk}}}{\partial\rho_{jk}}  &= 
\frac{(1-\rho_{jk}^2)(1+\rho_{jk}^2)-(1+3\rho_{jk}^2)(z_j^2+z_k^2)+2\rho_{jk}(\rho_{jk}^2+3)z_jz_k}{(1-\rho_{jk}^2)^3}. 
\label{eq:hessian}
\end{align}
and
\begin{align}
\frac{\partial m_{\sigma_j^2}}{\partial\sigma_k^2}=\frac{\partial m_{\sigma_k^2}}{\partial\sigma_j^2} &= \frac{\beta^2}{(1+\beta)^2}\frac{\kappa_{jk}(\beta)}{4\sigma_j^2\sigma_k^2}, 
&	\frac{\partial m_{\sigma_j^2}}{\partial\sigma_j^2} &= \frac{\beta(2+\beta)}{(1+\beta)^2}\frac{\kappa_{jk}(\beta)}{4\sigma_j^4},
\notag\\
	\frac{\partial m_{\sigma_j^2}}{\partial\rho_{jk}}=\frac{\partial m_{\rho_{jk}}}{\partial\sigma_j^2} &= -\frac{\beta^2}{(1+\beta)^2}\frac{\rho_{jk}\;\kappa_{jk}(\beta)}{2\sigma_j^2(1-\rho_{jk}^2)}, &
\frac{\partial m_{\rho_{jk}}}{\partial\rho_{jk}} &= \frac{\beta(1+ (1+\beta)\rho_{jk}^2)}{(1+\beta)^2}
\frac{\kappa_{jk}(\beta)}{(1-\rho_{jk}^2)^2}, 
	\label{eq:gradm}
\end{align}
where $m_{\sigma_j^2}$, $m_{\sigma_k^2}$ and $m_{\rho_{jk}}$ denotes the three non-zero terms of $\bm{m}_{jk}$ 
corresponding to $\sigma_j^2$, $\sigma_k^2$ and $\rho_{jk}$, respectively.  
Note that, these satisfy the identity 
\begin{equation*}
\nabla_{\btheta} \bm{m}_{jk}(\btheta) = \int \nabla_{\btheta} \bm{u}_{jk}(\bm{x})f_{\btheta, jk}^{1+\beta}(\bm{x})\,d\bm{x} 
+ (1+\beta)\int \bm{u}_{jk}(\bm{x}) \bm{u}_{jk}^\top(\bm{x}) f_{\btheta, jk}^{1+\beta}(\bm{x})\,d\bm{x},
\end{equation*} 
which reduces at $\beta=0$ to $\nabla_{\btheta} \bm{m}_{jk}({\btheta}_0)=\bm{O}$, consistent with the theoretical expectations.

\subsection{Pairwise McKay Bivariate Gamma Composite Model}
\label{APP:Algo_GammaExp}

We next consider the pairwise composite likelihood constructed from McKay's bivariate gamma distribution 
as discussed in Example \ref{EX:GammaExp}. This model is particularly useful for illustrating that 
the proposed methodology applies beyond elliptical distributions and Gaussian-based dependence structures.

For a pairwise component \((j,k)\), recall that $f_{\boldsymbol{\theta},jk}(x_j,x_k)$ denote the corresponding McKay bivariate 
gamma density  defined in (\ref{EQ:McKay_2density}), with parameter $\btheta=(\delta_1,\ldots,\delta_d,\lambda)^\top\in\mathbb R_+^{d+1}$.
Interestingly, unlike the Gaussian model of Appendix \ref{APP:Algo_MVN}, a single pair's density depends on \emph{many} of the original $\delta_i$'s 
(all of $\delta_1,\ldots,\delta_k$) rather than just two of them. Standard integration using the Gamma function yields 
\begin{equation*}
\int f_{\btheta,jk}^{1+\beta}(\bm{x}_{jk})d\bm{x}_{jk}=\lambda^{2\beta}C_{\beta}\left(\delta_j^*, \delta_k^* - \delta_j^*\right), 
~~~~~~ 1\leq j < k  \leq d,
\end{equation*}
with $C_\beta(\cdot, \cdot)$ being as defined in (\ref{EQ:McKay_C}) and $\delta_j^* = \delta_1 + \cdots + \delta_j$ for all $j=1, \ldots, d$.
This has been used to construct the objective functions for the MCDPDEs of $\btheta$ in Example \ref{EX:GammaExp}.

\subsubsection{Derivation of Estimating equations}
For a pair  $(j,k)$, $1\le j<k\le d$, differentiating $\log f_{\btheta,jk}$ with respect to the components of $\btheta$ gives
\begin{eqnarray}
u_{\delta_a, jk}(\bm{x}_{jk}) = \frac{\partial \log f_{\btheta,jk}(\bm{x}_{jk}) }{\partial\delta_a} &=& \left\{
\begin{array}{ll}
\log\lambda-\psi\left(\delta_j^*\right)+\log x_j, 			& a = 1, \ldots, j, \\
\log\lambda-\psi(\delta_k^* - \delta_j^*)+\log(x_k-x_j),  	& a = (j+1), \ldots, k, \\
0 										& a > k,
\end{array}
\right.
\nonumber\\
u_{\lambda, jk}(\bm{x}_{jk}) = \frac{\partial \log f_{\btheta,jk}(\bm{x}_{jk}) }{\partial\lambda} &=&  \frac{\delta_k^*}{\lambda} - x_k,
\label{EQ:McKay_derivative}
\end{eqnarray}
where $\psi(\cdot)$ denotes the digamma function. 
Using these, we can express the pairwise score function $\bm{u}_{\btheta, jk}(\bm{x})$ compactly as 
\begin{eqnarray}
\bm{u}_{\btheta, jk}(\bm{x}_{jk}) = \left(u_{\delta_1, jk}(\bm{x}_{jk}), \ldots, u_{\delta_d, jk}(\bm{x}_{jk}), u_{\lambda, jk}(\bm{x}_{jk})\right)^\top
= \bm{L}_{jk}\;\widetilde{\bm{u}}_{\btheta, jk}(\bm{x}_{jk}),
\label{EQ:McKay_pairScore}
\end{eqnarray}
where $\widetilde{\bm{u}}_{\btheta, jk}(\bm{x}_{jk}) = \left(u_{\delta_j, jk}(\bm{x}_{jk}), u_{\delta_k, jk}(\bm{x}_{jk}), u_{\lambda, jk}(\bm{x}_{jk})\right)^\top$, and the fixed $0/1$ local-to-global spread matrix is given by 
\begin{equation*}
\bm{L}_{jk}=\begin{pmatrix} \bm{1}_j & \bm{0}_j & \bm{0}_j\\ 
							\bm{0}_{k-j} & \bm{1}_{k-j} & \bm{0}_{k-j}\\ 
							\bm{0}_{d-k} & \bm{0}_{d-k} & \bm{0}_{d-k}\\ 
							0 & 0 & 1 \end{pmatrix}\in\{0,1\}^{(d+1)\times 3}.
\end{equation*}
Since $\bm{L}_{jk}$ does not depend on $\btheta$, every local object corresponding to the three stochastic terms in \eqref{EQ:McKay_pairScore}
spreads into the full space via the same $\bm{L}_{jk}$ multiplications.
In particular, we have $\bm{m}_{jk}(\btheta) = \bm{L}_{jk} \widetilde{\bm{m}}_{jk}(\btheta)$ with 
\begin{equation*}
\widetilde{\bm{m}}_{jk}(\btheta) = \int \left(u_{\delta_j, jk}(\bm{x}_{jk}), u_{\delta_k, jk}(\bm{x}_{jk}), u_{\lambda, jk}(\bm{x}_{jk})\right)^\top f_{\btheta,jk}^{1+\beta}(\bm{x}_{jk})d\bm{x}_{jk}.
\end{equation*}
These three integrals then can be evaluated using the facts that, for $Z\sim\mathrm{Gamma}(\alpha,r)$, $\E[\log Z]=\psi(\alpha)-\log r$, $\Var(\log Z)=\psi_1(\alpha)$, $\Var(Z)=\alpha/r^2$, $\Cov(Z,\log Z)=1/r$,
which yields
\begin{equation}
	\widetilde{\bm{m}}_{jk}(\btheta)=\begin{pmatrix}
		\psi((1+\beta)\delta_j^* - \beta) - \psi(\delta_j^*) - \log(1+\beta)\\ 
		\psi((1+\beta)(\delta_k^*-\delta_j^*))-\psi(\delta_k^* - \delta_j^*)-\log(1+\beta)\\ 
		\frac{2\beta}{(1+\beta)\lambda}  \end{pmatrix}
	\lambda^{2\beta}C_{\beta}\left(\delta_j^*, \delta_k^* - \delta_j^*\right), 
\label{EQ:McKay_mjk}
\end{equation}
where $\psi(\cdot)$ and $\psi_1(\cdot)$ denotes the digamma and trigamma functions, respectively. 
Given all these simplifications, we can now write the minimum CDPD estimating equations for $\beta\geq 0$ as given by 
\begin{align}
 \sum_{i=1}^n & \sum_{1\le j<k\le d}\Big[m_{\delta_a,jk}(\btheta)\ -\ u_{\delta_a,jk}(\boldsymbol{x}_{jk}^{(i)})f_{\btheta,jk}^\beta(\boldsymbol{x}_{jk}^{(i)})\Big] = 0, 
	\qquad a=1,\ldots,d.
	\label{EQ:McKay_delta}	\\
\sum_{i=1}^n& \sum_{1\le j<k\le d}
	\left[\frac{2\beta\lambda^{2\beta-1}}{1+\beta} C_{\beta}(\delta_j^*, \delta_k^*)-  	 
	\left(\frac{\delta_k^*}{\lambda} - x_{ik}\right)f_{\boldsymbol{\theta},{jk}}^\beta(\boldsymbol{x}_{jk}^{(i)})\right]
	=0,
	\label{EQ:McKay_lambda}
\end{align}
where $m_{\delta_a,jk}(\btheta)$ denotes the $a$-th component of $\bm{m}_{jk}(\btheta)$ for $a=1, \ldots, d$.
At $\beta=0$, the weights $f_{\boldsymbol{\theta},jk}^{\beta}$ reduce to one and the above estimating equations coincide 
with those of the classical MCLE. 

\begin{remark}
The estimating equation \eqref{EQ:McKay_lambda} for $\lambda$ admits a direct fixed-point update. 
The equations \eqref{EQ:McKay_delta} for the shape parameters $\delta_j$, however, 
are nonlinear and do not generally possess closed-form solutions. 
In our empirical illustrations, we have computed the MLE, CMLE and MCDPDEs of $\lambda$ and $\bm{\delta}$ by simultaneously 
minimizing the corresponding objective functions using the limited-memory BFGS algorithm of \cite{byrd1995limited}
that allows the necessary box constraints restricting all parameters to be positive.
The implementation is available within our supplementary \texttt{R}-package \texttt{mvdpd} within CRAN. 
\qed
\end{remark}

\subsubsection{Derivation of the asymptotic covariance matrix of (unweighted) MCDPDEs}
Let us first derive the matrix $\bm{J}_\beta({\btheta}_0)$, indirectly to avoid computing expectations of the square of pairwise scores, 
through the following formulation (assuming all weights to be one)
\begin{eqnarray}
\bm{J}_\beta({\btheta}_0) &=&\sum_{1\le j<k\le d} \int \bm{u}_{{\btheta}_0,jk}(\bm{x}_{jk})\bm{u}_{{\btheta}_0,jk}^\top(\bm{x}_{jk}) f_{{\btheta}_0,jk}^{1+\beta}(\bm{x}_{jk})d\bm{x}_{jk} 
\nonumber\\
&=& \frac{1}{1+\beta}\left[\nabla_{\btheta} \bm{m}_{jk}(\btheta) - \int \nabla_{\btheta} \bm{u}_{{\btheta},jk}(\bm{x}_{jk}) f_{{\btheta},jk}^{1+\beta}(\bm{x}_{jk})d\bm{x}_{jk}  \right]_{\btheta={\btheta}_0}.
\nonumber
\end{eqnarray} 
This is because the derivatives of $\bm{m}_{jk}(\btheta)$ and $\nabla_{\btheta} \bm{u}_{{\btheta},jk}(\bm{x}_{jk})$ are much easier to compute for each pairs $1\leq j<k \leq d$.
In particular, $\nabla_{\btheta} \bm{u}_{{\btheta},jk}(\bm{x}_{jk})= \bm{L}_{jk}\;\widetilde{\bm{H}}_{jk}(\btheta)\bm{L}_{jk}^\top$
is a constant independent of $\bm{x}_{jk}$, since the local Hessian $\widetilde{\bm{H}}_{jk}(\btheta)$ is given by 
\begin{equation}
	\widetilde{\bm{H}}_{jk}(\btheta)=\begin{pmatrix}
		-\psi_1(\delta_j^*)&0&1/\lambda
		\\0&-\psi_1(\delta_k^*-\delta_j^*)&1/\lambda
		\\1/\lambda&1/\lambda&-\delta_k^*/\lambda^2
	\end{pmatrix}. 
\label{EQ:McKay_Htilde}
\end{equation}
Therefore, we simply get 
\begin{eqnarray*}
\int \nabla_{\btheta} \bm{u}_{{\btheta},jk}(\bm{x}_{jk}) f_{{\btheta}_0,jk}^{1+\beta}(\bm{x}_{jk})d\bm{x}_{jk} 
&=& \bm{L}_{jk}\widetilde{\bm{H}}_{jk}(\btheta)\bm{L}_{jk}^\top \int f_{{\btheta},jk}^{1+\beta}(\bm{x}_{jk})d\bm{x}_{jk} 
\\
&=& \bm{L}_{jk}\widetilde{\bm{H}}_{jk}(\btheta)\bm{L}_{jk}^\top \; \lambda^{2\beta}C_{\beta}\left(\delta_j^*, \delta_k^* - \delta_j^*\right).
\end{eqnarray*}
Further, differentiating $\bm{m}_{jk}(\btheta)$, we get 
\begin{equation*}
\nabla_{\btheta}\bm{m}_{jk}(\btheta) = \bm{L}_{jk} \nabla_{\btheta}\widetilde{\bm{m}}_{jk}(\btheta)\bm{L}_{jk}^\top
= \lambda^{2\beta}C_{\beta}\left(\delta_j^*, \delta_k^* - \delta_j^*\right) 
\bm{L}_{jk} \widetilde{\bm{M}}_{jk}^{(\beta)}(\btheta)\bm{L}_{jk}^\top
\end{equation*} 
where 
\begin{equation*}
\widetilde{\bm{M}}_{jk}^{(\beta)}(\btheta) = 
\begin{pmatrix}
	b_{\beta}(\delta_j^*) + (1+\beta)a_{\beta}^2(\delta_j^*) & (1+\beta)a_{\beta}(\delta_j^*)a_{\beta}(\delta_k^* - \delta_j^*) & \frac{2\beta}{\lambda}a_{\beta}(\delta_j^*)\\ 
	(1+\beta)a_{\beta}(\delta_j^*)a_{\beta}(\delta_k^* - \delta_j^*) & b_{\beta}(\delta_k^* - \delta_j^*)+ (1+\beta)a_{\beta}^2(\delta_k^* - \delta_j^*) & \frac{2\beta}{\lambda}a_{\beta}(\delta_k^* - \delta_j^*) \\
	\frac{2\beta}{\lambda}a_{\beta}(\delta_j^*)  & \frac{2\beta}{\lambda}a_{\beta}(\delta_k^* - \delta_j^*)   & \frac{2\beta(2\beta-1)}{(1+\beta)\lambda^2}
\end{pmatrix},
\end{equation*}
with $a_{\beta}(\delta) = \psi((1+\beta)\delta - \beta) - \psi(\delta) - \log(1+\beta)$ and  
$b_{\beta}(\delta) = (1+\beta)\psi_1((1+\beta)\delta-\beta) - \psi_1(\delta)$ for any $\delta>0$.  
Combining all these, we finally get 
\begin{equation}
\bm{J}_\beta({\btheta})=\sum_{1\le j<k\le d} \frac{\lambda^{2\beta}}{1+\beta} C_{\beta}\left(\delta_j^*, \delta_k^* - \delta_j^*\right)\; 
\bm{L}_{jk}\Big[\widetilde{\bm{M}}_{jk}^{(\beta)}(\btheta) - \widetilde{\bm{H}}_{jk}(\btheta) \Big]\bm{L}_{jk}^\top. 
\label{EQ:McKay_Jgamma}
\end{equation}

Next, we consider the variability matrix $\bm{K}_\beta(\btheta)$ as defined in \eqref{EQ:Kbeta-model},
which we will re-express as in \eqref{EQ:Kbeta-model-MVN} for the pairwise CL construction
with the double sum ranging over \emph{all ordered pairs} $(S,T)$ of composite components.
We can further simplify it using the matrix $\bm{L}_{jk}$ as 
\begin{equation}
\bm{K}_\beta(\btheta) = \sum_S\sum_T 
\bm{L}_S\Big[\widetilde{C}_{S,T}(\btheta)-\widetilde{\bm{m}}_S(\btheta)\widetilde{\bm{m}}_T^\top(\btheta)\Big]\bm{L}_T^\top, 
\label{EQ:Kbeta_mvGamma}
\end{equation}
where $\widetilde{C}_{S,T}(\btheta) = \E_G[\widetilde{\bm{u}}_{\btheta, S}\widetilde{\bm{u}}_{\btheta, T}^\top f_{\btheta, S}^\beta f_{\btheta, T}^\beta]$.
The own-pair blocks (with $S=T=(j,k)$) can again be computed in  closed form as 
\begin{equation*}
\widetilde{C}_{S,S}(\btheta) = \int \widetilde{\bm{u}}_{\btheta, S}\widetilde{\bm{u}}_{\btheta, S}^\top f_{\btheta, S}^{1+2\beta}
= \frac{\lambda^{4\beta}}{1+2\beta} C_{2\beta}\left(\delta_j^*, \delta_k^* - \delta_j^*\right)\; 
\Big[\widetilde{\bm{M}}_{jk}^{(2\beta)}(\btheta) - \widetilde{\bm{H}}_{jk}(\btheta) \Big]. 
\end{equation*}
For Cross-pair terms (with $S\ne T$), $\widetilde{C}_{S,T}$ requires the joint distribution of $\bm{X}$ restricted to $A=S\cup T$. 
Because $X_1<\cdots<X_d$ are cumulative sums of independent gamma increments $W_j = X_j-X_{j-1}$, 
the joint density of any subset of indices decomposes into independent gamma \emph{gaps}.
More precisely, sorting $A$'s breakpoints $0<b_1<\cdots<b_m$ ($m=|A|\le4$), 
the gaps $\Delta_1=X_{b_1},\Delta_2=X_{b_2}-X_{b_1},\ldots$ are mutually independent $\mathrm{Gamma}(\gamma_i,\lambda)$-distributed  
with $\gamma_i=\delta_{b_{i-1}+1}^*-$, and $U_S,V_S,U_T,V_T$ are each sums of a contiguous block of these gaps.
Unfortunately, these cross terms do not reduce to an elementary (Gamma-function) closed form; 
it remains a perfectly well-defined, low-dimensional integral given by 
\begin{equation}
	\widetilde{C}_{S,T}(\btheta)=\int \widetilde{\bm{u}}_{\btheta, S}(\bm{x}_S)\widetilde{\bm{u}}_{\btheta, T}^\top(\bm{x}_T) 
	f_{\btheta, S}^\beta(\bm{x}_S) f_{\btheta, T}^\beta(\bm{x}_T) \;g_A(\bm{x}_A)\,d\bm{x}_A. 
	\label{eq:crosspairintegral}
\end{equation}
These integrals may be evaluated by drawing $M$ Monte Carlo samples of the (at most $4$) independent gaps from their \emph{true}, 
untilted $\mathrm{Gamma}(\gamma_i,\lambda)$ distribution, forming $U_S,V_S,U_T,V_T$ as the corresponding gap-sums, 
and averaging $u_S\,u_T^\top\,f_S^\beta f_T^\beta$ over the draws. This provides an unbiased, 
arbitrarily-precise  estimate of \eqref{eq:crosspairintegral} as $M\to\infty$. 

Once we have computed $C_{S,T}$ for all $K=\binom d2$ composite components $S,T$ (including $S=T$), 
we can use them to compute $\bm{K}_\beta(\btheta)$  from \eqref{EQ:Kbeta_mvGamma}, and subsequently compute the asymptotic variance matrix of the (unweighted) MCDPDE for this specific model.

\subsubsection{The practical alternative: empirical sandwich estimator}

Because exact cross-pair evaluation costs $O(K^2)=O(d^4)$ Monte Carlo integrations, 
in practice we may use the empirical sandwich estimator following \eqref{EQ:empirical-sandwich} of the main paper.
This involves only $\bm{\psi}_\beta$ and $\nabla_{\btheta}\bm{\psi}_\beta$,
which can be computed efficiently from the previously derived formulas as follows: 
\begin{eqnarray*}
\boldsymbol{\psi}_\beta(\boldsymbol{x}|\boldsymbol{\theta}) 
&=& \sum_{1\le j<k\le d} \bm{L}_{jk}\Big[\widetilde{\bm{m}}_{jk}(\btheta) - 
\widetilde{\boldsymbol{u}}_{\boldsymbol{\theta},jk}(\boldsymbol{x}) f_{\boldsymbol{\theta},jk}^{\beta}(\boldsymbol{x}) \Big]. 
\\
\nabla_{\btheta}\boldsymbol{\psi}_\beta(\boldsymbol{x}|\boldsymbol{\theta}) 
&=& \sum_{1\le j<k\le d} \bm{L}_{jk}\Big[\nabla_{\btheta}\widetilde{\bm{m}}_{jk}(\btheta) - 
\widetilde{\bm{H}}_{jk}(\btheta) f_{\boldsymbol{\theta},jk}^{\beta}(\boldsymbol{x}) -
\beta \widetilde{\boldsymbol{u}}_{\boldsymbol{\theta},jk}(\boldsymbol{x}) \widetilde{\boldsymbol{u}}_{\btheta,jk}^\top(\boldsymbol{x}) 
f_{\boldsymbol{\theta},jk}^{\beta}(\boldsymbol{x}) \Big]\bm{L}_{jk}^\top. 
\end{eqnarray*}

\paragraph{Asymptotic variance of the MDPDEs under multivariate ordered gamma model}

For the standard MDPDEs of $\btheta$ computed directly under the multivariate ordered gamma model (\ref{EQ:density_GammaExp}), 
the associated asymptotic covariance matrix $\bm{J}_\beta^{-1}(\btheta)\bm{K}_\beta(\btheta)\bm{J}_\beta^{-1}(\btheta)$ 
can be computed in closed form as follows, whose implementation is available in our \texttt{R}-package \texttt{mvdpd} for comparison. 

The score function $\bm{u}_{\btheta}(\bm{x})$ associated with the multivariate model density \eqref{EQ:density_GammaExp} is given by
$u_{\delta_j}(\bm{x}) = \log\lambda-\psi(\delta_j)+\log(x_j  - x_{j-1})$ for $j=1, \ldots, d$, and 
$u_\lambda(\bm{x}) = \frac{1}{\lambda}\sum_{j=1}^d\delta_j -x_d$. 
Then, using the moment identity of Gamma distribution as before, we obtain 
\begin{eqnarray}
\bm{m}_\beta(\btheta) &=& \int \bm{u}_{\btheta}(\bm{x}) f_{\btheta}^{1+\beta}(\bm{x})d\bm{x}
=\kappa_\beta(\btheta)\widetilde{\bm{m}}_\beta(\btheta),
\nonumber
\\
\bm{J}_\beta(\btheta) &=& \int \bm{u}_{\btheta}(\bm{x})\bm{u}_{\btheta}^\top(\bm{x}) f_{\btheta}^{1+\beta}(\bm{x})d\bm{x} = 
\kappa_\beta(\btheta)\big[\widetilde{\bm{\Sigma}}_\beta(\btheta) 
+ \widetilde{\bm{m}}_\beta(\btheta)\widetilde{\bm{m}}_\beta^\top(\btheta)\big], 
\label{EQ:McKay_Jfull}
\\
\bm{K}_\beta(\btheta) &=&  \int \bm{u}_{\btheta}(\bm{x})\bm{u}_{\btheta}^\top(\bm{x}) f_{\btheta}^{1+2\beta}(\bm{x})d\bm{x}
-\bm{m}_\beta(\btheta)\bm{m}_\beta^\top(\btheta)
= \bm{J}_{2\beta}(\btheta) - \kappa_\beta^2(\btheta)\widetilde{\bm{m}}_\beta(\btheta)\widetilde{\bm{m}}_\beta^\top(\btheta),
\nonumber
\end{eqnarray}
where $\kappa_\beta(\btheta) = \lambda^{d\beta} \prod\limits_{j=1}^d  
\frac{\Gamma(\delta_j(1+\beta)-\beta)}{\Gamma(\delta_j)^{1+\beta}(1+\beta)^{\delta_j(1+\beta)-\beta}}$,
and 
\begin{equation}
	\widetilde{\bm{m}}_\beta(\btheta) = \begin{pmatrix}
		\psi(\delta_1(1+\beta)-\beta)-\psi(\delta_1)-\log(1+\beta)
		\\ \vdots\\ 
		\psi(\delta_d(1+\beta)-\beta)-\psi(\delta_d)-\log(1+\beta)
		\\ 
		\frac{d\beta}{(1+\beta)\lambda}
	\end{pmatrix}, 
\label{EQ:McKay_mfull}
\end{equation}
\begin{equation}
	\widetilde{\bm{\Sigma}}_\beta(\btheta) = \begin{pmatrix} 
		\psi_1(\delta_1(1+\beta)-\beta) & \hdots & 0 & -\frac{1}{\lambda(1+\beta)}\\ 
		& \ddots & & \vdots\\ 
		0 & \hdots & \psi_1(\delta_d(1+\beta)-\beta) & -\frac{1}{\lambda(1+\beta)}\\ 
		-\frac{1}{\lambda(1+\beta)} & \cdots & -\frac{1}{\lambda(1+\beta)} &  \frac{1}{\lambda^2(1+\beta)^2}\sum_{j=1}^d(\delta_j(1+\beta)-\beta) 
	\end{pmatrix}. \nonumber
\label{EQ:McKay_Sigfull}
\end{equation}
We can also derive explicit inverse of $\bm{J}_\beta$ via Sherman--Morrison--Woodbury theorem as given by 
\begin{equation}
	\bm{J}_\beta^{-1}(\btheta) = \frac{1}{\kappa_\beta(\btheta)}\left[\widetilde{\bm{\Sigma}}_\beta^{-1}(\btheta)  
	-\frac{\widetilde{\bm{\Sigma}}_\beta^{-1}(\btheta)\widetilde{\bm{m}}_\beta(\btheta)\widetilde{\bm{m}}_\beta^\top(\btheta)
	\widetilde{\bm{\Sigma}}_\beta^{-1}(\btheta)}{1+\widetilde{\bm{m}}_\beta^\top(\btheta)\widetilde{\bm{\Sigma}}_\beta^{-1}(\btheta)
	\widetilde{\bm{m}}_\beta(\btheta)}\right], 
\label{EQ:McKay_Jinv}
\end{equation}
where each element of the inverse of the arrow matrix $\widetilde{\bm{\Sigma}}_\beta$ can also be computed via its Schur complement
$c = \frac{1}{\lambda^2(1+\beta)^2}\left[\sum_{j=1}^d(\delta_j(1+\beta)-\beta) - \sum_{j=1}^d\frac{1}{\psi_{1,j}}\right]$ with $\psi_{1,j} = \psi_1(\delta_j(1+\beta)-\beta)$ for all $j\geq 1$, and is given by 
\begin{eqnarray}
	\left[\widetilde{\bm{\Sigma}}_\beta^{-1}(\btheta)\right]_{jk} =\left\{
	\begin{array}{ll}
\frac{1}{\psi_{1,j}}+\frac{1}{(1+\beta)^2\lambda^2c\,\psi_{1,j}^2} & \mbox{ if } k=j \in\{1, \ldots, d\},\\
\frac{1}{(1+\beta)^2\lambda^2 c\,\psi_{1,j}\psi_{1,k}} &  \mbox{ if } k, j\in\{1, \ldots, d\},\; k\neq j,\\
\frac{1}{(1+\beta) \lambda c\,\psi_{1,j}}, & \mbox{ if }j \in\{1, \ldots, d\},\; k=d+1,\\
\frac1c, & \mbox{ if }j = k=d+1.
	\end{array}
	\right.
\end{eqnarray}

At $\beta=0$, we get $\kappa_0=1$, $\widetilde{\bm{m}}_0=\bm{0}_{d+1}$,
and hence $\bm{J}_0=\bm{K}_0=\widetilde{\bm{\Sigma}}_0=diag(\psi_1(\delta_1),\ldots,\psi_1(\delta_d))$ bordered by $-1/\lambda$ and $\sum_j\delta_j/\lambda^2$, which is exactly the classical Fisher information of $d$ independent $\mathrm{Gamma}(\delta_j,\lambda)$ variables sharing a common rate $\lambda$, recovering the ordinary MLE.

\section{Proof of Results}
\label{APP:Proofs}

\subsection{Proof of Theorem \ref{THM:MCDPDE_asymp}}
\label{APP:Proof_Asymp}

The proof follows along the same line as the proof of asymptotic properties of ordinary MDPDEs available in \citet{Basu2011}.
We first prove that, with probability tending to 1, there exists a sequence of solutions (MCDPDE) to 
the estimating equation \eqref{EQ:MCDPDE_EstEqn} and this sequence of MCDPDs is indeed consistent for the MCDPDF 
$\boldsymbol{\theta}_\beta^\ast = \boldsymbol{T}_\beta(G)$ at the true distribution $G$ which may be outside the model family. 
For this purpose, let us fix $\beta\geq 0$ and denote the empirical and population objective functions as 
\begin{equation*}
H_{n,\beta} (\btheta) = \frac{1}{n} \sum_{i=1}^n V_{\beta}(\boldsymbol{x}_i|{\boldsymbol{\theta}}), 
\mbox{ and }
H_{\beta} (\btheta) = \_G\left[ V_{\beta}(\boldsymbol{x}_i|{\boldsymbol{\theta}})\right], 
\end{equation*}
respectively, and study the behavior of $H_{n,\beta}(\btheta)$ on a sphere $Q_a$ with the center at ${\btheta}_\beta^\ast$ and radius $a$.
Expanding $H_{n, \beta}(\btheta)$ using Taylor series around  ${\btheta}_\beta^\ast$, and scaling by $(1+\beta)$, we get 
\begin{align*}
	&\frac{H_{n, \beta}({\btheta}_\beta^\ast)-H_{n, \beta}(\btheta)}{1+\beta} 
	=-\frac{H_{n, \beta}(\theta) - H_{n, \beta}({\btheta}_\beta^\ast)}{1+\beta} \\
&= -\bm{A}^\top (\btheta - {\btheta}_\beta^\ast)   +\frac{1}{2} (\btheta - {\btheta}_\beta^\ast)^\top\bm{B}(\btheta - {\btheta}_\beta^\ast) 
\\
&\qquad +\frac{1}{6}\sum_j \sum_k \sum_l (\theta_j-{\theta}_{\beta,j}^\ast)(\theta_k-{\theta}_{\beta,k}^\ast)
(\theta_l-{\theta}_{\beta,l}^\ast)	\frac{1}{n} \sum_{i=1}^n \gamma_{jkl}(\bm{x}_i)M_{jkl}(\bm{x}_i)  \\
	&= S_1 + S_2 + S_3
\end{align*}
where $\bm{A} = \frac{1}{1+\beta} \nabla H_{n, \beta}(\btheta)|_{\btheta = {\btheta}_\beta^\ast}$, 
$\bm{B} = -\frac{1}{1+\beta} \nabla^2 H_{n, \beta}(\btheta)|_{\btheta = {\btheta}_\beta^\ast}$,
${\theta}_{\beta,j}^\ast$ denotes the $j$-th component of ${\btheta}_{\beta}^\ast$, 
$M_{jkl}$ is the $G$-integrable envelop functions dominating the indicated third order partial derivative of $V_\beta(\bm{x}\mid\btheta)$ 
from Assumption \ref{ASS:Asmp4}, and $0 \leq |\gamma_{jkl}(x)| \leq 1$  for all $j, k, l = 1, \ldots, p$. 

For the first term $S_1$, by the weak law of large numbers (WLLN), we get 
\begin{eqnarray}
	\bm{A} & = &  \sum_{k=1}^K w_k \int \bm{u}_{{\btheta}_\beta^\ast,k}(\bm{x}) f_{{\btheta}_\beta^\ast,k}^{1+\beta}(\bm{x})d\bm{x} 
- \sum_{k=1}^K w_k \frac{1}{n}\sum_{i=1}^n \boldsymbol{u}_{{\btheta}_\beta^\ast,k}(\boldsymbol{x}_i) 
f_{{\btheta}_\beta^\ast,k}^{\beta}(\boldsymbol{x}_i)
\nonumber\\
	& \xrightarrow{\mathcal{P}} & \sum_{k=1}^K w_k 
	\int \boldsymbol{u}_{{\btheta}_\beta^\ast,k}(\bm{x}) f_{{\btheta}_\beta^\ast,k}^{1+\beta}(\bm{x})d\bm{x} 
- \sum_{k=1}^K w_k \int  \boldsymbol{u}_{{\btheta}_\beta^\ast,k}(\boldsymbol{x}) f_{{\btheta}_\beta^\ast,k}^{\beta}(\boldsymbol{x})d\bm{x}
\nonumber \\
	& = & \frac{1}{1+\beta}  \nabla   H_\beta(\btheta)|_{\btheta={\btheta}_\beta^\ast}  = \bm{0}.\nonumber 
\end{eqnarray}
Thus, for any given $a>0$, each elements of $\bm{A}$ can be bounded in absolute value by $a^2$,
so that we get $|S_1| < p a^3$ with probability tending to 1.

By similar arguments using WLLN, we get $\bm{B} \xrightarrow{\mathcal{P}} -\bm{J}_{\beta}({\btheta}_\beta^\ast)$.
We then write  
\begin{equation*}
2 S_2 =
(\btheta - {\btheta}_\beta^\ast)^\top [-\bm{J}_{\beta}({\btheta}_\beta^\ast)](\btheta - {\btheta}_\beta^\ast) +
(\btheta - {\btheta}_\beta^\ast)^\top [\bm{B} + \bm{J}_{\beta}({\btheta}_\beta^\ast)] (\btheta - {\btheta}_\beta^\ast).
\end{equation*} 
The second term in the above is a sum of finitely many components each of which goes to zero in probability,
and so it the whole term. Thus, for any given $a>0$, its absolute value remains bounded by $p^2 a^3$ with probability tending to 1.  
The first term is, by Assumption \ref{ASS:Asmp5}, a negative (nonrandom) quadratic form in $(\btheta - {\btheta}_\beta^\ast)$.  
By an orthogonal transformation, this can be reduced to a diagonal form $\sum_{j=1}^p \lambda_i \xi^2_i\leq \lambda_1 a^2$,  
where $\lambda_p \leq \lambda_{p-1} \leq \ldots \leq \lambda_1 < 0$ are eigenvalues of $-\bm{J}_{\beta}({\btheta}_\beta^\ast)$
and $\sum_i \xi^2_i = a^2$ for all $\btheta$ on the surface of $Q_a$.
Combining these two bounds, there exist $c > 0$, $a_0 > 0$ such that $S_2 < - c a^2$ with probability tending to 1,  for all $a < a_0$.

Finally, with probability tending to 1, $|\frac{1}{n}\sum M_{jkl}(\bm{x}_i)| < 2 \E_G[M_{jkl}(\bm{X})]$ for all $i\geq 1$, 
and hence $|S_3| < ba^3$ on $Q_a$ where $b = \frac{1}{3} \sum\sum\sum \E_G[M_{jkl}(\bm{X})]<\infty$ by Assumption \ref{ASS:Asmp4}.  
Combining the bounds of the three terms, we finally get
\begin{equation*}
\max(S_1 + S_2 + S_3) < -ca^2 + (b+p)a^3 < 0, 
\end{equation*}
for any  $a < c/(b+p)$. Therefore,  $H_{n,\beta}(\theta) > H_{n,\beta}({\btheta}_\beta^\ast)$ for all points $\btheta$ 
on the surface of $Q_a$  with probability tending to one, for any sufficiently small $a>0$.
Then, $H_{n, \beta}(\theta)$ would have a local minimum in the interior of $Q_a$, satisfying \eqref{EQ:MCDPDE_EstEqn}.  
In other words, for all sufficiently small $a>0$, there exists a sequence of roots $\widehat{\theta}(a)$ to
the MCDSPDE estimating equation \eqref{EQ:MCDPDE_EstEqn}  within $Q_a$, with probability tending to 1 as $n \rightarrow \infty$. 
As these roots belong to $Q_a$, we already have 
$P(||\widehat{\btheta}(a) - {\btheta}_\beta^\ast||_2 < a) \rightarrow 1$ as $n\rightarrow\infty$.  

It now remains to show that we can determine such a sequence independently of $a$.
This can be done easily by considering the root $\widehat{\btheta}_{n,\beta}$ closest to ${\btheta}_\beta^\ast$,
which exists due to the continuity of $H_{n, \beta}(\btheta)$ as a function of $\btheta$.  
Then, clearly $P(||\widehat{\btheta}_{n,\beta} - {\btheta}_\beta^\ast||_2 < a) \rightarrow 1$ for all $a > 0$,
showing the existence of a sequence of consistent solutions $\widehat{\btheta}_{n,\beta}$ to 
the MCDSPDE estimating equation \eqref{EQ:MCDPDE_EstEqn} with probability tending to 1.

Finally, we now prove of the asymptotic normality of these MCDPDEs $\widehat{\btheta}_{n,\beta}$ via standard Taylor series arguments.  
Let $H_{n, \beta}^{j}$, $H_{n, \beta}^{jk}$ and $H_{n, \beta}^{jkl}$ denote the first second and third partial derivatives of 
$H_{n, \beta}$ with respect to the indicated indices of $\btheta$ (in the superscript). 
Expanding $H_{n, \beta}^j(\btheta)$ about ${\btheta}_\beta^\ast$ and evaluating at $\btheta=\widehat{\btheta}_{n,\beta}$, we get 
\begin{equation*}
0 = H_{n, \beta}^j(\widehat{\btheta}_{n,\beta}) = H_{n, \beta}^j({\btheta}_\beta^\ast) 
+ \sum_k (\theta_k - {\theta}_{\beta, k}^\ast) H_{n, \beta}^{jk}({\btheta}_\beta^\ast) 
+ \frac{1}{2} \sum_k \sum_l (\theta_k - {\theta}_{\beta, k}^\ast) (\theta_l - {\theta}_{\beta, l}^\ast) H_{n, \beta}^{jkl}({\btheta}_*),
\end{equation*} 
where  ${\btheta}_*$ is a point on the line segment connecting $\widehat{\btheta}_{n,\beta}$ and ${\btheta}_\beta^\ast$. 
Rearranging and scaling suitably, we get the vector form
\begin{equation}	
\bm{J}_{n} \left[\sqrt{n}(\widehat{\btheta}_{n,\beta} - {\btheta}_{\beta}^\ast)\right] 
= -\sqrt{n} \nabla H_{n,\beta}({\btheta}_\beta^\ast), 
\label{EQ:9Lehmann_1}
\end{equation}
with the $(j,k)$-th elements of the matrix $\bm{J}_n$ being 
\begin{align*}
	J_{jk,n} &= \left[H^{jk}_{n,\beta}({\btheta}_\beta^\ast) 
	+ \frac{1}{2}\sum_{l=1}^p(\hat{\theta}_l -{\theta}_{\beta, l}^\ast)H^{jkl}_{n,\beta}(\theta^*)\right],
	~~ j, k = 1, \ldots, p.
\end{align*}

Now, from Assumption \ref{ASS:Asmp4}, each $H^{jkl}_n(\theta^*)$ is bounded with probability tending to 1, 
so that the consistency of $\widehat{\btheta}_{n,\beta}$ implies that the second term of $J_{jk,n}$ converges to zero in probability.  
Further, following our earlier arguments with WLLN, we see that $H^{jk}_n({\btheta}_\beta^\ast)$ converges in probability to
$(1+\beta)$ times the $(j,k)$-th entry of the matrix $\bm{J}_{\beta}({\btheta}_\beta^\ast)$.
Thus, we get $\bm{J}_n \xrightarrow{\mathcal{P}}  (1 + \beta) \bm{J}_{\beta}({\btheta}_\beta^\ast)$. Next, we note that 
\begin{equation}
-\sqrt{n} \nabla H_{n,\beta}({\btheta}_\beta^\ast) 
= - \sqrt{n} \frac{(1+\beta)}{n} \sum_{i=1}^n \bm{\psi}_{\beta}(\bm{x}_i|{\btheta}_\beta^\ast).
	\label{EQ:9T_jn}
\end{equation} 
But, by the definition of ${\btheta}_\beta^\ast$, we have  $\E_G[\bm{\psi}_{\beta}(\bm{x}_i|{\btheta}_\beta^\ast)] = \bm{0}$,
and therefore $\Cov_G[\bm{\psi}_{\beta}(\bm{x}_i|{\btheta}_\beta^\ast)] =  \bm{K}_{\beta}({\btheta}_\beta^\ast)$
for each $i = 1, \ldots, n$. 
It then follows by an application of the central limit theorem, together with Equations~(\ref{EQ:9T_jn}), 
that the limiting distribution of $-\sqrt{n} \nabla H_{n,\beta}({\btheta}_\beta^\ast) $ is multivariate normal with mean vector zero
and covariance matrix $(1+\beta)^2 \bm{K}_{\beta}({\btheta}_\beta^\ast)$.

Therefore, by Lemma 4.1 of \citet[][page 432--433]{lehmann1983theory}, the solutions, 
$\sqrt{n}(\widehat{\btheta}_{n,\beta} - {\btheta}_{\beta}^\ast)$,  of the linear equations in (\ref{EQ:9Lehmann_1}) 
has the limiting distribution same as the distribution of the solution $\bm{Y}$ of the linear equations 
$(1+\beta)\bm{J}_\beta({\btheta}_{\beta}^\ast)\bm{Y} = \bm{Z}$ where $\bm{Z}\sim N_p(\bm{0}_p, (1+\beta)^2\bm{K}_{\beta}({\btheta}_\beta^\ast))$. Under Assumption \ref{ASS:Asmp5},
this solution $\bm{Y}$ is explicitly given by $\bm{Y} = (1+\beta)^{-1}\bm{J}_{\beta}^{-1}({\btheta}_\beta^\ast)\bm{Z}$.
Therefore, its distribution, and hence the asymptotic distribution of $\sqrt{n}(\widehat{\btheta}_{n,\beta} - {\btheta}_{\beta}^\ast)$, 
is multivariate normal with mean vector $\bm{0}_p$ and covariance matrix 
\begin{align}
\left[(1+\beta)^{-1}\bm{J}_{\beta}^{-1}({\btheta}_\beta^\ast)\right] \cdot\left[(1 + \beta)^2 \bm{K}_{\beta}({\btheta}_\beta^\ast)\right] 
\cdot\left[(1+\beta)^{-1}\bm{J}_{\beta}^{-1}({\btheta}_\beta^\ast)\right] 
= \bm{J}_{\beta}^{-1}({\btheta}_\beta^\ast)\bm{K}_{\beta}({\btheta}_\beta^\ast) \bm{J}_{\beta}^{-1}({\btheta}_\beta^\ast).
\nonumber
\label{EQ:9asymp_variance}
\end{align}
This completes the proof of Theorem \ref{THM:MCDPDE_asymp}. 

\subsection{Proof of Proposition \ref{PROP:sandwich}}
\label{APP:Proof-Varest}

By Theorem \ref{THM:MCDPDE_asymp}, we have that $\widehat{\btheta}_{n,\beta}\xrightarrow{\mathcal{P}}{\btheta}_\beta^*$.

Let us first consider the matrix 
\begin{equation*}
\widehat{\boldsymbol J}_\beta = \frac1n \sum_{i=1}^n \nabla  \bm\psi_\beta (\bm X_i|\widehat{\btheta}_{n,\beta}).
\end{equation*}
By Assumption \ref{ASS:Asmp6}, the derivatives of the estimating function are dominated by an integrable envelope. 
Therefore, the uniform law of large numbers gives
\begin{equation*}
\sup_{\btheta\in\mathcal N} \left\| \frac1n \sum_{i=1}^n \nabla \bm\psi_\beta(\bm X_i|\btheta) -
\E_G[\nabla \bm\psi_\beta(\bm X|\btheta)] \right\| \xrightarrow{\mathcal{P}} \bm{0}.
\end{equation*}
Now, evaluating at the consistent estimator $\widehat{\btheta}_{n,\beta}$ and applying the continuous mapping theorem,
we get 
\begin{equation*}
\widehat{\boldsymbol J}_\beta \xrightarrow{\mathcal{P}} 
\E_G[ \nabla \bm\psi_\beta(\bm X|{\btheta}_\beta^*)] = \boldsymbol J_\beta({\btheta}_\beta^*).
\end{equation*}

Similarly, we note that 
$\widehat{\boldsymbol K}_\beta = \frac1n \sum_{i=1}^n \bm\psi_\beta(\bm X_i|\widehat{\btheta}_{n,\beta}) 
\bm\psi_\beta^\top (\bm X_i|\widehat{\btheta}_{n,\beta})$.
Then, the envelope condition in Assumption \ref{ASS:Asmp6} ensures that the corresponding class is Glivenko--Cantelli,
and hence 
\begin{equation*}
\widehat{\boldsymbol K}_\beta \xrightarrow{\mathcal{P}} 
\E_G[ \bm\psi_\beta(\bm X|{\btheta}_\beta^*) \bm\psi_\beta^\top(\bm X|{\btheta}_\beta^*)]
= \boldsymbol K_\beta({\btheta}_\beta^*).
\end{equation*}

Since $\boldsymbol J_\beta({\btheta}_\beta^*)$ is nonsingular by Assumption \ref{ASS:Asmp5}, 
the result follows from another application of the continuous mapping theorem.

\section{Performances under benchmark Gaussian datasets}
\label{APP:normal_data_examples}

We compare the proposed MCDPDE with the state-of-the-art cellMCD estimator using three benchmark datasets from 
the \texttt{R} package \texttt{robustbase} \citep{robustbase}. These datasets were also used by the developers of 
the \texttt{cellWise} package to illustrate the performance of the cellMCD estimator \\
\noindent (\texttt{cran.r-project.org/web/packages/cellWise/vignettes/cellMCD\_examples.html}). \\
All of the following three datasets are known to contain outlying observations.

\begin{itemize}
	\item \texttt{alcohol}: A dataset consisting of $d=7$ physicochemical characteristics measured on $n=44$ aliphatic alcohols, 
	collected to study alcohol solubility using molecular descriptors.
	
	\item \texttt{milk}: A dataset containing measurements on $d=8$ compositional variables for $n=86$ milk samples.
	
	\item \texttt{bushfire}: A dataset containing satellite measurements in $d=5$ spectral frequency bands for $n=38$ image pixels, 
 	recorded to identify bushfire scars.
\end{itemize}

To facilitate comparison with the published cellMCD analyses, each dataset is first transformed toward multivariate Gaussian
using the same \texttt{tranfo} function in the \texttt{cellWise} package. This procedure applies robustly estimated 
Yeo-Johnson transformations based on reweighted maximum likelihood estimation \citep{raymaekers2024transforming}. 
We then fit the cellMCD estimator, the Gaussian MLE/CMLE, and the proposed MCDPDE based on the pairwise Gaussian likelihood 
with tuning parameters $\beta\in\{0.1,0.3,0.5\}$.

\begin{figure}[!ht]
	\centering
		\includegraphics[width=0.49\linewidth]{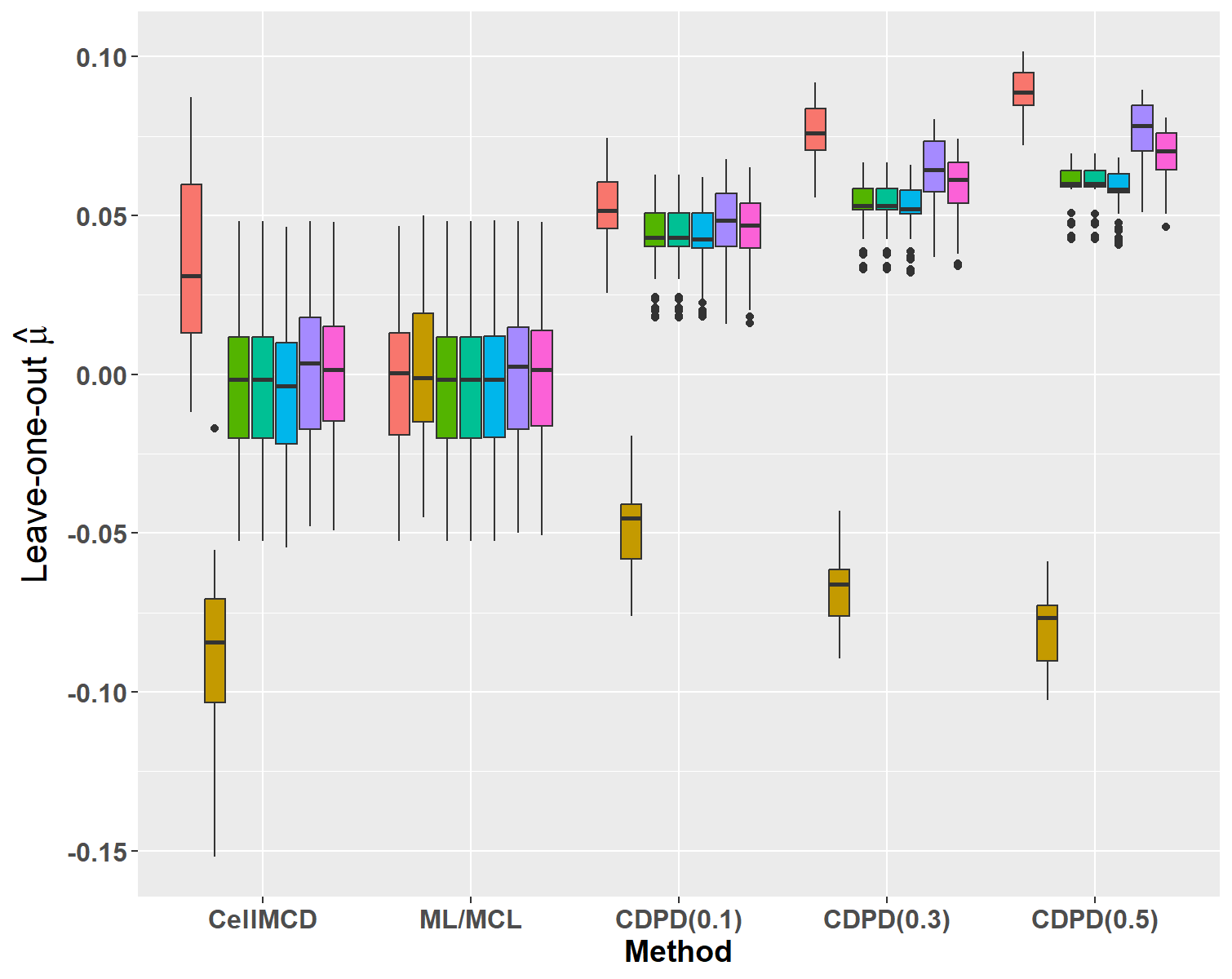}
		\includegraphics[width=0.49\linewidth]{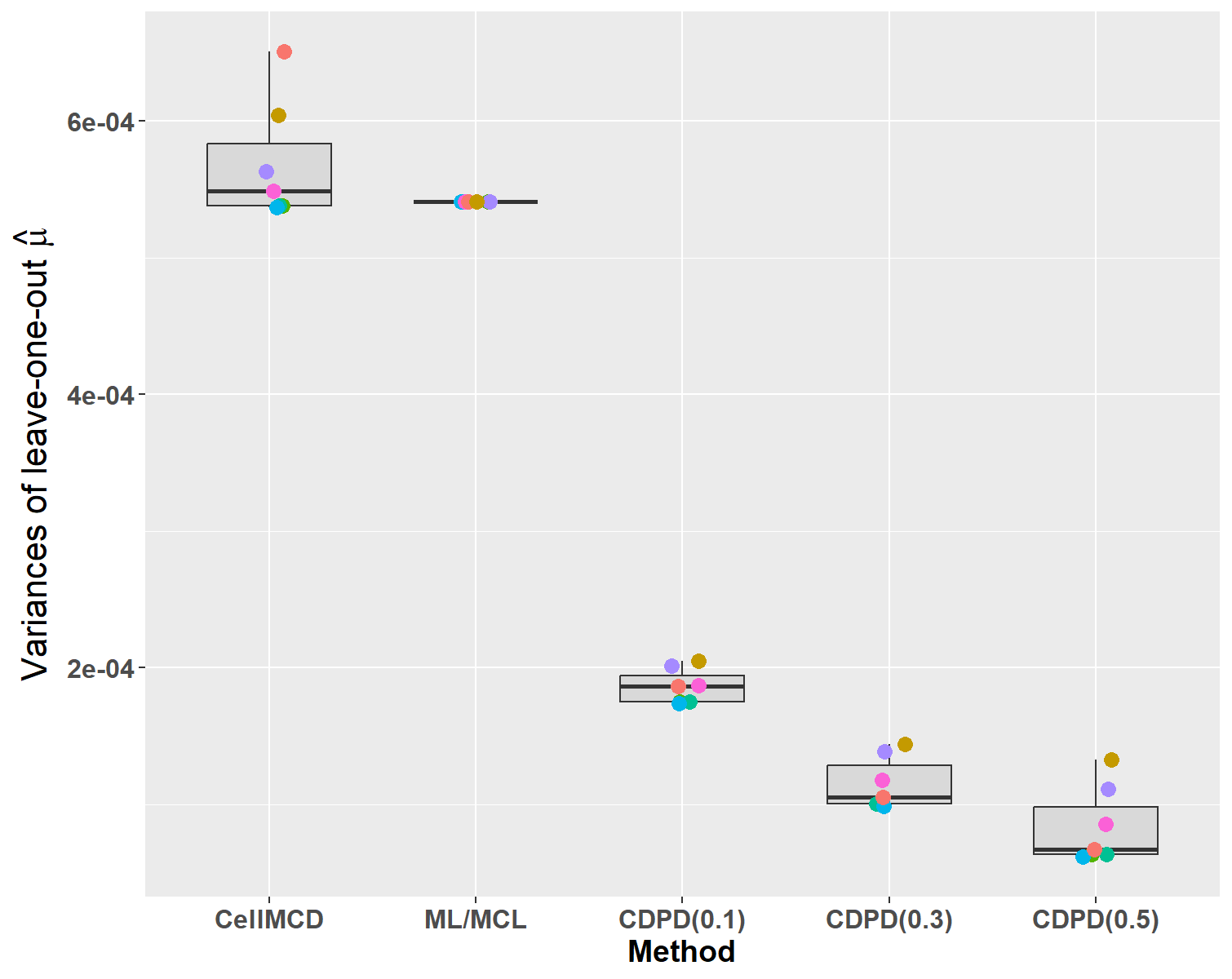}
		\includegraphics[width=0.49\linewidth]{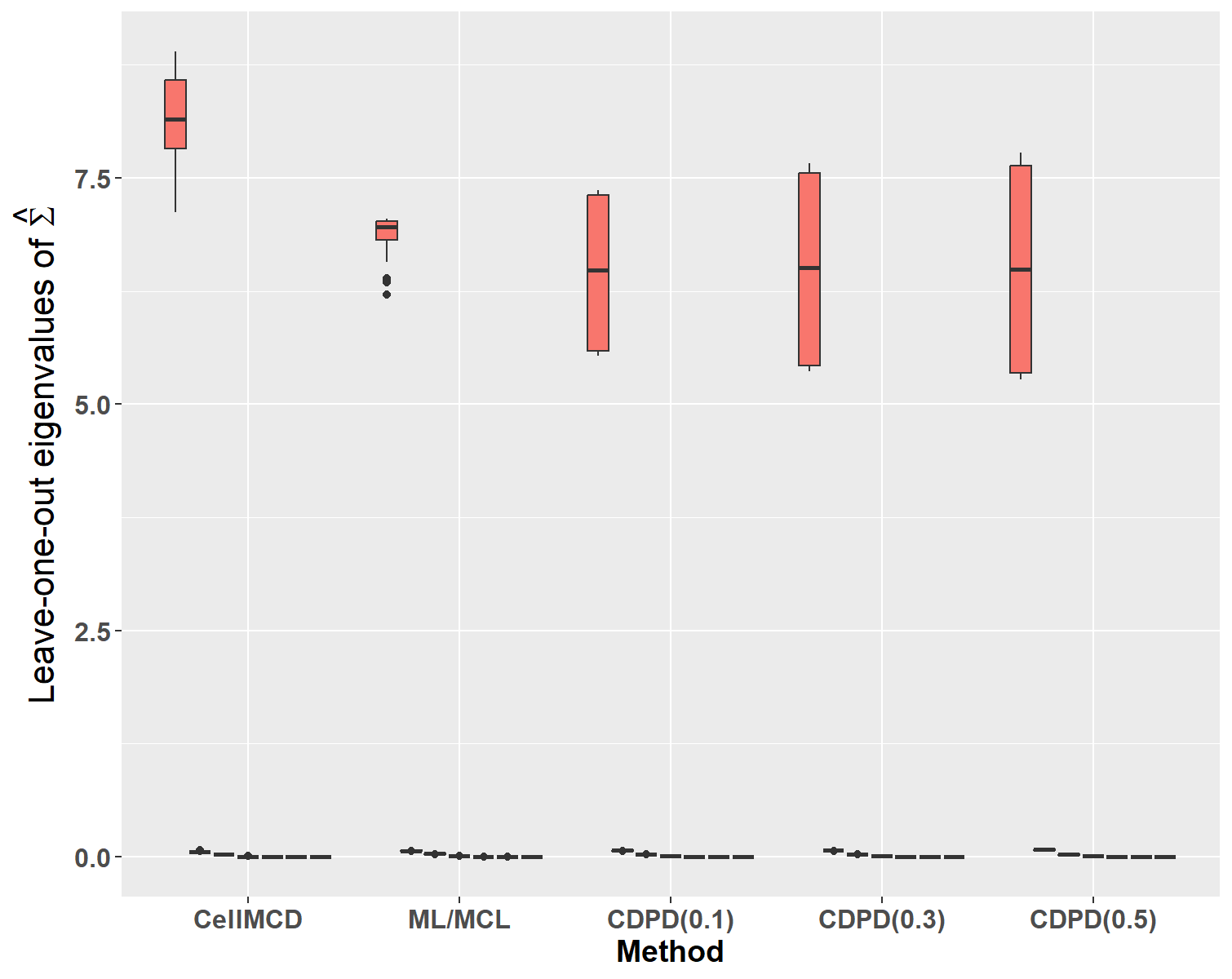}
		\includegraphics[width=0.49\linewidth]{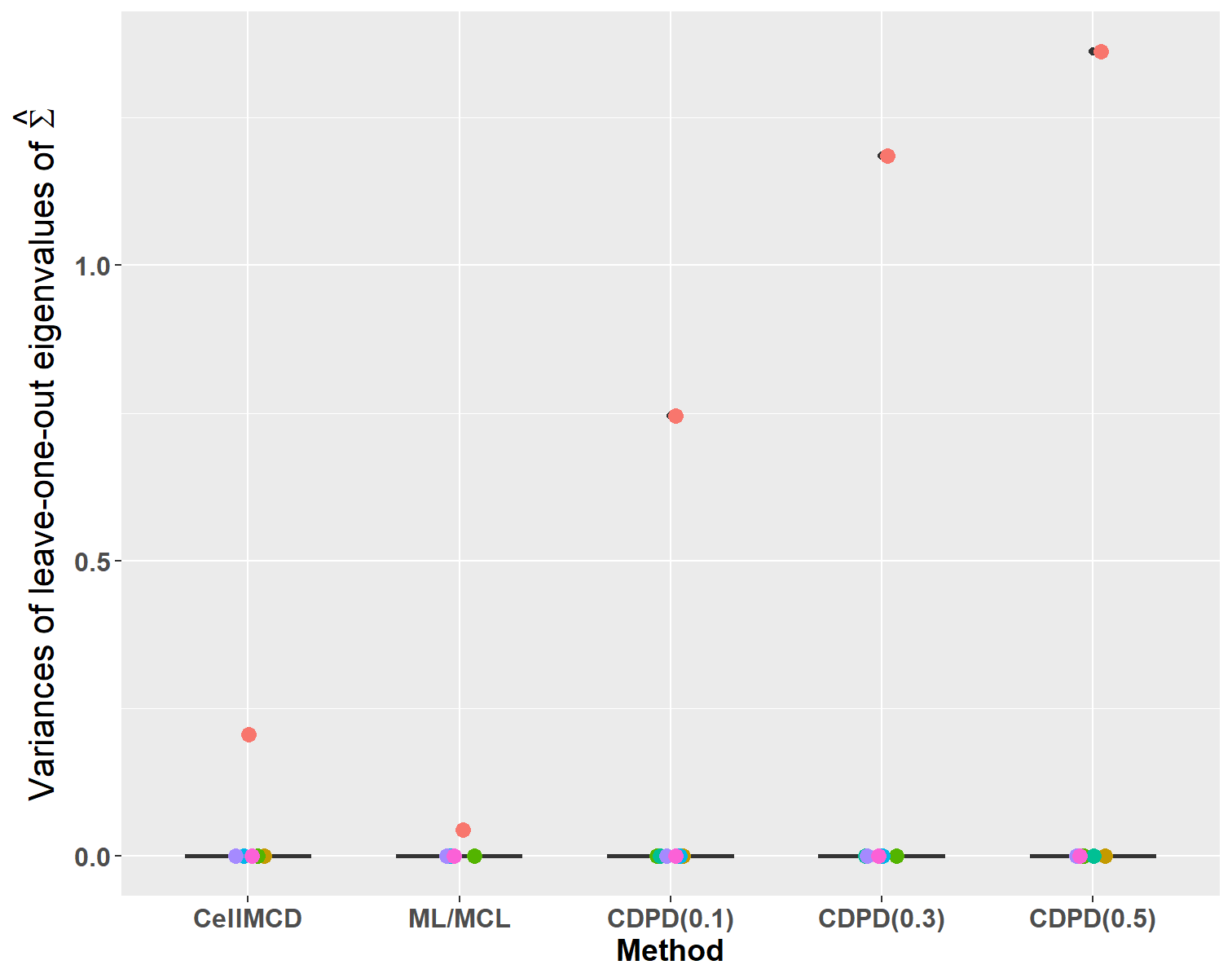}
	 \caption{Leave-one-out estimates of the location components and the eigenvalues of the scatter matrix for the \texttt{alcohol} dataset. Different colors correspond to different variables or eigenvalues.}
	\label{FIG:alcohol}
\end{figure}

\begin{figure}[!ht]
	\centering
	\includegraphics[width=0.49\linewidth]{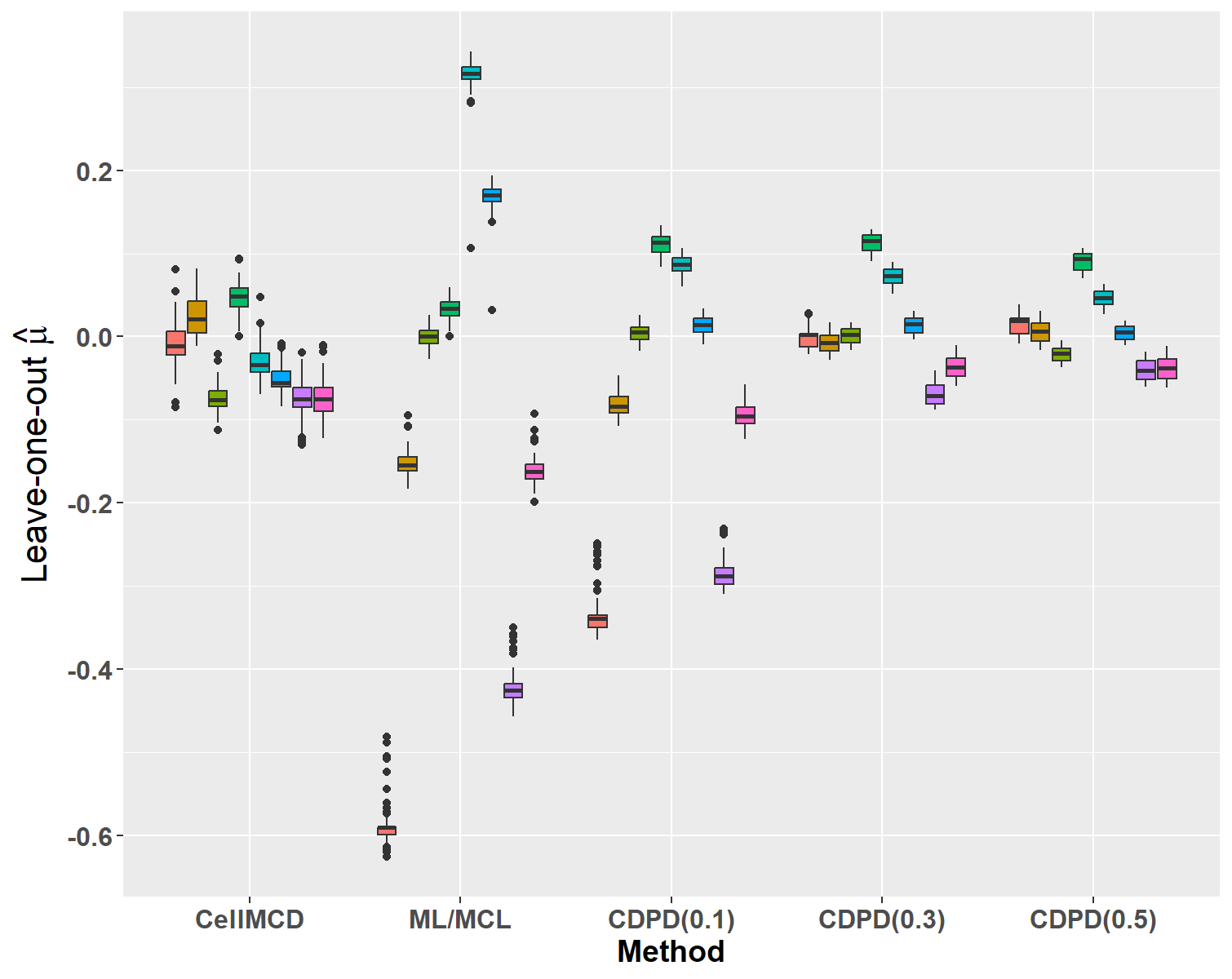}
	\includegraphics[width=0.49\linewidth]{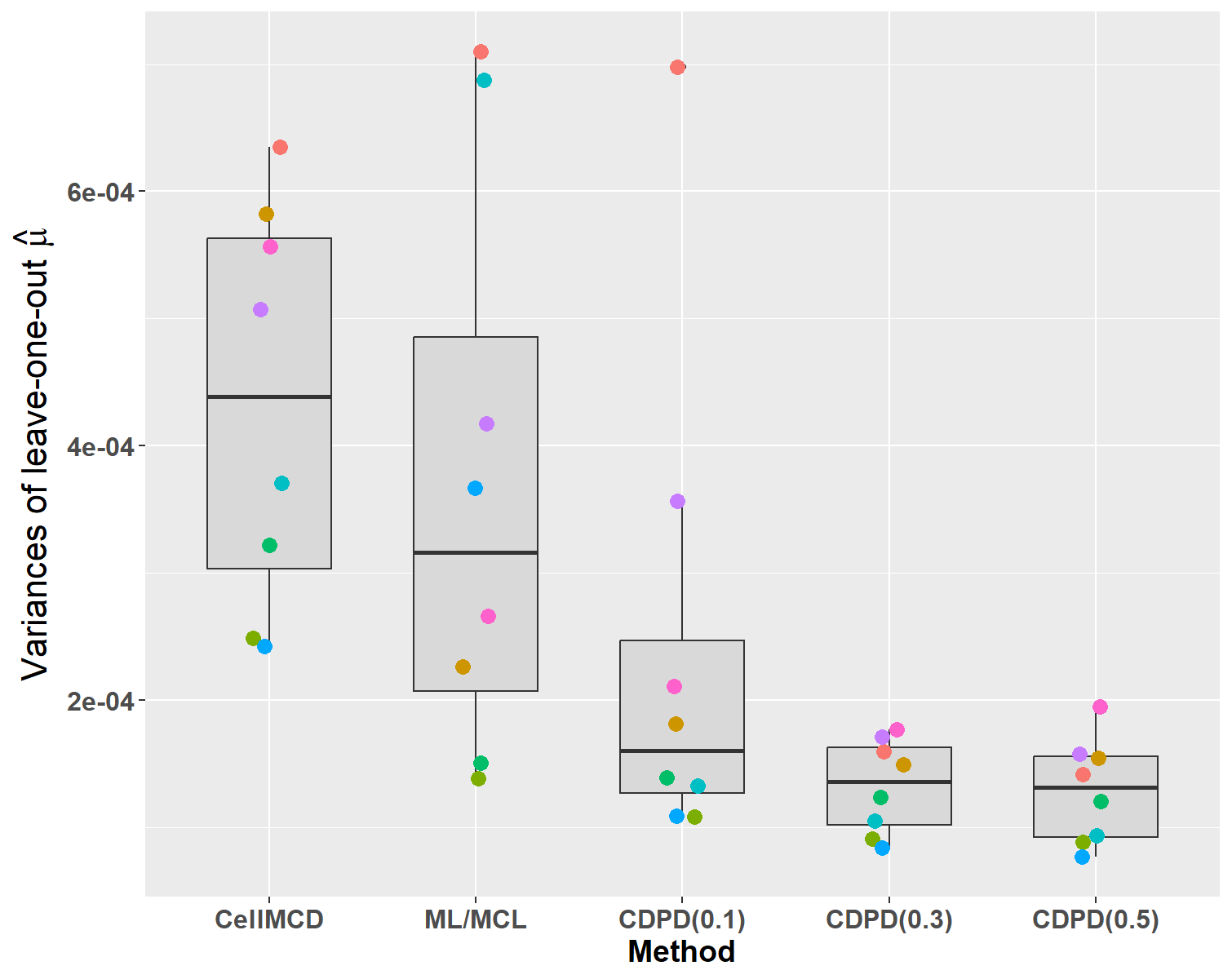}
	\includegraphics[width=0.49\linewidth]{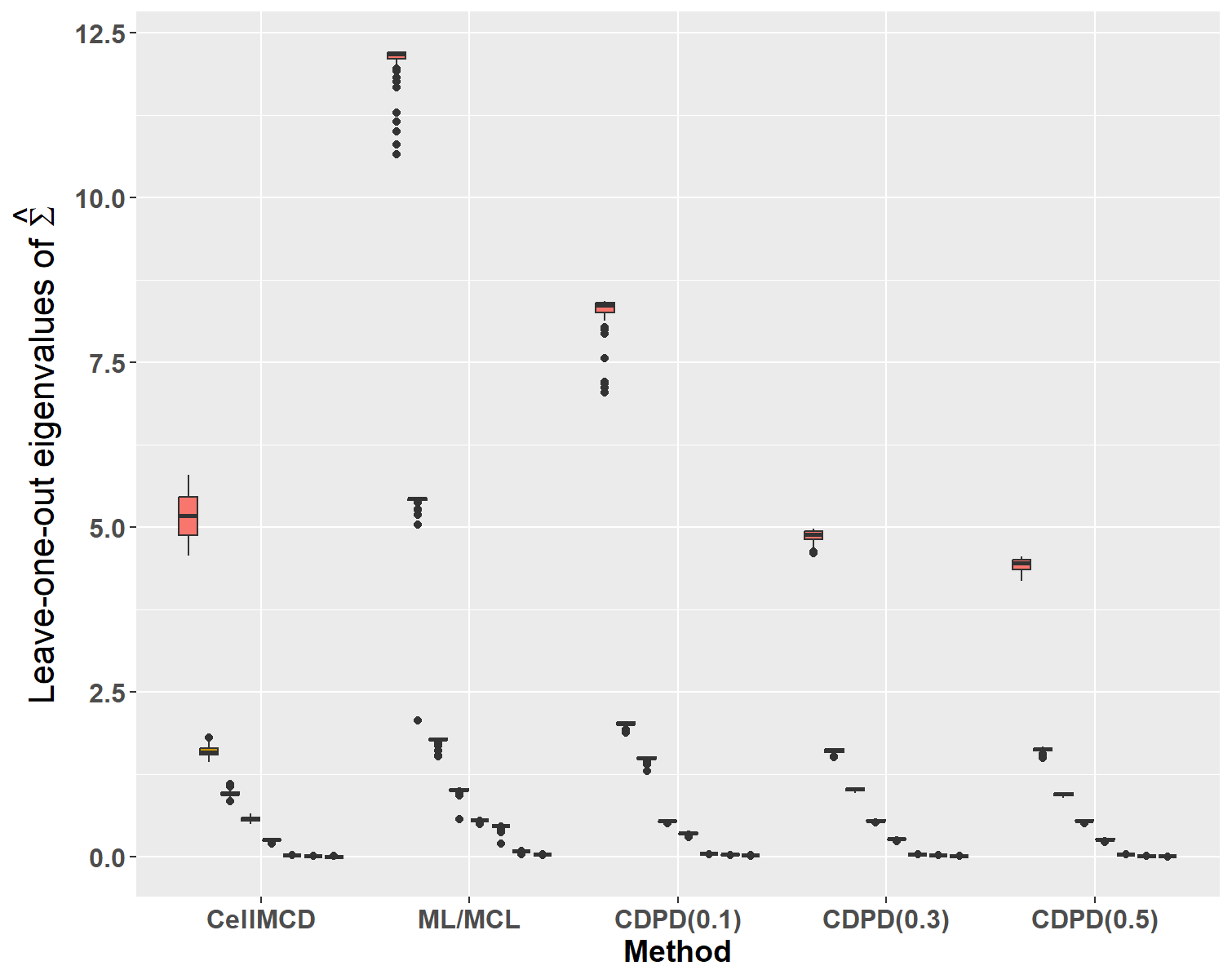}
	\includegraphics[width=0.49\linewidth]{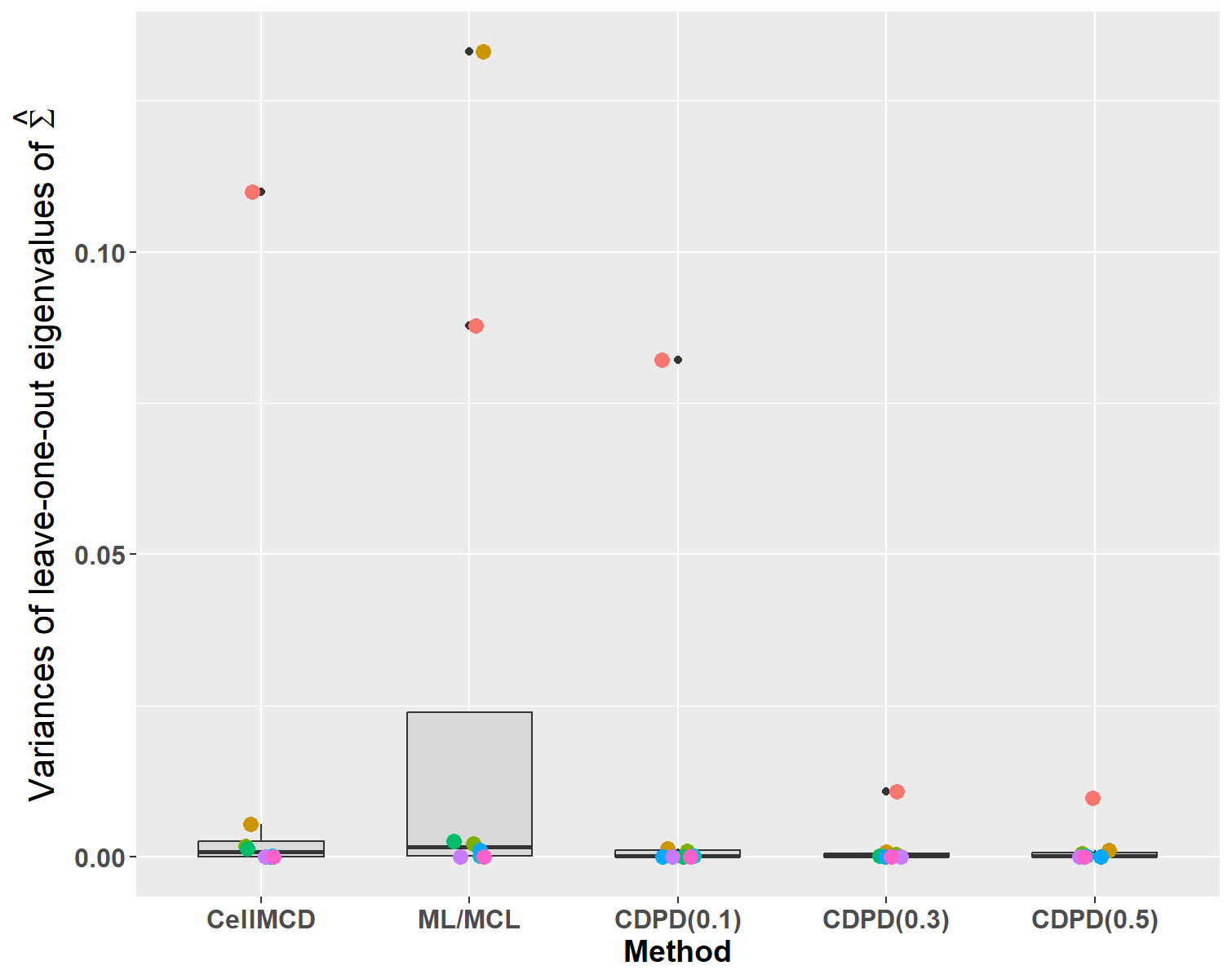}
	\caption{Leave-one-out estimates of the location components and the eigenvalues of the scatter matrix for the \texttt{milk} dataset. Different colors correspond to different variables or eigenvalues.}
	\label{FIG:milk}
\end{figure}

For each method, we examine the estimated location vector $\widehat{\bm{\mu}}$ 
and the eigenvalues of the estimated scatter matrix $\widehat{\bm{\Sigma}}$. To assess the stability of these estimators, 
we perform a leave-one-out (LOO) analysis by recomputing the estimates after deleting each observation in turn, 
yielding $n$ LOO estimates of $\widehat{\bm{\mu}}$ and $\widehat{\bm{\Sigma}}$ for each dataset. Figures~\ref{FIG:alcohol}--\ref{FIG:bushfire} display the resulting LOO estimates 
together with their empirical variances across the $n$ deletion samples.
Lower LOO variability provides empirical evidence of reduced local sensitivity, complementing the IF analysis in Section \ref{SEC:IF}.

\begin{figure}[!ht]
	\centering
	\includegraphics[width=0.49\linewidth]{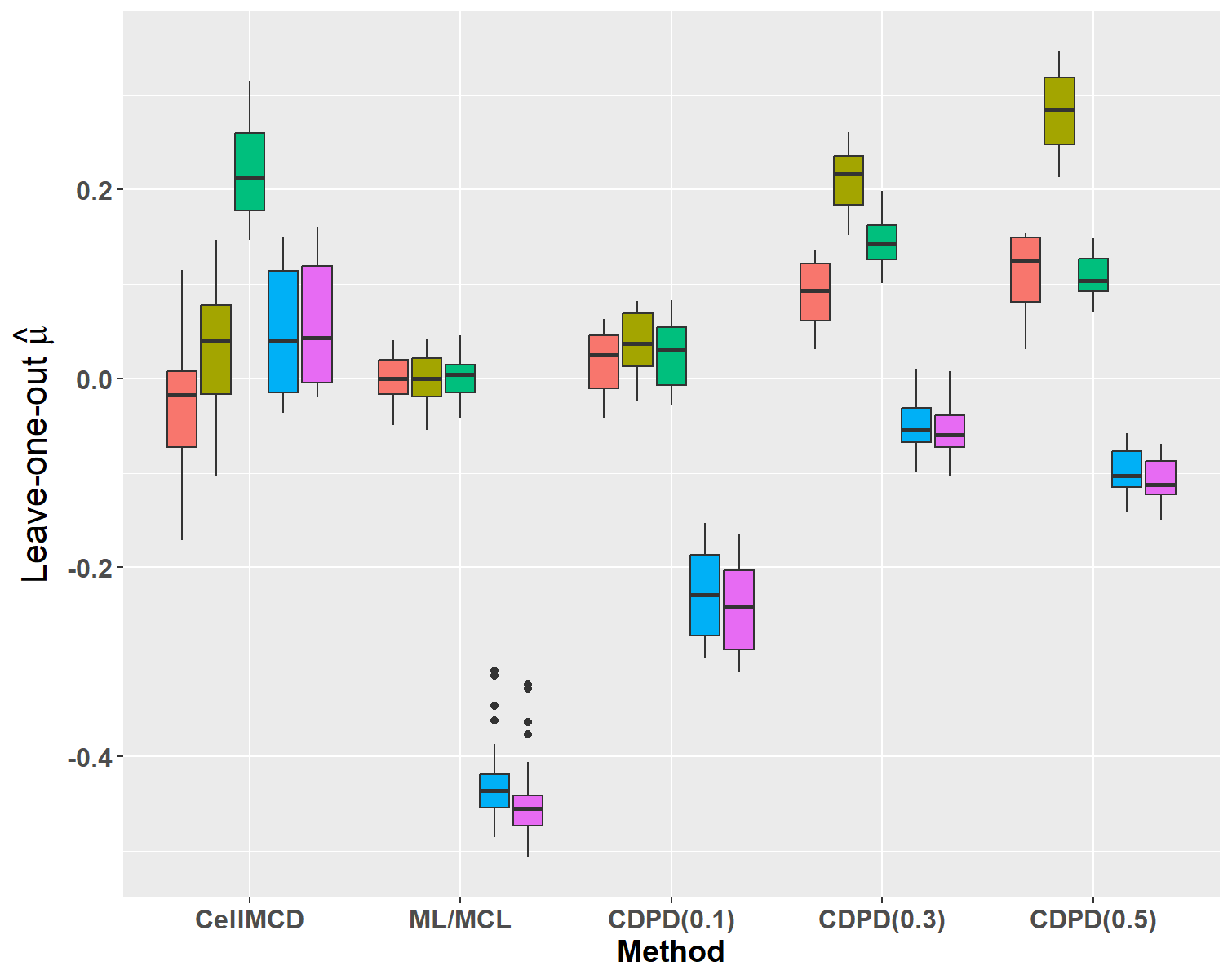}
	\includegraphics[width=0.49\linewidth]{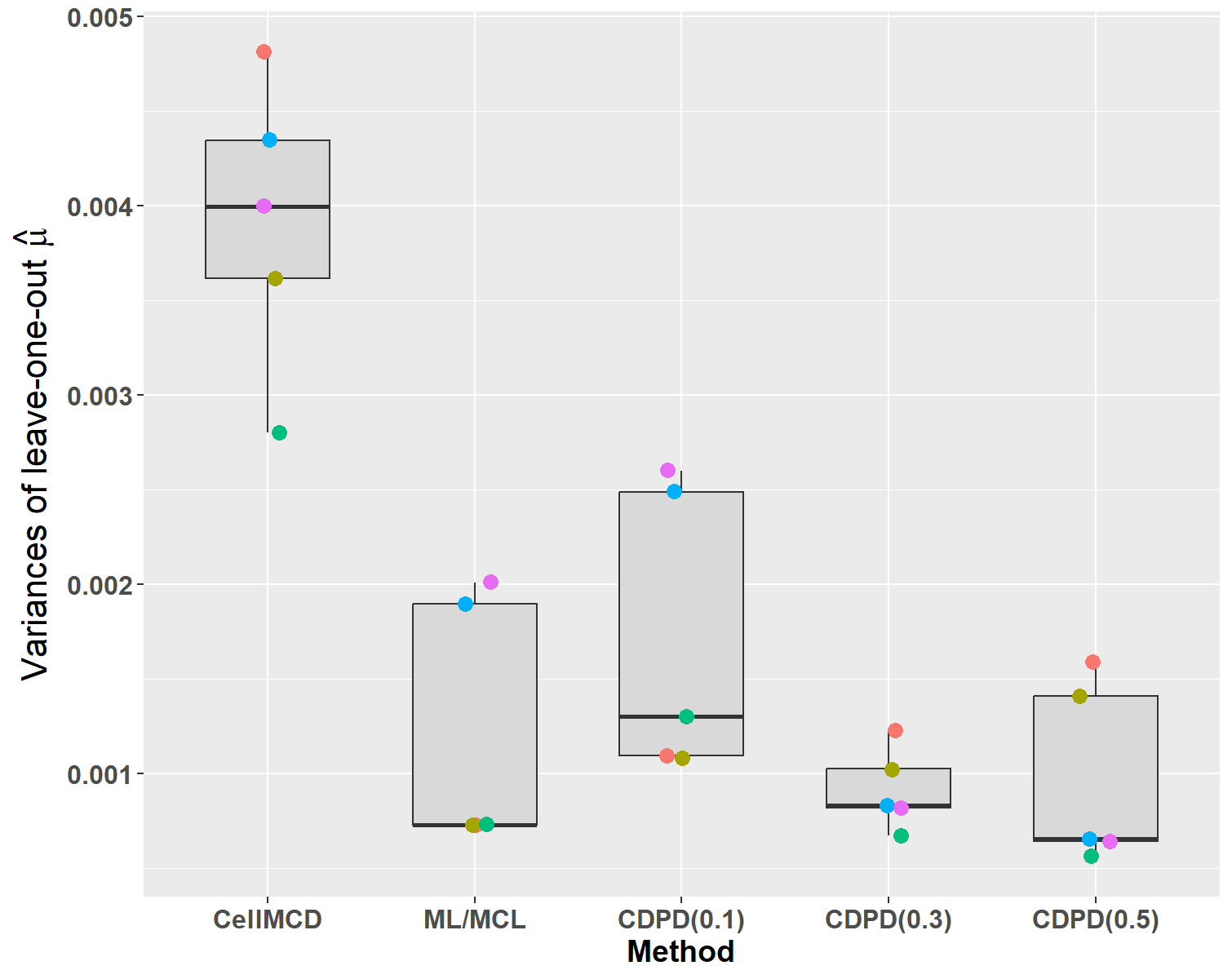}
	\includegraphics[width=0.49\linewidth]{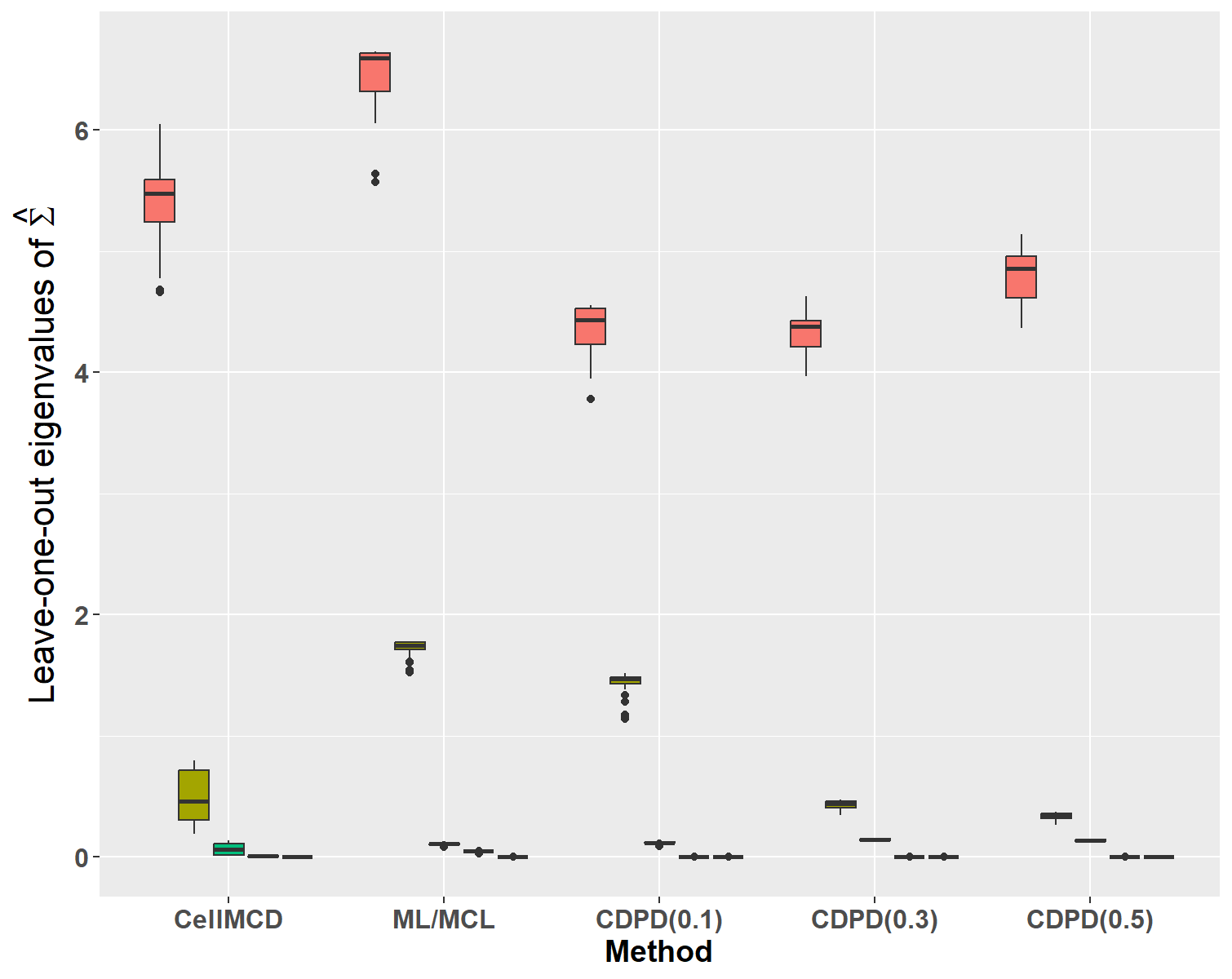}
	\includegraphics[width=0.49\linewidth]{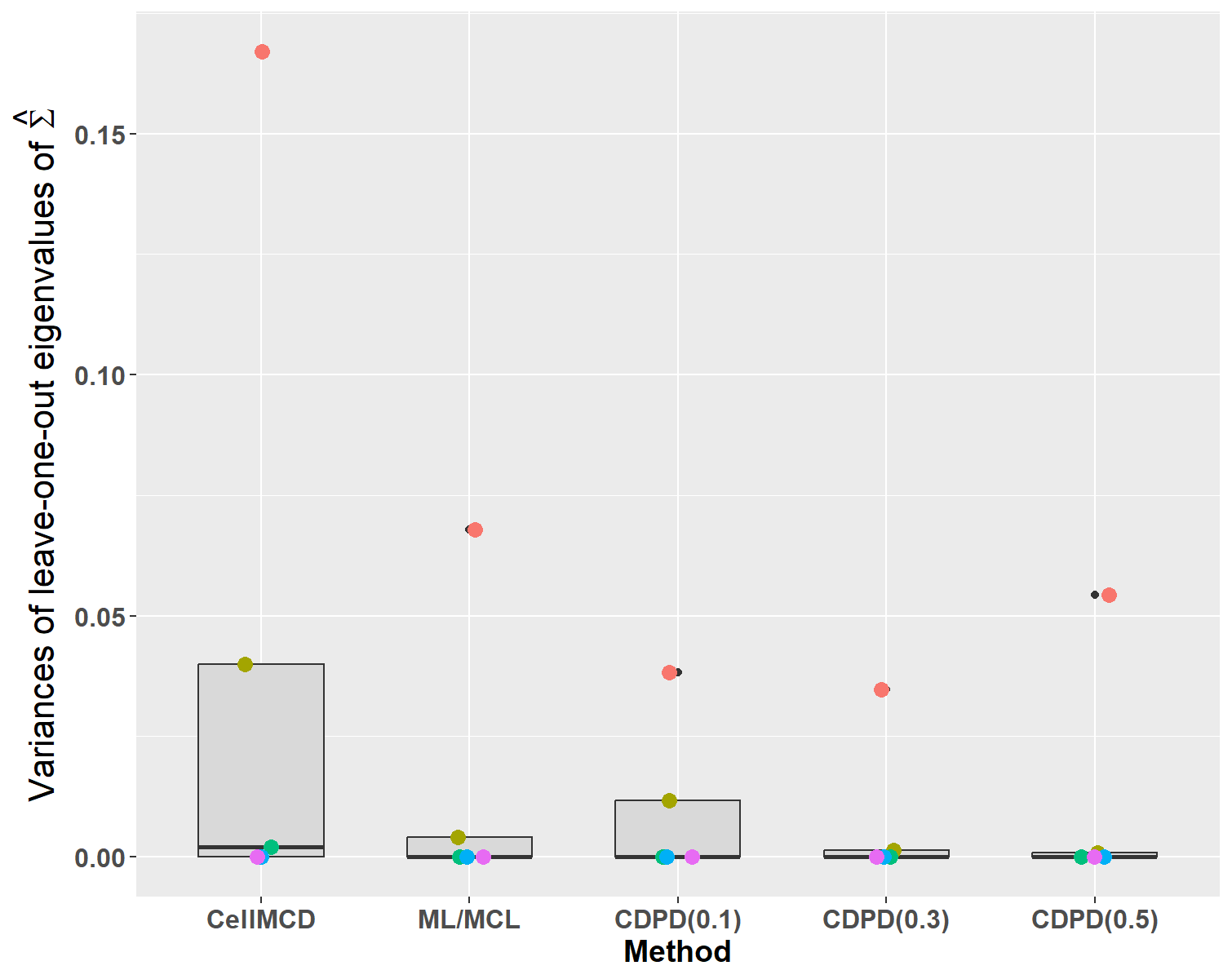}
	\caption{Leave-one-out estimates of the location components and the eigenvalues of the scatter matrix for the \texttt{bushfire} dataset. Different colors correspond to different variables or eigenvalues.}
	\label{FIG:bushfire}
\end{figure}

Figures~\ref{FIG:alcohol}--\ref{FIG:bushfire} show that the LOO variability of the estimated location vector $\bm{\mu}$ 
is substantially smaller for the proposed MCDPDE than for both the cellMCD estimator and the Gaussian MLE/CMLE across all three datasets. 
Moreover, the empirical variability decreases monotonically as the tuning parameter $\beta$ increases, 
consistent with the increasing robustness of the MCDPDE.
A similar pattern is observed for the estimated eigenvalues of the scatter matrix in the \texttt{milk} and \texttt{bushfire} datasets, 
where the proposed estimator exhibits noticeably greater stability than its competitors. 
For the \texttt{alcohol} dataset, the estimated scatter matrix is nearly singular, 
with one dominant eigenvalue and the remaining eigenvalues close to zero, a phenomenon previously reported for the cellMCD estimator. 
The proposed MCDPDE exhibits the same qualitative behavior while producing slightly larger variability for the leading eigenvalue. 
Nevertheless, the variability remains comparable to that of the cellMCD estimator and  the Gaussian MLE.

We additionally consider the \texttt{toxicity} dataset from the \texttt{robustbase} package, 
which contains $n=38$ observations on $d=10$ molecular descriptors used to predict the toxicity of carboxylic acids. 
This dataset is particularly challenging because, even after transformation toward central normality using 
the robust preprocessing described above, the cellMCD algorithm fails to produce robust location and scatter estimates 
due to the presence of a large number of marginal outliers. In contrast, the proposed MCDPDE can still be computed without difficulty, 
both with and without the robust transformation, yielding the location and scatter estimates reported in Tables 
\ref{TAB:toxicity-est} and \ref{TAB:toxicity-estWO}, respectively. 
This example illustrates that, beyond its improved robustness and stability, the proposed MCDPDE possesses a broader computational domain 
of applicability, producing reliable estimates even for datasets where the cellMCD algorithm is unable to return a valid solution.

\begin{table}[!ht]
	\centering
	\caption{MCDPDs obtained under the Gaussian model for \texttt{toxicity} data for different $\beta$ 
		(The case $\beta=0$ gives the MLE/MCLE) after robust transformation.}
	\begin{tabular}{lrrrr|lrrrr}
		\hline
		$\beta\rightarrow$ & 0 & 0.1 & 0.3 & 0.5 & 	$\beta\rightarrow$ & 0 & 0.1 & 0.3 & 0.5\\ 
		\hline
		\multicolumn{5}{c|}{\textbf{Location components}} & \multicolumn{5}{c}{\textbf{Variance components}}\\\hline
$\mu_1$    & $-$0.08 & $-$0.08 	& $-$0.09 	& $-$0.07 	& $\sigma_{1}^2$  & 0.89 & 1.19 & 1.20 & 1.17\\
$\mu_2$    & $-$0.00 & 0.02 	& 0.06 		& 0.10 		&  $\sigma_{2}^2$ & 0.74 & 1.01 & 1.02 & 1.01\\
$\mu_3$    & 0.00 	 & 0.02 	& 0.07 		& 0.12 		& $\sigma_{3}^2$  & 0.74 & 1.04 & 1.15 & 1.22\\
$\mu_4$    & $-$0.00 & 0.02 	& 0.08 		& 0.13 		& $\sigma_{4}^2$  & 0.74 & 1.04 & 1.10 & 1.15\\
$\mu_5$    & $-$0.07 & $-$0.06 	& $-$0.04 	& $-$0.02	&  $\sigma_{5}^2$ & 0.86 & 1.13 & 1.09 & 1.01\\
$\mu_6$    & $-$0.00 & $-$0.00 	& $-$0.01 	& $-$0.01	& $\sigma_{6}^2$  & 0.74 & 1.03 & 1.11 & 1.17\\ 
$\mu_7$    & $-$0.00 & 0.01 	& 0.02 		& 0.02 		& $\sigma_{7}^2$  & 0.74 & 1.00 & 1.02 & 1.02\\
$\mu_8$    & 0.00 	 & 0.01 	& 0.01 		& 0.01 		& $\sigma_{8}^2$  & 0.74 & 1.03 & 1.11 & 1.19\\
$\mu_9$    & $-$0.00 & $-$0.01 	& $-$0.03 	& $-$0.05 	& $\sigma_{9}^2$  & 0.74 & 1.02 & 1.09 & 1.15\\
$\mu_{10}$ & 0.00 	 & 0.02 	& 0.06 		& 0.11 		& $\sigma_{10}^2$ & 0.74 & 1.06 & 1.21 & 1.36\\
		\hline
		\multicolumn{10}{c}{\textbf{Correlation parameters}}\\
		\hline
		$\rho_{12}$   &  0.83 &  0.86 &  0.92 &  0.93 & 		$\rho_{13}$   &  0.11 &  0.12 &  0.14 &  0.16\\
		$\rho_{14}$   &  0.28 &  0.28 &  0.28 &  0.26 & 		$\rho_{15}$   &  0.30 &  0.32 &  0.39 &  0.44\\
		$\rho_{16}$   &  0.49 &  0.50 &  0.51 &  0.51 & 		$\rho_{17}$   &  0.72 &  0.74 &  0.77 &  0.80\\
		$\rho_{18}$   &  0.48 &  0.50 &  0.53 &  0.54 & 		$\rho_{19}$   &  0.76 &  0.76 &  0.74 &  0.72\\
		$\rho_{1,10}$ &  0.61 &  0.60 &  0.58 &  0.54 & 		$\rho_{23}$   &  0.59 &  0.58 &  0.54 &  0.51\\
		$\rho_{24}$   &  0.73 &  0.73 &  0.73 &  0.73 & 		$\rho_{25}$   &  0.86 &  0.86 &  0.84 &  0.82\\
		$\rho_{26}$   &  0.08 &  0.08 &  0.06 &  0.04 & 		$\rho_{27}$   &  0.23 &  0.22 &  0.21 &  0.21\\
		$\rho_{28}$   &  0.62 &  0.63 &  0.65 &  0.67 & 		$\rho_{29}$   &  0.56 &  0.59 &  0.63 &  0.68\\
		$\rho_{2,10}$ &  0.83 &  0.83 &  0.83 &  0.86 & 		$\rho_{34}$   &  0.17 &  0.17 &  0.14 &  0.11\\
		$\rho_{35}$   &  0.25 &  0.24 &  0.22 &  0.20 & 		$\rho_{36}$   &  0.82 &  0.82 &  0.82 &  0.81\\
		$\rho_{37}$   &  0.83 &  0.85 &  0.86 &  0.86 & 		$\rho_{38}$   & $-$0.13 & $-$0.12 & $-$0.10 & $-$0.08\\
		$\rho_{39}$   & $-$0.04 & $-$0.03 & $-$0.01 &  0.03 & 	$\rho_{3,10}$ & $-$0.60 & $-$0.62 & $-$0.64 & $-$0.66\\
		$\rho_{45}$   & $-$0.58 & $-$0.62 & $-$0.68 & $-$0.73 &	$\rho_{46}$   & $-$0.01 & $-$0.01 & $-$0.02 & $-$0.03\\
		$\rho_{47}$   &  0.19 &  0.21 &  0.24 &  0.27 & 		$\rho_{48}$   &  0.42 &  0.43 &  0.43 &  0.43\\
		$\rho_{49}$   & $-$0.32 & $-$0.30 & $-$0.28 & $-$0.25 &	$\rho_{4,10}$ & $-$0.29 & $-$0.28 & $-$0.23 & $-$0.17\\
		$\rho_{56}$   & $-$0.63 & $-$0.63 & $-$0.64 & $-$0.65 &	$\rho_{57}$   & $-$0.65 & $-$0.68 & $-$0.76 & $-$0.81\\
	    $\rho_{58}$    & $-$0.29 & $-$0.27 & $-$0.23 & $-$0.18 & $\rho_{59}$    &  0.06 &  0.08 &  0.12 &  0.19\\
		$\rho_{5,10}$  &  0.14 &  0.17 &  0.21 &  0.24 &		$\rho_{67}$    &  0.89 &  0.90 &  0.90 &  0.90\\
		$\rho_{68}$    &  0.23 &  0.22 &  0.22 &  0.21 &		$\rho_{69}$    &  0.36 &  0.36 &  0.34 &  0.32\\
		$\rho_{6,10}$  &  0.86 &  0.87 &  0.88 &  0.88 &		$\rho_{78}$    &  0.80 &  0.81 &  0.82 &  0.84\\
		$\rho_{79}$    &  0.50 &  0.50 &  0.50 &  0.50 &		$\rho_{7,10}$  &  0.07 &  0.06 &  0.05 &  0.03\\
		$\rho_{89}$    &  0.10 &  0.10 &  0.10 &  0.09 &		$\rho_{8,10}$  & $-$0.68 & $-$0.71 & $-$0.73 & $-$0.75\\
		$\rho_{9,10}$  & $-$0.68 & $-$0.69 & $-$0.69 & $-$0.70\\
		\hline
	\end{tabular}
	\label{TAB:toxicity-est}
\end{table}

\begin{table}[!ht]
	\centering
	\caption{MCDPDs obtained under the Gaussian model for \texttt{toxicity} data for different $\beta$ 
		(The case $\beta=0$ gives the MLE/MCLE) without transformation.}
\begin{tabular}{lrrrr|lrrrr}
	\hline
	$\beta\rightarrow$ & 0 & 0.1 & 0.3 & 0.5 & 	$\beta\rightarrow$ & 0 & 0.1 & 0.3 & 0.5\\ 
	\hline
	\multicolumn{5}{c|}{\textbf{Location components}} & \multicolumn{5}{c}{\textbf{Variance components}}\\\hline
	$\mu_1$   & $-$0.16 & $-$0.18 & $-$0.21 & $-$0.24 & 	$\sigma_{1}^2$      &   0.12 &   0.16 &   0.16 &   0.17\\
	$\mu_2$   &  1.67 &  1.63 &  1.66 &  1.61 & 	$\sigma_{2}^2$      &   1.34 &   1.85 &   1.98 &   2.20\\
	$\mu_3$   &  0.65 &  0.64 &  0.90 &  0.90 &		$\sigma_{3}^2$      &   0.15 &   0.21 &   0.01 &   0.01\\
	$\mu_4$   &  4.34 &  4.39 &  4.65 &  4.63 &		$\sigma_{4}^2$      &   0.53 &   0.61 &   0.06 &   0.07\\
	$\mu_5$   & 17.19 & 17.18 & 17.23 & 17.22 &		$\sigma_{5}^2$      &   0.34 &   0.45 &   0.42 &   0.44\\
	$\mu_6$   &  3.12 &  2.98 &  2.84 &  2.67 &		$\sigma_{6}^2$      &   6.97 &   9.23 &  9.35 &  9.59\\
	$\mu_7$   & 34.27 & 33.85 & 33.46 & 33.11 &		$\sigma_{7}^2$      & 114.75 & 151.44 & 148.81 & 155.45\\
	$\mu_8$   &  1.45 &  1.45 &  1.45 &  1.45 &		$\sigma_{8}^2$      &   0.00 &   0.00 &   0.00 &   0.00\\
	$\mu_9$   & 38.14 & 38.25 & 32.99 & 33.25 &		$\sigma_{9}^2$      &  69.62 &  87.83 &   2.08 &   2.52\\
	$\mu_{10}$&  6.81 &  1.46 &  1.46 &  1.46 &		$\sigma_{10}^2$   &  31.97 &   0.00 &   0.00 &   0.00\\
	\hline
	\multicolumn{10}{c}{\textbf{Correlation parameters}}\\
	 \hline
	 $\rho_{12}$  &  0.82 &  0.84 &  0.92 &  0.94 &  $\rho_{13}$  & $-$0.10 & $-$0.10 &  0.51 &  0.52\\
	 $\rho_{14}$  &  0.15 &  0.16 &  0.42 &  0.42 &	 $\rho_{15}$  & $-$0.00 &  0.06 &  0.57 &  0.61\\
	 $\rho_{16}$  &  0.37 &  0.36 &  0.48 &  0.49 &	 $\rho_{17}$  &  0.57 &  0.59 &  0.86 &  0.86\\
	 $\rho_{18}$  &  0.43 &  0.45 &  0.52 &  0.60 &	 $\rho_{19}$  &  0.75 &  0.75 &  0.73 &  0.73\\
	 $\rho_{1,10}$&  0.56 &  0.57 &  0.38 &  0.40 &	 $\rho_{23}$  &  0.64 &  0.62 &  0.32 &  0.31\\
	 $\rho_{24}$  &  0.72 &  0.74 &  0.74 &  0.75 &	 $\rho_{25}$  &  0.87 &  0.86 &  0.85 &  0.83\\
	 $\rho_{26}$  &  0.11 &  0.09 &  0.06 &  0.02 &	 $\rho_{27}$  &  0.14 &  0.13 &  0.09 &  0.09\\
	 $\rho_{28}$  &  0.59 &  0.59 &  0.59 &  0.60 &	 $\rho_{29}$  &  0.57 &  0.60 &  0.64 &  0.69\\
	 $\rho_{2,10}$&  0.83 &  0.83 &  0.83 &  0.83 &	 $\rho_{34}$  &  0.21 &  0.20 &  0.00 &  $-$0.00\\
	 $\rho_{35}$  &  0.25 &  0.22 & $-$0.03 & $-$0.01 &	 $\rho_{36}$  &  0.76 &  0.77 &  0.77 &  0.78\\
	 $\rho_{37}$  &  0.89 &  0.89 &  0.89 &  0.89 &	 $\rho_{38}$  & $-$0.04 & $-$0.07 & $-$0.13 & $-$0.23\\
	 $\rho_{39}$  & $-$0.00 & $-$0.00 &  0.01 & $-$0.05 &	 $\rho_{3,10}$  & $-$0.37 & $-$0.37 & $-$0.66 & $-$0.62\\
	 $\rho_{45}$    & $-$0.34 & $-$0.43 & $-$0.76 & $-$0.79 &	 $\rho_{46}$    & $-$0.01 & $-$0.01 & $-$0.02 & $-$0.06\\
	 $\rho_{47}$    &  0.25 &  0.21 &  0.21 &  0.12 & 	 $\rho_{48}$    &  0.43 &  0.40 &  0.38 &  0.31\\
	 $\rho_{49}$    & $-$0.46 & $-$0.45 &  0.13 &  0.01 	&  $\rho_{4,10}$  & $-$0.50 & $-$0.48 &  0.12 &  0.04\\
	 $\rho_{56}$    & $-$0.20 & $-$0.21 & $-$0.70 & $-$0.72 & 	 $\rho_{57}$    & $-$0.50 & $-$0.57 & $-$0.47 & $-$0.54\\
	 $\rho_{58}$    & $-$0.39 & $-$0.34 & $-$0.27 & $-$0.29 &	 $\rho_{59}$    & $-$0.21 & $-$0.16 &  0.38 &  0.30\\ 
	 $\rho_{5,10}$  & $-$0.06 & $-$0.00 &  0.25 &  0.20 	&	 $\rho_{67}$    &  0.74 &  0.74 &  0.64 &  0.64\\
	 $\rho_{68}$    &  0.40 & $-$0.24 & $-$0.23 & $-$0.21 	&	 $\rho_{69}$    &  0.51 &  0.03 &  0.00 &  0.02\\
	 $\rho_{6,10}$  &  0.60 &  0.15 & $-$0.28 & $-$0.24 	&	 $\rho_{78}$    &  0.49 & $-$0.09 & $-$0.61 & $-$0.59\\
	 $\rho_{79}$    &  0.56 &  0.33 &  0.29 &  0.30 		&	 $\rho_{7,10}$  &  0.28 &  0.12 &  0.07 &  0.06\\
	 $\rho_{89}$    &  0.19 &  0.45 &  0.40 &  0.37 		&	$\rho_{8,10}$  & $-$0.56 &  0.83 &  0.78 &  0.80\\
	 $\rho_{9,10}$  & $-$0.51 &  0.62 &  0.03 &  0.08\\
	\hline
\end{tabular}
\label{TAB:toxicity-estWO}
\end{table}

\section{Additional Figures and Tables}
\label{APP:empResults}

\begin{figure}[!ht]
	\centering
	\subfloat[Casewise contamination]{
		\includegraphics[width=0.49\linewidth]{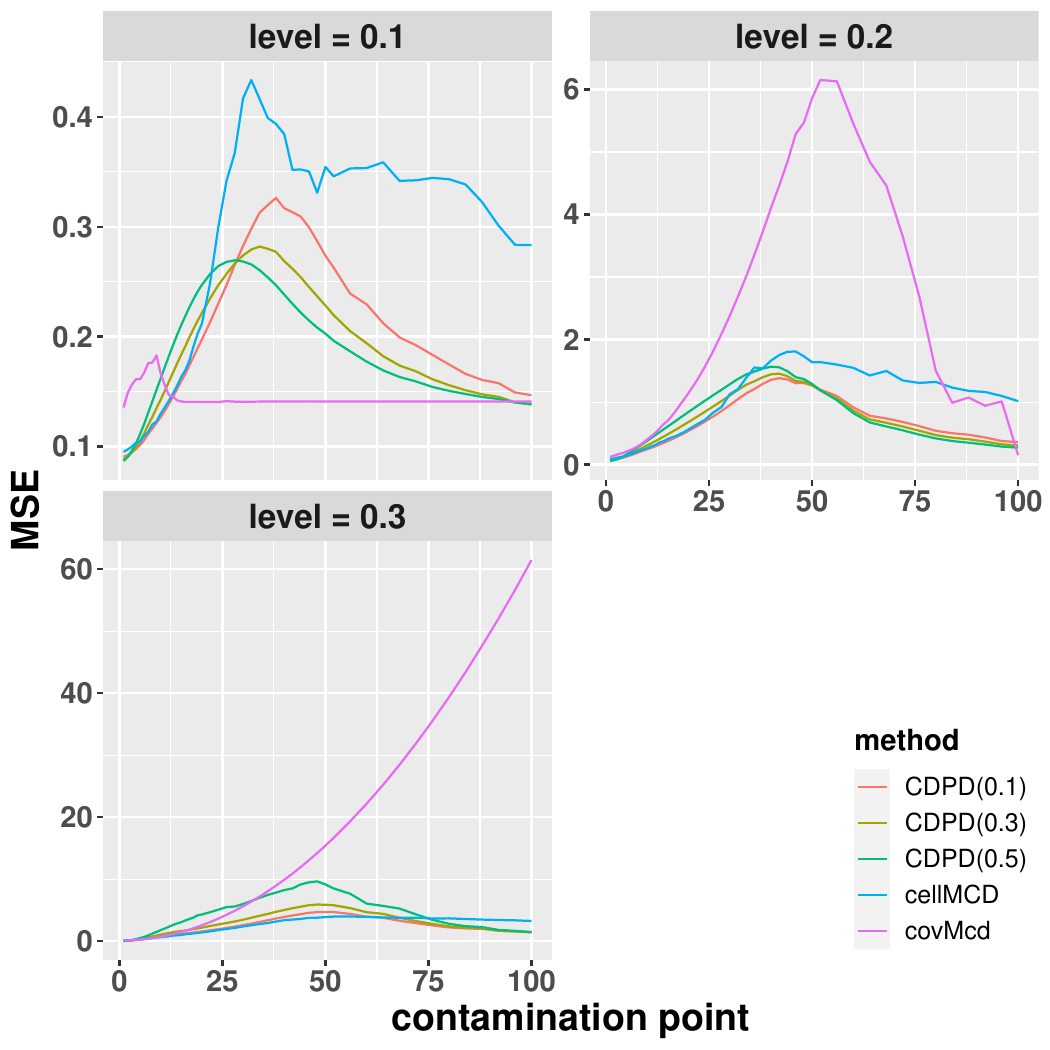}
		\includegraphics[width=0.49\linewidth]{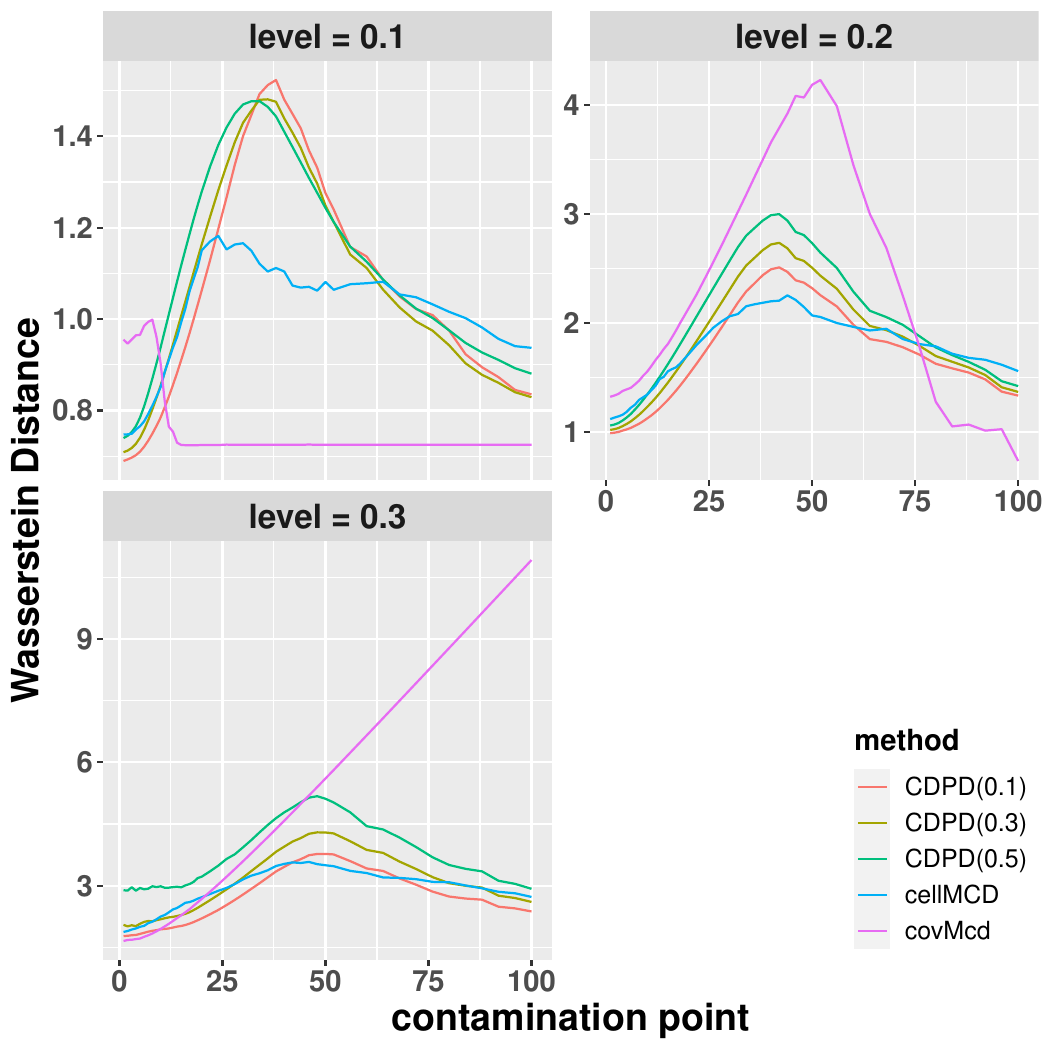}
		\label{FIG:normal-case-10}
	}
\\
	\subfloat[Cellwise contamination]{
		\includegraphics[width=0.49\linewidth]{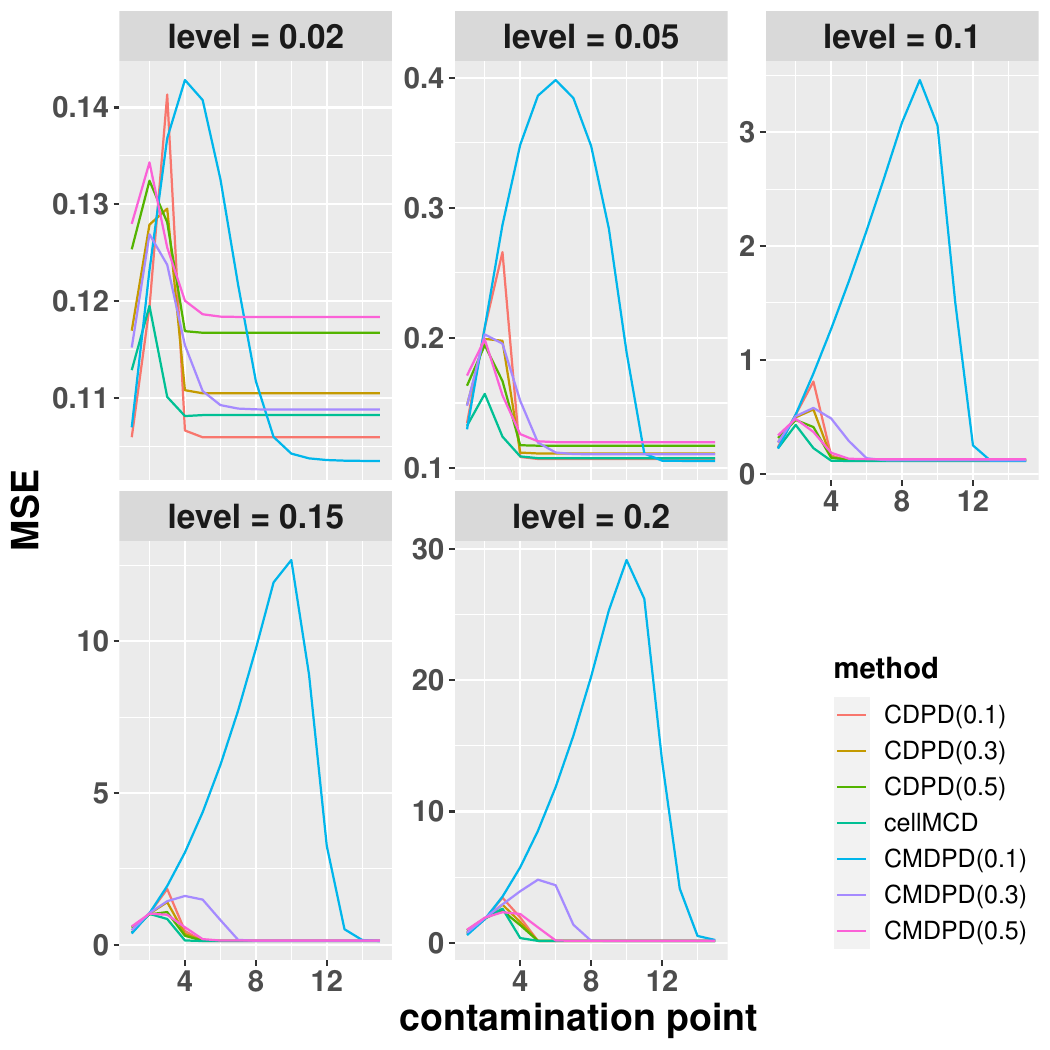}
		\includegraphics[width=0.49\linewidth]{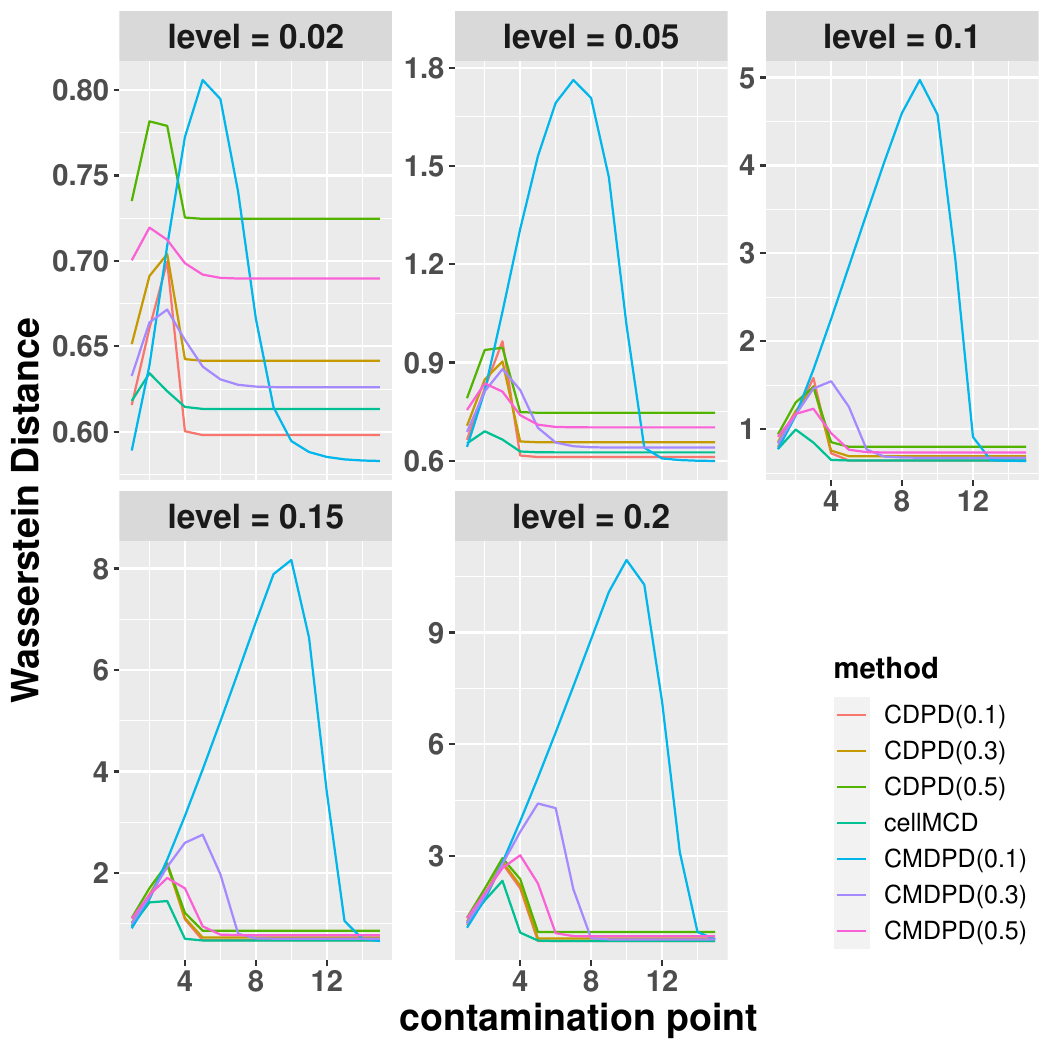}
		\label{FIG:normal-cell-10}
	}
	\caption{Plots of empirical MSE and Wasserstein-2 distance of different competing estimators over the contamination point $y$
		for data dimension $d=10$.}
	\label{FIG:normal_cont10}
\end{figure}

\begin{figure}
	\centering
	\subfloat[Casewise contamination]{
		\includegraphics[width=0.49\linewidth]{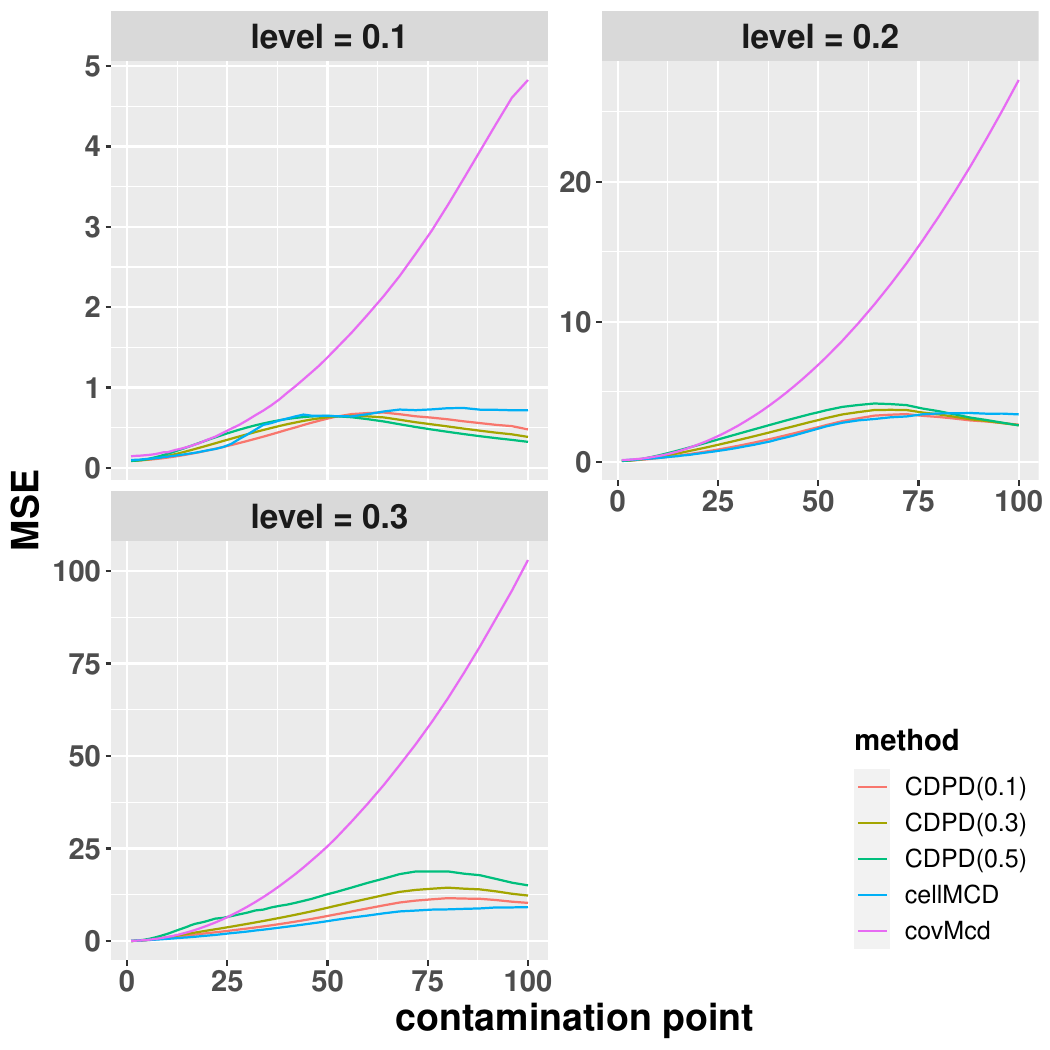}
		\includegraphics[width=0.49\linewidth]{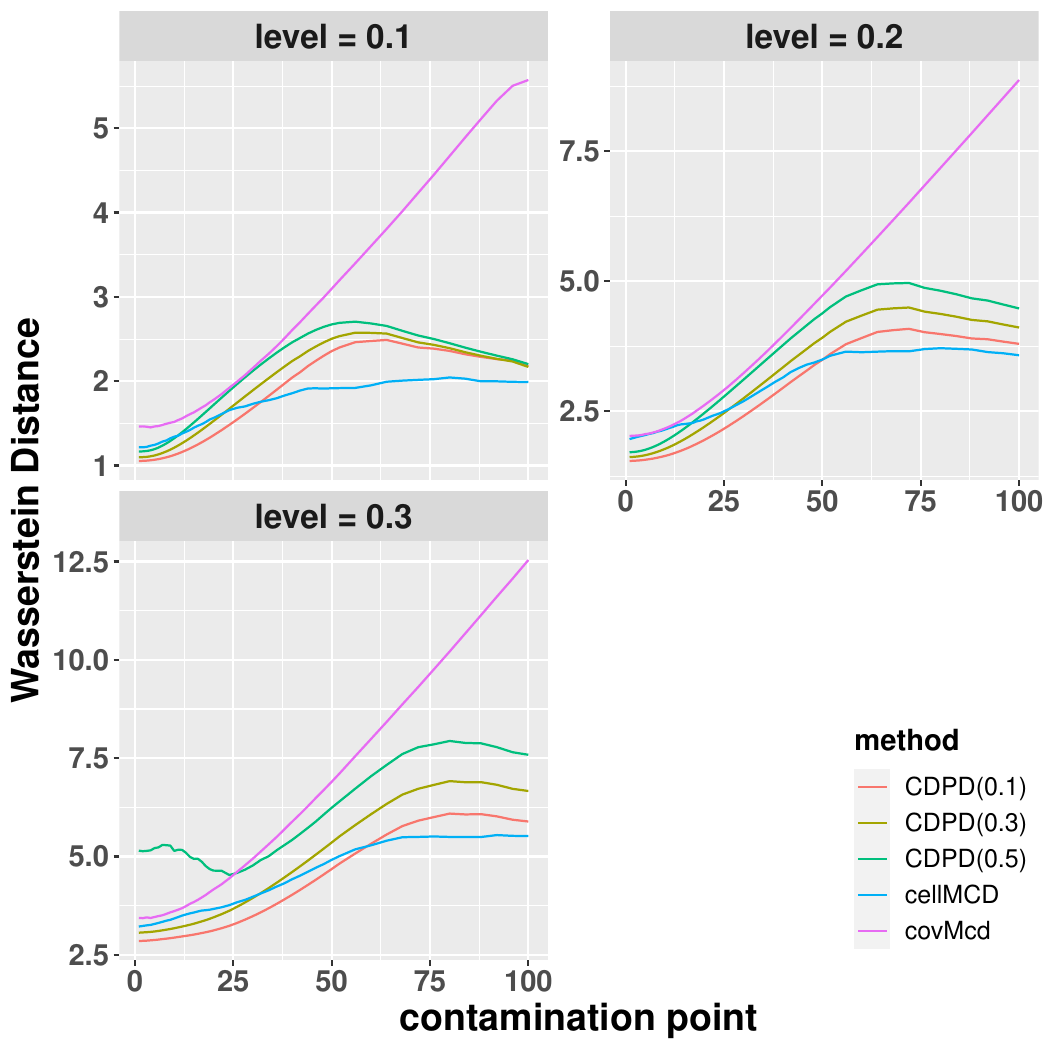}
		\label{FIG:normal-case-30}
	}
\\
	\subfloat[Cellwise contamination]{
		\includegraphics[width=0.49\linewidth]{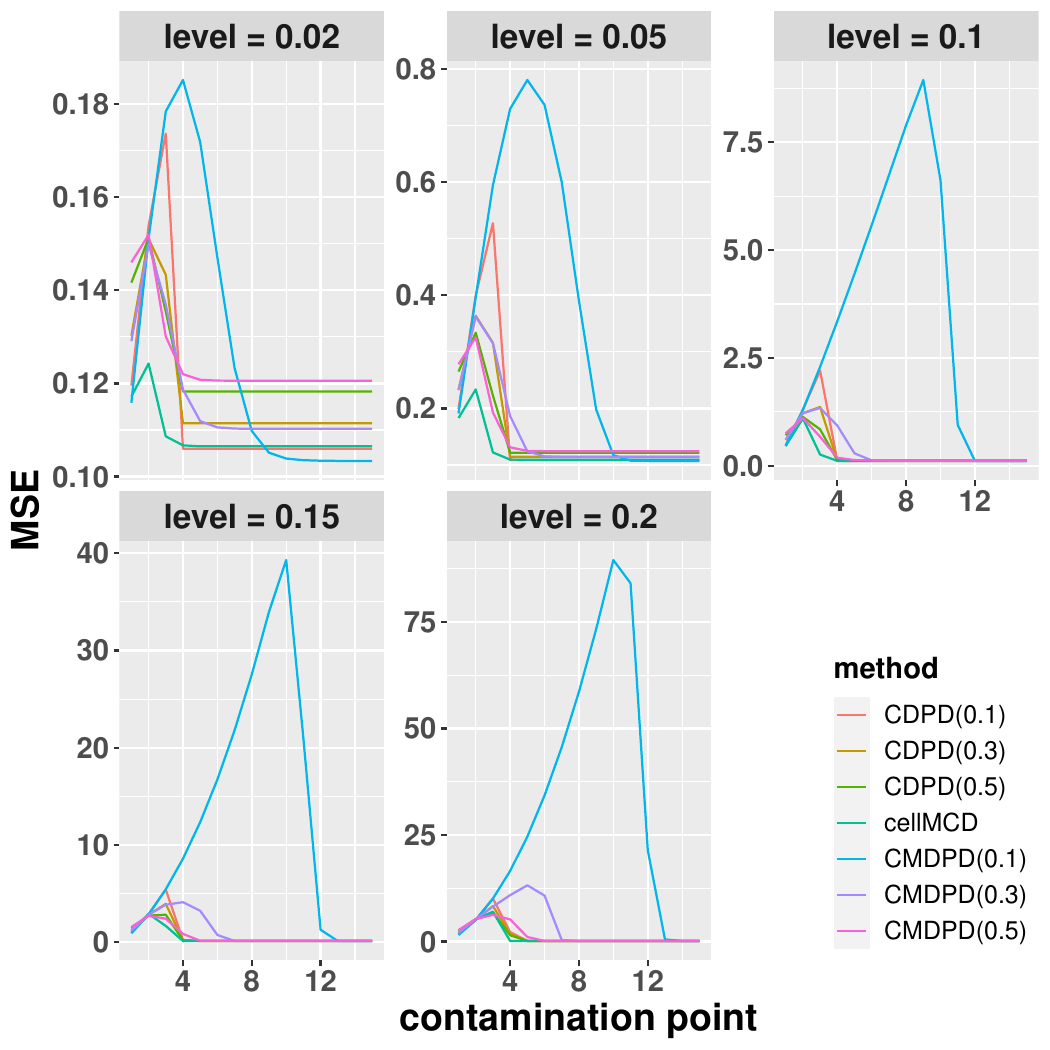}
		\includegraphics[width=0.49\linewidth]{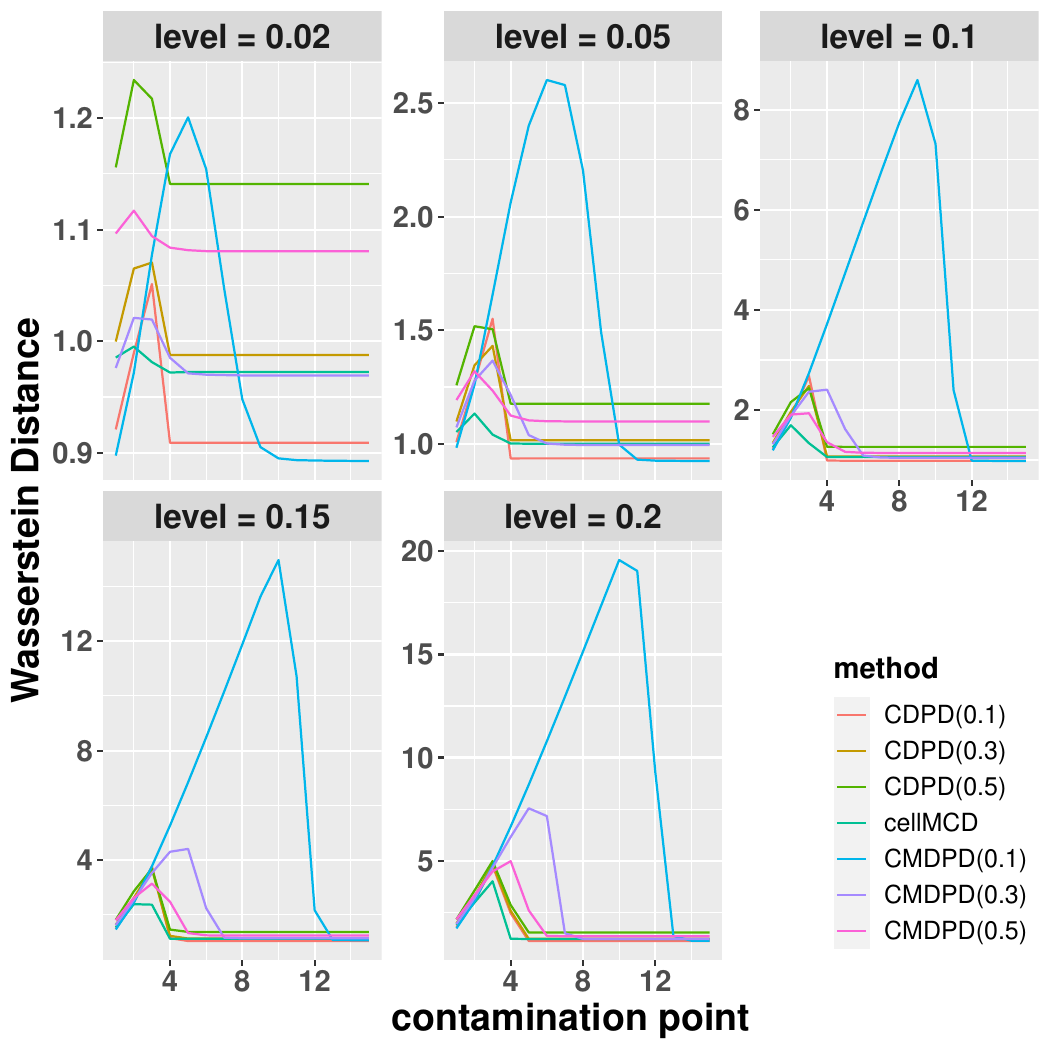}
		\label{FIG:normal-cell-30}
	}
	\caption{Plots of empirical MSE and Wasserstein-2 distance of different competing estimators over the contamination point $y$
		for data dimension $d=30$.}
	\label{FIG:normal_cont30}
\end{figure}

\begin{figure}
	\centering
	\subfloat[Casewise contamination]{
		\includegraphics[width=0.49\linewidth]{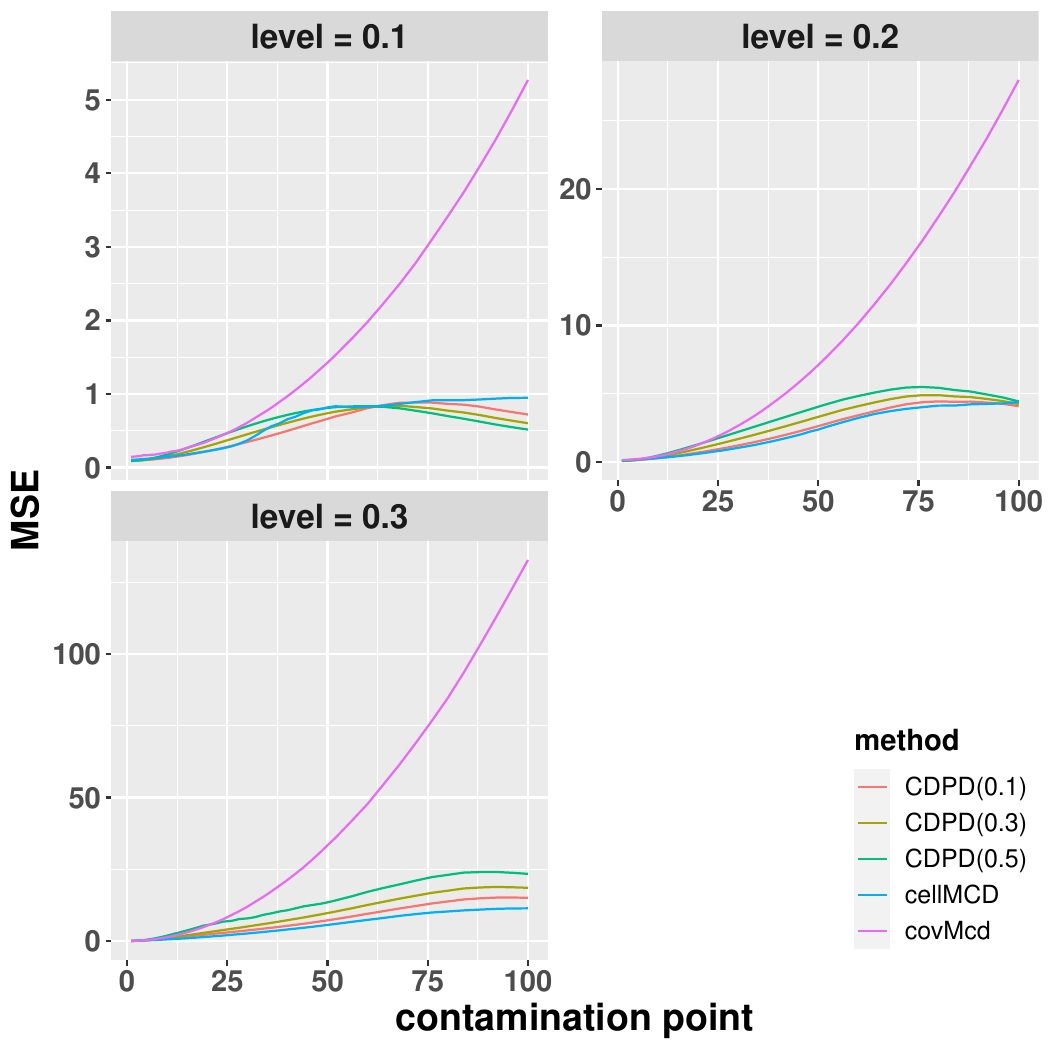}
		\includegraphics[width=0.49\linewidth]{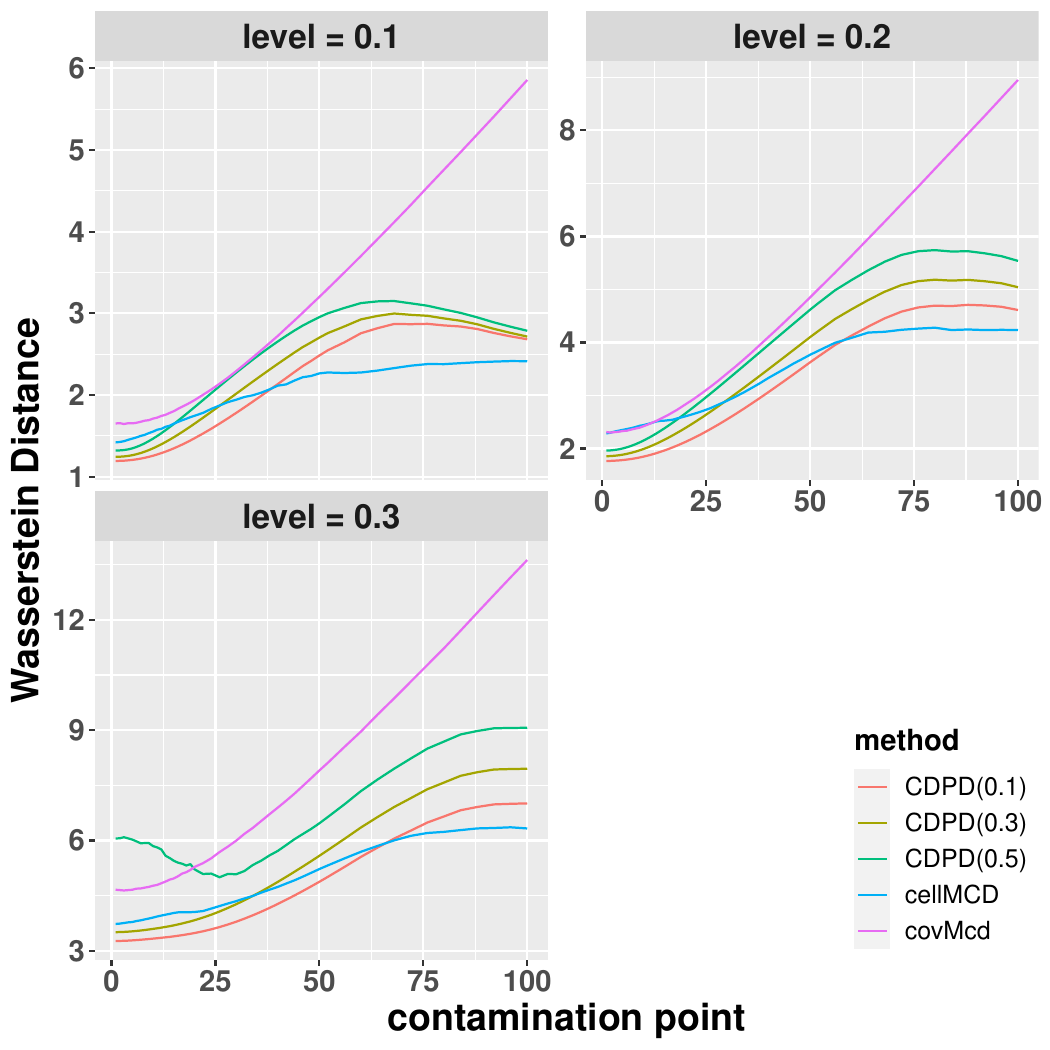}
		\label{FIG:normal-case-40}
	}
\\
	\subfloat[Cellwise contamination]{
		\includegraphics[width=0.49\linewidth]{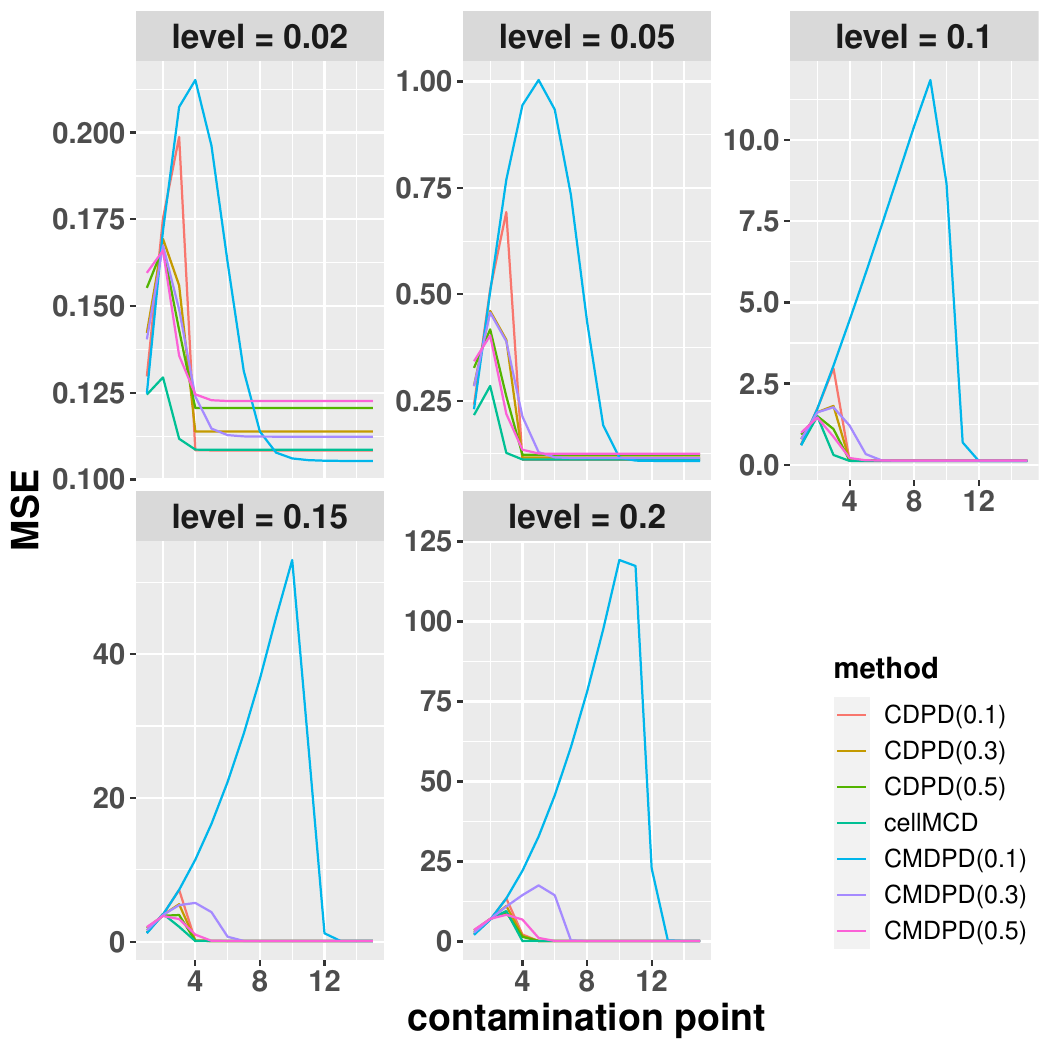}
		\includegraphics[width=0.49\linewidth]{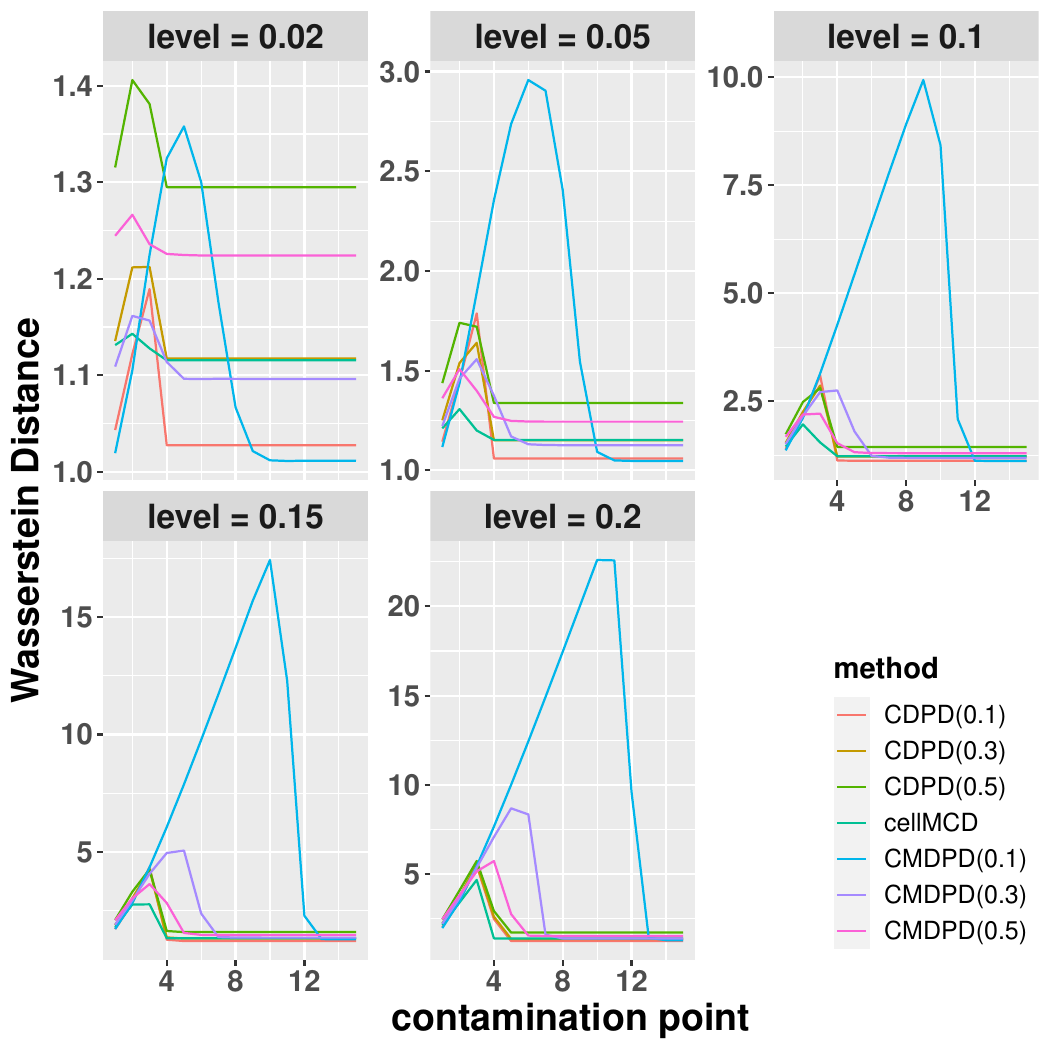}
		\label{FIG:normal-cell-40}
	}
	\caption{Plots of empirical MSE and Wasserstein-2 distance  of different competing estimators over the contamination point $y$
		for data dimension $d=40$.}
	\label{FIG:normal_cont40}
\end{figure}

\begin{figure}
	\centering
	\subfloat[Casewise contamination]{
		\includegraphics[width=0.49\linewidth]{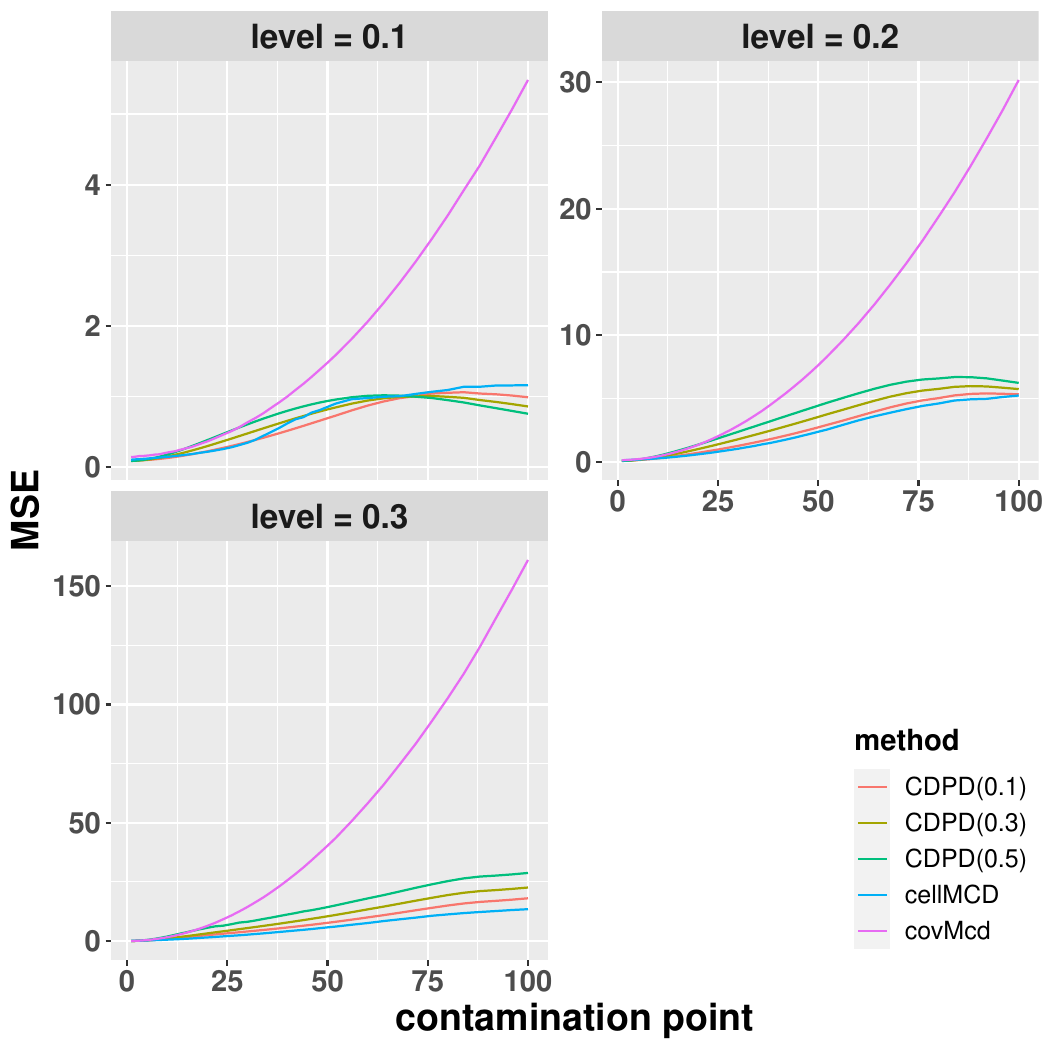}
		\includegraphics[width=0.49\linewidth]{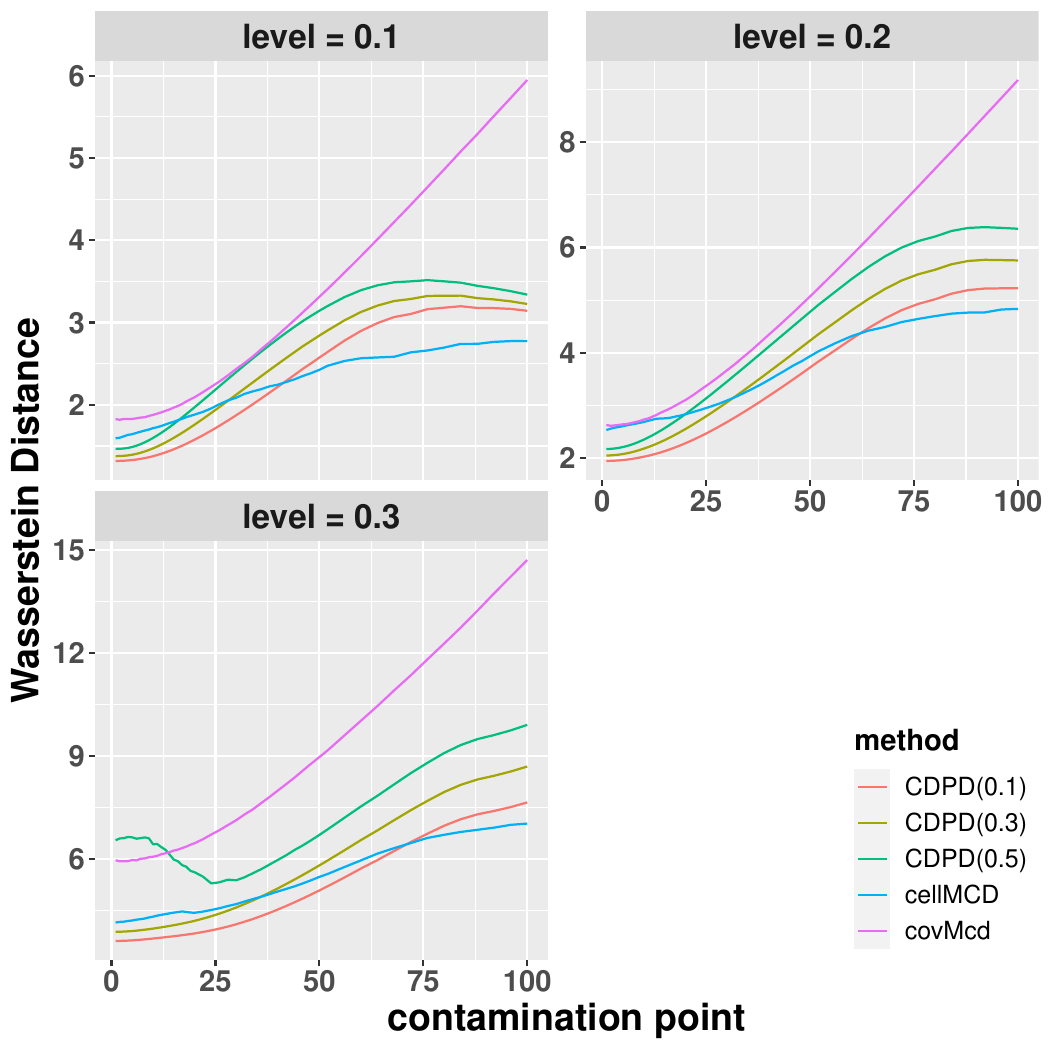}
		\label{FIG:normal-case-50}
	}
\\
	\subfloat[Cellwise contamination]{
		\includegraphics[width=0.49\linewidth]{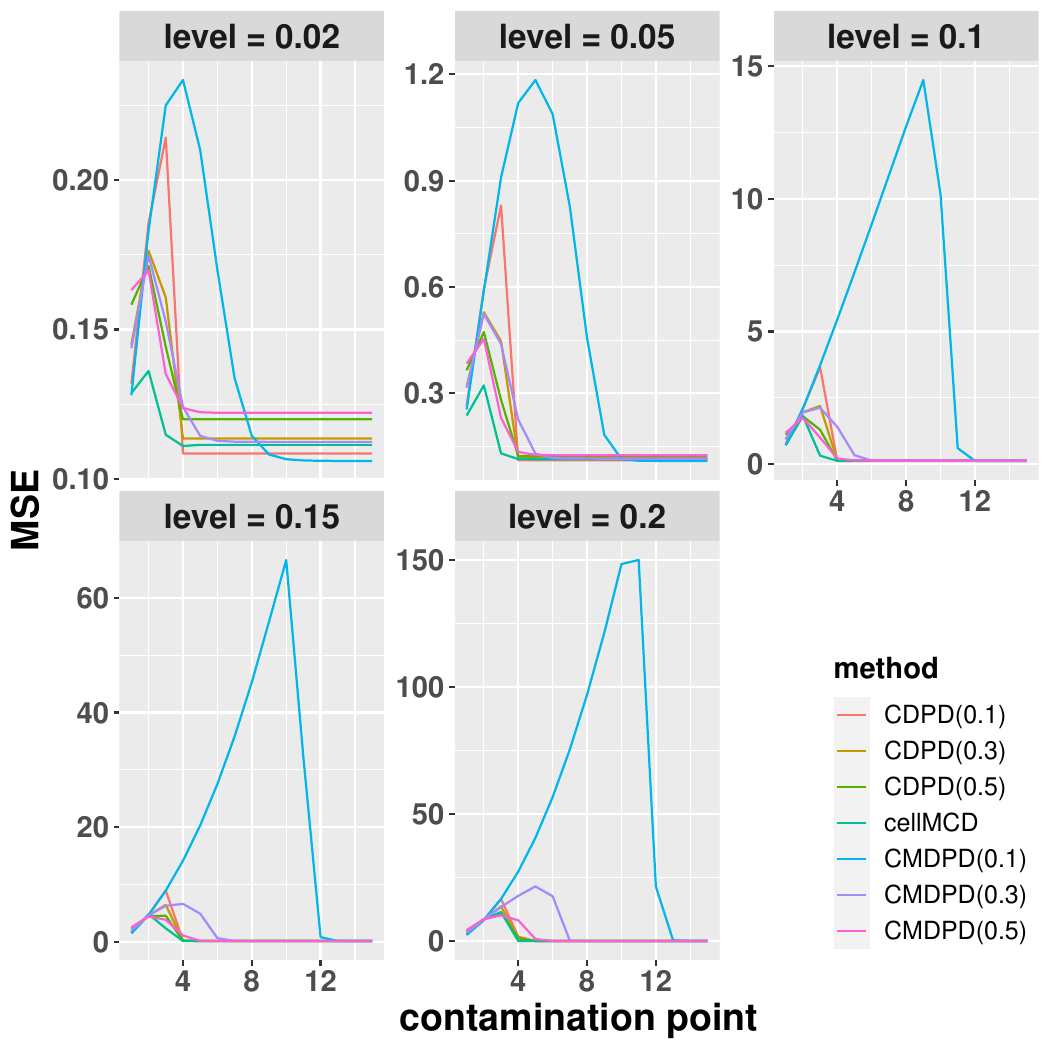}
		\includegraphics[width=0.49\linewidth]{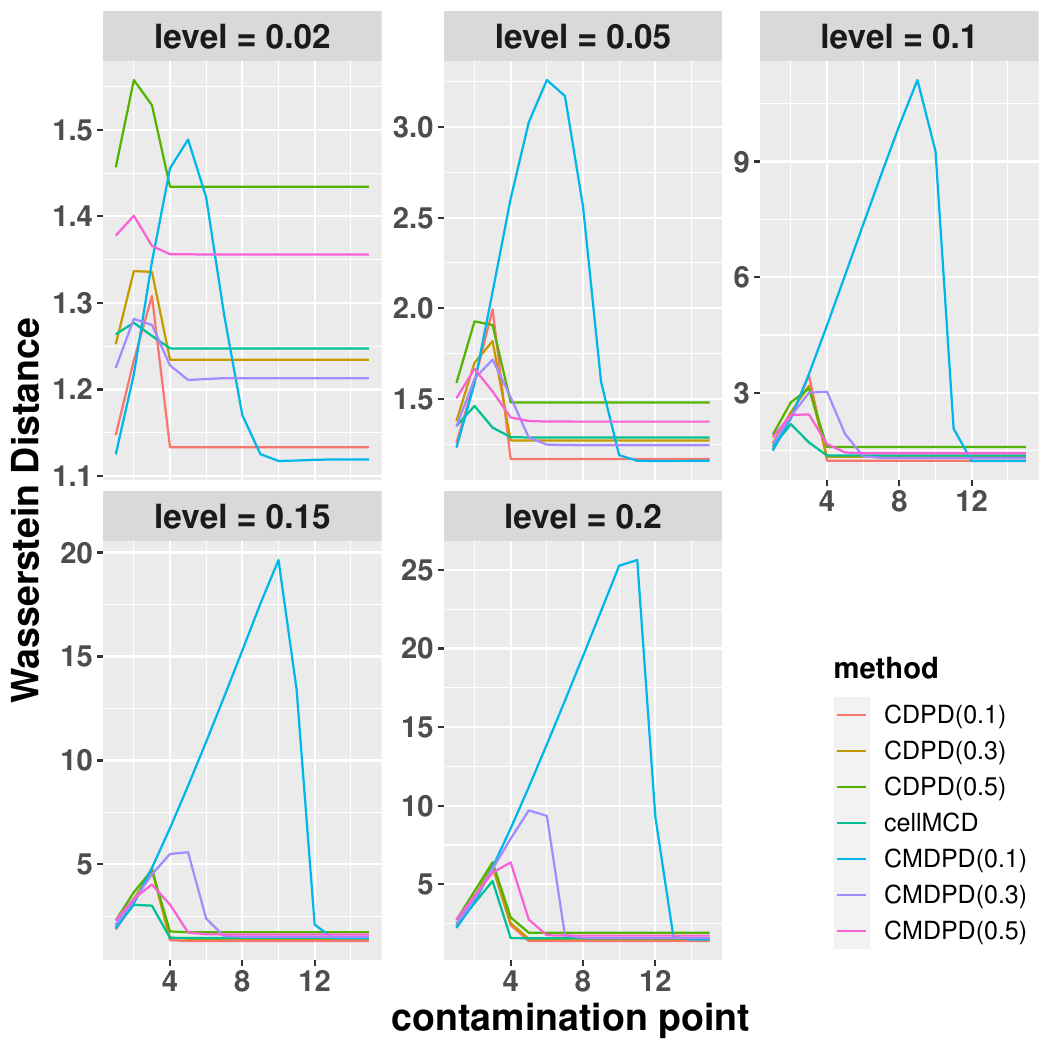}
		\label{FIG:normal-cell-50}
	}
	\caption{Plots of empirical MSE and Wasserstein-2 distance of different competing estimators over the contamination point $y$
		for data dimension $d=50$.}
	\label{FIG:normal_cont50}
\end{figure}

\begin{figure}[!ht]
	\centering
	\includegraphics[width=0.48\linewidth]{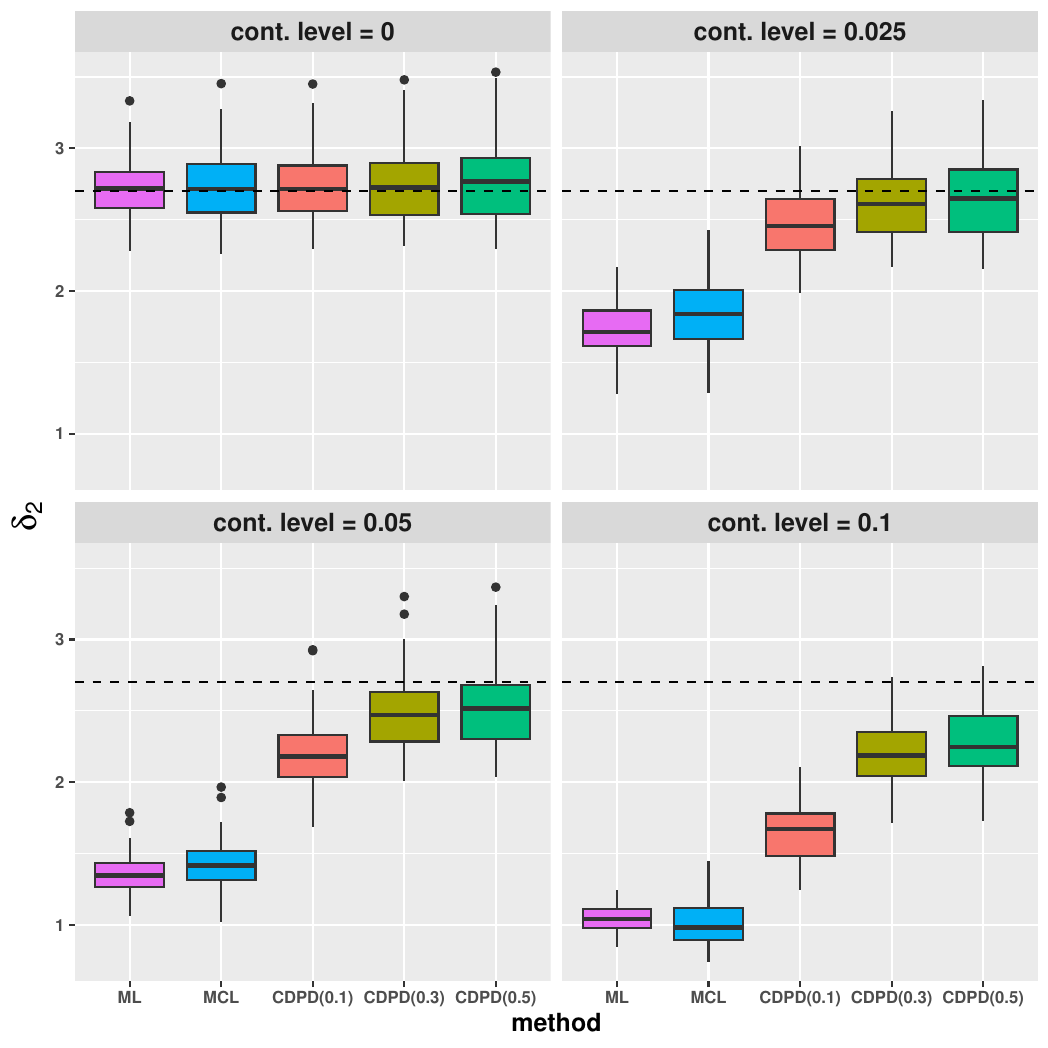}
	\includegraphics[width=0.48\linewidth]{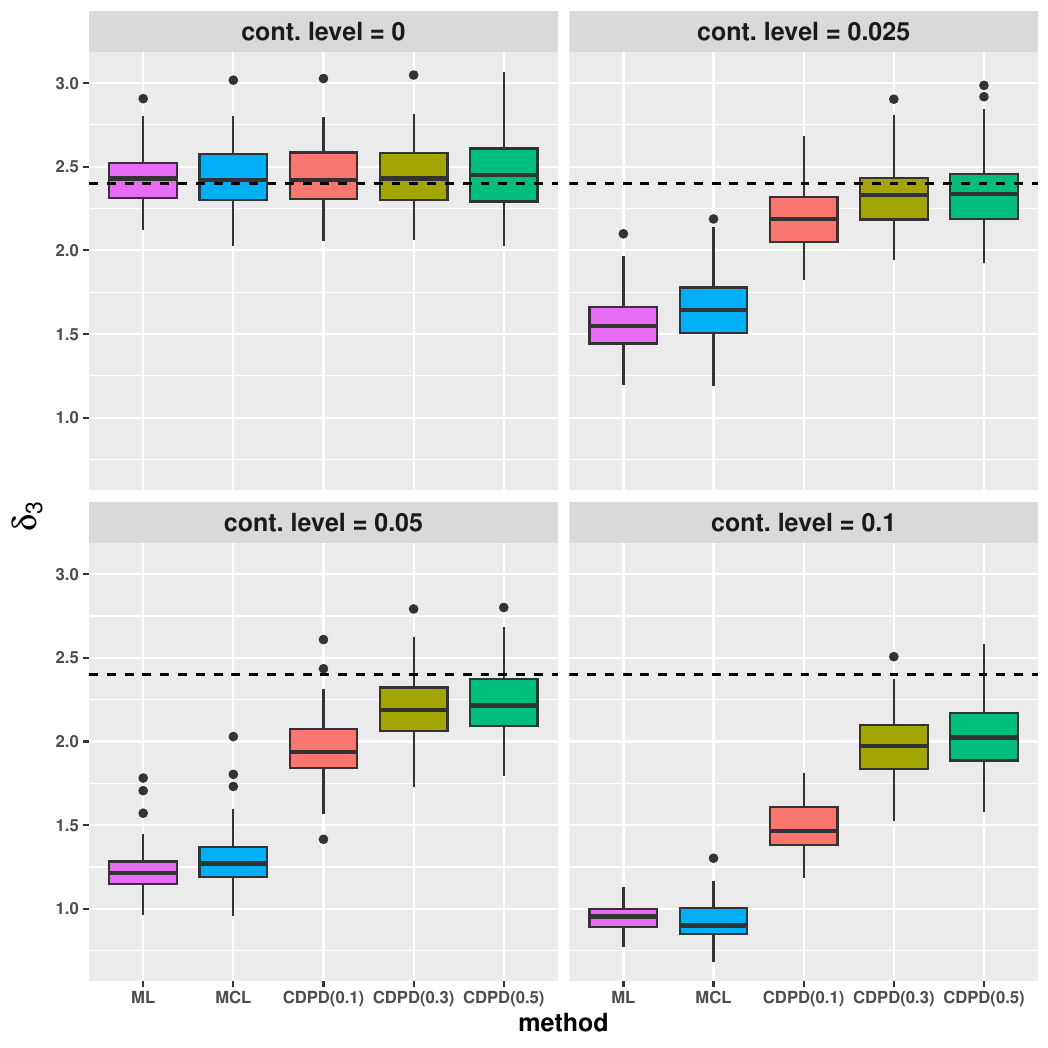}
	\includegraphics[width=0.48\linewidth]{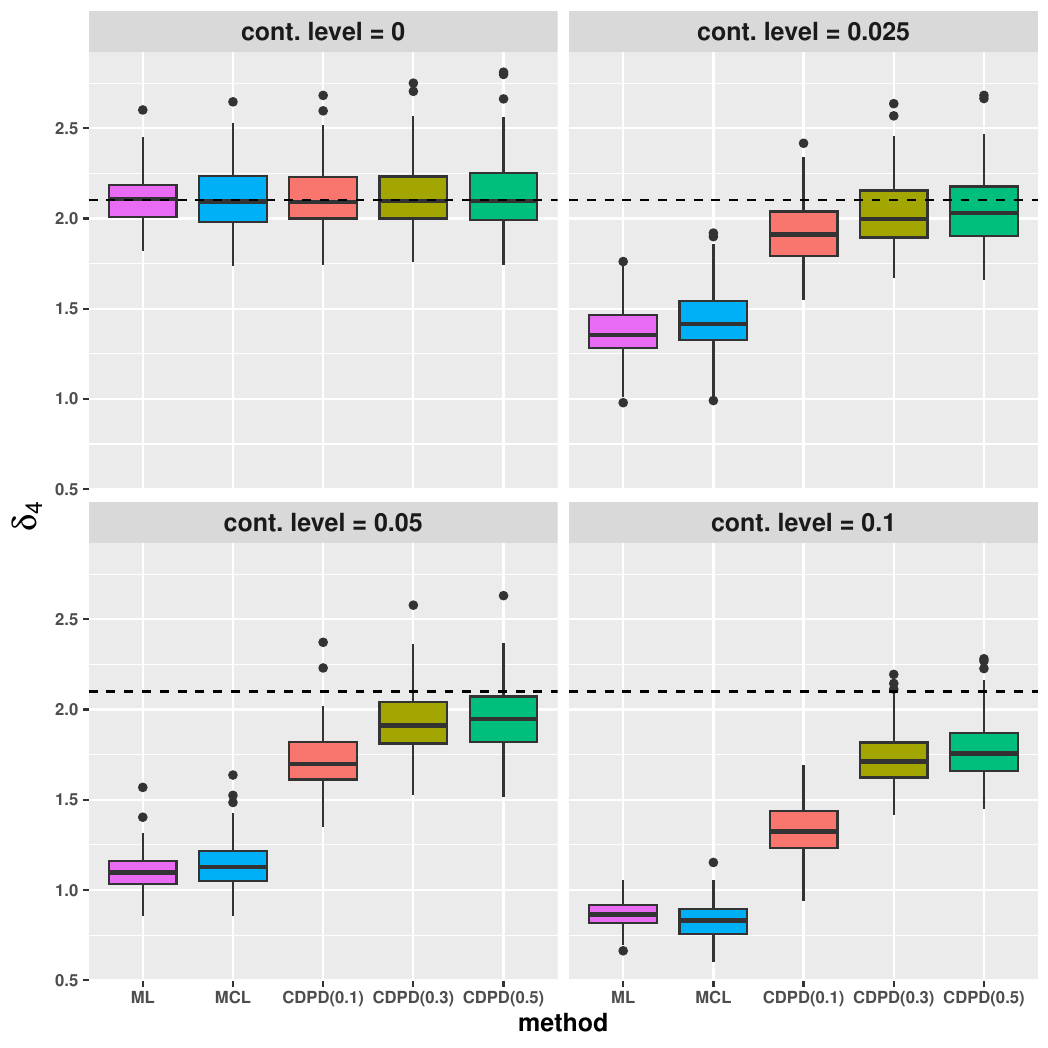}
	\includegraphics[width=0.48\linewidth]{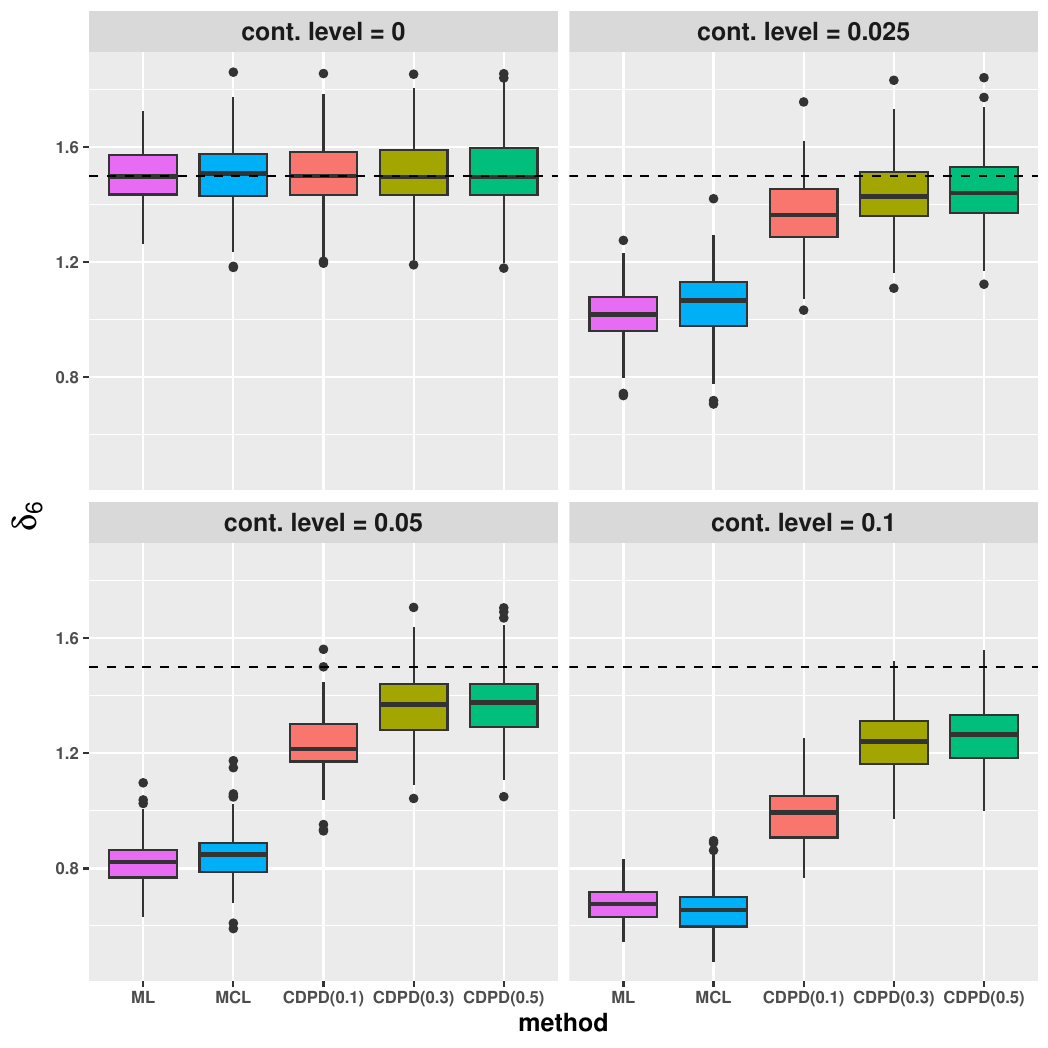}
	\includegraphics[width=0.48\linewidth]{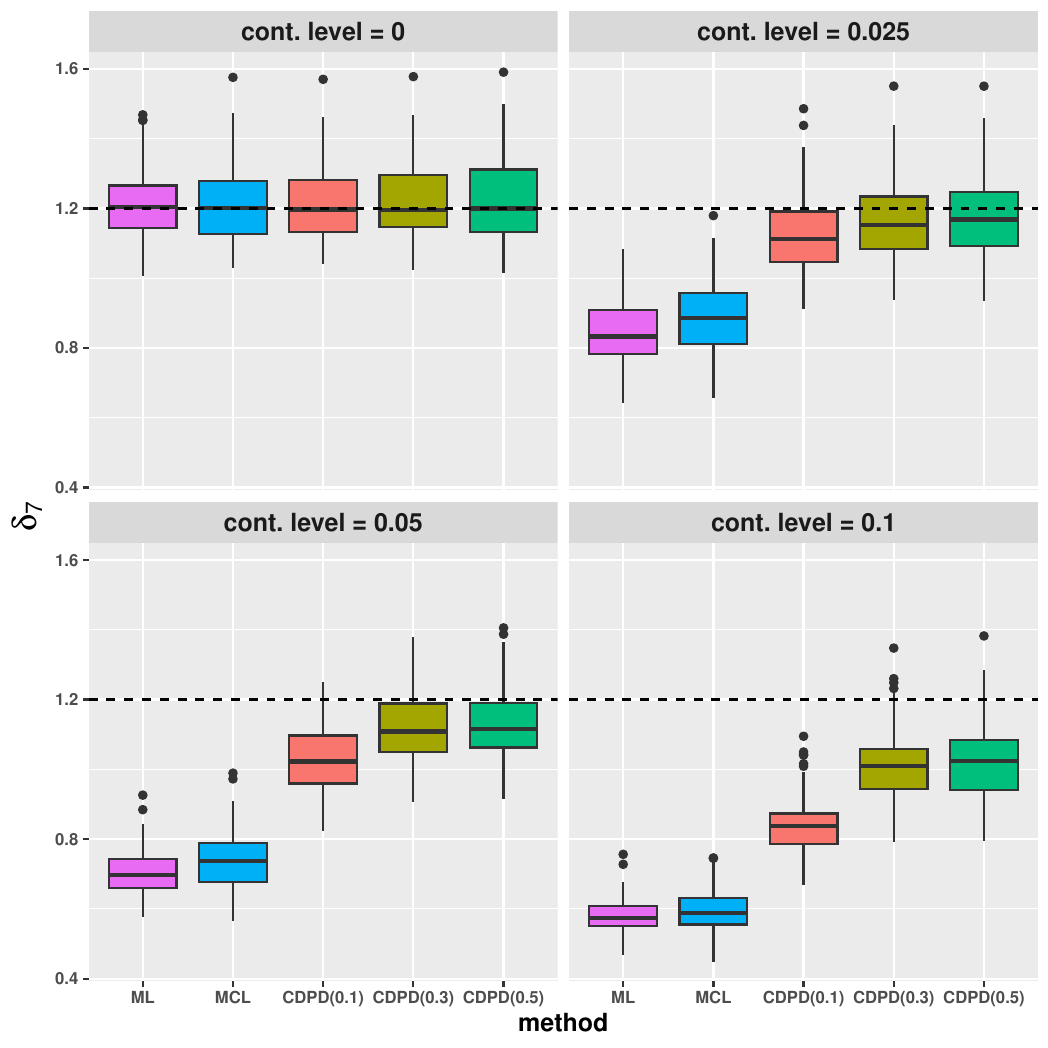}
	\includegraphics[width=0.48\linewidth]{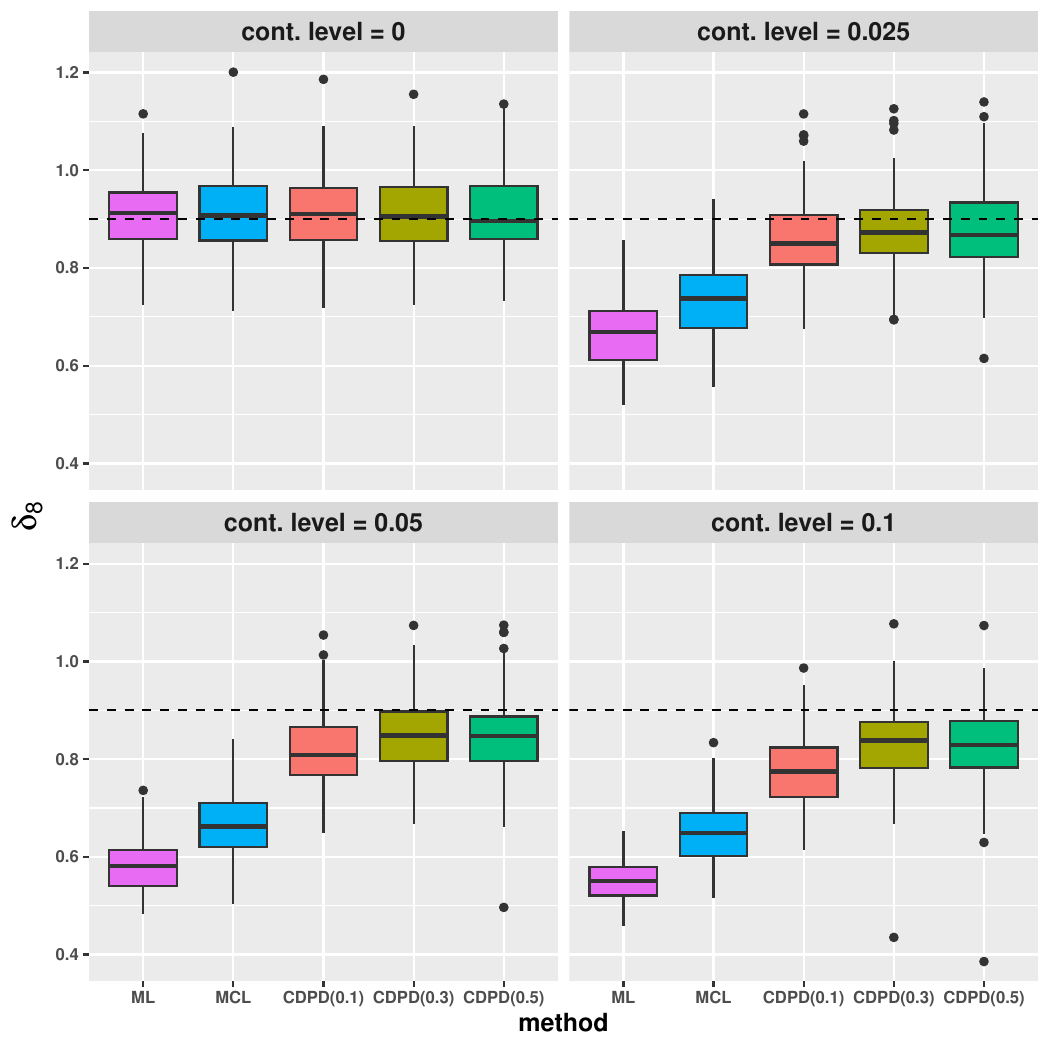}
	\caption{Boxplot of estimated parameters ($\delta_2, \ldots, \delta_4$, $\delta_6, \ldots, \delta_8$) 
		obtained across Monte Carlo replications under the Multivariate Ordered Gamma Model.}
	\label{FIG:marathon-est2}
\end{figure}

\begin{table}[ht]
	\centering
	\caption{Parameter estimates obtained under clean and contaminated ultramarathon data.\\}
	\begin{tabular}{l|rr|rrrr|rrrr}
		\hline
		& MLE & MCLE &\multicolumn{4}{c|}{DPD($\beta$)} & \multicolumn{4}{c}{CDPD($\beta$)}\\ 
$\beta \rightarrow$		&	&	& 0.1	&	0.2	&	0.3	&	0.5	& 0.1	&	0.2	&	0.3	&	0.5\\	\hline\hline
		&\multicolumn{10}{l}{\underline{Best 200 races (potentially clean data)}}\\
$\lambda$	&	11.53	&	11.94	&	15.55	&	18.37	&	19.14	&	27.33	&	14.01	&	16.12	&	17.90	&	20.24	\\
$\delta_1$	&	35.59	&	37.27	&	47.56	&	55.48	&	57.17	&	85.22	&	43.76	&	50.33	&	55.88	&	63.13	\\
$\delta_2$	&	1.37	&	1.52	&	1.31	&	1.14	&	0.93	&	0.59	&	1.65	&	1.78	&	1.87	&	1.96	\\
$\delta_3$	&	1.06	&	1.11	&	1.12	&	1.06	&	0.94	&	0.59	&	1.25	&	1.38	&	1.48	&	1.59	\\
$\delta_4$	&	0.96	&	0.95	&	0.98	&	0.97	&	0.92	&	1.67	&	1.05	&	1.13	&	1.20	&	1.28	\\
$\delta_5$	&	0.88	&	0.83	&	0.96	&	0.98	&	0.96	&	1.46	&	0.92	&	1.00	&	1.07	&	1.15	\\
$\delta_6$	&	0.89	&	0.81	&	0.95	&	0.97	&	0.94	&	0.84	&	0.90	&	0.99	&	1.07	&	1.15	\\
$\delta_7$	&	0.82	&	0.75	&	0.86	&	0.89	&	0.90	&	2.45	&	0.83	&	0.91	&	0.98	&	1.06	\\
$\delta_8$	&	0.76	&	0.67	&	0.83	&	0.85	&	0.80	&	0.59	&	0.75	&	0.84	&	0.90	&	0.98	\\
$\delta_9$	&	0.77	&	0.67	&	0.81	&	0.81	&	0.77	&	0.59	&	0.75	&	0.82	&	0.88	&	0.96	\\
$\delta_10$	&	0.76	&	0.70	&	0.76	&	0.76	&	0.75	&	0.74	&	0.76	&	0.82	&	0.87	&	0.93	\\
		\hline\hline		
		&\multicolumn{10}{l}{\underline{10\% Casewise contamination with worst races}}\\
$\lambda$	&	5.12	&	2.18	&	15.71	&	18.48	&	19.09	&	27.37	&	12.82	&	16.04	&	17.79	&	19.76	\\
$\delta_1$	&	18.02	&	7.83	&	47.81	&	55.55	&	56.84	&	85.32	&	39.91	&	49.81	&	55.22	&	61.27	\\
$\delta_2$	&	0.77	&	0.37	&	1.26	&	1.10	&	0.91	&	0.59	&	1.43	&	1.64	&	1.72	&	1.78	\\
$\delta_3$	&	0.67	&	0.30	&	1.09	&	1.04	&	0.92	&	0.59	&	1.12	&	1.33	&	1.42	&	1.50	\\
$\delta_4$	&	0.60	&	0.26	&	0.96	&	0.96	&	0.91	&	1.68	&	0.94	&	1.09	&	1.16	&	1.23	\\
$\delta_5$	&	0.58	&	0.24	&	0.96	&	0.99	&	0.96	&	1.46	&	0.84	&	0.98	&	1.05	&	1.12	\\
$\delta_6$	&	0.58	&	0.23	&	0.96	&	0.98	&	0.95	&	0.84	&	0.83	&	0.98	&	1.06	&	1.14	\\
$\delta_7$	&	0.53	&	0.22	&	0.85	&	0.88	&	0.90	&	2.46	&	0.75	&	0.88	&	0.94	&	1.01	\\
$\delta_8$	&	0.54	&	0.22	&	0.85	&	0.85	&	0.79	&	0.59	&	0.72	&	0.85	&	0.92	&	0.99	\\
$\delta_9$	&	0.54	&	0.23	&	0.81	&	0.81	&	0.77	&	0.59	&	0.70	&	0.82	&	0.89	&	0.96	\\
$\delta_10$	&	0.53	&	0.26	&	0.77	&	0.77	&	0.75	&	0.74	&	0.72	&	0.83	&	0.88	&	0.93	\\
		\hline\hline
		&\multicolumn{10}{l}{\underline{Artificial cellwise contamination within 20 best races}}\\
$\lambda$	&	1.85	&	2.32	&	5.87	&	12.05	&	12.61	&	15.67	&	7.34	&	13.11	&	14.97	&	15.31	\\
$\delta_1$	&	5.26	&	6.80	&	17.52	&	36.61	&	38.02	&	46.71	&	22.44	&	40.58	&	46.45	&	47.55	\\
$\delta_2$	&	0.61	&	0.66	&	0.95	&	1.20	&	1.03	&	0.76	&	1.25	&	1.78	&	1.90	&	1.82	\\
$\delta_3$	&	0.47	&	0.39	&	0.77	&	1.10	&	1.07	&	1.25	&	0.80	&	1.19	&	1.29	&	1.26	\\
$\delta_4$	&	0.42	&	0.31	&	0.63	&	0.83	&	0.78	&	0.62	&	0.66	&	0.98	&	1.06	&	1.04	\\
$\delta_5$	&	0.41	&	0.29	&	0.65	&	0.86	&	0.79	&	0.62	&	0.64	&	0.97	&	1.06	&	1.06	\\
$\delta_6$	&	0.40	&	0.27	&	0.67	&	0.95	&	0.96	&	1.08	&	0.59	&	0.88	&	0.96	&	0.98	\\
$\delta_7$	&	0.39	&	0.27	&	0.64	&	0.92	&	0.93	&	0.90	&	0.56	&	0.83	&	0.91	&	1.20	\\
$\delta_8$	&	0.39	&	0.30	&	0.60	&	0.79	&	0.80	&	0.94	&	0.56	&	0.81	&	0.89	&	0.15	\\
$\delta_9$	&	0.42	&	0.37	&	0.58	&	0.73	&	0.72	&	0.74	&	0.57	&	0.77	&	0.83	&	1.08	\\
$\delta_10$	&	0.62	&	0.75	&	0.70	&	0.81	&	0.77	&	0.73	&	0.77	&	0.88	&	0.92	&	0.96	\\
		\hline
	\end{tabular}
	\label{TAB:marathon-estimates}
\end{table}

\end{document}